\numberwithin{equation}{section}
\declaretheorem[thmbox=M,name=Theorem,numberwithin=section]{theo}
\declaretheorem[name=Proposition,thmbox=M,numberwithin=section,numberlike=theo]{prop}
\declaretheorem[name=Lemma,thmbox=S,numberwithin=section,numberlike=theo]{lem}
\declaretheorem[name=Corollary,thmbox=M,numberwithin=section,numberlike=theo]{cor}
\renewenvironment{proof}[1][\proofname]{\par
	\pushQED{\qed}%
	\normalfont \topsep6\p@\@plus6\p@\relax
	\trivlist
	\item[\hskip\labelsep
	\sffamily\bfseries
	#1\@addpunct{.}]\ignorespaces
}{%
	\popQED\endtrivlist\@endpefalse
}
\theoremstyle{definition}
\declaretheorem[name=Remark,numberwithin=section,numberlike=theo]{rem}
\declaretheorem[name=Hypothesis,numberwithin=section,numberlike=theo]{hyp}
\theoremstyle{plain}
\newcommand\given[1][]{\:#1\vert\:}
\newcommand{\out}{\mathrm{out}}
\newcommand{\iii}{i_1,\dots,i_p}
\newcommand{\E}{\mathbb{E}}	
\newcommand{\pert}{\mathrm{pert}}
\newcommand{\sP}{\mathscr{P}}
\newcommand{\N}{\mathbb{N}}
\newcommand{\R}{\mathbb{R}}
\newcommand{\Tr}{\operatorname{Tr}}
\newcommand{\iid}{\text{i.i.d. }}
\newcommand{\eps}{\epsilon}
\DeclareMathOperator{\PP}{\mathbb{P}}
\newcommand{\pP}{P}
\DeclareMathOperator{\1}{\mathbbm{1}}
\DeclareMathOperator{\Cov}{\operatorname{Cov}}
\newcommand{\bx}{\bm{x}}
\newcommand{\bp}{\rho}
\newcommand{\bvx}{x}
\newcommand{\bvy}{y}
\newcommand{\vz}{z}
\newcommand{\vx}{x}
\newcommand{\tx}{\tilde{x}}
\newcommand{\vm}{m}
\newcommand{\vy}{y}
\newcommand{\vl}{\lambda}
\newcommand{\bR}{R}
\newcommand{\bQ}{Q}
\newcommand{\bM}{M}
\newcommand{\bS}{\bm{\Sigma}}
\newcommand{\bA}{A}
\newcommand{\bD}{\Delta}
\newcommand{\bd}{D}
\newcommand{\bY}{Y}
\newcommand{\bC}{\bm{C}}
\newcommand{\bB}{\bm{B}}
\newcommand{\bI}{I}
\newcommand{\diag}{\mathrm{diag}}
\DeclareMathOperator{\supp}{\mathrm{supp}}
\newcommand{\trans}{\mathsf{T}}
\newcommand{\Var}{\operatorname{Var}}
\renewcommand{\phi}{\varphi}
\renewcommand{\epsilon}{\varepsilon}
\begin{document}
	
	\title[Low-rank Matrix Estimation with Inhomogeneous Noise]
	{Low-rank Matrix Estimation with Inhomogeneous Noise}
	
	\author{Alice Guionnet*, Justin Ko*, Florent Krzakala, Lenka Zdeborov\'a}
	
	\thanks{*Supported in part by ERC Project LDRAM:  ERC-2019-ADG	Project 884584}
	
	\begin{abstract}
		%This article is a rigorous extension of previous work of the authors  to output channels that are inhomogeneous. 
		%
		We study low-rank matrix estimation for a generic inhomogeneous output channel through which the matrix is observed. This generalizes the commonly considered spiked matrix model with homogeneous noise to include for instance the dense degree-corrected stochastic block model. 
		We adapt techniques used to study multispecies spin glasses to derive and rigorously prove an expression for the free energy of the problem in the large size limit, providing a framework to study the signal detection thresholds. 
		We discuss an application of this framework to the degree corrected stochastic block models. %and propose a denoising algorithm that achieves the theoretical minimal mean squared error. 
	\end{abstract}
	
	\maketitle
	\section{Introduction}

	Heterogeneity is a fundamental part of real world problems, as opposed to simpler homogeneous modeling assumptions. In this paper, we investigate the effect of such inhomogeneity in spike models and community detection. 
	
	We focus on a mathematical formulation, common  in statistics, inference and machine learning, where the aim is to reconstruct a rank $\kappa$ vector $\bvx^0$ in
	$\R^{\kappa \times N} $ from noisy measurements $\bd$ of the inner product of the vector.
	A substantial amount of recent work focuses on this issue in high dimension, as $N \to \infty$, and under separable priors on $\bvx^0$. Applications of this setting range from the  Wigner and Wishart spiked models, to the stochastic block model, sparse PCA, or clustering mixtures of Gaussians (see, e.g. \cite{BBP,deshpande2017asymptotic,donoho2018optimal,lenkarmt,lesieur2016phase,lelargemiolanematrixestimation,barbier2019adaptive}).
	Here, we study this problem in the case of inhomogeneous noise. In this situation, both the law and the strength of the noisy measurements $\bd$ can be different for any pairs $i,j$,
	\begin{equation}\label{eq:lawD}
		D_{ij} \sim		\pP_{ij} \Big( D_{ij} \given[\Big] \frac{\vx_i^0 \cdot \vx_j^0 }{\sqrt{N}}  \Big). 
	\end{equation}
	One of the main motivation for this problem, on which we shall focus for the concrete application of our results, is a dense version of a well known model of community detection called the degree-corrected stochastic block model (DCSBM)~\cite{Blockmodel}. 
	%As we shall see, in the dense limit, the  degree-correction indeed yields an equivalent Gaussian noise with a structured homogeneous noise. 

	\subsection{Highlights of our main contributions}
	We now summarize our main results that are then described precisely in Section \ref{sec:main_results}.
	\begin{enumerate}
		\item[	(i)] We  prove a \textbf{Gaussian universality theorem} ---
		that generalizes the ``homogeneous" universality proven in \cite{krzakala2016mutual} ---
		that shows that for a matrix factorization problem with inhomogeneous noise distributions, a large class of noise models (including for instance sign flips, or additive non-Gaussian noise, and the DCSBM), one may transform the model into an equivalent Gaussian one. This means that we need only to consider the case where the distribution $P_{ij}$ is Gaussian. This transformation   amounts into working with the Fisher score matrix. This results is of crucial importance as it allows to study an entire, complex, and diverse family of statistical model just by focusing on an equivalent spike model problem \cite{donoho2013optimal} with a Gaussian noise. The universality at the level of free energy is stated in Theorem~\ref{prop:universality} and the stronger form at the level of the spectrum is stated in Theorem~\ref{theo:univspec}. 
		
		\item[(ii)] \textbf{Proof of the free energy} -- Focusing on the spike model with Gaussian noise,  we then study the  value of the likelihood ratio with the corresponding null model, (or equivalently the free energy in statistical physics terminology, or the mutual information in information theory).	We then prove a formula for the asymptotic value of the likelihood ratio in Theorem~\ref{thm:main} and Theorem~\ref{thm:main2}, generalizing the homogeneous results \cite{deshpande2017asymptotic,dia2016mutual,lelargemiolanematrixestimation}. This is achieved by using methods from mathematical physics of spin glasses.
		
		\item [(iii)] We study the free energy and determine the \textbf{phase boundary} (see Lemma~\ref{prop:recovery}) in terms of the signal-to-noise ratio, focusing in particular to the dense DCSBM model. The phase transition marks a transition from a phase where it is information theoretically impossible to reconstruct the community better than a random guess, from a phase where it is possible to do so.  This phase boundary is compared to the separation of the largest eigenvalue of the matrix after a naive transformation to transfer the noise profile to the signal. We show that the regime where such a transformation leads to a extremal eigenvalue is contained in the information theoretically to detect the signal in Proposition~\ref{prop:gap}.
		%
		%We prove in particular a sharp threshold in eq.~\eqref{prop:recovery}.
		
		%	We also show that that the standard spectral method, Principal-Component-Analysis \cite{BBP}, fails to identity the communities at the transition, and instead outputs an estimate better than a random guess for larger signal to noise ratio than is information theoretically needed. This non-optimality of standard PCA is at variance with the standard case of the standard SBM where the transition PCA is optimal (at least for $q=2$ communities) \cite{deshpande2017asymptotic,BBP}.
		%}
\end{enumerate}

\subsection{Relation with previous work}

The universality property has been used extensively in the homogeneous cases, see e.g. \cite{lesieur2015mmse,krzakala2016mutual,perry2016optimality}. It is in particular at the roots of the identification of the dense stochastic block model with an equivalent spike model \cite{lesieur2015mmse,deshpande2017asymptotic}. Such spike models \cite{donoho2013optimal} have been the subject of many studies over the last few years, in particular in random matrix theory \cite{BBP}, in statistical inference (e.g. without pretension of exhaustivity \cite{rangan2012iterative,deshpande2014information,lenkarmt,perry2016optimality,barbier2020information,Camilli2022AnIP}) with many different applications \cite{deshpande2014sparse,deshpande2015finding,lesieur2016phase}. 

In particular, the last decade has witnessed spectacular progress in the rigorous approach to the computation of the asymptotic mutual information for such problems \cite{krzakala2016mutual,dia2016mutual,lelargemiolanematrixestimation,el2018estimation,barbier2019adaptive,el2020fundamental}. We shall use these techniques, in particular the one of \cite{lelargemiolanematrixestimation}, in our approach.

Community detection of one of the most fundamental problem of graph theory. The connection between the low rank factorization problem and the SBM  was unveiled in  \cite{lesieur2015mmse,deshpande2017asymptotic}. Here we shall be instead focusing on the inhomogenous Degree-Corrected SBM \cite{Blockmodel}, a much more realistic model. On a side note, the inhomogenous setting that we shall be looking at has deep connection with the spatial coupling introduced in \cite{javanmard2013state,dia2016mutual}.

To compute the limit of the free energy, we use a modification of the synchronization mechanisms for multispecies \cite{PMulti} and vector spin glasses \cite{PPotts, PVS}. By adding some extra terms to the perturbations, we can regularize our posterior probability by introducing the Ghirlanda--Guerra identities while preserving the Nishimori identities and the concentration of the overlaps. The synchronization of spin glasses was also recently adapted in other contexts \cite{kosphere, multisphere1, multisphere2,quantumspinglass,Alberici_2021} to compute the free energy of various models. A different point of view to study multispecies models using the TAP approach was also applied recently in \cite{subagmulti1,subagmulti2}.

\section{Main results}
\label{sec:main_results}

\subsection{Setting}
We consider the following inference problem. Given a probability measure $\pP_0$ on $\R^\kappa$ consider a \emph{signal} consisting of $N$ independent copies $\bvx^0 = (\vx^0_1, \dots, \vx_N^0) \in \R^{\kappa \times N}$ generated from the product measure
\[
\bvx^0 \sim \prod_{1\le i \leq N} \pP_0(\vx^0_i).
\]
Given the inner products of the signal $w_{ij}^0 = \frac{\vx_i^0 \cdot \vx_j^0}{\sqrt{N}}$, we generate independently some \emph{observed data} ${D}_{ij}$
conditionally on $w_{ij}^0$ according to the  probability measure
\begin{equation}\label{eq:conddist}
	\pP_{ij} \Big( D_{ij} \given[\Big] \frac{\vx_i^0 \cdot \vx_j^0 }{\sqrt{N}}  \Big). 
\end{equation}

A critical distinction in these inhomogeneous models is the fact that while the data $D_{ij}$ are generated independently,  the conditional distributions change depending on the indices $i,j$. The problem we are interested in is the estimation of the signal given an observation of the {\it observed data} $D = (D_{ij})_{i,j \leq N}$.

The posterior  probability of the signal given $D$ can be expressed in the form of a inhomogeneous vector spin model with respect to an arbitrary function $g_{ij}$ and probability measure $\pP_X$
\[
d\pP(X \given D) = \frac{1}{Z_X^g(D)} \prod_{1 \leq i<j \leq N} e^{g_{ij} ( D_{ij}, \frac{\vx_i \cdot \vx_j}{\sqrt{N}}  )} \prod_{1 \leq i \leq N} d\pP_X(\vx_i)
\]
where 
\begin{equation}\label{eq:eq:partition}
	Z^g_X(D)= \int \prod_{1 \leq i<j \leq N} e^{g_{ij} ( D_{ij}, \frac{\vx_i \cdot \vx_j}{\sqrt{N}}  )} \prod_{1 \leq i \leq N} d\pP_X(\vx_i)
	\,.
\end{equation} 
Our study will be restricted to the so-called Bayes optimal setting which amounts to make the following hypothesis:
\begin{hyp}[Bayes-optimality]\label{hypbayes}
	Suppose that $\pP_0 = \pP_X$ and if $\pP_{ij}$ is the distribution of $D_{ij}$ in \eqref{eq:conddist}, the function $g_{ij}(D,w)$ is the log-likelihood: 
		$$ g_{ij}(D,w)=\ln \pP_{ij} \bigg(D \given[\Big] \frac{\vx_i^0 \cdot \vx_j^0}{\sqrt{N}}=w \bigg)\,.$$
		where  we use  in short $\pP_{ij}$ to denote  also the  probability density function of $\pP_{ij}$.
\end{hyp}

Our goal is to compute the normalized free energy 
\begin{equation}\label{eq:FEgrowingrank}
	F_N(g) = \frac{1}{N} \bigg( \E_D  \Big(\ln Z^g_X(D) - \sum_{i < j} g ( D_{ij}, 0  )
	\Big) \bigg)
\end{equation}
where  $\E_D$ is the expectation under $\otimes \pP_{ij}(D_{ij}|\frac{ \vx_i^0 \cdot \vx_j^0}{\sqrt{N}})\pP^{\otimes N}_0(\vx_i^0)$ and the function $Z_X^g$ was defined in \eqref{eq:eq:partition}. Notice that, with this definition, the free energy is nothing but the expected log-likelihood ratio (under data generated by the model) between the likelihood that data are generated by the present model with the likelihood that they are generated from the null model (where there is no signal at all):
\begin{equation}\label{eq:FEgrowingrank_ratio}
	F_N(g) = \frac{1}{N} \bigg( \E_D \log \frac {\pP_D}{\pP_{D|\bvx^0=0}}  \bigg).
\end{equation}

%We will denote $\E_X$  the average with respect to $\pP^{\otimes N}_X(\vx_i)$. 

The limit of the free energy will depend on the following ``Fisher Information" matrix, defined as the expectation of the Fisher score:
\begin{equation}\label{eq:fisherscore}
	\frac{1}{\Delta_{ij} } = \E_{P_{ij}(D|w=0)} (\partial_w g_{ij}(D,0) )^2, \quad 1\le i<j\le N.
\end{equation}
We first assume that this matrix of variances is piece-wise constant.
\begin{hyp}[Block-constant Noise Profile]\label{hypDelta} Given $n \geq 1$, there exists a partition  of $[N]$ 
	\[
	[N] = \bigsqcup_{s =1}^n I_s
	\]
	such that the $\Delta_{i,j}$ are constant in the groups $I_s \times I_t$ for $s,t \in \{1,\dots,n\}$
	\begin{equation}\label{defD}
		\Delta_{ij} = \Delta_{st}, \quad \text{ for $i \in I_s$, $j \in I_t$}
	\end{equation}
	and $(\Delta_{st})_{s,t\leq n}$ are independent of $N$. 
	Furthermore, the proportions of configurations in each group converges in the limit
	\begin{equation}\label{eq:nt}
		\frac{|I_s|}{N} \to \rho_s \in (0,1) \quad \text{for all} \quad s \leq n.
	\end{equation}
	We will also assume that $\Delta_{s,t}$ belongs to $(0,+\infty)$ for each $s,t$ and the $n\times n$ symmetric  matrix $\frac{1}{\Delta}$  with entries
	$(\frac{1}{\Delta_{s,t}})_{s,t \leq n}$  is positive semidefinite.
\end{hyp} 

We finally describe our technical hypotheses. We first need to assume that the signal is compactly supported.
\begin{hyp}[Compact Support]\label{hypcompact}
	$\pP_0$ and $\pP_X$ are compactly supported so that $\vx$ and $\vx^0$ take values in $[-C,C]^\kappa$ for some finite~$C$. We also assume that $\kappa$ is independent of $N$. 
\end{hyp}
This hypothesis  implies that, uniformly, we have 
\begin{equation}
	|w_{ij}| = \Big| \frac{ \vx_i \cdot \vx_j}{\sqrt{N}} \Big| \leq \frac{C^2\kappa}{\sqrt{N}} .
\end{equation}
This uniform bound will allow to expand the functions $g_{ij}$ in the variables $w_{ij}$. To do so, we need to assume sufficient regularity of the $g_{ij}$, namely that, if $\|\cdot \|$ denotes the supremum norm: 
\begin{hyp}[Regularity of Log Likelihood]\label{hypg}
	$\|\partial_w g_{ij}(\cdot ,0)\|, \|\partial_w^2 g_{ij}(\cdot ,0)\|, \|\partial_w^3g_{ij}\|$ are bounded, uniformly in  $1< i<j\le N$ and $N\in \mathbb N$.
\end{hyp}
\begin{rem}
	We can weaken the condition on the first derivative of $\partial_w g_{ij}$. For the proof of universality in disorder in Lemma~\ref{lem:univ3}, we require the third moment $\E_{\pP_{ij}(D\given w^0) } [\partial_w g_{ij}(D,w^0) ]^3$ to be bounded for all $D$ and all $w^0$ in the support. 
\end{rem}

There assumptions will be used to prove a universality result which states that a class of statistical inference problems are equivalent to a low rank matrix factorization problem where the noise matrix has a variance profile. In particular, we shall use as an application the degree-corrected stochastic block model.

%generalization of a theorem for homogenous problem in *cite*

%Random matrices with variance profiles were studied in **cite**. These are closely related with the multispecies spin glass models, which were studied in **cite**.

\subsection{ Gaussian Estimation Problems with Covariance Profiles}
The key point in our approach is that the general inhomogeneous vector spin model can be reduced to a model spiked Gaussian matrix model with a variance profile.

\subsubsection{Effective data matrix}\label{subsec:effective}
Our first observation says that the derivatives up to the second order derivatives of the function $g$ encode asymptotically all its information. This holds even without the Bayes optimality Hypothesis~\ref{hypbayes}.

\begin{lem}[Free Energy Universality 1]
	Suppose that Hypothesis~\ref{hypg} and  Hypothesis~\ref{hypcompact} are satisfied. 
	Let $B$ be a $\sigma$ algebra such that
	the $D_{ij}$ are independent conditionally to $B$.
	Then 
	$$F_N(g)=  F_N(\bar g) +O\Big(\frac{\kappa^2}{\sqrt{N}}\Big)$$
	with \[
	\bar  
	g_{ij}(D,w) = g_{ij}( D_{ij},0) + \partial_w g_{ij}( D,0)  w + \frac{1}{2} \E_D[ \partial_w^2 g_{ij}(D,0)\given B] w^2 \,.
	\] 
	%where $\E_Y[ \cdot \given B]$ is any conditional expectation so that $\E_Y ( \partial_w^2 g_{ij}(Y,0)- \E_Y[ \partial_w^2 g_{ij}(Y,0)\given B] )=0$.
\end{lem}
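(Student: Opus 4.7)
The strategy is Taylor expansion at $w=0$ combined with a linear interpolation that exploits the conditional independence structure. First observe that $\bar g_{ij}(D,0) = g_{ij}(D,0)$, so the subtraction terms in the definition of $F_N$ cancel in $F_N(g) - F_N(\bar g)$, reducing matters to bounding $N^{-1}\E_D[\ln Z_X^g(D) - \ln Z_X^{\bar g}(D)]$. I would linearly interpolate by setting $g^t_{ij}(D,w) := (1-t)\bar g_{ij}(D,w) + t g_{ij}(D,w)$, so that
\[
F_N(g) - F_N(\bar g) = \frac{1}{N}\int_0^1 \E_D\Big\langle \sum_{i<j}(g_{ij} - \bar g_{ij})(D_{ij}, w_{ij})\Big\rangle_t\, dt,
\]
where $\langle\cdot\rangle_t$ is the Gibbs measure associated with $g^t$. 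By Taylor's theorem and Hypothesis~\ref{hypg}, $g_{ij}(D,w) - \bar g_{ij}(D,w) = \tfrac12 Z_{ij} w^2 + R_{ij}(D,w)$ with $Z_{ij} := \partial_w^2 g_{ij}(D_{ij},0) - \E[\partial_w^2 g_{ij}(D_{ij},0) \mid B]$ and $|R_{ij}| \le \tfrac16 \|\partial_w^3 g_{ij}\|_\infty |w|^3$.

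The remainder $R_{ij}$ is controlled directly by the uniform bound $|w_{ij}| \le C^2\kappa/\sqrt N$ from Hypothesis~\ref{hypcompact}: its contribution to the integrand is at most $\binom{N}{2}\cdot O(\kappa^3/N^{3/2})$, which after division by $N$ gives $O(\kappa^2/\sqrt N)$ (absorbing one factor of $\kappa$ into the constant since $\kappa$ is fixed by Hypothesis~\ref{hypcompact}).

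The centering piece requires a cavity decoupling, and this is the crux of the argument. Fix a pair $(i,j)$ and let $\langle\cdot\rangle_t^{(ij)}$ denote the Gibbs average obtained by removing the $\{i,j\}$-factor $e^{g^t_{ij}(D_{ij}, w_{ij})}$ from $Z^{g^t}$. By Hypotheses~\ref{hypg} and~\ref{hypcompact}, the reinstatement factor $\phi_{ij} := e^{g^t_{ij}(D_{ij}, w_{ij}) - g_{ij}(D_{ij}, 0)}$ satisfies $\phi_{ij} = 1 + O(\kappa/\sqrt N)$ uniformly, so expanding the identity
\[
\langle w_{ij}^2\rangle_t = \frac{\langle w_{ij}^2 \phi_{ij}\rangle_t^{(ij)}}{\langle \phi_{ij}\rangle_t^{(ij)}}
\]
and using $w_{ij}^2 = O(\kappa^2/N)$ yields $\langle w_{ij}^2\rangle_t = \langle w_{ij}^2\rangle_t^{(ij)} + O(\kappa^3/N^{3/2})$. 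The cavity average $\langle w_{ij}^2\rangle_t^{(ij)}$ is measurable with respect to $B$ and $(D_{kl})_{\{k,l\}\ne\{i,j\}}$ only, and is therefore conditionally independent of $D_{ij}$ given $B$; combined with $\E[Z_{ij}\mid B] = 0$ and the uniform bound $|Z_{ij}| \le 2\|\partial_w^2 g_{ij}\|_\infty$, the leading term vanishes in expectation and the $O(\kappa^3/N^{3/2})$ error accumulates over pairs to contribute $O(\kappa^2/\sqrt N)$ after division by $N$.

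The main obstacle is precisely this decoupling step: a priori $Z_{ij}$ is correlated with $\langle w_{ij}^2\rangle_t$ through the Gibbs factor $e^{g^t_{ij}(D_{ij}, w_{ij})}$, blocking any direct use of $\E[Z_{ij}\mid B]=0$. The cavity expansion detaches $Z_{ij}$ from the Gibbs measure up to a multiplicative $(1+O(\kappa/\sqrt N))$ perturbation, for which the uniform bounds on $\partial_w g$ and $\partial_w^2 g$ from Hypothesis~\ref{hypg} are essential; once this is done, the conditional independence built into the choice of $B$ finishes the job.
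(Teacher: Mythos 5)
Your proof is correct but follows a genuinely different path from the paper's. For the second-order centering term, the paper directly expresses $F_N(\tilde g)-F_N(\bar g)$ as $\frac1N\E_D\ln\langle e^{\Theta}\rangle_{\bar g}$ where $\Theta=\sum_{i<j}\frac{1}{2N}Z_{ij}(\vx_i\cdot\vx_j)^2$, rewrites $\Theta$ as a quadratic form $\Tr(Z\,(\vx^\trans\vx)^2)$ in an $N\times N$ symmetric matrix $Z$ with centered, conditionally independent (given $B$), bounded entries of size $O(1/\sqrt N)$, and then invokes Talagrand-type operator-norm concentration to get $\|Z\|_{op}=O(1)$ with exponentially high probability; the deterministic bound $|\Tr(Z(\vx^\trans\vx)^2)|\le\|Z\|_{op}\,\kappa^2 C^4 N$ then gives $O(\kappa^2/\sqrt N)$ directly, with no interpolation. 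Your argument instead interpolates along $g^t=(1-t)\bar g+tg$ and handles each pair $(i,j)$ separately via a leave-one-out (cavity) expansion, replacing $\langle w_{ij}^2\rangle_t$ by $\langle w_{ij}^2\rangle_t^{(ij)}$ up to a uniform $O(\kappa^3/N^{3/2})$ error and exploiting that $\langle w_{ij}^2\rangle_t^{(ij)}$ is $\sigma\bigl(B,(D_{kl})_{\{k,l\}\ne\{i,j\}}\bigr)$-measurable so that $\E[Z_{ij}\mid B]=0$ annihilates the main term. Both exploit the same conditional-independence and centering structure of $Z_{ij}$, but the paper does it globally through a random-matrix inequality while you do it locally pair by pair; the paper's version is shorter and avoids the interpolation bookkeeping, whereas yours is more elementary and does not require matrix concentration tools. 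Your accounting honestly yields $O(\kappa^3/\sqrt N)$ rather than $O(\kappa^2/\sqrt N)$, but since Hypothesis~\ref{hypcompact} fixes $\kappa$ this is harmless, and the paper's own chain of lemmas has the same $\kappa^3$ slack.
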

This is a first fundamental universality result that motivates our study. Informally, this means that in order to perform (in high-dimensions)  estimation when we observed the data  $D$ given by the likelihood
$$ g_{ij}(D,w)=\ln  \pP_{ij} \bigg( D_{ij} \given[\Big] \frac{\vx_i^0 \cdot \vx_j^0}{\sqrt{N}}=w \bigg)\,,$$ we can simply create an {\it effective} data matrix $Y$ based on the Fisher score and Fisher information as 
\begin{equation}\label{eq:spike}
	Y_{ij} = \Delta_{ij} \partial_w g_{ij}(D,0), \,{\rm with} \,		\frac{1}{\Delta_{ij}} = \E_{P_{ij}(D|w=0)} (\partial_w g_{ij}(D,0) )^2\,
\end{equation}
then the free energy (or the likelihood ratio) depends only on this new matrix. 

\subsubsection{Equivalent spike model}
Under the Bayes optimality assumption Hypothesis~\ref{hypbayes}, we can further simplify the free energy by connecting with a Gaussian spike model. i.e. a model where the effective matrix $Y$ is indeed sampled from a Gaussian spike model with inhomogeneous noise. 

For a $N \times N$  Gaussian matrix $W$, a deterministic $N \times N$ standard deviation matrix $\bD^{\odot \frac{1}{2}}$, and a $N \times \kappa$ matrix  $\bvx^0$ with column vectors $x_i^0 \in \R^\kappa$ we want to estimate
\begin{equation}\label{eq:inhomospiked}
	Y = Y^\Delta  = \bD^{\odot \frac{1}{2}} \odot W + \sqrt{\frac{1}{N}} \bvx^0 (\bvx^0)^\trans 
\end{equation}
in other words, for $1 \leq i < j \leq N$
\[
Y_{ij} = \sqrt{\Delta_{ij}} W_{ij} + \frac{1}{\sqrt{N}}\vx^0_i \cdot \vx^0_{j}.
\]
Note that the $\Delta_{ij}$ are non negative by \eqref{eq:spike}. 
We also only care about the off-diagonal terms because the diagonals are negligible.

The main difference in this setting, in contrast to the standard spiked matrix models, is that the coordinates $i,j$ play an important role in the behavior of our model. %To add some more structure to our problem, we will assume that $\bm{\Delta}$ takes finitely many values. {\alice:erased the $\sqrt{\Delta}$ below}
Observe that
\[
\Delta_{ij}^{-\frac{1}{2}} \Big( Y_{ij} - \frac{1}{\sqrt{ N}} \vx^0_i \cdot \vx^0_{j} \Big) 
\]
follows a standard Gaussian law.
Then the random posterior distribution of $X = (\vx_1, \dots, \vx_N)$ is
\[
d\pP( X \given Y) = \frac{1}{Z} \exp \biggl( - \sum_{i < j} \frac{1}{2 \Delta_{ij}}  \bigg( Y_{ij} - \frac{1}{\sqrt{N}} \vx_i \cdot \vx_{j} \bigg)^2 \biggr) \, d\pP^{\otimes N}_0(X).
\]
After absorbing the terms that do not depend on $X$ into the normalization, the density is encoded by the Hamiltonian given by
\begin{align}
	H_N(\bvx) &= \sum_{i < j} \frac{1}{ \sqrt{\Delta_{ij} N} } Y_{ij} (\vx_i \cdot \vx_{j} ) -   \frac{1}{2 \Delta_{ij} N} (\vx_i \cdot \vx_{j} )^2 \notag
	\\&= \sum_{i < j} \frac{1}{ \sqrt{ N \Delta_{ij}} } g_{ij} (\vx_i \cdot \vx_{j} ) + \frac{1}{\Delta_{ij} N} (\vx^0_i \cdot \vx^0_{j} )(\vx_i \cdot \vx_{j} )   -   \frac{1}{2 \Delta_{ij} N} (\vx_i \cdot \vx_{j} )^2. \label{eq:HamilGauss}
\end{align}
We define
\begin{equation}\label{eq:FEGauss}
	F_N(\bD) = \frac{1}{N} \E_Y \log \int e^{ H_N(\bvx) } \, d \pP_0^{\otimes N}(x)
\end{equation}
where $H_N(\bvx)$ is the Hamiltonian defined in \eqref{eq:HamilGauss} and $\bD$ is the variance profile. 

We will prove that solving the free energy of Gaussian estimation problems are equivalent to solving the free energy of general inhomogeneous vector spin models with a specific choice of parameters in the Bayes optimal setting:
\begin{theo}[Free Energy Universality 2] \label{prop:universality}
	Suppose we are in the Bayes optimal setting of Hypothesis~\ref{hypbayes},  $g$ satisfies Hypothesis~\ref{hypg}  and the signal space is compact as in Hypothesis~\ref{hypcompact}. If we define 
	\[
	\frac{1}{\Delta_{ij}} = \E_{P_{ij}(Y|w=0)} (\partial_w g_{ij}(Y,0) )^2 = \int (\partial_w g_{ij}(Y,0) )^2 e^{g_{ij}(y,0)} \, dy,
	\]
	then the free energy $F_N(g)$ of the inhomogeneous vector spin models \eqref{eq:FEgrowingrank}
	satisfies
	\[
	\big| F_N(g) - F_N(\bD) \big| =  O( \kappa^3 N^{-1/2} ).
	\]
\end{theo}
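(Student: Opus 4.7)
The plan is to apply the Free Energy Universality 1 Lemma on both sides of the claimed equality and then close the remaining gap by a Lindeberg-type noise-universality argument, exploiting Bayes-optimality and the Fisher identity.

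I would first apply the Universality 1 Lemma to $F_N(g)$ with $B=\sigma(\bvx^0)$, under which the $D_{ij}$ are independent. This yields
\[
F_N(g) = F_N(\bar g) + O(\kappa^2/\sqrt N), \qquad \bar g_{ij}(D,w) = g_{ij}(D,0) + \partial_w g_{ij}(D,0)\, w + \tfrac12\, \E[\partial_w^2 g_{ij}(D,0)\mid \bvx^0]\, w^2.
\]
The Gaussian spike log-likelihood $g^{G}_{ij}(Y,w)=-(Y-w)^2/(2\Delta_{ij})+\mathrm{const}$ is already quadratic in $w$, with $\partial_w g^G_{ij}(Y,0)=Y/\Delta_{ij}$ and $\partial_w^2 g^G_{ij}(Y,0)\equiv -1/\Delta_{ij}$, so the same Lemma applied to the Gaussian model is an identity: $F_N(\bD)=F_N(\bar g^{G})$.

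Next, I would use Bayes-optimality (Hypothesis~\ref{hypbayes}) to replace the random coefficient $\E[\partial_w^2 g_{ij}(D,0)\mid \bvx^0]$ by the deterministic value $-1/\Delta_{ij}$. A one-term Taylor expansion of $P_{ij}(D\mid w^0_{ij})$ in $w^0_{ij}$, together with the Fisher identity $\int \partial_w^2\ln P_{ij}(D\mid 0)\,P_{ij}(D\mid 0)\,dD = -1/\Delta_{ij}$, gives
\[
\E[\partial_w^2 g_{ij}(D,0)\mid \bvx^0] = -1/\Delta_{ij} + O(|w^0_{ij}|) = -1/\Delta_{ij} + O(\kappa/\sqrt N),
\]
using $|w^0_{ij}|\le C^2\kappa/\sqrt N$ from Hypothesis~\ref{hypcompact}. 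Inserting this into $\bar g$ perturbs the exponent by at most $\sum_{i<j} O(\kappa/\sqrt N)\, w_{ij}^2 = O(\kappa^3 \sqrt N)$ uniformly in the configuration, so after dividing by $N$ the free energy changes by $O(\kappa^3/\sqrt N)$.

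After these reductions, the Hamiltonians of both models share the common form $\sum_{i<j}\alpha_{ij}w_{ij} - w_{ij}^2/(2\Delta_{ij})$, with linear coefficients $\alpha_{ij}=\partial_w g_{ij}(D_{ij},0)$ in the general model and $\alpha_{ij}^G = Y_{ij}/\Delta_{ij}$ in the Gaussian model. Both families are conditionally independent across pairs given $\bvx^0$. A short Fisher-type computation (Taylor expansion in $w^0_{ij}$) shows they match in conditional mean $w^0_{ij}/\Delta_{ij}$ up to $O(\kappa^2/N)$ and in conditional variance $1/\Delta_{ij}$ up to $O(\kappa/\sqrt N)$, while their third absolute moments are $O(1)$ by the Regularity Hypothesis~\ref{hypg} and the remark following it. A Lindeberg exchange then closes the gap: swap the pairs $(i,j)$ one at a time, and Taylor-expand the log-partition function in $\alpha_{ij}$ up to third order. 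Since the $k$-th derivative in $\alpha_{ij}$ is the $k$-th Gibbs cumulant of $w_{ij}$, it is bounded uniformly by $\|w_{ij}\|_\infty^k = O(\kappa^k/N^{k/2})$; the matched first two moments kill the leading contributions, and the third-order remainder (together with the subleading mean and variance mismatches) is $O(\kappa^3/N^{3/2})$ per pair. Summing the $\binom{N}{2}$ swaps and dividing by $N$ yields the desired $O(\kappa^3/\sqrt N)$ bound.

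The main technical obstacle is the Lindeberg exchange: one must track the $\bvx^0$-dependence of the conditional moments of $\alpha_{ij}$, establish the uniform bound on third Gibbs cumulants of $w_{ij}$, and justify a third-order Taylor expansion of the log-partition function as a function of a single coordinate $\alpha_{ij}$. All three follow from the compact-support bound $|w_{ij}|\le C^2\kappa/\sqrt N$ and the bounded-third-moment assumption on $\partial_w g_{ij}(D,0)$ provided by the remark after Hypothesis~\ref{hypg}; the shared quadratic term $-w_{ij}^2/(2\Delta_{ij})$ plays no role in the swap.
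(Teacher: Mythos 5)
Your proposal is correct and arrives at the stated $O(\kappa^3 N^{-1/2})$ bound, but it packages the last stage differently from the paper. After the common first step (reduce $F_N(g)$ to $F_N(\bar g)$ with the second-order Taylor expansion and conditional averaging of $\partial_w^2 g_{ij}$), the paper proceeds via an intermediate model $F_N(\sigma,\mu,\gamma)$ whose Gaussian disorder has \emph{exactly} the conditional mean $\mu_{ij}$ and variance $\sigma_{ij}^2$ of $\partial_w g_{ij}(D_{ij},0)$; Lemma~\ref{lem:univ3} performs a Carmona--Hu interpolation in $t$ from the true disorder to this Gaussian with the moments matched exactly, and Lemma~\ref{lem:univ4} then interpolates the parameters $(\sigma,\mu,\gamma)$ to the Bayes-optimal values $(\Delta_{ij}^{-1/2},\, w^0_{ij}/\Delta_{ij},\, -1/\Delta_{ij})$ using the Fisher-score identities of Lemma~\ref{lembayes}. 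You instead (i) replace the curvature coefficient $\E[\partial_w^2 g_{ij}(D,0)\mid\bvx^0]$ by $-1/\Delta_{ij}$ up front, using the crude uniform bound $\sum_{i<j}O(\kappa/\sqrt{N})\,w_{ij}^2 = O(\kappa^3\sqrt{N})$; and (ii) perform a single discrete Lindeberg exchange of $\alpha_{ij}=\partial_w g_{ij}(D_{ij},0)$ for $Y_{ij}/\Delta_{ij}$, tolerating the $O(\kappa^2/N)$ mean mismatch and the $O(\kappa/\sqrt{N})$ variance mismatch inside the per-pair Taylor error rather than matching them exactly at an intermediate stage. This fuses Lemmas~\ref{lem:univ3} and \ref{lem:univ4} into one step and avoids the normalization $\tilde W_{ij}=\sigma_{ij}^{-1}(\partial_w g_{ij}(D_{ij},0)-\mu_{ij})$, which implicitly requires a lower bound on $\sigma_{ij}$; your bound uses only that $\alpha_{ij}$ is bounded and that the Gibbs cumulants of $w_{ij}$ are $O(\kappa^k N^{-k/2})$. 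The Bayes/Fisher expansions $\mu_{ij}=w^0_{ij}/\Delta_{ij}+O(\kappa^2/N)$ and $\sigma_{ij}^2=1/\Delta_{ij}+O(\kappa/\sqrt{N})$ that you invoke are precisely Lemma~\ref{lembayes}, so you still need that computation; you correctly outline it. One minor caveat: the ``Free Energy Universality 1'' lemma is stated with $O(\kappa^2/\sqrt{N})$, but unwinding it through Lemma~\ref{lem:univ1} actually costs $O(\kappa^3/\sqrt{N})$, which is harmless since it matches the target order.
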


\begin{rem} We will need that the
	matrix $1/\bD$ satisfy Hypothesis \ref{hypDelta} or \ref{hypDeltac} to get the limit of the free energy. Similarly, $\kappa$ could go to infinity with $N$ provided $\kappa^3 N^{-1/2}$ goes to zero but then one would need to understand the asymptotics of $\sup_{\bQ} \phi( \bQ )$. 
\end{rem}

We will also derive a stronger form of universality at the level of the spectrum of random matrices, instead of at the level of the free energy. Consider the transformed data matrix
\[
\frac{\tilde Y_{ij}}{\sqrt{N}} =  \frac{1}{\sqrt{N}} \bigg( \partial_w g_{ij} (D,w) \bigg) \bigg|_{D = D_{ij}, w = 0} \qquad i,j \leq N
\]
where $D_{ij}$ is the random variable with law given by \eqref{eq:lawD} and the spiked matrix with variance profile $Y^\Delta$
\[
Y^\Delta =   \bD^{\odot \frac{1}{2}} \odot W + \frac{\bvx^0 (\bvx^0)^\trans }{\sqrt{N} }
\]
defined in \eqref{eq:inhomospiked}. Under some conditions on the smallest entries of the Fisher information matrix $\frac{1}{\Delta}$ (see Hypothesis~\ref{hypQVE}), we have the following universality result for the spectrum. 

\begin{theo}[Universality of the Spectrum]\label{theo:univspec}
	If $g$ satisfies Hypothesis~\ref{hypDelta} and the corresponding Fisher information matrix \eqref{eq:fisherscore} satisfies Hypothesis~\ref{hypQVE}, then
	\begin{enumerate}
		\item Conditionally on $\bvx^0$, the empirical distribution $\mu_1$ of the eigenvalues of $\frac{\tilde Y_{ij}}{\sqrt{N}}$ and the empirical distribution $\mu_2$ of the eigenvalues of $\frac{1}{\sqrt{N} \bD} \odot \bY^\Delta$ satisfy
		\[
		\lim_{N \to \infty} d( \mu_1, \mu_2) \to 0
		\]
		in probability, where $d$ is a distance compatible with the weak topology. 
		\item Conditionally on $\bvx^0$, when the dimension goes to infinity, $\frac{1}{\sqrt{N} \bD} \odot \bY^\Delta$ has an extremal eigenvalue away from the bulk if and only if 
		$\frac{\tilde Y_{ij}}{\sqrt{N}}$ does, for almost all 
		$\bD$ and $\bp$. 
	\end{enumerate}
\end{theo}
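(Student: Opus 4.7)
The strategy is to realize both matrices as spiked inhomogeneous Wigner-type matrices with the \emph{same} rank-$\kappa$ perturbation and the \emph{same} limiting variance profile, and then appeal to the universality theory for such matrices associated to the quadratic vector equation (Hypothesis~\ref{hypQVE}). The Gaussian model $\frac{1}{\sqrt{N}\bD}\odot \bY^\Delta$ is already in the required form, so the work concentrates on exposing the same structure in $\tilde Y_{ij}/\sqrt{N}$.

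Concretely, I would Taylor expand around the hidden parameter $w_{ij}^0 = \vx_i^0\cdot\vx_j^0/\sqrt{N}$ (bounded by $C^2\kappa/\sqrt{N}$ thanks to Hypothesis~\ref{hypcompact}). Using Hypothesis~\ref{hypg} and the Fisher identities $\int\partial_w g_{ij}(D,0)\,e^{g_{ij}(D,0)}\,dD = 0$ and $\int(\partial_w g_{ij}(D,0))^2 e^{g_{ij}(D,0)}\,dD = 1/\Delta_{ij}$, one obtains under $D_{ij}\sim P_{ij}(\cdot\mid w_{ij}^0)$
\[
\E[\tilde Y_{ij}] = \frac{w_{ij}^0}{\Delta_{ij}} + O(N^{-1}), \qquad \mathrm{Var}(\tilde Y_{ij}) = \frac{1}{\Delta_{ij}} + O(N^{-1/2}),
\]
with bounded third central moment. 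This yields the decomposition
\[
\frac{\tilde Y_{ij}}{\sqrt{N}} = \frac{\vx_i^0\cdot\vx_j^0}{N\Delta_{ij}} + \frac{\xi_{ij}}{\sqrt{N}} + s_{ij},
\]
where, conditionally on $\bvx^0$, the $\xi_{ij}$ are independent centered random variables with variance $1/\Delta_{ij}+O(N^{-1/2})$ and $s_{ij}=O(N^{-3/2})$ is a deterministic bias. The matrix $\frac{1}{\sqrt{N}\bD}\odot \bY^\Delta$ admits the same decomposition with $\xi_{ij}=W_{ij}/\sqrt{\Delta_{ij}}$ and $s_{ij}\equiv 0$; crucially the spike part $\bvx^0(\bvx^0)^\top/(N\bD)$ is \emph{identical} in both cases.

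For part~(1), the bias matrix $S=(s_{ij})$ has Hilbert--Schmidt norm $O(N^{-1/2})$ and is therefore negligible by Weyl's inequality, while the rank-$\kappa$ spike contributes at most $\kappa$ eigenvalues carrying vanishing weight in the weak limit. What remains is to compare two Wigner-type matrices $(\xi_{ij}/\sqrt{N})$ with almost identical variance profiles $1/(N\Delta_{ij})$ (up to additive $O(N^{-3/2})$) and bounded higher moments. Under Hypothesis~\ref{hypQVE} the Ajanki--Erd\H{o}s--Kr\"uger theory applies: the solution of the quadratic vector equation is stable under such perturbations, and both empirical spectral distributions converge in probability to the same deterministic measure $\mu_\infty$, giving $d(\mu_1,\mu_2)\to 0$. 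For part~(2), the spike is literally identical in both matrices, and a BBP-type result for deformed Wigner-type ensembles (derived from the isotropic local law that Hypothesis~\ref{hypQVE} affords, in the spirit of the Knowles--Yin and Benaych-Georges--Nadakuditi frameworks extended to variance-profile noise) implies that whether the extremal eigenvalue detaches from the bulk depends only on the variance profile and on the spike spectrum, both of which match. Hence outliers exist for one matrix if and only if they exist for the other.

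The main obstacle is part~(2): bulk convergence is robust and only requires second-moment matching together with QVE universality, whereas the outlier statement needs sharp control of the resolvent down to the spectral edge for a non-Gaussian Wigner-type ensemble with a perturbed variance profile. Hypothesis~\ref{hypQVE} shoulders most of this burden by guaranteeing an isotropic local law, but one must still verify that the perturbations $s_{ij}$ and the $O(N^{-1/2})$ deviations of the variance profile do not move the BBP threshold by an amount visible in the limit. This stability analysis of the QVE near its spectral edge, under bounded-moment universality of the entries, is the technical crux of the argument.
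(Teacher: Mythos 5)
Your proposal matches the paper's own proof in strategy and in almost every step: both matrices are decomposed into an identical finite-rank spike $\mu/\sqrt N$ plus a centered Wigner-type matrix whose variance profiles agree up to $O(N^{-1/2})$, the residual bias is dismissed by a Hilbert--Schmidt/Weyl bound, and Ajanki--Erd\H{o}s--Kr\"uger quadratic-vector-equation universality (Hypothesis~\ref{hypQVE}) is invoked for the bulk comparison and the accompanying isotropic local law for the BBP outlier analysis. The ``technical crux'' you correctly flag for part~(2) is handled in the paper by exploiting the piecewise-constancy of $\bD$ to reduce the outlier condition to a finite determinant identity in the block-averaged QVE solution, combined with a stability-of-analytic-zeros argument that is exactly the source of the ``almost all $\bD$ and $\bp$'' qualification in the statement.
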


\subsection{The Limit of the Free Energy and Consequences}

Given a sequence  $\bQ = (\bQ_s)_{s \leq n}$ of symmetric $\kappa\times\kappa$ positive semidefinite matrices, the replica symmetric free energy is given by 
\begin{align}
	\phi(\bQ) &= - \sum_{s,t =1}^n \frac{\rho_s \rho_t}{4 \Delta_{st}}\Tr( \bQ_s \bQ_t) +  \sum_{s =1}^n \rho_s \E_{\vz,\vx^0} \ln \bigg[ \int e^{ ( \tilde \bQ_s\vx^0 + \sqrt{\tilde \bQ_s} \vz )^\trans \vx - \frac{\vx^\trans \tilde \bQ_s \vx}{2}} \, d \pP_0(\vx) \bigg] \label{eq:RSformula}
\end{align}
where 
\[
\tilde \bQ_s = \sum_{t = 1}^n \frac{1}{\Delta_{st}} \rho_t \bQ_t,
\]
and $\vx^0 \sim \pP_0$ and $\vz \sim N(0,\bI_r)$ are independent. We have the following limit of the free energy.

\begin{theo}[Bayes Optimal Free Energy] \label{thm:main}
	Suppose that Hypotheses  \ref{hypbayes} and \ref{hypDelta} are verified, as well as  technical Hypotheses \ref{hypcompact} and  \ref{hypg}. Then, 
	\[
	\lim_{N\rightarrow\infty} F_N(g) = \sup_{\bQ} \phi( \bQ ).
	\]
\end{theo}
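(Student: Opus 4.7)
By Theorem~\ref{prop:universality}, the general Bayes-optimal free energy $F_N(g)$ coincides, up to $O(\kappa^3 N^{-1/2})$, with the Gaussian spiked free energy $F_N(\bD)$ defined in \eqref{eq:FEGauss}. So the entire problem reduces to computing the limit of $F_N(\bD)$ for the inhomogeneous Gaussian spiked model \eqref{eq:inhomospiked} under the block-constant Fisher-information profile of Hypothesis~\ref{hypDelta}. This is the model my strategy will target: a multispecies vector spin glass in the Nishimori phase where the species labels $s \in \{1, \dots, n\}$ come from the partition $[N] = \bigsqcup_s I_s$.

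\textbf{Upper bound via Guerra interpolation.} I would first establish $\limsup_N F_N(\bD) \le \sup_{\bQ} \phi(\bQ)$ by a Guerra--Toninelli interpolation. Fix a candidate overlap profile $\bQ = (\bQ_s)_{s \le n}$ with $\bQ_s$ symmetric positive semidefinite. Interpolate, for $t \in [0,1]$, between the full Hamiltonian $H_N(\bvx)$ at $t=1$ and a decoupled species-dependent Hamiltonian at $t=0$ in which each spin $\vx_i$ with $i \in I_s$ feels an independent effective Gaussian field with covariance $\tilde\bQ_s = \sum_t \Delta_{st}^{-1}\rho_t \bQ_t$. Differentiating the interpolated free energy $\varphi_N(t)$ in $t$ and using Gaussian integration by parts plus the Nishimori identity, one obtains
\begin{equation*}
\varphi_N'(t) = -\tfrac{1}{2}\sum_{s,t=1}^n \tfrac{\rho_s\rho_t}{2\Delta_{st}} \Tr(\bQ_s \bQ_t) + \tfrac{1}{2}\sum_{s,t=1}^n \tfrac{\rho_s\rho_t}{2\Delta_{st}}\,\E\langle \Tr\bigl((R^{s}_{12}-\bQ_s)(R^{t}_{12}-\bQ_t)\bigr)\rangle_t,
\end{equation*}
where $R^{s}_{12}$ is the species-$s$ overlap between two replicas. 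The correction term is the key: because $(\Delta_{st}^{-1})_{s,t}$ is positive semidefinite (Hypothesis~\ref{hypDelta}), the remainder is nonnegative, so $\varphi_N(1) \le \varphi_N(0)$, and minimizing over $\bQ$ — equivalently rearranging into the variational form — yields $F_N(\bD) \le \sup_{\bQ} \phi(\bQ) + o(1)$.

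\textbf{Matching lower bound by cavity and multispecies synchronization.} The harder direction is $\liminf_N F_N(\bD) \ge \sup_{\bQ} \phi(\bQ)$, and this is where the paper's main technical machinery will be used. I would follow the Aizenman--Sims--Starr / Panchenko scheme adapted to the multispecies Bayesian setting:
\begin{enumerate}
\item Add a small perturbation Hamiltonian to the model, polynomial in species-restricted overlaps, whose strength is integrated in a generic small parameter. This perturbation changes the free energy by $o(1)$ but, by a standard Gaussian concentration argument, enforces the multispecies Ghirlanda--Guerra identities in the infinite volume limit while preserving Nishimori invariance.
\item Combine the Nishimori identities with the Ghirlanda--Guerra identities to show concentration of each species overlap matrix $R^{s}_{12}$ around a deterministic $\bQ_s^{*}$. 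The Nishimori identity forces the random measure on overlaps to be the law of a single replica's overlap with the planted signal, which rules out the ultrametric replica-symmetry-breaking complication and leaves only the problem of identifying the joint law of the $n$ species overlaps.
\item Apply the multispecies synchronization mechanism of Panchenko~\cite{PMulti} (in the Nishimori-reinforced form used here) to show that the joint asymptotic law of $(R^1_{12}, \dots, R^n_{12})$ is supported on a one-parameter curve, ensuring that the overlap is effectively described by a single positive semidefinite profile $(\bQ_s^{*})$.
\item Compute the cavity free energy increment $F_{N+1}(\bD) - F_N(\bD)$: adding a single spin to each species and using the concentration established above, the increment converges to $\phi(\bQ^{*})$ for the concentrated profile $\bQ^{*}$. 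A Cesaro argument then gives $\liminf_N F_N(\bD) \ge \phi(\bQ^{*}) \ge \sup_{\bQ}\phi(\bQ) - \varepsilon$ after optimizing over the perturbation.
\end{enumerate}

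\textbf{Main obstacle.} The routine parts are the Guerra interpolation (standard once PSD of $1/\bD$ is used) and the cavity increment calculation. The real work is step~(3): proving multispecies synchronization while preserving the Nishimori symmetry. In particular, the perturbation must be rich enough to yield the Ghirlanda--Guerra identities for all species-pair overlaps simultaneously, yet compatible with Nishimori so that the posterior measure is not biased away from the planted solution; this is the modification of the synchronization mechanism alluded to in the introduction, and carrying it out in the Bayes-optimal multispecies vector-spin setting is the core technical contribution. Once synchronization is in place, together with the upper bound the theorem follows.
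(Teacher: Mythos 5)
Your high-level architecture — reduce to the Gaussian model via Theorem~\ref{prop:universality}, then sandwich $F_N(\bD)$ between a Guerra interpolation bound and an Aizenman--Sims--Starr cavity bound, with the cavity step needing multispecies overlap concentration via a Nishimori-compatible perturbation — is the paper's architecture. But you have the two halves assigned to the wrong directions, and the sign claim you offer to justify this is backwards. In the Bayes-optimal setting the variational formula is a supremum, and this flips the roles relative to the classical SK picture. The Guerra interpolation gives the \emph{lower} bound: after Gaussian integration by parts and Nishimori, one gets
\begin{equation*}
\varphi_N'(t) = \E\Big\langle \tfrac{1}{4}\sum_{s,t}\tfrac{\rho_s\rho_t}{\Delta_{st}}\Tr\big((\bR^s_{12}-\bQ_s)(\bR^t_{12}-\bQ_t)\big)\Big\rangle_t - \sum_{s,t}\tfrac{\rho_s\rho_t}{4\Delta_{st}}\Tr(\bQ_s\bQ_t) + O(\kappa N^{-1/2}),
\end{equation*}
and since $1/\bD$ is positive semidefinite the first term is $\geq 0$, so $\varphi_N'(t) \geq -\sum_{s,t}\tfrac{\rho_s\rho_t}{4\Delta_{st}}\Tr(\bQ_s\bQ_t) + O(\kappa N^{-1/2})$. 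Integrating over $t\in[0,1]$ gives $F_N(\bD) = \varphi_N(1) \geq \varphi_N(0) - \sum\ldots = \phi(\bQ) - O(\kappa N^{-1/2})$, i.e. $\liminf_N F_N \geq \sup_{\bQ}\phi(\bQ)$. Your claim that the nonnegativity of the remainder yields $\varphi_N(1)\le\varphi_N(0)$ is the opposite implication and does not hold; as written your ``upper bound'' step fails.

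Consequently the ASS cavity argument must supply the \emph{upper} bound, not the lower bound, and your step~(4) does not close. The cavity computation, combined with the perturbation-induced Ghirlanda--Guerra/synchronization machinery of Section~\ref{sec:upbd}, shows that $\limsup_N F_N = \lim_{M}\lim_{N}\Delta F^{\pert}_{N,M} = \phi(\bQ^*)$ for \emph{some} concentration profile $\bQ^*$, from which $\limsup_N F_N \leq \sup_{\bQ}\phi(\bQ)$. There is no a priori reason that this particular $\bQ^*$ realizes the supremum, so the inequality $\phi(\bQ^*)\geq\sup_{\bQ}\phi(\bQ)-\varepsilon$ that you invoke in step~(4) has no independent justification; it would only follow \emph{after} combining with the interpolation lower bound, which is precisely what the sandwich does. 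Fixing the proposal requires swapping the two arguments: interpolation $\Rightarrow$ lower bound (using PSD of $1/\bD$ to control the sign of the overlap remainder), ASS cavity plus multispecies synchronization $\Rightarrow$ upper bound.
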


We can generalize Theorem \ref{thm:main} to the case where $\Delta$ is not piecewise constant, but we then need to approximate it by piecewise constant matrices.  We then replace Hypothesis \ref{hypDelta} by the following: 
\begin{hyp}[Kernel Regularity]\label{hypDeltac}
	Assume that there exists a non-negative measurable function $\Delta(s,t)$ such that
	$$\lim_{N\rightarrow \infty} \sup_{s,t\in [0,1]}\left|\frac{1}{\Delta_{\lfloor sN\rfloor ,\lfloor tN\rfloor }}-\frac{1}{ \Delta(s,t)}\right|=0.$$
	We will also assume that there exists $\epsilon>0$ such that $\Delta(s,t)$ belongs to $(\eps,1/\eps)$ for each $s,t\in [0,1]$ and the  symmetric  operator $\frac{1}{\Delta}$  on $L^2([0,1])$ given by 
	$$\frac{1}{\Delta}f(t)=\int_0^1 \frac{1}{\Delta(t,s)}f(s) ds$$
	is non-negative. 
\end{hyp} 

\begin{rem}
	This hypothesis is satisfied if, for instance, $\Delta$ is $C^0$ and bounded below.
\end{rem}
Under this hypothesis, Theorem \ref{thm:main} generalizes as follows. Let $\bQ_s : [0,1] \mapsto \bS_\kappa^+$ be a measurable function with values in the set of  $\kappa\times \kappa $ symmetric definite matrices, and define
\begin{align*}
	\tilde \phi(\bQ) &:= - \int_{0}^1 \frac{1}{4 \Delta(s,t)}\Tr( \bQ_s \bQ_t) ds dt+  \int_0^1 ds \E_{\vz,\vx^0} \ln \bigg[ \int e^{  ( \tilde \bQ_s\vx^0 + \sqrt{\tilde \bQ_s} \vz )^\trans \vx - \frac{\vx^\trans \tilde \bQ_s \vx}{2} } \, d \pP_0(\vx) \bigg]  
\end{align*}
where 
\[
\tilde \bQ_s = \int_0^1\frac{1}{\Delta({s,t})} \bQ_t dt.
\]
Then, we have:
\begin{theo}[Bayes Optimal Free Energy with General Kernel] \label{thm:main2}
	Suppose that Hypotheses  \ref{hypbayes} and \ref{hypDeltac} are verified, as well as the technical Hypotheses \ref{hypcompact} and  \ref{hypg}. Then, 
	\[
	\lim_{N\rightarrow\infty} F_N(g) = \sup_{\bQ } \tilde \phi( \bQ ).
	\]
\end{theo}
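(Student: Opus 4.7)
The plan is to derive Theorem~\ref{thm:main2} from Theorem~\ref{thm:main} by approximating the general kernel $\Delta(s,t)$ by a sequence of block-constant kernels $\Delta^{(n)}$, invoking Theorem~\ref{thm:main} for each approximation, passing to the limit in the free energy via Gaussian interpolation, and then identifying the limiting variational formula.
\textbf{(Step 1: Block-constant approximation.)} Partition $[0,1]$ into blocks $I_s = ((s-1)/n, s/n]$ and set
\[
\frac{1}{\Delta^{(n)}_{s,t}} := n^2\int_{I_s}\int_{I_t}\frac{1}{\Delta(u,v)}\,du\,dv, \qquad 1\le s,t\le n.
\]
By Hypothesis~\ref{hypDeltac}, $1/\Delta^{(n)}\to 1/\Delta$ uniformly on $[0,1]^2$. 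Positive semidefiniteness is inherited from $1/\Delta$ via a Mercer-type expansion $1/\Delta(u,v) = \sum_k\mu_k\phi_k(u)\phi_k(v)$ with $\mu_k\geq 0$, since $1/\Delta^{(n)}_{s,t} = n^2\sum_k\mu_k\bigl(\int_{I_s}\phi_k\bigr)\bigl(\int_{I_t}\phi_k\bigr)$ is a sum of Gram matrices. Thus $\Delta^{(n)}$ satisfies Hypothesis~\ref{hypDelta} with uniform proportions $\rho_s = 1/n$, and Theorem~\ref{thm:main} yields $\lim_{N\to\infty}F_N(g;\Delta^{(n)}) = \sup_{\bQ^{(n)}}\phi^{(n)}(\bQ^{(n)};\Delta^{(n)})$ for each $n$.

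\textbf{(Step 2: Uniform Lipschitz continuity in the noise profile.)} Introduce a Gaussian interpolation between the two Bayes-optimal spike models with inverse-variance matrices $1/\Delta$ and $1/\Delta^{(n)}$: for $t\in[0,1]$ let $\lambda_{ij}(t) := (1-t)\Delta_{ij}^{-1} + t(\Delta^{(n)}_{ij})^{-1}$, and let $H_N^{(t)}$ be the Hamiltonian \eqref{eq:HamilGauss} with $1/\Delta_{ij}$ replaced by $\lambda_{ij}(t)$ and a shared Gaussian disorder $W_{ij}$. Gaussian integration by parts together with the Nishimori identity (valid since the interpolating model remains Bayes-optimal) gives
\[
\partial_t F_N^{(t)} = \frac{1}{2N^2}\sum_{i<j}\bigl(\Delta_{ij}^{-1}-(\Delta^{(n)}_{ij})^{-1}\bigr)\,\mathcal{A}_{ij}(t),
\]
where $\mathcal{A}_{ij}(t)$ is a Gibbs average of a polynomial of degree at most two in $\vx_i\cdot\vx_j$ and $\vx_i^0\cdot\vx_j^0$. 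Hypothesis~\ref{hypcompact} forces $|\mathcal{A}_{ij}(t)|\leq C'\kappa^2$ uniformly, and integrating in $t$ yields
\[
\bigl|F_N(g;\Delta) - F_N(g;\Delta^{(n)})\bigr| \leq \frac{C'\kappa^2}{2}\left\|\frac{1}{\Delta}-\frac{1}{\Delta^{(n)}}\right\|_{\infty},
\]
a bound that is \emph{uniform in $N$} and vanishes as $n\to\infty$.

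\textbf{(Step 3: Convergence of the variational formula.)} A piecewise-constant candidate $\bQ^{(n)}$ for $\phi^{(n)}(\cdot;\Delta^{(n)})$ embeds as $\bar\bQ^{(n)}\in L^\infty([0,1],\bS_\kappa^+)$ with $\phi^{(n)}(\bQ^{(n)};\Delta^{(n)}) = \tilde\phi(\bar\bQ^{(n)};\Delta^{(n)})$, while any measurable $\bQ$ is approximated in $L^1$ by its block averages $\bQ^{(n)}_s := n\int_{I_s}\bQ$, with uniform norm bounds inherited from the compact support of $\pP_0$ and the quadratic structure. The functional $(\bQ,\Delta)\mapsto\tilde\phi(\bQ;\Delta)$ is continuous in these norms: the quadratic term is bilinear with bounded kernel, while the log-integral term is smooth in $\tilde\bQ_s$ and uniformly controlled by the compactness of the support of $\pP_0$. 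Matching upper and lower bounds (test $\tilde\phi$ at block averages for $\liminf$; lift piecewise-constant maximisers for $\limsup$, using continuity in $\Delta$) then yield $\sup_{\bQ^{(n)}}\phi^{(n)}(\bQ^{(n)};\Delta^{(n)})\to\sup_{\bQ}\tilde\phi(\bQ;\Delta)$. Combining with Steps~1--2 and exchanging the $n\to\infty$ and $N\to\infty$ limits (legitimate by the uniform bound of Step~2) gives the desired identity.

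\textbf{(Main obstacle.)} The principal challenge is securing the Lipschitz bound of Step~2 with a constant independent of $N$; this is what permits the interchange of limits. The assumption $\Delta(s,t)\in(\epsilon,1/\epsilon)$ in Hypothesis~\ref{hypDeltac} is essential here: it controls both $1/\Delta$ and $1/\Delta^{(n)}$ in sup norm and prevents the interpolation derivative from blowing up. A more benign technical point is the preservation of positive semidefiniteness under block averaging, which Mercer's theorem resolves once continuity of $\Delta$ is assumed (as in the remark following Hypothesis~\ref{hypDeltac}); otherwise one first mollifies $1/\Delta$ to a continuous kernel before discretising.
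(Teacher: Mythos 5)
Your approach is exactly the one the paper indicates in the remark after Lemma~\ref{lem:finitesupport}: deduce Theorem~\ref{thm:main2} from Theorem~\ref{thm:main} by the same interpolation scheme, interpolating in $\Delta$ rather than in the prior. The three-step skeleton (block-constant approximation, interpolation bound uniform in $N$, limit of the variational formula) is the right one and matches the paper's intended argument, which it leaves to the reader. Two small repairs are worth making, both concerning what Hypothesis~\ref{hypDeltac} actually provides.

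First, in Step 2 you bound $|F_N(g;\Delta)-F_N(g;\Delta^{(n)})|$ by $\|1/\Delta - 1/\Delta^{(n)}\|_\infty$ and assert that this vanishes by Hypothesis~\ref{hypDeltac}. That hypothesis only makes the finite-$N$ profile converge uniformly to the limiting kernel; it does not make $1/\Delta$ uniformly continuous, so the block averages $1/\Delta^{(n)}$ need not converge back to $1/\Delta$ in sup-norm. Fortunately the interpolation derivative already yields the better
\[
|\partial_t F_N^{(t)}| \leq \frac{C'\kappa^2}{2N^2} \sum_{i<j} \bigl| \Delta_{ij}^{-1} - (\Delta^{(n)}_{ij})^{-1} \bigr|,
\]
an $L^1$-type control in the profile rather than $L^\infty$. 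Since simple functions are dense in $L^1$ and $1/\Delta$ is bounded above by $1/\eps$, this does tend to zero as $n\to\infty$ after also using Hypothesis~\ref{hypDeltac} to replace $\Delta_{ij}$ by $\Delta(i/N,j/N)$ with uniformly small error. State the $L^1$ bound, not the sup-norm one. Second, the Mercer expansion you invoke to show positive semidefiniteness of $1/\Delta^{(n)}$ is unnecessary and would itself require the continuity you are trying not to assume. The direct argument is immediate: for $c\in\R^n$ set $g(u)=\sum_s c_s\1(u\in I_s)$ and note
\[
\sum_{s,t} c_s c_t \Big(\frac{1}{\Delta^{(n)}}\Big)_{st} = n^2 \int_0^1\!\!\int_0^1 g(u)\,\frac{1}{\Delta(u,v)}\,g(v)\,du\,dv \geq 0,
\]
by non-negativity of the operator $\frac{1}{\Delta}$ on $L^2([0,1])$ assumed in Hypothesis~\ref{hypDeltac}. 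This holds for any bounded measurable kernel, so no mollification step is needed.
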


When the variance profile is discrete and if the probability measure $\pP_0$ is centered, the maximizers of $\phi$ defined in \eqref{eq:RSformula} satisfy the following fixed point equation
\begin{equation}
	\tilde \bQ_s = \sum_{t =1 }^n \frac{\rho_t}{ \Delta_{s,t} } \E \langle \vx  \rangle_{\tilde \bQ_t} \langle \vx \rangle_{\tilde \bQ_t}^\trans
\end{equation}
where $\langle \cdot \rangle_{\tilde \bQ_t}$ denotes the average
\[
\langle f(x) \rangle_{\tilde \bQ_t} = \frac{\int f(x) e^{  ( \tilde \bQ_s\vx^0 + \sqrt{\tilde \bQ_s} \vz )^\trans \vx - \frac{\vx^\trans \tilde \bQ_s \vx}{2}   } \, d \pP_0(\vx)}{ \int e^{  ( \tilde \bQ_s\vx^0 + \sqrt{\tilde \bQ_s} \vz )^\trans \vx - \frac{\vx^\trans \tilde \bQ_s \vx}{2}   } \, d \pP_0(\vx) }.
\]
This fact allows us to compute the informationally theoretical optimal thresholds of the inhomogeneous estimation problems.

We define the matrix minimal means square estimator of our signal $\bx^0 (\bx^0)^\trans$ by
\[
\mathrm{MMSE}(N) = \min_\theta \frac{2}{N(N - 1)} \sum_{i < j} \E( x^0_i \cdot x^0_j - \theta_{i,j}(\bY)  )^2
\]
where the minimum is over all possible estimators $\theta$ that only depend on the data $\bY$. 
We have the following result for the limit of the minimal mean squared error.
\begin{cor}[Limiting MMSE]\label{prop:limitingMMSE}
	Suppose that Hypotheses  \ref{hypbayes} and \ref{hypDelta} are verified, as well as  technical Hypotheses \ref{hypcompact} and  \ref{hypg}. Then for almost all $\bD$ and $\bp$, for any  maximizer $(\bQ_1, \dots, \bQ_n)$ of \eqref{eq:RSformula} and
	\[
	\lim_{N \to \infty} \mathrm{MMSE}(N) = \E_{\pP_0} \| x x^\trans \|_2^2 - \sum_{s,t = 1}^n \rho_s \rho_t \Tr( \bQ_s \bQ_t ).
	\]
\end{cor}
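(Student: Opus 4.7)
The plan is to express $\mathrm{MMSE}(N)$ as the difference of a prior-only term and a Gibbs-overlap term, and then import the matrix-valued overlap concentration that underlies the proof of Theorem~\ref{thm:main}. Under Hypothesis~\ref{hypbayes} the minimizer in the definition of $\mathrm{MMSE}(N)$ is the posterior mean $\theta^*_{ij}(Y)=\langle \vx_i\cdot\vx_j\rangle$, and the Nishimori identity $\E[(\vx_i^0\cdot\vx_j^0)\langle\vx_i\cdot\vx_j\rangle]=\E\langle \vx_i\cdot\vx_j\rangle^2$ (obtained by swapping the signal with an independent replica drawn from the posterior) gives
\[
\mathrm{MMSE}(N)=\frac{2}{N(N-1)}\sum_{i<j}\E(\vx_i^0\cdot\vx_j^0)^2 \;-\;\frac{2}{N(N-1)}\sum_{i<j}\E\langle \vx_i\cdot\vx_j\rangle^2 .
\]
The first sum depends only on $\pP_0$ and, by independence of the $\vx_i^0$, is an elementary finite-dimensional computation producing the leading term $\E_{\pP_0}\|xx^\trans\|_2^2$.

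For the second sum I would introduce the inter-replica block overlap matrices
\[
R_s:=\frac{1}{|I_s|}\sum_{i\in I_s}\vx_i^{(1)}(\vx_i^{(2)})^\trans\in\R^{\kappa\times\kappa},
\]
where $\vx^{(1)},\vx^{(2)}$ are two independent samples from the posterior. Writing $\langle\vx_i\cdot\vx_j\rangle^2=\langle(\vx_i^{(1)}\!\cdot\vx_j^{(1)})(\vx_i^{(2)}\!\cdot\vx_j^{(2)})\rangle$ and expanding in coordinates yields the key identity
\[
\frac{1}{|I_s||I_t|}\sum_{i\in I_s,\,j\in I_t}\langle \vx_i\cdot\vx_j\rangle^2 \;=\;\bigl\langle\Tr\bigl(R_s R_t^\trans\bigr)\bigr\rangle .
\]
Weighting by $|I_s||I_t|/N^2\to\rho_s\rho_t$ (the $O(1/N)$ discrepancies between $\tfrac{2}{N(N-1)}\sum_{i<j}$ and $\tfrac{1}{N^2}\sum_{i,j}$, and the diagonal terms, are negligible by Hypothesis~\ref{hypcompact}), the whole question reduces to showing that each $R_s$ concentrates on a symmetric positive semidefinite matrix $\bQ_s$ which is a maximizer of $\phi$ in \eqref{eq:RSformula}.

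This matrix-valued overlap concentration is precisely what is produced in the proof of Theorem~\ref{thm:main}: adding a small multispecies Ghirlanda--Guerra perturbation to the Hamiltonian (designed to preserve the Nishimori identity) and invoking Panchenko's synchronization mechanism from~\cite{PMulti,PVS} forces the block overlaps $R_s$ to concentrate on symmetric PSD matrices satisfying the stationarity equations of $\phi$. Although $\phi$ may admit several maximizers, the map $(\bD,\bp)\mapsto\sup_{\bQ}\phi(\bQ)$ is Lipschitz and hence differentiable almost everywhere; at each differentiability point the envelope theorem forces the quantity $\sum_{s,t}\rho_s\rho_t\Tr(\bQ_s\bQ_t)$ to take the same value at every maximizer, which is exactly the ``almost all $\bD$ and $\bp$'' proviso in the statement. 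Combining this with the prior-only computation of the first term yields the corollary.

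The main obstacle is the concentration step: the synchronization argument must be carried out on all $n$ species simultaneously, using a perturbation that is compatible with the Bayes-optimal Nishimori structure and small enough not to disturb the leading order of $F_N$. Once that ingredient is in place---as part of Theorem~\ref{thm:main}---the Nishimori reduction in the first paragraph and the envelope-theorem identification with a maximizer of $\phi$ in the third paragraph are standard.
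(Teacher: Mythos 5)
Your Nishimori reduction of $\mathrm{MMSE}(N)$ to the difference $\E_{\pP_0}\|xx^\trans\|_2^2$ minus a block-overlap average is correct and matches the paper's first step. The gap is in identifying the limit of the overlap average with the value $\sum_{s,t}\rho_s\rho_t\Tr(\bQ_s\bQ_t)$ at a maximizer. You assert that the Ghirlanda--Guerra perturbation and synchronization machinery ``forces the block overlaps $R_s$ to concentrate on symmetric PSD matrices satisfying the stationarity equations of $\phi$.'' That is not what Theorems~\ref{OC3} and~\ref{synch} deliver: they yield concentration around $\bQ_t:=\E\langle\bR^t_{1,0}\rangle_\pert$ and symmetry of the limiting array, but they do not establish that $\bQ_t$ solves the stationarity equation \eqref{eq:generalcrticalpointcondition}, and a fortiori not that it is a \emph{maximizer} rather than some other critical point. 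This identification is the whole difficulty, and concentration alone does not supply it.

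The paper's route (Lemma~\ref{lem:critpt}) closes this gap by a different mechanism. Differentiating the finite-$N$ free energy in $\eta_{u,v}=1/\Delta_{u,v}$ and using Nishimori yields $\partial_{\eta_{u,v}}F_N(\eta)=\tfrac{\rho_u\rho_v}{4}\E\langle\Tr(\bR^u_{1,0}\bR^v_{1,0})\rangle+o(1)$ directly, with no appeal to concentration. One then uses \emph{convexity} of $\psi(\eta)=\sup_\bQ\phi$ in $\eta$---not merely Lipschitz continuity, which is what you invoke---to conclude that the finite-$N$ derivatives converge, $\partial_\eta F_N\to\partial_\eta\psi$, at a.e.\ $\eta$, and then the envelope theorem to identify $\partial_{\eta_{u,v}}\psi(\eta)=\tfrac{\rho_u\rho_v}{4}\Tr(\bQ_u^*\bQ_v^*)$ for any maximizer $\bQ^*$. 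Lipschitz continuity of $(\bD,\bp)\mapsto\sup_\bQ\phi(\bQ)$ gives a.e.\ differentiability of $\psi$ by Rademacher, but it does not give the convergence of derivatives $\partial_\eta F_N\to\partial_\eta\psi$, which is precisely what transfers the overlap-average identity to the limit; convexity is essential there. Your high-level scaffolding (Nishimori reduction, envelope theorem for a.e.\ uniqueness of $\Tr(\bQ_s\bQ_t)$) is aligned with the paper, but the bridge from the Gibbs overlap average to a maximizer is the free-energy differentiation argument, not the synchronization machinery, and as written your argument has a hole at exactly that junction.
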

It follows that the maximizers of \eqref{eq:RSformula} and the fact that they vanish or not  are essential to quantify the information theoretic thresholds for these inhomogenous factorization problems. Notice that we do not prove the uniqueness of the maximizers but the uniqueness of the values of $\left(\Tr( \bQ_s \bQ_t )\right)_{s,t}$ regardless of the choice of the maximizer, which is enough to guarantee the above statement.

We now classify detectability phase transitions with respect to the size of the variance profiles. Consider the noise parameter matrices
\[
\bp = \diag( \rho_1, \dots, \rho_{n} ) \in \R^{n \times n} \qquad \frac{1}{\bD} = \bigg( \frac{1}{\Delta_{s,t}} \bigg) \in \R^{n \times n}.
\]
The size of the noise of the inhomogeneous models can be encoded by the largest eigenvalue of the matrix $\sqrt{\bp} \frac{1}{\bD} \sqrt{\bp}$. Let $\| \cdot \|_{op}$ denote the operator norm of the matrix, which is equivalent to the largest eigenvalue when the matrix is symmetric and positive semidefinite. We have the following thresholds on recovery.

\begin{lem}[Recovery Transitions]\label{prop:recovery} Suppose that Hypotheses  \ref{hypbayes} and \ref{hypDelta} are verified, as well as  technical Hypotheses \ref{hypcompact} and  \ref{hypg}. Furthermore, suppose that $\pP_0$ is symmetric. We have that
	\begin{enumerate}
		\item If
		\[
		\bigg\| \sqrt{\bp} \frac{1}{\bD} \sqrt{\bp}  \bigg\|_{op}  < \frac{1}{9 \kappa^4 C^6}
		\]
		then $\lim_{N \to \infty} F_N(g) = 0$ and $\lim_{N \to \infty} \mathrm{MMSE}(N)  =  \E_{\pP_0} \| x x^\trans \|_2^2$.
		\item If
		\[
		\bigg\| \sqrt{\bp} \frac{1}{\bD} \sqrt{\bp}  \bigg\|_{op} > \frac{1}{\| \Cov(x) \|_{op}^2}
		\]
		then $\lim_{N \to \infty} F_N(g) > 0$ and $	\lim_{N \to \infty} \mathrm{MMSE}(N)   < \E_{\pP_0} \| x x^\trans \|_2^2$.
	\end{enumerate}
\end{lem}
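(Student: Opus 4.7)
The plan is to invoke Theorem~\ref{thm:main} to reduce both claims to properties of the variational problem $\sup_{\bQ}\phi(\bQ)$. Since $\phi(0)=0$ (the integrand equals $1$ at $\bQ=0$), one always has $\sup\phi \ge 0$. Thus part~(2) amounts to exhibiting some $\bQ$ with $\phi(\bQ)>0$, and part~(1) amounts to proving $\phi(\bQ)\le 0$ for every PSD tuple $\bQ$.

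For part~(2), I would test the rank-one ansatz $\bQ_s = \epsilon\alpha_s u u^\trans$, where $u$ is a unit top eigenvector of $C_0 := \Cov_{\pP_0}(\vx)$ (so $C_0 u = \|C_0\|_{op} u$) and $\alpha_s := v_s/\sqrt{\rho_s}$ with $v$ a unit top eigenvector of $K := \sqrt{\bp}\,\bD^{-1}\sqrt{\bp}$ (so $Kv = \|K\|_{op} v$). Then $\tilde\bQ_s = \epsilon\|K\|_{op}\alpha_s u u^\trans$, and expanding the log-integral in $\phi$ to second order in $\epsilon$ (using the symmetry of $\pP_0$ to kill the odd-order contributions after integration in $\vx^0,\vz$) yields
\[
\phi(\bQ) = \tfrac{\epsilon^2 \|K\|_{op}}{4}\bigl(\|C_0\|_{op}^2\|K\|_{op} - 1\bigr) + O(\epsilon^3),
\]
which is strictly positive for small $\epsilon$ precisely when $\|K\|_{op}\|C_0\|_{op}^2 > 1$. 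Corollary~\ref{prop:limitingMMSE} then yields $\lim \mathrm{MMSE}(N) = \E_{\pP_0}\|\vx\vx^\trans\|_2^2 - \sum_{s,t}\rho_s\rho_t\Tr(\bQ_s^*\bQ_t^*)$, and since the components of any maximizer $\bQ^*$ are PSD and cannot all vanish (otherwise $\phi(\bQ^*)=0$ contradicts $\sup\phi>0$), $\sum_{s,t}\rho_s\rho_t\Tr(\bQ_s^*\bQ_t^*) = \|\sum_s \rho_s \bQ_s^*\|_F^2 > 0$, yielding the strict inequality.

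For part~(1), this small-$\bQ$ expansion must be refined quantitatively. Writing $F(A) := \E\ln\int e^{(A\vx^0+\sqrt A \vz)^\trans \vx - \vx^\trans A \vx/2}\,d\pP_0(\vx)$ so that $\phi(\bQ) = -T_1(\bQ) + \sum_s\rho_s F(\tilde\bQ_s)$, a cumulant expansion under symmetry of $\pP_0$ together with the algebraic identity $\Var_{\pP_0}(\vx^\trans A \vx) = 2\Tr((AC_0)^2) + \sum_{ijkl} A_{ij}A_{kl} c^{ijkl}$ (so that the fourth-cumulant contributions from $\kappa_2/2$, $\kappa_3/6$ and $\kappa_4/24$ cancel pairwise) gives
\[
F(A) = \tfrac{1}{4}\Tr((AC_0)^2) + R(A), \qquad |R(A)| \le c_0\|A\|_{op}^3\,\kappa^3 C^6,
\]
with the cubic estimate using compact support to bound sixth-order moments. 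In the rescaled variables $R_s := \sqrt{\rho_s}\bQ_s$, the quadratic part of $\phi$ rewrites as $\phi_2 = \tfrac{1}{4}\langle R, K(\mathcal C K - I) R\rangle$ with $\mathcal C(M) := C_0 M C_0$, and since $K$ and $\mathcal C$ commute on different tensor factors, $\phi_2(\bQ) \le -\tfrac{\|K\|_{op}}{4}\bigl(1 - \|K\|_{op}\|C_0\|_{op}^2\bigr)\|R\|^2$. Coercivity of $\phi$ (since $T_1$ is quadratic in $\bQ$ while $T_2$ grows at most linearly) ensures that the supremum is attained at some $\bQ^*$ satisfying the fixed-point equation $\bQ_s^* = \E\langle\vx\rangle\langle\vx\rangle^\trans$; by Nishimori, $\bQ_s^* \preceq C_0$, so $\|\bQ_s^*\|_{op} \le \|C_0\|_{op}\le \kappa C^2$. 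Inserting this a priori bound into the cubic remainder and using $\|K\|_{op} < (9\kappa^4 C^6)^{-1}$ shows the cubic correction is dominated by the quadratic gap, forcing $\phi(\bQ^*) \le 0$. Hence $\sup\phi = 0$, the unique maximizer is $\bQ^* = 0$, and Corollary~\ref{prop:limitingMMSE} yields $\lim \mathrm{MMSE}(N) = \E_{\pP_0}\|\vx\vx^\trans\|_2^2$.

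The main obstacle is the quantitative control in part~(1): establishing the prior-independence of the leading $\tfrac{1}{4}\Tr((AC_0)^2)$ coefficient for general symmetric $\pP_0$ through the cumulant cancellation above, and then carefully balancing the sixth-moment cubic remainder against the small quadratic gap $1 - \|K\|_{op}\|C_0\|_{op}^2$ uniformly over the compact set of admissible $\bQ$. The explicit constant $9\kappa^4 C^6$ drops out of a Taylor-with-remainder estimate combined with the a priori Nishimori bound $\|\bQ_s^*\|_{op}\le\|C_0\|_{op}\le\kappa C^2$.
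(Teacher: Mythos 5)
Your part (2) is essentially identical to the paper's argument (Lemma~\ref{prop:gentransition2}): test a rank-one ansatz built from the Perron eigenvector of $\sqrt{\bp}\bD^{-1}\sqrt{\bp}$ and the top eigenvector of $\Cov(x)$, expand $\phi$ to second order, and observe $g(\epsilon)=\phi(\epsilon\bM)$ has $g'(0)=0$, $g''(0)>0$; conclude via Corollary~\ref{prop:limitingMMSE}.

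For part (1), you take a genuinely different route from the paper. The paper's Lemma~\ref{prop:gentransition1} does \emph{not} expand $\phi$ to third order. Instead it works directly with the critical-point equation $\tilde\bQ_s=\sum_t\frac{\rho_t}{\Delta_{st}}f(\tilde\bQ_t)$, proves $f(\bQ)=\E\langle\vx^0\vx^\trans\rangle_{\bQ}$ is globally Lipschitz with constant $3\kappa^2C^3$ (Lemma~\ref{lem:Lip}), and reads the threshold off the contraction estimate $\|\sqrt{\bp}\tilde\bQ\|_2^2\le\|\bA\|_{op}\,(3\kappa^2C^3)^2\|\sqrt{\bp}\tilde\bQ\|_2^2$: when $\|\bA\|_{op}<(9\kappa^4C^6)^{-1}$ the unique critical point is $\tilde\bQ=0$, hence $\bQ=0$. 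This produces the constant $9\kappa^4C^6=(3\kappa^2C^3)^2$ immediately, uniformly, with no remainder-versus-a-priori-bound balancing. Your Taylor-with-cubic-remainder scheme could in principle be made to work, but as written it has concrete gaps: (a) the bound $\phi_2(\bQ)\le-\tfrac{\|K\|_{op}}{4}(1-\|K\|_{op}\|C_0\|_{op}^2)\|R\|^2$ is false when $K$ is merely positive semidefinite — in any direction $R$ with $KR=0$ you have $\phi_2(\bQ)=0$, so the correct bound must be stated in terms of $\langle R,KR\rangle$, after which the a priori control and the constant bookkeeping both change; (b) the cumulant-cancellation claim yielding $F(A)=\tfrac14\Tr((AC_0)^2)+R(A)$ with $|R(A)|\le c_0\|A\|_{op}^3\kappa^3C^6$ is asserted rather than derived, and the claimed pairwise cancellation of $\kappa_2/2,\kappa_3/6,\kappa_4/24$ contributions is not obviously correct as stated; (c) even granting (a)–(b), the final step ``the constant $9\kappa^4C^6$ drops out'' is unproved: the quadratic gap is of order $\|K\|_{op}$ while the cubic error, measured in the right variable $\langle R,KR\rangle$, introduces additional powers of $\|K\|_{op}$ and the compact-support constant, and the a priori Nishimori bound $\bQ_s^*\preceq C_0$ only gives $\|R\|\lesssim\kappa^{3/2}C^2$, so whether the threshold that emerges matches (or is contained in) $9\kappa^4C^6$ requires explicit tracking that is not done. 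In short: the contraction argument is the robust one; if you want the Taylor route you must (i) work in the $K$-metric, (ii) prove the quadratic coefficient and third-derivative bound rather than assert them, and (iii) verify the numerics close. Finally, for the implication to MMSE you still need the envelope/uniqueness-of-$\Tr(\bQ_s\bQ_t)$ argument of Lemma~\ref{lem:critpt}; your proposal invokes Corollary~\ref{prop:limitingMMSE} directly, which is acceptable, but note that step is where the (stated but implicit) positive-definiteness of $\frac{1}{\bD}$ enters.
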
 

\begin{rem}
	If $\pP_0$ is a standard Gaussian, then we can improve the first bound to conclude that if
	\[
	\bigg\| \sqrt{\bp} \frac{1}{\bD} \sqrt{\bp}  \bigg\|_{op}  < 1
	\]
	then the signal is not recoverable, which gives us a sharp phase transition when the priors are Gaussian. Technically $\pP_0$ does not have compact support, so it will violate Hypothesis~\ref{hypcompact}, so universality of models with Gaussian signals will require a bit more work to show. However, the computation of the free energy of the inhomogenous spiked matrix model \eqref{eq:inhomospiked} does not require the compact support Hypothesis, so we do have a rigorous proof of the sharp transition in this case. 
\end{rem}

There is a gap between the two thresholds in Lemma~\ref{prop:recovery}. To explore the phase transition, we can numerically solve for the maximizers of $\phi$ defined in \eqref{eq:RSformula} to test if the phase transition is tight. 

We first check the behavior for $\kappa = 1$, $n = 2$, and a non-sparse symmetric prior
\[
\pP_0(\pm 1) = \frac{1}{4} \quad \pP_0(0) = \frac{1}{2}.
\]
Under these assumptions, it follows that $\|  \Cov(x)\|_{op}^2 = \frac{1}{4}$, so the phase transition in part 2 of Lemma~\ref{prop:recovery} happens at $\bigg\| \sqrt{\bp} \frac{1}{\bD} \sqrt{\bp}  \bigg\|_{op} 
	=4$.  For matrix paths $\bp(t)$ and matrix $\bD(t)$ we plot the free energy $F_N(g)$ corresponding to the parameters $\bD(t)$ and $\bp(t)$ versus the operator norm of $\sqrt{\bp(t)} \frac{1}{\bD(t)} \sqrt{\bp(t)}$ where $t \in \R^+$. We consider the following noise profiles
\[
\frac{1}{\Delta_1(t)} = \begin{bmatrix}
	\frac{1}{2} & 0\\
	0 & t 
\end{bmatrix} \qquad \frac{1}{\Delta_2(t)} = \begin{bmatrix}
	t & \frac{t}{2}\\
	\frac{t}{2} & t 
\end{bmatrix} \qquad \frac{1}{\Delta_3(t)} = \begin{bmatrix}
	\frac{t}{3} & \frac{t}{4}\\
	\frac{t}{4} & t 
\end{bmatrix}
\]
with proportion $\bp = \diag(0.4,0.6)$. Figure~\ref{fig:dense_general} shows the relationship between the free energy and the inhomogeneous noises at the $3$ different choices of the matrices $\bD$ and~$\bp$.  We see in the figure that the transition derived in part 2 of Lemma \ref{prop:recovery} appears tight in this case when the prior is not sparse.

%We can fix matrices $\bp_k$ and matrix $\bD_k$ and plot the free energy $F_N(g)$ corresponding to the parameters $t \bD$ and $\bp$ versus the operator norm of $\frac{1}{t} \sqrt{\bp_1} \frac{1}{\bD} \sqrt{\bp_1}$ where $t \in \R^+$. Figure \ref{fig:dense_general} shows the relationship between the free energy and the homogeneous noises at $3$ different choices of the matrices $\bD$ and~$\bp$. We see in the figure that the transition derived in part 2 of Lemma \ref{prop:recovery} is tight in this case when the prior is not sparse.
\begin{figure}
	\begin{center}
		\includegraphics[width=13cm]{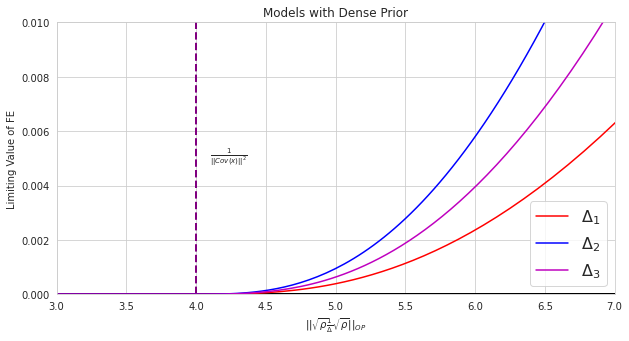}
		\includegraphics[width=13cm]{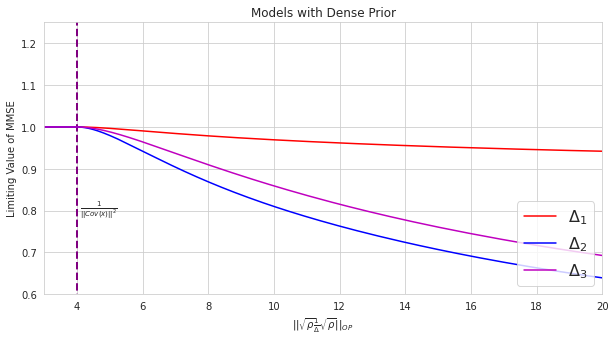}
		\caption{The free energy as a function of the noise. A case with a continuous phase transition (dashed line) separating the undetectable regime from one where detection of the signal is possible.}
		\label{fig:dense_general}
	\end{center}
\end{figure}

Another example, where the transition in part 2 of Lemma \ref{prop:recovery} does not appear to be tight is when estimating a sparse signal. We thus now examine the behavior for $\kappa = 1$, $n = 2$, and symmetric prior
\[
\pP_0(\pm 1) = 0.03 \quad \pP_0(0) = 0.94.
\]
Under these assumptions, it follows that $\frac{1}{\|  \Cov(x)\|_{op}^2} \approx 278$.  In Figure \ref{fig:sparse_general} we plot the limit of the free energy of the matrix $$\frac{1}{\bD(t)} = \begin{bmatrix}
		t & \frac{t}{2} \\
		\frac{t}{2} & 2t
	\end{bmatrix}$$ and proportion $\bp = \diag(0.4,0.6)$ versus the operator norm of $\sqrt{\bp} \frac{1}{\bD(t)} \sqrt{\bp }$ where $t \in \R^+$,  
\begin{figure}
	\begin{center}
		\includegraphics[width=13cm]{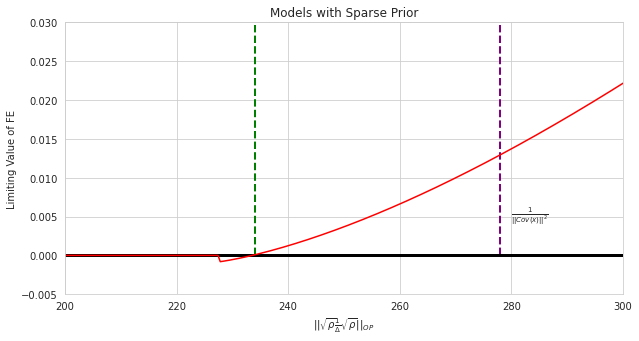}
		\includegraphics[width=13cm]{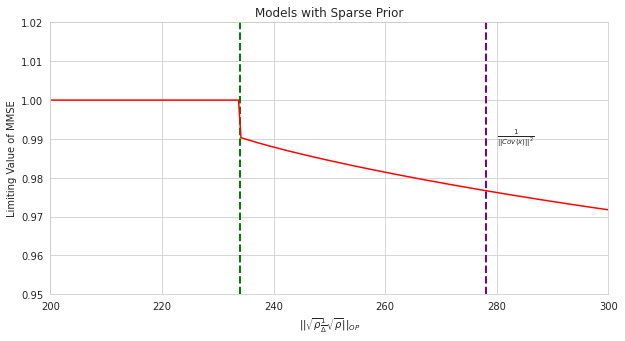}
		\caption{The free energy as a function of the noise. A case with a discontinuous phase transition (orange dashed line) separating the undetectable regime from one where detection of the signal is possible. We plots the free energy of two local maximizers (in blue and red), it is the larger one that provides the final result. The purple dashed line is a position of the threshold from part 2 of Lemma \ref{prop:recovery} that is not tight in this case.} 
		\label{fig:sparse_general}
	\end{center}
\end{figure}
In this scenario, there is a clear gap between the upper bound on the transition $\frac{1}{\|\Cov(x)\|^2}$ (denoted by the purple line) and the actual transition (denoted by the green line) when the free energy becomes positive. In this paper we do not focus on algorithms but in analogy with the homogeneous case \cite{lesieur2015mmse,lesieur2016phase} we anticipate that the transition at $\frac{1}{\|\Cov(x)\|^2}$ in fact has an algorithmic meaning of a threshold beyond which a corresponding message passing algorithm is able to find a vector correlated with the signal and below which it is not. The region in between these two threshold is then conjectured to be algorithmically hard. 
% -----------

	Next, we examine the behavior of the limiting free energy as the proportions of the two blocks vary in the same setting of the  sparse  Rademacher prior. We consider the case when $\bp = (p,1-p)$. We consider fixed
	\[
	\frac{1}{\bD(t)} = \begin{bmatrix}
		t & 0\\
		0 & 2t
	\end{bmatrix}.
	\]
	For each choice of $p \in \{0.1,0.2,0.3,0.4 \}$, we again plot in Figure \ref{fig:small_diagonal}  the limit of the free energy of the matrix $\bD(t)$ and proportion $\bp$ versus the operator norm of $ \sqrt{\bp} \frac{1}{\bD} \sqrt{\bp}$ where $t \in \R^+$. In this case, the phase transition is independent of the proportions $p$.
\begin{figure}
	\begin{center}
		\includegraphics[width=13cm]{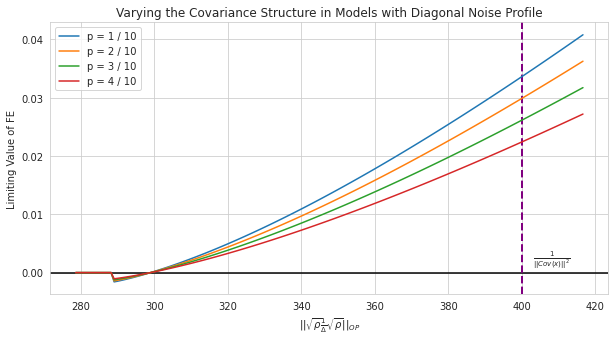}
		\caption{The free energy as a function of the noise parameter, for groups of different sizes $p$, and diagonal inhomogeneity $\bD$.}
		\label{fig:small_diagonal}
	\end{center}
\end{figure}

 If we now consider a non-diagonal inhomogeneity matrix 
	\[
	\frac{1}{\bD} = \begin{bmatrix}
		t & t\\
		t & 4t
	\end{bmatrix}
	\]
	the phase transition is dependent on the proportions $p$ as depicted in Figure \ref{fig:small_nondiagonal}.
	\begin{figure}
		\begin{center}
			\includegraphics[width=13cm]{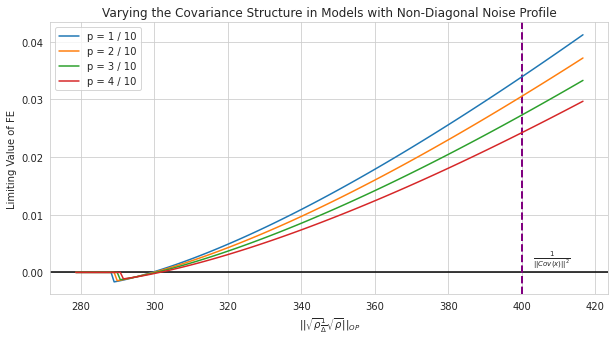}
			\caption{The free energy as a function of the noise parameter, for groups of different sizes $p$, and nondiagonal inhomogeneity $\bD$.}
			\label{fig:small_nondiagonal}
		\end{center}
	\end{figure}
	It remains to study the dependence of the limiting free energies on the structure of the noise more rigorously. In particular, it appears that phase transition at $ \frac{1}{\| \Cov(x) \|_{op}^2}$ can be improved for certain choices of sparse symmetric priors. However, this will likely be a difficult problem to solve, because it will depend on both the covariance structure $\bD$ and the proportions $\bp$.

\subsection{The BBP Transition of a Transformed Homogeneous Noise Model}

Recovery transitions can also be studied from the point of view of the BBP transition \cite{BBP} in random matrix theory. In the following, we compute the transition for a normalization of the spiked matrix that will move the noise profile onto the signal,
\[
\frac{1}{\sqrt{N}}\frac{1}{\bD^{\odot \frac{1}{2} }}\odot  Y^\Delta  = \frac{1}{\sqrt{N}}  W + \frac{1}{N}  \frac{1}{\bD^{\odot \frac{1}{2} }}  \odot  \bvx^0 (\bvx^0)^\trans.
\]
The normailzation by $\frac{1}{\sqrt{N}}$ was to ensure the eigenvalue distribution is supported almost surely on a compact set, and the normalization by $\frac{1}{\bD^{\odot \frac{1}{2} }} $ moved the noise profile onto the signal. Classical random matrix theory arguments can be used to compute when the largest eigenvalue separates from the bulk of the transformed matrix.

\begin{lem}[BBP Transition]\label{lem:BBPtrans}
	The largest eigenvalue of the matrix $\frac{1}{\sqrt{N}} Y \odot \frac{1}{\bD^{\odot \frac{1}{2} }}$ separates from the bulk when
	\[
	\bigg\| \sqrt{\bp} \frac{1}{\bD^{\odot \frac{1}{2}}} \sqrt{\bp} \bigg\|_{op} > \frac{1}{\| \Cov(x) \|_{op}}.
	\]
\end{lem}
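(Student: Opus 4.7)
The plan is to reduce the statement to the classical BBP transition for a finite-rank additive perturbation of a symmetric Wigner matrix. Decompose
\[
M_N := \frac{1}{\sqrt{N}}\, Y^\Delta \odot \frac{1}{\bD^{\odot \frac{1}{2}}} = \frac{1}{\sqrt{N}}\, W + P_N, \qquad P_N := \frac{1}{N}\, \frac{1}{\bD^{\odot \frac{1}{2}}} \odot \bvx^0 (\bvx^0)^\trans,
\]
so that the noise piece $W/\sqrt{N}$ is a standard normalized symmetric Gaussian matrix: its empirical spectral distribution converges to the semicircle on $[-2,2]$ and its top eigenvalue converges almost surely to $2$. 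All model-dependent information then lives in the bounded-rank spike $P_N$, and the task reduces to locating the top eigenvalue of a deformed Wigner ensemble.

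The core computation is to evaluate the limit of the top eigenvalue of $P_N$. Since $\bvx^0 (\bvx^0)^\trans$ has rank $\kappa$ and $1/\bD^{\odot \frac{1}{2}}$ is block-constant, $P_N$ has rank at most $\kappa n$, uniformly in $N$. Writing the eigenvalue equation $P_N v = \lambda v$ with the ansatz $v_i = \lambda^{-1}\, \vx^0_i \cdot \tilde u_s$ for $i \in I_s$, where
\[
u_t := \frac{1}{N} \sum_{j \in I_t} v_j\, \vx^0_j \in \R^\kappa, \qquad \tilde u_s := \sum_t \Delta_{s,t}^{-1/2}\, u_t,
\]
and closing the system using the block-wise law of large numbers $|I_s|^{-1} \sum_{i \in I_s} \vx^0_i (\vx^0_i)^\trans \to \Cov(x)$ ($\pP_0$ centered per Lemma~\ref{prop:recovery}), the substitution $u_s = \sqrt{\rho_s}\, w_s$ yields the asymptotic reduced problem
\[
\lambda\, W = \Cov(x)\, W\, B, \qquad B := \sqrt{\bp}\, \frac{1}{\bD^{\odot \frac{1}{2}}}\, \sqrt{\bp},
\]
on matrices $W \in \R^{\kappa \times n}$. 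This Sylvester/Kronecker-type linear map has eigenvalues $\{\sigma_i(\Cov(x))\,\beta_j(B)\}_{i,j}$; since $\Cov(x)$ is positive semidefinite and $B$ has strictly positive entries (so its spectral radius is attained by a positive eigenvalue, by Perron--Frobenius), the largest eigenvalue of $P_N$ converges to $\|\Cov(x)\|_{op}\,\|B\|_{op}$.

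Finally, the classical BBP theorem for finite-rank deformations of a Wigner matrix (P\'ech\'e, F\'eral--P\'ech\'e, Capitaine--Donati-Martin--F\'eral) asserts that the top eigenvalue of $W/\sqrt{N} + P_N$ separates from the bulk edge $2$ if and only if the limiting top eigenvalue of $P_N$ exceeds $1$. Combined with the computation above, this condition becomes $\|\Cov(x)\|_{op}\,\|B\|_{op} > 1$, which is exactly the inequality in the statement. The main technical obstacle is the BBP step: $P_N$ is not a fixed deterministic finite-rank matrix but a random bounded-rank one whose top eigenvalue converges in probability to the explicit limit above, so one must appeal to a version of BBP that is stable under this convergence; this is routine given the concentration of the block sample second-moment matrices together with standard perturbation theory for extremal eigenvalues.
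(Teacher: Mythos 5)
Your proof is correct and arrives at the same answer as the paper's, but the route through the spike's spectrum is genuinely different and, for $\kappa>1$, more self-contained. The paper decomposes $R = \frac{1}{N}\, \frac{1}{\bD^{\odot 1/2}}\odot \bvx^0(\bvx^0)^\trans$ by first diagonalizing $\bvx^0(\bvx^0)^\trans = \sum_j \theta^j u^j(u^j)^\trans$ and then restricting each $u^j$ to the blocks $I_s$; it then appeals to an $n\times n$ reduced matrix $M^N_{st}=\frac{1}{N}\|v^1_s\|\|v^1_t\|/\Delta^{1/2}_{st}$ and invokes a ``we only examine the behavior of $v^1$'' step (Remark~\ref{rem:finiterank}), which leaves implicit why the cross-$j$ interaction between the block-restricted $v^j(s)$ (which are not orthogonal across $j$) does not affect the top eigenvalue. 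Your derivation bypasses this: you write the eigenvalue equation $P_N v = \lambda v$ directly, observe that any eigenvector with $\lambda\neq 0$ must satisfy $v_i = \lambda^{-1}\vx^0_i\cdot \tilde u_s$ for $i\in I_s$, close the system with the block-wise law of large numbers $\frac{1}{N}\sum_{i\in I_s}\vx^0_i(\vx^0_i)^\trans \to \rho_s \Cov(x)$, and identify the limiting nonzero eigenvalues of $P_N$ as the spectrum of the Sylvester operator $W\mapsto \Cov(x) W B$ on $\R^{\kappa\times n}$, namely $\{\sigma_i(\Cov(x))\beta_j(B)\}$. This gives the top eigenvalue $\|\Cov(x)\|_{op}\|B\|_{op}$ without any restriction to a single $v^j$, and then the classical BBP criterion (outlier iff top spike eigenvalue exceeds $1$) yields the stated inequality. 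One small point worth making explicit, since you invoke it implicitly: the argument needs $\pP_0$ to be centered so that $\E[\vx^0(\vx^0)^\trans]=\Cov(x)$; this is supplied by the symmetry hypothesis on $\pP_0$ in Lemma~\ref{prop:recovery} under which this BBP comparison is stated. With that caveat, the Sylvester route is a valid and somewhat cleaner alternative to the paper's $v^1$-based reduction.
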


When the noise profile $\bD$ is not identically equal to one, the region of noise described in Lemma~\eqref{lem:BBPtrans} where the top eigenvalue separates from the bulk is contained in the recovery regime Lemma~\ref{prop:recovery}. This means that a naive reduction of the inhomogeneous problem to a homogeneous problem does not yield the sharpest transition for these inhomogeneous noise models. This behavior is different from the classical homogeneous noise models.

\begin{prop}[Gap in Thresholds]\label{prop:gap}
	We have
	\[
	\bigg\| \sqrt{\bp} \frac{1}{\bD^{\odot \frac{1}{2}}} \sqrt{\bp} \bigg\|^2_{op} \leq \bigg\| \sqrt{\bp} \frac{1}{\bD} \sqrt{\bp} \bigg\|_{op},
	\]
	with equality holding if and only if $\bD = c$ for some constant $c$. In particular, when $\bD \neq c$
	\[
	\bigg\{ \bigg\| \sqrt{\bp} \frac{1}{\bD^{\odot \frac{1}{2}}} \sqrt{\bp} \bigg\|_{op} \geq \frac{1}{ \| \Cov(x) \|_{op} } \bigg\} \subset \bigg\{ \bigg\| \sqrt{\bp} \frac{1}{\bD} \sqrt{\bp} \bigg\|_{op} \geq \frac{1}{\| \Cov(x) \|_{op}^2} \bigg\}
	\]
	so the information theoretic detectability regime strictly contains the BBP transition.
\end{prop}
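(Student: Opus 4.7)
The plan is to prove the scalar inequality in part one via a clean two-step chain using only classical linear algebra, and to deduce part two at once. Write $A=\sqrt{\bp}\,\frac{1}{\bD^{\odot 1/2}}\sqrt{\bp}$ and $B=\sqrt{\bp}\,\frac{1}{\bD}\sqrt{\bp}$, so that $A_{st}=\sqrt{\rho_s\rho_t}/\sqrt{\Delta_{st}}$ and $B_{st}=\sqrt{\rho_s\rho_t}/\Delta_{st}$. The matrix $B$ is positive semidefinite: $1/\bD$ is PSD by Hypothesis~\ref{hypDelta}, and conjugation by the nonnegative diagonal $\sqrt{\bp}$ preserves this. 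Setting $v:=(\sqrt{\rho_1},\dots,\sqrt{\rho_n})^{\trans}$, I will prove the chain
\[
\|A\|_{op}^{2}\;\leq\;\|A\|_{F}^{2}\;=\;v^{\trans}Bv\;\leq\;\|v\|^{2}\,\|B\|_{op}\;=\;\|B\|_{op}.
\]

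The first step is the comparison of operator and Frobenius norms of a symmetric matrix: $\|A\|_{op}^{2}\leq\sum_i\lambda_i(A)^{2}=\|A\|_{F}^{2}$, with equality iff $A$ has rank one. The middle equality is the direct identity $\|A\|_{F}^{2}=\sum_{s,t}\rho_s\rho_t/\Delta_{st}=v^{\trans}Bv$. The final bound is the Rayleigh quotient estimate for the PSD matrix $B$, using $\|v\|^{2}=\sum_s\rho_s=1$; equality holds iff $v$ is a top eigenvector of $B$.

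For the equality characterization, assume both inequalities are simultaneously tight. Since $A$ has strictly positive entries and must be rank one, necessarily $A=\alpha\alpha^{\trans}$ for some strictly positive vector $\alpha$. Solving $\alpha_s\alpha_t=\sqrt{\rho_s\rho_t}/\sqrt{\Delta_{st}}$ gives $1/\Delta_{st}=\alpha_s^{2}\alpha_t^{2}/(\rho_s\rho_t)$, and hence $B=\beta\beta^{\trans}$ with $\beta_s=\alpha_s^{2}/\sqrt{\rho_s}$. For $v$ to be a top eigenvector of this rank-one $B$ one needs $v\propto\beta$, i.e.\ $\alpha_s^{2}\propto\rho_s$; plugging back into $1/\Delta_{st}=\alpha_s^{2}\alpha_t^{2}/(\rho_s\rho_t)$ shows $1/\Delta_{st}$ is independent of $s,t$, so $\bD\equiv c$. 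Conversely, if $\bD\equiv c$ then both $A$ and $B$ are explicit rank-one multiples of $vv^{\trans}$ and a direct spectral computation verifies equality.

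Part two is then immediate: if $\|A\|_{op}\geq 1/\|\Cov(x)\|_{op}$ then $\|B\|_{op}\geq\|A\|_{op}^{2}\geq 1/\|\Cov(x)\|_{op}^{2}$, and for non-constant $\bD$ the first bound is strict, so the containment of regimes is strict. The argument uses only two applications of elementary inequalities, so I do not anticipate any real obstacle; the only technical care needed is tracking the simultaneous saturation of the two steps to extract the conclusion $\bD\equiv c$ in a clean form.
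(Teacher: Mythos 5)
Your proof is correct, and it takes a genuinely different path than the paper. Setting $A=\sqrt{\bp}\,\frac{1}{\bD^{\odot 1/2}}\sqrt{\bp}$ and $B=\sqrt{\bp}\,\frac{1}{\bD}\sqrt{\bp}$, the paper establishes $u^{\trans}Au \le (u^{\trans}Bu)^{1/2}\|u\|_2$ directly, by splitting each summand $\rho_s^{1/2}u_s\rho_t^{1/2}u_t/\Delta_{st}^{1/2}$ as a product of $\rho_s^{1/4}u_s^{1/2}\rho_t^{1/4}u_t^{1/2}/\Delta_{st}^{1/2}$ and $\rho_s^{1/4}u_s^{1/2}\rho_t^{1/4}u_t^{1/2}$ and applying Cauchy--Schwarz twice (once to this split, once to bound $\sum\rho_s^{1/2}u_s\le\|u\|_2$), then invoking Perron--Frobenius to reduce the operator norm computation to nonnegative test vectors. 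Your route instead inserts the Frobenius norm as a middleman, using the chain $\|A\|_{op}^2\le\|A\|_F^2=v^{\trans}Bv\le\|B\|_{op}$ with $v=\sqrt{\rho}$. The algebraic identity $\|A\|_F^2=v^{\trans}Bv$ — which you may as well state is the computation $\sum_{s,t}\rho_s\rho_t/\Delta_{st}$ read two ways — is the crux, and it eliminates any appeal to Perron--Frobenius for the inequality itself (you only use positivity of entries later, in the equality case). Both proofs track equality cleanly; yours factors it as ``$A$ rank one'' plus ``$v$ top eigenvector of $B$,'' while the paper's wording on equality in Cauchy--Schwarz is a bit compressed but leads to the same condition $u=\sqrt{\rho}$ and $\Delta_{s,t}^{-1/2}$ constant. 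One small point worth making explicit in your argument: when $A$ is rank one, $\Tr(A)=\sum_s\rho_s/\sqrt{\Delta_{ss}}>0$ forces the single nonzero eigenvalue to be positive, so $A=\alpha\alpha^{\trans}$ with real $\alpha$, and the strict positivity of the entries $A_{st}=\alpha_s\alpha_t$ then forces all $\alpha_s$ to share a sign — which is exactly what makes the factorization $1/\Delta_{st}=\alpha_s^2\alpha_t^2/(\rho_s\rho_t)$ well defined. With that spelled out, the argument is airtight, and arguably cleaner than the one in the paper.
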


\subsection{Application to the Degree Corrected Stochastic Block Model}
We now apply our results to the degree corrected stochastic block models (DCSBM) introduced in \cite{Blockmodel}. 
%We first explain the application to the DCSBM with $\kappa$ symmetric equally sized groups. 

We consider a dense community detection problem with $\kappa \geq 1$ communities in a network of $N$ nodes. If the vertex $i$ belongs to the community $t_i \in \{1, \dots, \kappa\}$, then its membership can be encoded by $x_i = e_{t_i} \in \R^{\kappa}$ where $e_{t_i}$ is the $t_i$th basis element in $\R^\kappa$. In particular, for any two nodes $i$ and $j$, $x_i \cdot x_j = 1$ if they belong to the same community and $x_i \cdot x_j = 0$ otherwise. If we choose the prior $\pP_0$ to be uniform on all basis elements, then we are in the setting with equally sized groups. 

We now build a site dependent adjacency matrix that encodes different probability of attaching an edge depending on if the vertices are in the same group or not. In contrast to the standard stochastic block model, we consider a vector of site specific parameters $(\theta_1, \dots, \theta_N) \in (0,1)^N$ which controls the global probability of any vertex being attached to node $i$. For each $i,j$ we also define a $\kappa \times \kappa$ connectivity matrix
\[
\bC_{ij} =  \begin{bmatrix}
\theta_i \theta_j & \theta_i \theta_j &\dots & \theta_i \theta_j\\
\theta_i \theta_j&\ddots &\ddots &\vdots \\
\vdots &\ddots &\ddots & \theta_i \theta_j \\
\theta_i \theta_j &\dots &\theta_i \theta_j &  \theta_i \theta_j
\end{bmatrix} + \frac{\lambda}{\sqrt{N}} \begin{bmatrix}
1 &0 &\dots &0\\
0&\ddots &\ddots &\vdots \\
\vdots &\ddots &\ddots &0 \\
0 &\dots &0 &1
\end{bmatrix}.
\]
In particular, this implies that the probability to attach an edge between vertex $i$ and vertex $j$ is $\theta_i \theta_j$ if they are in different communities and if they are in the same community, then the probability is different by an additive factor of $\frac{\lambda}{\sqrt{N}}$. The precise output channel for the adjacency matrix of the entire graph is given by
\begin{equation}\label{defy}
\pP_\out(D_{ij} = 1 \given \bvx^0, \bm\theta) = \theta_i \theta_j + \frac{\lambda}{\sqrt{N}} \vx^0_i \cdot \vx_j^0.
\end{equation}

We will generalize the framework slightly and consider parameters $(\theta_i)_{i \leq N}$ with each $\theta_i$ sampled independently from some distribution $\pP_\theta$ on $(0,1) \subset \R^+$ (the precise interval doesn't matter because we can scale the $p_\out > 0$). As usual, we also generate a vector $\vx^0 \in \R^{\kappa}$ according to $\pP_0$. Finally, we generate the adjacency matrix $D_{ij}$ according to \eqref{defy}. 
The parameter $\lambda$ is the signal to noise ratio in these models. 

\begin{rem}
If $\pP_\theta(\theta = 1) = 1$, then this model reduces to the classical stochastic block model. 
\end{rem}

\subsubsection{The Limit of the Free Energy}

We now state the limit of the free energies in these models. Suppose that $\pP_\theta$ is supported on finitely many points $(\theta^{(1)}, \dots, \theta^{(n)}) \in (0,1)$ with probabilities $\rho_1, \dots, \rho_n$. We define $\bD = (\bD_{s,t})_{1\leq s,t \leq n}$ by
\begin{equation}\label{eq:DCSBM1}
\frac{1}{\Delta_{st}} = \bigg( \frac{\lambda^2}{ \theta^{(s)} \theta^{(t)} } +  \frac{\lambda^2}{ (1 - \theta^{(s)} \theta^{(t)}) } \bigg) 
\end{equation}
and consider the conditional probabilities
\begin{equation}\label{eq:DCSBM2}
d\pP(X|D, \Theta) = \frac{1}{Z_X(D,\Theta)}  \prod_{1 \leq i<j \leq N} e^{\ln \pP_\out(D_{ij} \given \bvx , \bm\theta)} \prod_{1 \leq i \leq N} d\pP_0(\vx_i)
\end{equation}
where
\begin{equation}\label{eq:DCSBM3}
\pP_\out(D_{ij} = 1 \given \bvx, \bm\theta) = \theta_i \theta_j + \frac{\lambda}{\sqrt{N}} \vx_i \cdot \vx_j \quad\text{and}\quad \pP_\out(D_{ij} = 0 \given \bvx^0, \bm\theta) = 1 - \theta_i \theta_j - \frac{\lambda}{\sqrt{N}} \vx_i \cdot \vx_j.
\end{equation}
We define the collection $\bQ = (\bQ_1, \dots, \bQ_n)$ of symmetric positive semidefinite $\kappa \times \kappa$ matrices and the functional
\begin{align}\label{eq:DCSBM4}
\phi(\bQ) &= - \sum_{s,t = 1}^n \frac{\rho_s \rho_t}{4 \Delta_{s,t}}\Tr( \bQ_s \bQ_t) +  \sum_{s =1}^n \rho_s \E_{\vz,\vx^0} \ln \bigg[ \int e^{( \tilde \bQ_s\vx^0 + \sqrt{\tilde \bQ_s} \vz )^\trans \vx - \frac{\vx^\trans \tilde \bQ_s \vx}{2} } \, d \pP_0(\vx) \bigg]
\end{align}
where for $s\in \{1,\ldots,n\}$, 
\[
\tilde \bQ_s = \sum_{t =1}^n \frac{1}{\Delta_{s,t}} \rho_t \bQ_t
\]
$\vx^0 \sim \pP_0$ and $\vz \sim N(0,\bI_\kappa)$ are independent. The limit of the free energy is given by maximizing this functional.
\begin{cor}[Free Energy for Discrete Degree Corrected Stochastic Block Models] \label{prop:FESBM}
If $\pP_0=\sum_{i=1}^n \rho_i\delta_{\theta^{(i)}}$ with $\theta^{(1)}, \dots, \theta^{(n)} \in (0,1)$ then the free energy of the degree corrected stochastic block model is
\[
\lim_{N \to \infty}  \frac{1}{N}  \E \left(\ln Z_X(D,\Theta)- \sum_{i<j}\ln  \pP_\out(D_{ij} \given \bvx=0 , \bm\theta)\right)
= \sup_{\bQ} \phi( \bQ).
\]
where the parameters of the model  are given in \eqref{eq:DCSBM1}, \eqref{eq:DCSBM2}, \eqref{eq:DCSBM3} and \eqref{eq:DCSBM4}.
\end{cor}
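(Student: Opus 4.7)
The plan is to identify the DCSBM as a Bayes-optimal inhomogeneous vector spin model, verify the hypotheses of Theorem \ref{thm:main} conditionally on the realization of the degree parameters $\bm\theta = (\theta_1, \ldots, \theta_N)$, and then average over $\bm\theta$.

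By construction, the log-likelihood of the output channel \eqref{eq:DCSBM3} is
\[
g_{ij}(D, w) = D \ln(\theta_i \theta_j + \lambda w) + (1 - D) \ln(1 - \theta_i \theta_j - \lambda w),
\]
so Hypothesis \ref{hypbayes} holds by design with $w = \vx_i^0 \cdot \vx_j^0 / \sqrt{N}$. A direct computation yields $\partial_w g_{ij}(D, 0) = \lambda D/(\theta_i \theta_j) - \lambda(1-D)/(1 - \theta_i\theta_j)$; taking the $D \sim \mathrm{Bernoulli}(\theta_i\theta_j)$ expectation of its square recovers the Fisher information \eqref{eq:DCSBM1}, namely $1/\Delta_{ij} = \lambda^2/(\theta_i\theta_j) + \lambda^2/(1 - \theta_i\theta_j)$.

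I would then condition on a realization of $\bm\theta$ and set $I_s = \{i : \theta_i = \theta^{(s)}\}$. On the full-measure event on which $|I_s|/N \to \rho_s$ (SLLN), $\Delta_{ij}$ is block-constant on $I_s \times I_t$ with the prescribed limiting proportions, so Hypothesis \ref{hypDelta} holds once the positive semidefiniteness of $(1/\Delta_{s,t})_{s,t \leq n}$ is established. I would verify this by decomposing
\[
\frac{1}{\Delta_{s,t}} = \lambda^2 \cdot \frac{1}{\theta^{(s)}} \cdot \frac{1}{\theta^{(t)}} + \lambda^2 \sum_{k \geq 0} (\theta^{(s)} \theta^{(t)})^k,
\]
where the geometric series converges since $\theta^{(s)}\theta^{(t)} \in (0,1)$, and each summand is a rank-one outer product $\lambda^2 (\theta^{(s)})^k (\theta^{(t)})^k$, hence PSD; the full sum is therefore PSD. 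Hypothesis \ref{hypcompact} is immediate since $\pP_0$ is supported on the standard basis $\{e_1, \ldots, e_\kappa\} \subset \R^\kappa$. For Hypothesis \ref{hypg}, since the $\theta^{(s)}$ lie in a finite subset of $(0,1)$ there is a uniform lower bound on both $\theta_i\theta_j$ and $1 - \theta_i\theta_j$; combined with $|w| = O(N^{-1/2})$, the first three $w$-derivatives of $g_{ij}$ are uniformly bounded in $D, i, j, N$.

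Theorem \ref{thm:main}, applied conditionally on the SLLN event, then gives $F_N(g \mid \bm\theta) \to \sup_\bQ \phi(\bQ)$ almost surely, the right-hand side being deterministic and independent of $\bm\theta$. Since $F_N(g \mid \bm\theta) \geq 0$ as a log-likelihood ratio and admits a deterministic upper bound coming from the uniform bound on $|g_{ij}(D, 0)|$ (implied by $\theta_i\theta_j$ bounded away from $\{0,1\}$) together with Jensen's inequality, dominated convergence lets us commute the limit with the outer expectation over $\bm\theta$, yielding the claim. The principal technical input is the PSD verification for $(1/\Delta_{s,t})$: without it Hypothesis \ref{hypDelta} fails and Theorem \ref{thm:main} cannot be invoked, so the geometric-series identity above is the nontrivial step. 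Everything else is a bookkeeping check that the DCSBM fits into the abstract framework, plus a routine dominated-convergence argument to remove the conditioning on $\bm\theta$.
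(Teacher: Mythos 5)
Your proof is correct and follows essentially the same route as the paper: identify the DCSBM log-likelihood $g_{ij}$, check Hypotheses \ref{hypbayes}, \ref{hypcompact}, \ref{hypg} directly, and establish Hypothesis \ref{hypDelta} by decomposing $1/\Delta_{s,t}$ into the Gram matrix $\lambda^2/(\theta^{(s)}\theta^{(t)})$ plus the geometric series $\lambda^2\sum_{k\ge0}(\theta^{(s)}\theta^{(t)})^k$, then invoke Theorem \ref{thm:main}. Two small points of comparison. First, for the PSD verification of the geometric-series piece you observe that each term $((\theta^{(s)})^k(\theta^{(t)})^k)_{s,t}$ is a rank-one outer product and hence PSD; the paper instead views the $k$-th term as the Hadamard power $\Theta^{\odot k}$ of a Gram matrix and cites the Schur product theorem. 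Both are valid; your version is slightly more elementary since you avoid Schur. Second, you add an explicit conditioning step (condition on $\bm\theta$, apply SLLN so that $|I_s|/N\to\rho_s$ almost surely, invoke Theorem \ref{thm:main} conditionally, then remove the conditioning by dominated convergence using the uniform bound on $|g_{ij}(D,0)|$). The paper's proof elides this: it checks the hypotheses as if the partition $I_s$ were deterministic and block-constant, leaving the passage from random $\theta_i\sim\pP_\theta$ to a realization satisfying Hypothesis \ref{hypDelta} implicit. Your treatment is therefore a bit more careful on that point and is a genuine (if minor) improvement in rigor; otherwise the two proofs are the same.
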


\begin{proof}
We essentially check that conditional probabilities in \eqref{eq:DCSBM3} satisfy the conditions for the inhomogeneous vector spin models required in Theorem \ref{thm:main}. Hypotheses \ref{hypbayes} and \ref{hypcompact} are clearly verified. We next check the two other assumptions. 
\\\\\textit{Hypothesis \ref{hypg} (Bounds on the derivatives):} The corresponding function $g_{ij}$ is the log likelihood expressed as a function of $D_{ij}$ and $ \frac{1}{\sqrt{N}} \vx^0_i \cdot \vx_j^0$,
\[
g_{ij}(1,w) = \ln\pP_\out(D_{ij} = 1 \given w, \bm\theta) =  \ln ( \theta_i \theta_j  + \lambda w)
\]
and
\[
g_{ij}(0,w) = \ln\pP_\out(D_{ij} = 0 \given w, \bm\theta) =  \ln (1 -  \theta_i \theta_j  - \lambda w ).
\]
Notice that our function $g$ is smooth in $w$, and well defined for $w$ such that $1 -  \theta_i \theta_j - \lambda w \in (0,1)$. Furthermore,
\begin{align}
	\partial_w g_{ij} (D,w) &= D \frac{\lambda}{ \theta_i \theta_j  + \lambda w }  - (1 - D) \frac{\lambda}{1 -  \theta_i \theta_j  - \lambda w }
	\\\partial^2_w g_{ij} (D,w) &=  D \frac{-\lambda^2}{ (\theta_i \theta_j  + \lambda w)^2 }  - (1 - D)\frac{\lambda^2}{( 1 -  \theta_i \theta_j  - \lambda w)^2 }
	\\\partial^3_w g_{ij} (D,w) &=  D \frac{2\lambda^3}{ (\theta_i \theta_j + \lambda w)^3 }  - (1 - D)\frac{2\lambda^3}{( 1 -  \theta_i \theta_j  - \lambda w)^3 }
\end{align}
which are all uniformly bounded provided that $0 < \theta_i \theta_j + \lambda w < 1$ for all $i$ and $j$ and $w$ in the domain. Since $w$ goes to zero, for $N$ large enough,  it is enough that the $\theta^{(i)}$ belong to $(0,1)$  independently of $N$. %These conditions will be satisfied if we restrict the support of $\pP_{0}$ and $\pP_\theta$ and take $\lambda$ sufficiently small.  
\\\\\textit{Hypothesis \ref{hypDelta} (Positive Definiteness of Noise Profile):} By definition, we have 
\begin{align}
	\frac{1}{\Delta_{ij}} = \E_{P_{ij}(D|w=0)} \bigg[ (\partial_w g_{ij}(D,0) )^2 \bigg] &= \bigg( \pP(D_{ij} = 1\given w_{ij}^0 = 0,\theta) \frac{\lambda^2}{ \theta^2_i \theta_j^2 } +  \pP(D_{ij} = 0\given w_{ij}^0 = 0,\theta) \frac{\lambda^2}{ (1 - \theta_i \theta_j)^2 } \bigg) \notag
	\\&=\bigg( \frac{\lambda^2}{ \theta_i \theta_j } +  \frac{\lambda^2}{ (1 - \theta_i \theta_j) } \bigg)  \label{eq:fishinfo0}
\end{align} 
because $\pP_{ij}(D_{ij} = 1 \given 0, \theta) = \theta_i \theta_j = 1 - \pP(D_{ij} = 0 \given 0,\theta)$.
Observe that $\Delta_{ij}$ is piecewise constant since the $\theta_i$'s are piecewise constant. We denote as well $\Delta$ the $n\times n$ symmetric defined in Hypothesis \ref{hypDelta}. 
We next show that the matrix $\frac{1}{\Delta^2}$ is non-negative. 
To see this, notice that
\begin{equation}\label{eq:fishinfo1}
	\bigg[ \frac{\lambda^2}{ \theta^{((s)} \theta^{(t)}}  \bigg]_{1 \leq s,t \leq n} = \lambda^2 \bigg[ \frac{1}{\theta^{(s)}} \cdot \frac{1}{\theta^{(t)}}  \bigg]_{1 \leq i,j \leq n}
\end{equation}
is positive semidefinite because it is a Gram matrix. Furthermore, for $|\theta^{(s)} \theta^{(t)}| < 1$, we have %https://www.wolframalpha.com/input/?i=sum_%28n%3D0%29%5E%E2%88%9E+x%5En+%281+%2B+n%29+y%5En
\[
\frac{\lambda^2}{ (1 - \theta^{(s)} \theta^{(t)}) } = \lambda^2 \sum_{k = 0}^\infty (\theta^{(s)})^k (\theta^{(t)})^k,
\]
by its Taylor expansion, so 
\begin{equation}\label{eq:fishinfo2}
	\bigg[ 	\frac{\lambda^2}{ (1 - \theta^{(s)} \theta^{(t)}) } \bigg]_{1 \leq s,t \leq n} =  \lambda^2 \sum_{k = 0}^\infty  \Theta^{\odot k},
\end{equation}
where $\Theta = [\theta^{(s)} \theta^{(t)}]_{1 \leq s,t \leq n}$. The matrix $\Theta$ is positive semidefinite because it is a Gram matrix, and the Schur product theorem implies that the Hadamard powers are also positive semidefinite, so is also \eqref{eq:fishinfo2} positive semidefinite. This concludes the proof.\end{proof}

We can also apply Theorem \ref{thm:main2} to this setting. To this end we assume that
\begin{hyp}\label{hypthetac}
Assume that $\theta_{\lfloor Ns\rfloor}$ converges uniformly to a measurable function $(\theta^{(s)})_{s\in [0,1]}$ with values in a compact subset $[a,b]$ of $(0,1)$.

\end{hyp}
Let $\bQ(\theta): [a,b] \to \mathbb{S}_\kappa^+$ be a measurable matrix valued function. Let $\theta$ and $\theta'$ be independent samples from $\tilde \pP_\theta$. We define
\begin{equation}\label{eq:DCSBM7}
\frac{1}{\Delta(\theta,\theta')} = \bigg( \frac{\lambda^2}{ \theta \theta' } +  \frac{\lambda^2}{ (1 - \theta \theta' ) } \bigg) 
\end{equation}
We have
\begin{equation}\label{eq:DCSBM8}
\phi(\bQ) = - \int_0^1 \frac{\Tr(\bQ(\theta^{(s)}), \bQ(\theta^{(t)}) ) }{4 \Delta(\theta^{(s)},\theta^{(t)})} dsdt + \int_0^1 ds \ln \bigg[ \int e^{( \tilde \bQ(\theta^{(s)})\vx^0 + \sqrt{\tilde \bQ(\theta^{(s)})} \vz )^\trans \vx - \frac{\vx^\trans \tilde \bQ(\theta^{(s)}) \vx}{2} } \, d \pP_0(\vx)
\end{equation}
where the expected value is with respect to $\theta, \theta', \vx^0, \vz$ and
\begin{equation}\label{eq:DCSBM9}
\tilde\bQ(\theta) = \int_0^1 \frac{1}{\Delta(\theta, \theta^{(s)}) } \bQ(\theta^{(s)}) ds  \,.
\end{equation}
It follows by approximation that Corollary~\ref{prop:FESBM} holds in the limit. 

\begin{cor}[Free Energy for Degree Corrected Stochastic Block Models]
Under  Hypothesis \ref{hypthetac}, the free energy of the degree corrected stochastic block model is given by 
\[
\lim_{N \to \infty}  \frac{1}{N} \E \ln\left( Z_X(Y,\tilde \Theta)- \sum_{i<j}\ln  \pP_\out(Y_{ij} \given \bvx=0 , \bm\theta)\right) = \sup_{\bQ}  \phi( \bQ).
\]
where the parameters of the model  are given in \eqref{eq:DCSBM2} \eqref{eq:DCSBM7} and \eqref{eq:DCSBM8} and \eqref{eq:DCSBM9}.
\end{cor}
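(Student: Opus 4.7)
The plan is to obtain this corollary as a direct application of Theorem~\ref{thm:main2} to the log-likelihood functions $g_{ij}(D,w)$ of the DCSBM. The Bayes-optimal setup, the compactness of the signal space (since $\pP_0$ is supported on the standard basis of $\R^\kappa$), and the identification of $g_{ij}$ with the log-output channel are exactly as in the proof of the discrete Corollary~\ref{prop:FESBM}, so Hypotheses~\ref{hypbayes} and~\ref{hypcompact} are immediate.

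To verify the regularity Hypothesis~\ref{hypg}, I would reuse the explicit formulas for $\partial_w^k g_{ij}(D,w)$, $k = 1,2,3$, computed in the proof of Corollary~\ref{prop:FESBM}. Each such derivative is a rational function of $\theta_i \theta_j + \lambda w$ and $1 - \theta_i\theta_j - \lambda w$. Under Hypothesis~\ref{hypthetac} the parameters $\theta_i$ all lie in a fixed compact subset $[a,b] \subset (0,1)$, and by Hypothesis~\ref{hypcompact} the relevant values satisfy $|w| \leq C^2\kappa/\sqrt{N}$; hence for $N$ large enough both denominators are uniformly bounded away from $0$ and from $1$, yielding the required uniform bounds on the first three $w$-derivatives.

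The main technical step is Hypothesis~\ref{hypDeltac}. With
\[
\tfrac{1}{\Delta(s,t)} = \tfrac{\lambda^2}{\theta(s)\theta(t)} + \tfrac{\lambda^2}{1 - \theta(s)\theta(t)},
\]
the uniform convergence of $1/\Delta_{\lfloor sN\rfloor,\lfloor tN\rfloor}$ to $1/\Delta(s,t)$ follows by composing the uniform convergence $\theta_{\lfloor sN\rfloor} \to \theta(s)$ from Hypothesis~\ref{hypthetac} with the uniform continuity on $[a,b]^2$ of $(\alpha,\beta)\mapsto \lambda^2/(\alpha\beta)+\lambda^2/(1-\alpha\beta)$. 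For the non-negativity of the associated integral operator on $L^2([0,1])$, I would mimic the discrete Gram-matrix argument used for Corollary~\ref{prop:FESBM}: the first kernel factors as $\lambda^2\,\theta(s)^{-1}\theta(t)^{-1}$, defining a positive rank-one operator, while $\lambda^2/(1-\theta(s)\theta(t)) = \lambda^2 \sum_{k\geq 0} \theta(s)^k \theta(t)^k$ converges uniformly on $[a,b]^2$ (since $\theta(s)\theta(t) \leq b^2 < 1$) and thereby expresses the second kernel as a uniformly convergent sum of rank-one positive kernels, whose limit is positive.

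Once these hypotheses are in hand, Theorem~\ref{thm:main2} yields the variational identity, and it only remains to match definitions. Since $g_{ij}(D_{ij},0) = \ln \pP_\out(D_{ij} \given \bvx = 0, \bm\theta)$, the normalized log-partition function $F_N(g)$ of Theorem~\ref{thm:main2} coincides with the quantity appearing in the corollary, while the identification $\bQ_s \leftrightarrow \bQ(\theta(s))$ identifies the variational functional $\tilde\phi$ from Theorem~\ref{thm:main2} with the $\phi$ displayed in \eqref{eq:DCSBM8}. I expect the subtlest part of the argument to lie in the uniform control required for Hypothesis~\ref{hypg}: the blow-up of the derivatives of $g_{ij}$ as $\theta_i\theta_j \to 0$ or $1$ is precisely why Hypothesis~\ref{hypthetac} insists that $\theta$ take values in a compact subset of $(0,1)$ rather than in the open interval itself, and the computations have to be carried out uniformly in $N$ along that compact subset.
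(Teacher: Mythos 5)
Your proposal is correct and follows essentially the same route as the paper: apply Theorem~\ref{thm:main2} after transferring the hypothesis checks from the discrete Corollary~\ref{prop:FESBM}. The paper states this only as ``it follows by approximation,'' so the explicit verifications you provide (uniform convergence of $1/\Delta(s,t)$ from Hypothesis~\ref{hypthetac} and the uniform continuity of $(\alpha,\beta)\mapsto \lambda^2/(\alpha\beta)+\lambda^2/(1-\alpha\beta)$ on $[a,b]^2$, uniform bounds on the first three $w$-derivatives of $g_{ij}$ thanks to $\theta\in[a,b]\subset(0,1)$, and positivity of the integral kernel via the rank-one plus geometric-series decomposition) are exactly the details the paper leaves implicit.
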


\subsubsection{Rademacher Prior}

We now consider the case when $\pP_0$ takes values in $\{-1,0,1\}$ such that for $p \in [0,\frac{1}{2}]$
\begin{equation}\label{eq:sparse}
\pP_0(-1) = \pP_0(1) = p \qquad \pP_0(0) = 1 - 2p.
\end{equation}
If $x \sim \pP_0$, then $\Var(x) = 2p$. We define the $n \times n$ matrices
\[
\frac{1}{\Delta} = \Big( \frac{1}{\Delta_{s,t} } \Big)_{s,t \leq n} \quad \rho = \diag( \rho_1, \dots, \rho_n ).
\]
In the degree corrected stochastic block model, the parameters were encoded by $\pP_\theta$ supported on finitely many points $(\theta^{(1)}, \dots, \theta^{(n)}) \in (0,1)$ with probabilities $\rho_1, \dots, \rho_n$,
\begin{equation}\label{eq:dcsbmvar}
\frac{1}{\Delta_{st}} = \bigg( \frac{\lambda^2}{ \theta^{(s)} \theta^{(t)} } +  \frac{\lambda^2}{ (1 - \theta^{(s)} \theta^{(t)}) } \bigg) 
\end{equation}
and for \iid $\theta_i \sim \pP_\theta$
\begin{equation}
\pP_\out(Y_{ij} = 1 \given \bvx, \bm\theta) = \theta_i \theta_j + \frac{\lambda}{\sqrt{N}} \vx_i \cdot \vx_j \quad\text{and}\quad \pP_\out(Y_{ij} = 0 \given \bvx^0, \bm\theta) = 1 - \theta_i \theta_j - \frac{\lambda}{\sqrt{N}} \vx_i \cdot \vx_j.
\end{equation}
Recall that the replica symmetric functional is
\begin{equation}
\phi(\bQ) = - \sum_{s,t =1}^n \frac{\rho_s \rho_t}{4 \Delta_{s,t}}\Tr( \bQ_s \bQ_t) +  \sum_{s =1}^n \rho_s \E_{\vz,\vx^0} \ln \bigg[ \int \exp \bigg( \bigg( \tilde \bQ^s\vx^0 + \sqrt{\tilde \bQ^s} \vz \bigg)^\trans \vx - \frac{\vx^\trans \tilde \bQ^s \vx}{2}  \bigg) \, d \pP^\kappa_0(\vx) \bigg]
\end{equation}
where 
\[
\tilde \bQ^s = \sum_{t =1}^n \frac{1}{\Delta_{s,t}} \rho_t \bQ_t.
\]
The replica free energy functional is identical to the one computed by a spiked matrix with covariance profile given by \eqref{eq:dcsbmvar}. By Lemma~\ref{prop:gentransition1} and Lemma~\ref{prop:gentransition2}, we can conclude that
\begin{enumerate}
\item $\phi$ has a unique maximum at a sequence of matrices with all entries equal to $0$ when
\[
\bigg\| \sqrt{\rho} \frac{1}{\bm\Delta} \sqrt{\rho} \bigg\|_{op} < \frac{1}{9}
\]
\item $\phi$ has a maximum at a sequence of matrices where at least one matrix has a non-zero entry when
\[
\bigg\| \sqrt{\rho} \frac{1}{\bm\Delta} \sqrt{\rho} \bigg\|_{op} > \frac{1}{4p^2}.
\]
\end{enumerate}
Since $p \leq \frac{1}{2}$, this transition is clearly not tight. 

We can numerically solve the free energies to see how the phase transitions depends on the inhomogeneity. In the $n = 2$ case, a perfectly homogeneous model is when all entries of $\frac{1}{\bD}$ are identical the phase transitions agree with known results \cite{lesieur2015mmse}. We analyze the effects of changing the homogeneity on the locations of the maximizers. To do this, we can fix $\theta_1$ and $\theta_2$ and vary the parameter $\lambda$. If we consider the sparse case when $p = 0.025$ in \eqref{eq:sparse}, we plot in Figure \ref{fig:DCSBM_sparse} the limit of the free energy versus the operator norm of $\| \sqrt{\rho} \frac{1}{\bm\Delta} \sqrt{\rho} \|_{op}$.
\begin{figure}
\begin{center}
	\includegraphics[width=15cm]{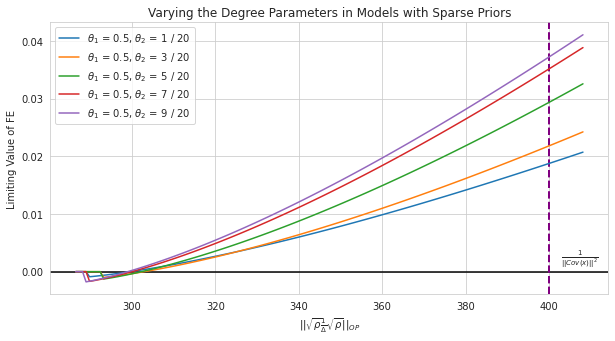}
	\caption{Free energy as the function of noise for the DCSBM with a small size of one of the groups. We see a discontinuous phase transition (purple line).}
	\label{fig:DCSBM_sparse}
\end{center}
\end{figure}
In contrast, we consider the dense case when $p = 0.25$ in \eqref{eq:sparse} and plot again limit of the free energy versus the operator norm of $\| \sqrt{\rho} \frac{1}{\bm\Delta} \sqrt{\rho} \|_{op}$ in Figure \ref{fig:DCSBM_dense}.
The second bound appears to be tight when the average degree encoded by $p$ is high.
\begin{figure}
\begin{center}
	\includegraphics[width=15cm]{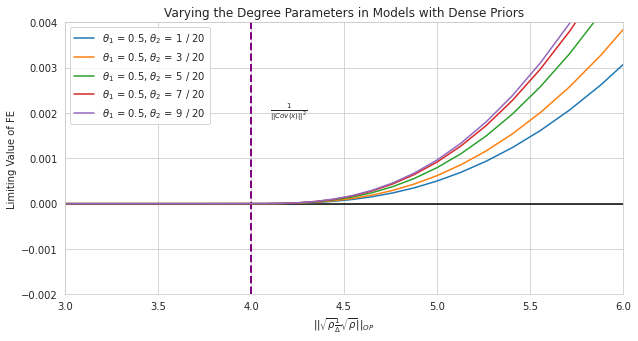}
	\caption{Free energy as the function of noise for the DCSBM with comparable sizes of one of the groups. We see a continuous phase transition (purple line).}
	\label{fig:DCSBM_dense}
\end{center}
\end{figure}

\subsection{Outline of the Paper.} We first start with a proof of the universality result explained in Lemma~\ref{prop:universality}. In Section~\ref{sec:univ}, we move onto proving the limit of the free energy in Theorem~\ref{thm:main} by proving the lower bound using interpolation in Section~\ref{sec:lwbd}. We prove the matching upper bound using concentration of the overlaps and cavity computations in Section~\ref{sec:upbd}. In Section~\ref{sec:phase}, we study the maximizers of the free energy, and explore its consequences on the information theoretic thresholds.

\section{Universality in Inhomogenous Vector Spin Models} \label{sec:univ}

In this section, we proceed to several approximations to arrive to Theorem \ref{prop:universality}. We start by showing only the second order Taylor expansion of $g$ matters in the computation of the free energy (recall the  definition of $F_N(g)$ in \eqref{eq:FEgrowingrank}). 

\begin{lem}[Independence of Third Order Expansions] \label{lem:univ1}
If $\sup_{i,j} \|\partial_w^3 g_{ij}\|_\infty < \infty$, then 
\[
F_N(g) = F_N(\tilde g) + O\Big( \frac{\kappa^3}{\sqrt{N}}\Big) 
\]
where
\[
\tilde g_{ij}(D,w) = g_{ij}( D_{ij},0) + \partial_w g_{ij}( D,0)  w + \frac{1}{2} \partial_w^2 g_{ij}( D,  0) w^2.
\]
\end{lem}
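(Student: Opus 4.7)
The plan is a direct Taylor expansion argument, exploiting the fact that the compact support Hypothesis~\ref{hypcompact} makes the arguments $w_{ij} = \vx_i\cdot\vx_j/\sqrt{N}$ uniformly tiny, so that the cubic remainder in the Taylor expansion of $g_{ij}(D,\cdot)$ around $w=0$ is negligible once one pays attention to how the errors accumulate over the $\binom{N}{2}$ pairs.

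First, I would write, for each $i<j$ and each admissible $D$,
\[
g_{ij}(D, w) \;=\; g_{ij}(D,0) \;+\; \partial_w g_{ij}(D,0)\,w \;+\; \tfrac12 \partial_w^2 g_{ij}(D,0)\,w^2 \;+\; R_{ij}(D,w),
\]
with integral remainder $R_{ij}(D,w) = \tfrac{1}{2}\int_0^w (w-s)^2 \partial_w^3 g_{ij}(D,s)\,ds$. Hypothesis~\ref{hypg} gives $\sup_{ij}\|\partial_w^3 g_{ij}\|_\infty \le M$ for some constant $M$, and Hypothesis~\ref{hypcompact} gives $|w_{ij}| \le C^2\kappa/\sqrt{N}$ uniformly in configurations $\vx_i,\vx_j$ with $\|\vx_k\|_\infty\le C$. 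Hence
\[
\sup_{\vx,\vx^0}\,|R_{ij}(D_{ij}, w_{ij})| \;\le\; \frac{M}{6}\,\Big(\frac{C^2\kappa}{\sqrt{N}}\Big)^{\!3} \;=\; O\!\Big(\frac{\kappa^3}{N^{3/2}}\Big),
\]
uniformly in $i,j$ and in the data~$D$.

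Second, since $\tilde g_{ij}(D_{ij},0) = g_{ij}(D_{ij},0)$, both free energies have the same subtracted constant term and
\[
F_N(g) - F_N(\tilde g) \;=\; \frac{1}{N}\,\E_D\!\log \frac{\int \exp\bigl(\sum_{i<j} g_{ij}(D_{ij}, w_{ij})\bigr)\,d\pP_X^{\otimes N}}{\int \exp\bigl(\sum_{i<j}\tilde g_{ij}(D_{ij}, w_{ij})\bigr)\,d\pP_X^{\otimes N}}
\;=\;\frac{1}{N}\,\E_D\!\log\bigl\langle e^{\sum_{i<j} R_{ij}(D_{ij},w_{ij})}\bigr\rangle_{\tilde g},
\]
where $\langle\cdot\rangle_{\tilde g}$ denotes the Gibbs average associated with $\tilde g$.

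Third, the key numerical balance: the pointwise bound gives
\[
\Bigl|\sum_{i<j} R_{ij}(D_{ij},w_{ij})\Bigr| \;\le\; \binom{N}{2}\cdot \frac{M C^6\kappa^3}{6\,N^{3/2}} \;=\; O\!\bigl(\sqrt{N}\,\kappa^3\bigr),
\]
uniformly in all randomness and in the integration variable $\vx$. Since this is a \emph{deterministic} (configuration-independent) bound, the same bound controls $\log\langle e^{\sum R_{ij}}\rangle_{\tilde g}$, and taking expectation and dividing by $N$ yields
\[
\bigl|F_N(g) - F_N(\tilde g)\bigr| \;=\; O\!\Big(\frac{\kappa^3}{\sqrt{N}}\Big),
\]
as claimed.

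There is really no obstacle here beyond careful bookkeeping: the compact-support hypothesis is precisely what makes the cubic term in $w$ contribute $N^2 \cdot (\kappa/\sqrt{N})^3 = \sqrt{N}\kappa^3$ when summed over pairs, which is then divided by $N$ in the normalization. The only mild subtlety is making sure the bound on $R_{ij}$ is uniform in $\vx$ so that it can be pulled outside the Gibbs average as a deterministic constant rather than requiring a concentration argument.
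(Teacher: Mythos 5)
Your proof is correct and follows essentially the same route as the paper: both expand $g_{ij}$ to second order via Taylor's theorem, bound the cubic remainder uniformly by $O(\kappa^3 N^{-3/2})$ using the compact-support bound $|w_{ij}|\le C^2\kappa/\sqrt{N}$ and the uniform bound on $\partial_w^3 g_{ij}$, and observe that summing over $\binom{N}{2}$ pairs and dividing by $N$ gives $O(\kappa^3/\sqrt{N})$. The only difference is cosmetic: you use the integral form of the remainder and explicitly write the free energy difference as a Gibbs average of $e^{\sum R_{ij}}$, whereas the paper uses the Lagrange form and directly bounds the sum of remainders, but the estimate and conclusion are identical.
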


\begin{proof}
By Taylor's theorem, for all $i,j$,
$$g_{ij}(  D_{ij},w_{ij})-g_{ij}( D_{ij},0)=  \partial_w g_{ij}( D_{ij},0) w_{ij} + \frac{1}{2} \partial_w^2 g_{ij}(  D_{ij},0)  w_{ij}^2  +\frac{w_{ij}^3}{3!} \partial_w^3g_{ij}(D_{ij},\theta_{ij}w_{ij})$$
for some $\theta_{ij}\in [0,1]$. Since our hypothesis implies that $|w_{ij}|_\infty \le C^2 \kappa/\sqrt{N}$, our assumption that $\sup_{i,j} \|\partial_w^3 g_{ij}\|_\infty < \infty$ implies
\[
\bigg\|\frac{1}{N} \sum_{i < j} \frac{w_{ij}^3}{3!}g^{(3)}_w(Y_{ij},\theta_{ij}w_{ij} ) \bigg\| =  O\Big( \frac{\kappa^3}{N^{1/2}}\Big) 
\]
from which the result follows.
\end{proof}

The next step in the reduction is to prove that the coefficient of the second derivative term can be replaced by its conditional average. We let $B$ be a  sub 
$\sigma$-algebra of $\mathbb R^{N^2}\times (\mathbb R^\kappa)^N$ and denote by $\mathbb P_B$ (resp. $\E_Y[.|B]$) its associated conditional probability under $\pP_Y$
(resp. its conditional expectation).  Later on we will simply take $B$ to be the $\sigma$-algebra generated by $\bvx^0$, and therefore $\pP_B$ will just be the distribution of $Y$ knowing the $\bvx^0$. 

\begin{lem}[Concentration of Second Order Terms] \label{lem:univ2} 
Assume $\sup_{i,j} \|\partial_w^2 g_{ij}(\cdot ,0)\|_\infty < \infty$ and   $\pP_0$ compactly supported.  
Let $B$ be a $\sigma$ algebra  such that
the $Y_{ij}$ are independent conditionally to $B$.
Then 
$$F_N(\tilde g)=  F_N(\bar g) +O\Big(\frac{\kappa^2}{\sqrt{N}}\Big)$$
with \[
\bar  
g_{ij}(D,w) = g_{ij}( D_{ij},0) + \partial_w g_{ij}( D,0)  w + \frac{1}{2} \E_D[ \partial_w^2 g_{ij}(D,0)\given B] w^2 \,.
\] 
%where $\E_Y[ \cdot \given B]$ is any conditional expectation so that $\E_Y ( \partial_w^2 g_{ij}(Y,0)- \E_Y[ \partial_w^2 g_{ij}(Y,0)\given B] )=0$.
\end{lem}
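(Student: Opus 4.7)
\medskip

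\noindent\textbf{Proof plan.} The natural strategy is linear interpolation between the two Hamiltonians, followed by a cavity-style argument that exploits the conditional mean-zero property of the difference in second-order coefficients. The constant-in-$w$ terms $g_{ij}(D_{ij},0)$ agree for $\tilde g$ and $\bar g$, so they cancel in the definition \eqref{eq:FEgrowingrank}; only the quadratic coefficient changes.

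\smallskip

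\noindent\textbf{Step 1 (Interpolation).} For $t\in[0,1]$ and each pair $i<j$, set
\[
g^t_{ij}(D,w)=g_{ij}(D,0)+\partial_w g_{ij}(D,0)\,w+\tfrac12\bigl[(1-t)\partial_w^2 g_{ij}(D,0)+t\,\E_D[\partial_w^2 g_{ij}(D,0)\mid B]\bigr]w^2,
\]
so that $F_N(g^0)=F_N(\tilde g)$ and $F_N(g^1)=F_N(\bar g)$. Writing $\langle\cdot\rangle_t$ for the associated Gibbs average and $\Delta_{ij}(D):=\E_D[\partial_w^2 g_{ij}(D,0)\mid B]-\partial_w^2 g_{ij}(D,0)$,
\[
\frac{d}{dt}F_N(g^t)=\frac{1}{2N}\,\E_D\Bigl\langle\sum_{i<j}\Delta_{ij}(D_{ij})\,w_{ij}^2\Bigr\rangle_t,
\]
since the $w$-independent terms in $g^t$ are the same for all $t$. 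Thus it suffices to bound this last expectation by $O(\kappa^2\sqrt N)$, uniformly in $t$.

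\smallskip

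\noindent\textbf{Step 2 (Cavity decoupling of $D_{ij}$).} Let $\langle\cdot\rangle_t^{(ij)}$ denote the Gibbs average obtained by removing the single summand corresponding to $(i,j)$ from the Hamiltonian; crucially, $\langle w_{ij}^2\rangle_t^{(ij)}$ does not depend on $D_{ij}$. By Hypothesis~\ref{hypbayes} on conditional independence, given $B$ the variable $D_{ij}$ is independent of $\bigl(D_{kl}\bigr)_{(k,l)\ne(i,j)}$ and of $\bvx^0$, so
\[
\E\bigl[\Delta_{ij}(D_{ij})\,\langle w_{ij}^2\rangle_t^{(ij)}\bigm| B,(D_{kl})_{kl\ne ij}\bigr]=\E[\Delta_{ij}(D_{ij})\mid B]\cdot\langle w_{ij}^2\rangle_t^{(ij)}=0
\]
by the defining property of the conditional expectation. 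Hence
\[
\E\bigl[\Delta_{ij}(D_{ij})\,\langle w_{ij}^2\rangle_t\bigr]=\E\bigl[\Delta_{ij}(D_{ij})\,(\langle w_{ij}^2\rangle_t-\langle w_{ij}^2\rangle_t^{(ij)})\bigr].
\]

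\smallskip

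\noindent\textbf{Step 3 (Cavity estimate).} Write $h_{ij}(w):=g^t_{ij}(D_{ij},w)-g^t_{ij}(D_{ij},0)$, so that $\langle f\rangle_t=\langle f e^{h_{ij}(w_{ij})}\rangle_t^{(ij)}/\langle e^{h_{ij}(w_{ij})}\rangle_t^{(ij)}$. Under Hypotheses~\ref{hypcompact} and~\ref{hypg},
\[
|w_{ij}|\le C^2\kappa/\sqrt N,\qquad |h_{ij}(w_{ij})|\le \|\partial_w g_{ij}(\cdot,0)\|\cdot|w_{ij}|+\tfrac12\|\partial_w^2 g_{ij}(\cdot,0)\|\cdot w_{ij}^2=O(\kappa/\sqrt N).
\]
Expanding $e^{h_{ij}}=1+h_{ij}+O(h_{ij}^2)$ yields the standard covariance identity
\[
\langle w_{ij}^2\rangle_t-\langle w_{ij}^2\rangle_t^{(ij)}=\mathrm{Cov}_t^{(ij)}\bigl(w_{ij}^2,h_{ij}(w_{ij})\bigr)+O\!\bigl(\|w_{ij}^2\|\cdot\|h_{ij}\|^2\bigr),
\]
both pieces of which are bounded by $\|w_{ij}^2\|_\infty\cdot(\|h_{ij}\|_\infty+\|h_{ij}\|_\infty^2)=O(\kappa^2/N)\cdot O(\kappa/\sqrt N)$. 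Multiplying by the uniform bound $|\Delta_{ij}(D_{ij})|\le 2\|\partial_w^2 g_{ij}(\cdot,0)\|=O(1)$, summing over the $O(N^2)$ pairs and dividing by $N$, one obtains $\frac{d}{dt}F_N(g^t)=O(\kappa^2/\sqrt N)$ (tightening the straightforward $O(\kappa^3/\sqrt N)$ bound by exploiting that the leading covariance in $h_{ij}$ is against $\partial_w g_{ij}(D_{ij},0)w_{ij}$, whose first-order contribution can be absorbed using the same conditional-independence trick applied componentwise to the expansion of $h_{ij}$). Integrating in $t$ from $0$ to $1$ yields the claim.

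\smallskip

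\noindent\textbf{Main obstacle.} The genuine difficulty is \emph{Step 3}: one must ensure that the single-bond perturbation of the Gibbs measure is genuinely of size $\kappa/\sqrt N$ in supremum norm (not just in $L^2$), which relies on the uniform bound $|w_{ij}|\le C^2\kappa/\sqrt N$ from Hypothesis~\ref{hypcompact} together with the regularity assumption on $\partial_w g_{ij}$ in Hypothesis~\ref{hypg}. A secondary subtlety is tracking the correct power of $\kappa$: the naive Cauchy–Schwarz route only gives $\kappa^3$, and getting $\kappa^2$ requires decomposing $h_{ij}$ into its linear and quadratic parts in $w_{ij}$ and reapplying the conditional mean-zero identity to the linear-in-$w$ contribution, which does not interact with $\Delta_{ij}$ after conditioning on $B$.
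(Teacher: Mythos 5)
Your proof takes a genuinely different route from the paper's, and while the first two steps are sound, the claimed tightening to $O(\kappa^2/\sqrt N)$ in Step 3 does not hold up.

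The paper does not interpolate and does not argue bond by bond. Instead it writes the difference of free energies directly as $\frac{1}{N}\E_D\ln\big\langle e^{\Tr(Z(\vx^\trans\vx)^2)/\sqrt N}\big\rangle$, where $Z$ is the random symmetric matrix with entries proportional to $\partial_w^2 g_{ij}(D_{ij},0)-\E_D[\partial_w^2 g_{ij}(D_{ij},0)\mid B]$. Conditionally on $B$ this matrix has independent, centered entries of variance $O(1/N)$, so its \emph{operator norm} concentrates at $O(1)$ (Talagrand-type concentration, cited as \cite[Theorem 2.3.5]{AGZ} and \cite[Lemma 5.6]{HuGu}). Writing $\Tr(Z(\vx^\trans\vx)^2)=\sum_{k,k'\le\kappa}u_{kk'}^\trans Z u_{kk'}$ with $u_{kk'}(i)=x_i(k)x_i(k')$ and using $\|Z\|_{op}\le L$ bounds the exponent by $L\kappa^2 N C^4$, which after dividing by $N$ and the extra $1/\sqrt N$ gives $O(\kappa^2/\sqrt N)$ cleanly. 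The $\kappa^2$ is exactly the number of $(k,k')$ pairs — it falls out of the operator-norm bound, not out of any per-bond cancellation. Your cavity decoupling is an alternative way to exploit the conditional mean-zero property: Steps 1--2 are correct (the conditional-independence assumption in the lemma statement — not Hypothesis~\ref{hypbayes}, which only concerns Bayes optimality — is exactly what makes $\E[\Delta_{ij}\langle w_{ij}^2\rangle_t^{(ij)}]=0$), and the per-bond estimate in Step 3 honestly gives $O(\kappa^3/\sqrt N)$, matching the order the paper gets in Lemma~\ref{lem:univ1}.

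The gap is in the claimed improvement from $\kappa^3$ to $\kappa^2$. You propose to split $h_{ij}$ into its linear part $\partial_w g_{ij}(D_{ij},0)\,w_{ij}$ and its quadratic part and to ``absorb'' the linear contribution by reapplying the conditional mean-zero identity. That would require $\E\big[\Delta_{ij}(D_{ij})\,\partial_w g_{ij}(D_{ij},0)\mid B\big]=0$; but this quantity equals $-\Cov\big(\partial_w^2 g_{ij}(D_{ij},0),\partial_w g_{ij}(D_{ij},0)\mid B\big)$, which is generically nonzero (the lemma makes no assumption relating the first and second $w$-derivatives of $g_{ij}$). So the leading covariance term $\partial_w g_{ij}(D_{ij},0)\,\Cov_t^{(ij)}(w_{ij}^2,w_{ij})$ survives with size $O(\kappa^3/N^{3/2})$ per bond, and the final estimate via this route is $O(\kappa^3/\sqrt N)$, not $O(\kappa^2/\sqrt N)$. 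This would still suffice for the overall universality statement in Theorem~\ref{prop:universality} (which is $O(\kappa^3 N^{-1/2})$ anyway), but it does not prove the lemma as stated. If you want to recover the $\kappa^2$, you should consider the collective argument the paper uses: treating $\{\Delta_{ij}(D_{ij})\}_{i<j}$ as a centered random matrix whose operator norm you control uniformly, rather than bounding the fluctuation bond by bond.
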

\begin{proof}
Notice that
$$F_N(\tilde g)-F_N(\bar g)=\E_D\frac{1}{N}\ln \Big\langle e^{\frac{1}{2\sqrt N}\sum_{i< j} \frac{1}{\sqrt{N}}(\partial_w^2 g_{ij}(D_{ij},0)-\E_D[ \partial_w^2 g_{ij}(D_{ij},0)|B])(\vx_i^\trans \vx_j)^2)}\Big\rangle$$
where $$\langle f\rangle= \frac{\int f e^{\sum_{i< j} \bar g_{ij}(D_{ij},w_{ij})} d\pP_0^{\otimes N}(\vx)}{ \int e^{ \sum_{i< j} \bar g_{ij}(D_{ij},w_{ij})} d\pP_0^{\otimes N}(\vx)}\,.$$
Let $Z$ be the $N\times N$ symmetric  matrix with entries so that $Z_{ii}=0$ and for $i\neq j$
$$Z_{ij}=\frac{1}{4\sqrt{N}}(\partial_w^2 g_{ij}(\Sigma_{ij}^{-1} D_{ij},0)-\E_D[ \partial_w^2 g_{ij}(\Sigma_{ij}^{-1} D_{ij},0)|B])\,.$$
As a consequence
$$\sum_{i< j} \frac{1}{2\sqrt{N}}(g_{ww}(D_{ij},0)-\E_D[ g_{ww}(D_{ij},0)|B])(\vx_i^\trans \vx_j)^2=\Tr\left (Z (\vx^\trans \vx)^2\right).$$
$Z$ is a random  symmetric matrix under $\pP_B$ which  has centered independent entries with covariance bounded by $C/N$ and where $(\vx^\trans \vx)^2$ is the matrix with entries $(\vx_i^\trans \vx_j)^2$.
Because $\sqrt{N} Z$  has  bounded entries, we can use concentration inequalities due to Talagrand (see \cite[Theorem 2.3.5]{AGZ}  and \cite[Lemma 5.6]{HuGu}) to see that there exists some finite $L_0$ such that, uniformly, 
\begin{equation}\label{conc2}\PP_B\left(\|Z\|_\infty\ge L\right)\le e^{-N(L-L_0)}\,.\end{equation}
On $\{\|Z\|_\infty\le L\}$, we have the bound
$$\left|\Tr\bigl(Z (\vx^\trans \vx)^2\big)\right|=\left|\sum_{k,k'=1}^{\kappa} \sum_{i,j} Z_{ij} x_i(k)x_i(k') x_j(k)x_j(k')\right| \le L \sum_{k,k'=1}^{\kappa} \sum_{i=1}^N (x_i(k)x_i(k'))^2\le C L \kappa^2 N $$
for some finite constant $C$ depending only on the bound on the support of $\pP_0$.
Hence, we deduce
$$F_N(\tilde g)-F_N(\bar g)=\E_D 1_{\|Z\||\ge L} 
\frac{1}{N}\ln \langle e^{\frac{1}{\sqrt N}\sum_{i\le j} \frac{1}{2\sqrt{N}}(\partial_w^2 g_{ij}(Y_{ij},0)-\E_D[ \partial_w^2 g_{ij}(D_{ij},0)|B])(\vx_i^\trans \vx_j)^2)}\rangle + O\Big( \frac{\kappa^2}{\sqrt{N}}\Big).$$
Moreover as we assumed that  $\partial_w^2 g_{ij}(D,0)$ is  uniformly bounded, the term in the above expectation is uniformly bounded and therefore the first term is going to zero exponentially fast by \eqref{conc2} for $L$ large enough. 
\end{proof}

We finally compare our free energy to those of a spin glass model. It will depend on three matrices with entries:
\begin{equation}\label{eq:defM}
\gamma_{ij}= \E_D[ \partial_{w}^2 g_{ij}(D_{ij},0)\given \bvx^0], \quad
\mu_{ij} =\E_D [\partial_w g_{ij}(D_{ij}, 0) \given \bvx^0] ,\quad \sigma^2_{ij} = \E_D[ (\partial_w g_{ij}(D_{ij}, 0)  -\mu_{ij})^2 \given \bvx^0].
\end{equation}
By universality, we will prove that we can replace $\partial_w g_{ij}( D_{ij},0)$ by $\sigma_{ij} W_{ij} + \mu_{ij}$ where $W_{ij}$ are \iid standard Gaussian variables (under the assumption that $\sqrt{N} \mu_{ij} = O(1)$).  Let
$$F_N(\sigma,\mu,\gamma)=\E_{W,x^0}\left[\frac{1}{N}\ln\E_{x}\left[ \exp( H_N(\bvx))\right]\right]$$
with 
\begin{align}
H_N(\bvx) &= \bigg( \sqrt{\frac{1}{N}} \sum_{i<j} \sigma_{ij} W_{ij} \vx_i^\trans \vx_j + \mu_{ij} \vx_i^\trans \vx_j \bigg) + \frac{1}{2N} \sum_{i<j} \gamma_{ij} ( \vx_i^\trans  \vx_j )^2 \notag
\\&= \sum_{k = 1}^{\kappa} \bigg( \sqrt{\frac{1}{N}} \sum_{i<j} \sigma_{ij} W_{ij} \vx_i(k) \vx_j(k) +  \mu_{ij} \vx_i(k) \vx_j(k) \bigg) + \frac{1}{2N} \sum_{k,\ell = 1}^{\kappa} \sum_{i<j} \gamma_{ij} \vx_i(k)  \vx_j(k) \vx_i(\ell)  \vx_j(\ell) \label{eq:genHamil}.
\end{align}
Then we shall prove that :
\begin{lem}[Universality in Disorder] \label{lem:univ3}
Assume that  
\[
\sup_{i,j} \|\mu_{ij} \|_\infty= O(N^{-1/2}),~\sup_{ij} \|\sigma_{ij}^2 \|_\infty< \infty,~\sup_{i,j}  \bigg\| \frac{ \E_{D}[ |\partial_w g_{ij}( D_{ij},0) - \mu_{ij} |^3|\bvx^0] }{ \sigma_{ij}^3 } \bigg\|_\infty < \infty
\]
%\begin{itemize}
%	\item $\sup_{i,j} \|\mu_{ij} \|_\infty= O(N^{-1/2})$
%	\item $\sup_{ij} \|\sigma_{ij}^2 \|_\infty< \infty$
%	\item  $\sup_{i,j}  \bigg\| \frac{ \E_{Y}[ |\partial_w g_{ij}( Y_{ij},0) - \mu_{ij} |^3|\bvx^0] }{ \sigma_{ij}^3 } \bigg\|_\infty < \infty$
%\end{itemize}
then
$$F_N(\bar g)=F_N(\sigma,\mu,\gamma)+O\Big( \frac{ \kappa^3}{N^{1/2}} \Big) 
$$
where $F_N(\sigma,\mu,\gamma)$ is the free energy with respect to the Hamiltonian defined in \eqref{eq:genHamil}.
\end{lem}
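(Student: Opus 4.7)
The plan is to apply a Lindeberg-style exchange argument to swap, one pair $(i,j)$ at a time, the non-Gaussian coupling $a_{ij}:=\partial_w g_{ij}(D_{ij},0)$ for its Gaussian surrogate $\tilde a_{ij}:=\sigma_{ij}W_{ij}+\mu_{ij}$. The Hamiltonians underlying $F_N(\bar g)$ and $F_N(\sigma,\mu,\gamma)$ differ only in this linear-in-$w$ coefficient: the quadratic coefficient on both sides is exactly $\gamma_{ij}=\E_D[\partial_w^2 g_{ij}(D_{ij},0)\given\bvx^0]$ by definition. Conditional on $\bvx^0$, the families $\{a_{ij}\}_{i<j}$ and $\{\tilde a_{ij}\}_{i<j}$ are independent across pairs, and the definitions of $\mu_{ij},\sigma_{ij}^2$ give matched first and second moments: $\E[a_{ij}\given\bvx^0]=\E[\tilde a_{ij}\given\bvx^0]=\mu_{ij}$ and $\E[a_{ij}^2\given\bvx^0]=\E[\tilde a_{ij}^2\given\bvx^0]=\mu_{ij}^2+\sigma_{ij}^2$. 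This moment-matching structure is what the argument exploits.

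Enumerate the $M=N(N-1)/2$ pairs and define interpolating Hamiltonians $H_N^{(k)}$ that use $\tilde a$ on the first $k$ pairs and $a$ on the remaining $M-k$, so that $H_N^{(0)}$ produces $F_N(\bar g)$, $H_N^{(M)}$ produces $F_N(\sigma,\mu,\gamma)$, and the difference telescopes over single-pair swaps. At the $k$-th swap, freeze all other randomness together with $\bvx^0$ and view the log-partition function as a smooth function $\ln Z(x)$ of the single varying coupling $x$ at the active pair $(i,j)$, whose conjugate Hamiltonian variable is $w=(\vx_i\cdot\vx_j)/\sqrt{N}$. The standard log-partition-function identities
\begin{align*}
\partial_x \ln Z(x) &= \langle w\rangle_x, \\
\partial_x^2 \ln Z(x) &= \langle w^2\rangle_x-\langle w\rangle_x^2, \\
\partial_x^3 \ln Z(x) &= \langle w^3\rangle_x-3\langle w^2\rangle_x\langle w\rangle_x+2\langle w\rangle_x^3
\end{align*}
give $|\partial_x^k\ln Z(x)|\le 6|w|^k$ uniformly in $x$ for $k\le 3$, and Hypothesis~\ref{hypcompact} forces $|w|\le C^2\kappa/\sqrt{N}$, so $\|\partial_x^3\ln Z\|_\infty = O(\kappa^3 N^{-3/2})$. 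Taylor-expanding $\ln Z(x)$ to order three around $x=0$ and taking expectation in the swapped variable $x\in\{a_{ij},\tilde a_{ij}\}$ conditional on everything else, the matched moments annihilate the zeroth, first and second order terms of the difference, leaving only the third-order remainders.

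These remainders are bounded in absolute value by $\tfrac{1}{6}\|\partial_x^3\ln Z\|_\infty\bigl(\E[|a_{ij}|^3\given\bvx^0]+\E[|\tilde a_{ij}|^3\given\bvx^0]\bigr)$. Combining the third-moment hypothesis $\E[|a_{ij}-\mu_{ij}|^3\given\bvx^0]\le C\sigma_{ij}^3$ with $|\mu_{ij}|=O(N^{-1/2})$ and $\sigma_{ij}^2$ bounded yields $\E[|a_{ij}|^3\given\bvx^0]=O(1)$, and the Gaussian surrogate satisfies the same bound trivially. Hence each swap contributes $O(\kappa^3 N^{-3/2})$; summing over the $M\sim N^2/2$ pairs gives $O(\kappa^3 N^{1/2})$, and dividing by the $1/N$ in the free-energy normalization produces the advertised $O(\kappa^3 N^{-1/2})$ bound. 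The main technical point is the uniform bound on the third Gibbs cumulant of $w$, which reduces directly to the compact-support hypothesis via $|w|\le C^2\kappa/\sqrt{N}$; everything else is elementary moment matching and careful accounting of the $N^2$ third-order remainders.
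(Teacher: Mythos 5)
Your proof is correct. The paper proves Lemma~\ref{lem:univ3} via the Carmona--Hu smooth interpolation: it interpolates the coupling $\sqrt{t}\,\tilde W_{ij}+\sqrt{1-t}\,W_{ij}$ simultaneously over all pairs, differentiates the interpolating free energy $\phi(t)$, and applies the approximate Gaussian integration-by-parts lemma (Lemma~\ref{lem:approxIBP}) to each $\tilde W_{ij}$ together with exact integration by parts for $W_{ij}$; the matched first two moments cancel, and the third-moment error enters through the second Gibbs derivative of $\langle \vx_i^\trans\vx_j\rangle_t$, yielding $O(\kappa^3 N^{-5/2})$ per pair. You instead run a discrete Lindeberg replacement: swap one coupling $a_{ij}\mapsto\tilde a_{ij}$ at a time, Taylor-expand $\ln Z$ in the single coupling to third order around $x=0$, and use the matched conditional moments to annihilate the low-order terms. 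Both arguments reduce to the same core estimate --- a uniform $O(\kappa^3 N^{-3/2})$ bound on the third Gibbs cumulant of $w=(\vx_i\cdot\vx_j)/\sqrt{N}$, a uniform $O(1)$ third-moment bound on the couplings, and a sum over $\sim N^2/2$ pairs divided by the $1/N$ normalization, giving $O(\kappa^3 N^{-1/2})$ either way. The two routes are essentially dual: the one-at-a-time swap isolates the single-coupling dependence cleanly and needs no integration-by-parts surrogate, while the smooth interpolation avoids an explicit enumeration of pairs and folds the Gaussian side of the comparison into ordinary Gaussian IBP. Neither buys any improvement over the other in this setting; both are standard and both give exactly the stated error. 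One small caveat: your uniform bound $|\partial_x^k\ln Z|\le 6\|w\|_\infty^k$ is stated for all $k\le 3$, but the constant $6$ only applies to $k=3$; for $k=1,2$ the constants are $1$ and $2$. This has no effect on the argument, which only uses the $k=3$ bound.
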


The proof follows from the following  approximate integration by  parts lemma \cite[Lemma 3,7]{Pbook}.
\begin{lem}\label{lem:approxIBP}
Suppose $x$ is a random variable that satisfies $\E x = 0$, $\E |x|^3 < \infty$. If $f: \R \to \R$ is twice continuously differentiable and $\|f''\|_\infty < \infty$, then
\[
| \E x f(x) - \E x^2 \E f'(x)| \leq \frac{3}{2} \| f''\|_\infty \E |x|^3.
\]
\end{lem}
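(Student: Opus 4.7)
The plan is to apply two Taylor expansions centered at the origin: one of $f$ to second order and one of $f'$ to first order. Taking expectations and using $\E x = 0$ will cancel the leading $f(0)$ contribution, and subtracting the two identities will cancel the $f'(0)\E x^2$ term, leaving only remainder contributions that are controlled by $\|f''\|_\infty$ and third moments of $|x|$.

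Concretely, I would first write $f(x) = f(0) + f'(0) x + R_1(x)$ with $|R_1(x)| \leq \tfrac{1}{2} \|f''\|_\infty x^2$ by Taylor's theorem with Lagrange remainder. Multiplying by $x$ and taking expectations, the term $f(0)\E x$ drops out, giving
\[
\E x f(x) = f'(0) \E x^2 + \E[x R_1(x)].
\]
Analogously I would expand $f'(x) = f'(0) + R_2(x)$ with $|R_2(x)| \leq \|f''\|_\infty |x|$, from which
\[
\E x^2 \cdot \E f'(x) = f'(0) \E x^2 + \E x^2 \cdot \E R_2(x).
\]
Subtracting the two identities eliminates $f'(0)\E x^2$ and expresses the left-hand side of the lemma as the sum of the two explicit remainder terms $\E[x R_1(x)] - \E x^2 \cdot \E R_2(x)$.

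It then suffices to bound each remainder. The first satisfies $|\E[x R_1(x)]| \leq \tfrac{1}{2} \|f''\|_\infty \E|x|^3$ directly from the pointwise bound on $R_1$. For the second, $|\E x^2 \cdot \E R_2(x)| \leq \|f''\|_\infty \cdot \E x^2 \cdot \E|x|$; by H\"older's inequality with conjugate exponents $3/2$ and $3$ applied to $|x|^2$ and $|x|$ viewed as powers of $|x|^3$, one has $\E x^2 \leq (\E|x|^3)^{2/3}$ and $\E|x| \leq (\E|x|^3)^{1/3}$, whose product is exactly $\E|x|^3$. Adding the two bounds produces the constant $\tfrac{1}{2} + 1 = \tfrac{3}{2}$ in front of $\|f''\|_\infty \E|x|^3$.

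There is no substantive obstacle in this argument; the only point requiring care is the choice of H\"older exponents, which is what delivers the stated constant $3/2$. A cruder bound such as $\E|x|\leq (\E x^2)^{1/2}$ would yield $\E x^2 \cdot (\E x^2)^{1/2}$, which cannot be reduced to $\E|x|^3$ without further integrability assumptions on $x$, so this H\"older step is the one delicate ingredient in an otherwise routine Taylor-expansion proof.
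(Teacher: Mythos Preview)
Your proof is correct. The paper does not actually give its own proof of this lemma but cites it from Panchenko's book \cite{Pbook}; the argument there is essentially the same Taylor-expansion approach you give, so there is nothing further to compare.
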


\begin{proof} 
We follow the proof of Carmona--Hu \cite{UnivCARMONAHU} presented in \cite[Theorem 3.9]{Pbook}. To compare the free energies $F_N(\bar g)$ and $F_N(\sigma, \mu, \kappa)$ we use an interpolation argument. Conditionally on $\vx^0$, consider the interpolating Hamiltonian defined for $t \in [0,1]$ by
\begin{align*}
	H_{N}(\bvx,t) &= \frac{1}{\sqrt{N} } \sum_{i < j} \Big( \sqrt{t} ( \partial_{ij}g(D_{ij},0) - \mu_{ij}) + \sqrt{1 - t} \sigma_{ij} W_{ij} \Big) \vx_i^\trans \vx_j + \frac{1}{\sqrt{N}} \sum_{i < j} \mu_{ij}  \vx_i^\trans \vx_j  + \frac{1}{2N} \sum_{i < j} \gamma_{ij}  (\vx_i^\trans \vx_j)^2
	\\&= \frac{1}{\sqrt{N} } \sum_{i < j} \sigma_{ij} \Big( \sqrt{t} \tilde W_{ij}+ \sqrt{1 - t} W_{ij} \Big)  \vx_i^\trans \vx_j + \frac{1}{\sqrt{N}} \sum_{i < j} \mu_{ij}  \vx_i^\trans \vx_j  + \frac{1}{2N} \sum_{i < j} \gamma_{ij}  (\vx_i^\trans \vx_j)^2
\end{align*}
where we defined $\tilde W_{ij} = \sigma_{ij}^{-1} ( \partial_w g_{ij}( D_{ij},0) - \mu_{ij})$ to simplify notation. Notice that 
\[
\E_{D}[ \tilde W_{ij}^2\given \bvx^0] = \sigma_{ij}^{-2} \E_{D}[ ( \partial_w g_{ij}( D_{ij}, 0) - \mu_{ij})^2|\bvx^0] = 1
\]
and
\[
\E_{D} [\tilde W_{ij} \given \bvx^0] = \sigma_{ij}^{-1} \E_{D} [ ( \partial_wg_{ij}(D_{ij}, 0) - \mu_{ij} )  \given \bvx^0] = 0
\]
so both $W$ and $\tilde W$ have mean zero and variance 1. Also $\E_D |\tilde W_{ij}^3| \leq \frac{1}{\sigma_{ij}^3} (\| \partial_w g \|_\infty)^3$ is uniformly bounded.

We define the interpolating free energy 
\[
\phi(t) = \frac{1}{N} \E_{D}[ \ln \E_{X} \exp(  H_N(\bvx,t) )|\bvx^0], \langle f\rangle_t=\frac{\int f(\bvx)  \exp(  H_N(\bvx,t) )d\pP_0(\bvx)}{\int \exp(  H_N(\bvx,t) )d\pP_0(\bvx)}, \,
\]
and  notice that 
\[
\phi'(t) =\E_Y\left[ \frac{1}{2 \sqrt{t} N^{3/2}} \sum_{i < j} \sigma_{ij}\tilde W_{ij} \langle \vx_i^\trans \vx_j  \rangle_t - \frac{1}{2 \sqrt{1-t} N^{3/2}} \sum_{i < j} \sigma_{ij} W_{ij} \langle \vx_i^\trans \vx_j  \rangle_t \given[\bigg]\bvx^0\right].
\]
Let $f(\tilde W_{ij}) = \langle \vx_i^\trans \vx_j  \rangle_t$ 
(the dependence on $\tilde W$ is in the numerator and denominator in the Gibbs measure). We find that
\[
\frac{\partial f}{ \partial \tilde W_{ij} } = \frac{ \sqrt{t} \sigma_{ij}}{ \sqrt{N} } \bigg( \langle (\vx^{1}_i \cdot \vx^1_j)^2 \rangle_t - \langle ( \vx^{1}_i \cdot \vx^1_j )(\vx^{2}_i \cdot \vx^2_j) \rangle_t \bigg)
\]
and
\[
\frac{\partial^2 f}{ \partial \tilde W^2_{ij} } =  \frac{t \sigma^2_{ij}}{N}   \bigg(\langle (\vx^{1}_i \cdot \vx^1_j)^3 \rangle_t -2 \langle (\vx^{1}_i \cdot \vx^1_j )^2(\vx^{2}_i \cdot \vx^2_j) \rangle_t - \langle (\vx^{1}_i \cdot \vx^1_j)(\vx^{2}_i \cdot \vx^2_j)^2 \rangle_t + 2\langle( \vx^{1}_i \cdot \vx^1_j )(\vx^{2}_i \cdot \vx^2_j)(\vx^{3}_i \cdot \vx^3_j ) \rangle_t \bigg)\,.
\]
Therefore the second derivative is bounded by 
\[
\|\frac{\partial^2 f}{ \partial \tilde W^2_{ij} }\|\le
\frac{\sup_{ij} \|\E_{D}[ |\partial_w g_{ij}(D_{ij},0) - \mu_{ij} |^2 |\bvx^0]\|_\infty 6 C^6 \kappa^3}{N}
\]
where $C$ is such that   $\vx\in [-C,C]^\kappa$ almost surely. Applying the approximate integration by parts lemma to $\tilde W$ stated in Lemma~\ref{lem:approxIBP} applied conditionally on $\bvx^0$ implies
\begin{align*}
	&\bigg| \E_D[\frac{1}{2 \sqrt{t} N^{3/2}} \sigma_{ij}\tilde W_{ij} \langle \vx_i^\trans \vx_j  \rangle_t\given \bvx^0] - \frac{\sigma_{ij}^2}{2 N^2} \bigg( \E_D[ \langle (\vx^{1}_i \cdot \vx^1_j)^2 \rangle_t\given \bvx^0] - \E [\langle( \vx^{1}_i \cdot \vx^1_j )(\vx^{2}_i \cdot \vx^2_j) \rangle_t\given\bvx^0] \bigg) \bigg| 
	\\&\le  \frac{\sup_{ij} \|\E_{D}[ |\partial_w g_{ij}(D_{ij},0) - \mu_{ij} |^2 |\bvx^0]\|_\infty 6 C^6 \kappa^3}{N} \cdot \frac{3 \sup_{i,j} \E [ |\tilde W_{i,j}|^3 \given \bvx^0 ] }{4N^{3/2}} = O\Big( \frac{ \kappa^3}{N^{5/2}} \Big)
\end{align*}
by our assumption on the uniform bounds on the conditional expectation of $\tilde W$. The classical integration by parts lemma for Gaussian variables implies
\[
\E_W \bigg[\frac{1}{2 \sqrt{1-t} N^{3/2}} \sigma_{ij} W_{ij} \langle \vx_i^\trans \vx_j  \rangle_t \bigg]=\frac{ \sigma_{ij}^2}{2 N^2}\E_W \bigg[ \bigg(\langle (\vx^{1}_i \cdot \vx^1_j)^2 \rangle_t - \langle( \vx^{1}_i \cdot \vx^1_j )(\vx^{2}_i \cdot \vx^2_j) \rangle_t \bigg) \bigg].
\]
Summing over $i < j$ gives us the bound
\[
|\phi'(t)| \leq O\Big( \frac{ \kappa^3}{N^{1/2}} \Big)
\]
so that
\[
|\phi(1) - \phi(0)| = |  F_N(\bar g)-F_N(\sigma,\mu,\kappa) | \leq O\Big( \frac{ \kappa^3}{N^{1/2}} \Big) .
\]
\end{proof}

\begin{rem}
We use the notation $\vx^1_i, \vx^2_i, \dots $ to denote the replicas. They are independent copies of $\vx$ under the corresponding Gibbs measures.
\end{rem}

We can now further reduce our problem to a Gaussian estimation problem under the Bayes optimal assumption of Hypothesis \ref{hypbayes}. Indeed, recalling the definition of the Fisher score matrix \eqref{eq:fisherscore}, it results in special relation between $\sigma,\mu$ and $\gamma$ which state as follows.

\begin{lem}\label{lembayes}Assume Hypotheses \ref{hypbayes} and \ref{hypcompact}. Then for all $i,j\in [N]$ the terms defined in \eqref{eq:defM} satisfy
$$ \mu_{ij} =\frac{\vx_i^0 \cdot \vx_j^0}{\Delta_{ij} \sqrt{N}} + O(\kappa^2 N^{-1}) , \quad \sigma_{ij}^2 =  -\gamma_{ij} + O(\kappa N^{-1/2}), \quad \gamma_{ij}=- \frac{1}{\Delta_{ij}} + O(\kappa N^{-1/2})$$
\end{lem}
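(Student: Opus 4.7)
\smallskip
\noindent\textbf{Proof sketch.} The plan is to exploit two elementary identities that hold under Bayes optimality (Hypothesis~\ref{hypbayes}): for any fixed $w$ for which $P_{ij}(\cdot\mid w)$ is a probability density,
\[
\int \partial_w g_{ij}(D,w)\,P_{ij}(D\mid w)\,dD=0,\qquad \int \bigl[\partial_w^2 g_{ij}(D,w)+(\partial_w g_{ij}(D,w))^2\bigr]P_{ij}(D\mid w)\,dD=0,
\]
obtained by differentiating $\int P_{ij}(D\mid w)\,dD=1$ once and twice under the integral sign and using $\partial_w P=P\,\partial_w g$. The second identity, applied at $w=0$, immediately gives $\int \partial_w^2 g_{ij}(D,0)\,P_{ij}(D\mid 0)\,dD=-1/\Delta_{ij}$ by the very definition of $\Delta_{ij}$ in \eqref{eq:fisherscore}.

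Now the three quantities $\mu_{ij}$, $\gamma_{ij}$, $\sigma_{ij}^2$ in \eqref{eq:defM} are, under Bayes optimality, all integrals of fixed functions of $D$ against $P_{ij}(D\mid w_{ij}^0)$ with $w_{ij}^0=\vx_i^0\cdot\vx_j^0/\sqrt N$. Since Hypothesis~\ref{hypcompact} forces $|w_{ij}^0|\le C^2\kappa/\sqrt N$, the natural strategy is a Taylor expansion in $w$ around $w=0$: for a test function $\phi$, define
\[
F_\phi(w)=\int \phi(D)\,P_{ij}(D\mid w)\,dD,\qquad F_\phi'(w)=\int \phi(D)\,\partial_w g_{ij}(D,w)\,P_{ij}(D\mid w)\,dD,
\]
and similarly for $F_\phi''$. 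Under Hypothesis~\ref{hypg}, the first and second derivatives $\partial_w g_{ij}$, $\partial_w^2 g_{ij}$ are bounded at $w=0$ and (by the bound on $\partial_w^3 g_{ij}$ and Taylor's theorem) remain bounded uniformly on the small interval $|w|\le C^2\kappa/\sqrt N$; consequently $F_\phi''$ is bounded uniformly when $\phi=\partial_w g_{ij}(\cdot,0)$, $\phi=\partial_w^2 g_{ij}(\cdot,0)$, or $\phi=(\partial_w g_{ij}(\cdot,0))^2$.

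Applying this to $\phi=\partial_w g_{ij}(\cdot,0)$: the Fisher identity gives $F_\phi(0)=0$, and $F_\phi'(0)=\int (\partial_w g_{ij}(D,0))^2 P_{ij}(D\mid 0)\,dD=1/\Delta_{ij}$. Hence $\mu_{ij}=F_\phi(w_{ij}^0)=w_{ij}^0/\Delta_{ij}+O((w_{ij}^0)^2)=\vx_i^0\cdot\vx_j^0/(\Delta_{ij}\sqrt N)+O(\kappa^2/N)$, giving the first assertion. Applying it to $\phi=\partial_w^2 g_{ij}(\cdot,0)$: the second Fisher identity at $w=0$ yields $F_\phi(0)=-1/\Delta_{ij}$, and $F_\phi'(0)$ is $O(1)$, so $\gamma_{ij}=-1/\Delta_{ij}+O(w_{ij}^0)=-1/\Delta_{ij}+O(\kappa/\sqrt N)$, the third claim. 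Finally, applied to $\phi=(\partial_w g_{ij}(\cdot,0))^2$, one gets $\E_D[(\partial_w g_{ij}(D_{ij},0))^2\mid \bvx^0]=1/\Delta_{ij}+O(\kappa/\sqrt N)$; subtracting $\mu_{ij}^2=O(\kappa^2/N)$ gives $\sigma_{ij}^2=1/\Delta_{ij}+O(\kappa/\sqrt N)$, which combined with the formula for $\gamma_{ij}$ yields $\sigma_{ij}^2=-\gamma_{ij}+O(\kappa/\sqrt N)$, the second claim.

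There is no real obstacle: the only point requiring a small amount of care is justifying the Taylor remainder estimates, i.e.\ checking that $\partial_w g_{ij}$, $\partial_w^2 g_{ij}$ and their products with $P_{ij}$ remain uniformly bounded in $D$ and in $w\in[-C^2\kappa/\sqrt N,C^2\kappa/\sqrt N]$; this is immediate from Hypothesis~\ref{hypg}, since the uniform bound on $\|\partial_w^3 g_{ij}\|$ propagates the $w=0$ bounds on $\partial_w g_{ij}$ and $\partial_w^2 g_{ij}$ to this vanishing interval.
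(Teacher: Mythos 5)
Your proof is correct and takes essentially the same route as the paper: both start from the two Fisher-score identities obtained by differentiating $\int e^{g_{ij}(y,w)}\,dy=1$ once and twice, and then Taylor-expand around $w=0$ using the boundedness from Hypothesis~\ref{hypg} and the $O(\kappa/\sqrt N)$ smallness of $w_{ij}^0$ from Hypothesis~\ref{hypcompact}. The only difference is organizational --- you package the expansion as the function $F_\phi(w)=\int\phi(D)P_{ij}(D\mid w)\,dD$ and evaluate $F_\phi(0)$, $F_\phi'(0)$, whereas the paper subtracts $\partial_w g_{ij}(y,w^0_{ij})$ inside the integral and Taylor-expands the integrand --- but the underlying computation is identical.
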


\begin{proof} 
The fact that $g_{ij}(D,w)$ is a log-likelihood is very important in this section, and the results in this final decomposition do not apply for general $g$. %Since we know all prior distributions, we can compute the exact posterior probability
%\[
Firstly, using the fact that $\pP_{ij}(\cdot|w)$ is a probability measure for all $w$ and $i < j$,
\begin{equation}\label{eq:out_probability}
	\int \pP_{ij} (y|w) \, dy  = \int e^{ g_{ij}(y,w)} dy = 1.
\end{equation}
By differentiating \eqref{eq:out_probability}, it follows that for all $w$, 
\begin{equation}\label{eq:center}
	\E_{\pP_{ij}(D|w)} \partial_w g_{ij}(D,w) =  \partial_w \int  e^{ g_{ij}(y,w)} \, dy  = 0
\end{equation}
and
\begin{equation}\label{eq:center2}
	\E_{\pP_{ij}(D|w)} \Big( (\partial_w g_{ij}(D,w))^2 + \partial_w^2 g_{ij}(D,w) \Big) = \partial_w^2 \int  e^{ g_{ij}(y,w)} \, dy  = 0.
\end{equation}
The conditional laws of $D_{ij}$ given the signal $\bvx^0$ are independent, so we have the following reductions of the parameters $\mu_{ij}$ and $\sigma_{ij}$. By \eqref{eq:center} we get 
\begin{eqnarray*}
	\mu_{ij}&=&\int \partial_w g_{ij}(y,0) e^{g_{ij}(y,w^0_{ij})}dy\\
	&=&\int (\partial_w  g_{ij}(y,0) -\partial_w  g_{ij}(y,w^0_{ij}) )e^{g_{ij}(y,w^0_{ij})}dy\\
	%&=& w_{ij}^0\int \frac{(\partial_w  g_{ij}(y,0) -\partial_w  g_{ij}(y,w^0_{ij}) )}{w_{ij}^0}( e^{g_{ij}(y,0)}  + O(w_{ij}^0) )dy\\
	&=&-w^0_{ij} \int \partial_w^2 g_{ij}(y,0) e^{g_{ij}(y,0)}dy+ O((w^0_{ij})^2)\\
	&=&-\Big( \frac{\vx_i^0 \cdot \vx_j^0}{\sqrt{N}} \Big) \E_{\pP_{ij}(D|w = 0)} \partial_w^2 g_{ij}(D,0) + O(\kappa^2N^{-1})%\\
	%&=&\Big( \frac{\vx_i^0 \cdot \vx_j^0}{\sqrt{N}} \Big) \E_{\pP_\out(Y|w = 0)} g_{w^2}(Y,0) + O(N^{-1})
\end{eqnarray*}
where we applied Taylor's theorem in the fourth equality. Applying \eqref{eq:center2} one more time implies that
\[
\mu_{ij} =  \frac{\vx_i^0 \cdot \vx_j^0}{\sqrt{N}} \E_{\pP_{ij}(D|w = 0)} (\partial_w g_{ij}(D,0))^2  + O(\kappa^2 N^{-1}) =  \frac{\vx_i^0 \cdot \vx_j^0}{\Delta_{ij} \sqrt{N}} + O(\kappa^2 N^{-1}) 
\]
where we recall that \eqref{eq:fisherscore}
\[
\frac{1}{\Delta_{ij}} = \E_{P_{ij}(D|w=0)} (\partial_w g_{ij}(D,0) )^2.
\]
Similarly, \eqref{eq:center2} implies 
\begin{eqnarray*}
	\sigma_{ij}^2&=& \int (\partial_w g_{ij}(y,0))^2 e^{g(y,w^0_{ij}) }dy -\mu_{ij}^2\\
	&=& \int(\partial_w g_{ij}(y,0))^2 -(\partial_w g_{ij}(y,w_{ij}^0))^2 )e^{g_{ij}(y,w_{ij}^0)} dy -\int \partial_w^2 g_{ij}(y,w_{ij}^0) e^{g_{ij}(y,w_{ij}^0)} dy  -\mu_{ij}^2\\
	&=&-\gamma_{ij}  +O(w_{ij})= -\gamma_{ij} + O(\kappa N^{-1/2}).
	\\
	%&=& -2 \Big( \frac{\vx_i^0 \cdot \vx_j^0}{\sqrt{N}} \Big) \E_{\pP_\out(Y|w = 0)} \partial_w g_{ij}(Y,0) \partial_w^2 g_{ij} (Y,0) -\gamma_{ij}  +O(\kappa^2 N^{-1}).
\end{eqnarray*}
We can do this trick one more time and apply \eqref{eq:center2} to see that 
\begin{eqnarray*}
	\gamma_{ij}&=& \int \partial_w^2 g_{ij}(y,0) e^{g_ij(y,w^0_{ij}) }dy \\
	%&=& \int ( \partial_w^2 g_{ij}(y,0) - \partial_w^2 g_{ij}(y,w^0_{ij}) ) e^{g_{ij}(y,w^0_{ij}) }dy - \int (\partial_w g_{ij}(y,w) )^2 e^{g_{ij}(y,w^0_{ij}) }dy\\
	&=& -w_{ij} \int \partial_w^3 g_{ij}(y,0) e^{g_{ij}(y,w^0_{ij}) }dy - \int ( (\partial_w g_{w}(y,w))^2 - (\partial_w g_{ij}(y,0) ) ^2 ) e^{g_{ij}(y,0) }dy - \frac{1}{\Delta_{ij}} + O(\kappa N^{-1/2})\\
	&=& - \frac{1}{\Delta_{ij}} + O(\kappa N^{-1/2}).
\end{eqnarray*}
\end{proof}
With this in mind, an interpolation argument and Gaussian integration by parts will prove that the Hamiltonian associated with the free energy $F(\sigma,\mu,\gamma)$ in the Bayes optimal case
\[
H_{N,\sigma,\mu,\gamma}(\bvx) = \sum_{i<j} \frac{\sigma_{ij} W_{ij}}{\sqrt{N}} (\vx_i \cdot \vx_j) + \frac{\mu_{ij}}{\sqrt{N}} (\vx_i \cdot \vx_j) + \frac{1}{2N} \gamma_{ij} ( \vx_i \cdot \vx_j )^2
\]
can be replaced with the following Hamiltonian
\[
H_{N,\Delta}(\bvx) = \sum_{i<j} \frac{W_{ij}}{   \sqrt{\Delta_{ij} N }} (\vx_i \cdot \vx_j) + \frac{( \vx_i^0 \cdot \vx^0_j)}{\Delta_{ij} N} (\vx_i \cdot \vx_j) - \frac{1}{2 \Delta_{ij} N} ( \vx_i \cdot \vx_j )^2
\]
without changing the limit of the free energy. We
let
\begin{equation}\label{defFD}
F_N(\Delta)=\frac{1}{N} \E_{W,\vx^0} \ln \E_X e^{H_{N,\Delta}(\bvx)}\,.
\end{equation}
The form of this Hamiltonian is identical to the free energy in the low rank matrix estimation with Hadamard covariance profile, which we introduced in Subsection~\ref{subsec:effective}. 
\begin{lem}[Reduction to Low Rank Hamiltonian] \label{lem:univ4}
If \label{lem:lowrankoptimal} \eqref{eq:out_probability} holds, then
$$F_N(\sigma,\mu,\kappa)=F_N(\Delta) +O(\kappa^3 N^{-1/2})\,.$$
\end{lem}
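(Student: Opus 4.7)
The plan is to exploit the asymptotic identities from Lemma~\ref{lembayes}---which in the Bayes-optimal setting identify $(\sigma_{ij}, \mu_{ij}, \gamma_{ij})$ with $(1/\sqrt{\Delta_{ij}}, \vx_i^0 \cdot \vx_j^0/(\Delta_{ij}\sqrt N), -1/\Delta_{ij})$ up to small errors---and to interpolate between the two Hamiltonians. Since both disorder families are genuinely Gaussian here, I would use exact Gaussian integration by parts throughout, avoiding the error term from the approximate IBP lemma that was needed in Lemma~\ref{lem:univ3}.

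First I would rewrite, uniformly in $i<j$,
\[
\sigma_{ij} = \frac{1}{\sqrt{\Delta_{ij}}} + \varepsilon^{\sigma}_{ij}, \qquad \mu_{ij} = \frac{\vx_i^0 \cdot \vx_j^0}{\Delta_{ij} \sqrt N} + \varepsilon^{\mu}_{ij}, \qquad \gamma_{ij} = -\frac{1}{\Delta_{ij}} + \varepsilon^{\gamma}_{ij},
\]
with $|\varepsilon^{\sigma}_{ij}|, |\varepsilon^{\gamma}_{ij}| = O(\kappa N^{-1/2})$ and $|\varepsilon^{\mu}_{ij}| = O(\kappa^2 N^{-1})$ (the bound on $\varepsilon^\sigma$ follows from the bound on $\sigma_{ij}^2$ and a Taylor expansion of the square root, which is valid on a fixed compact subset of $(0,\infty)$). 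Next I would introduce an independent family $\{W'_{ij}\}_{i<j}$ of i.i.d.\ standard Gaussians and consider
\[
H_t(\bvx) = \sum_{i<j} \frac{\sqrt{t}\,\sigma_{ij} W_{ij} + \sqrt{1-t}\,\Delta_{ij}^{-1/2} W'_{ij}}{\sqrt N}(\vx_i \cdot \vx_j) + \frac{\mu_{ij}(t)}{\sqrt N}(\vx_i \cdot \vx_j) + \frac{\gamma_{ij}(t)}{2N}(\vx_i \cdot \vx_j)^2,
\]
where $\mu_{ij}(t), \gamma_{ij}(t)$ are linearly interpolated between their $\Delta$-values at $t=0$ and the $(\sigma,\mu,\gamma)$-values at $t=1$. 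Setting $\phi(t) = \frac{1}{N}\E \log \int e^{H_t(\bvx)} \, d\pP_0^{\otimes N}(\bvx)$, one checks directly that $\phi(0) = F_N(\Delta)$ and $\phi(1) = F_N(\sigma,\mu,\gamma)$.

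I would then compute $\phi'(t) = \frac{1}{N}\E \langle H'_t(\bvx) \rangle_t$ and apply Gaussian IBP in each $W_{ij}$ and $W'_{ij}$; the singular prefactors $(2\sqrt t)^{-1}$ and $(2\sqrt{1-t})^{-1}$ produced by the chain rule cancel cleanly against the $\sqrt t, \sqrt{1-t}$ generated by IBP, leaving the noise contribution in the clean two-replica form
\[
\frac{1}{2N^2} \sum_{i<j} \Bigl(\sigma_{ij}^2 - \frac{1}{\Delta_{ij}}\Bigr) \E \bigl\langle (\vx^1_i \cdot \vx^1_j)^2 - (\vx^1_i \cdot \vx^1_j)(\vx^2_i \cdot \vx^2_j) \bigr\rangle_t .
\]
Using $|\sigma_{ij}^2 - 1/\Delta_{ij}| = O(\kappa N^{-1/2})$ from Lemma~\ref{lembayes} and $|\vx_i \cdot \vx_j| \le \kappa C^2$ from Hypothesis~\ref{hypcompact}, this piece is $O(\kappa^3 N^{-1/2})$. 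The drift and quadratic contributions to $\phi'(t)$ are bounded directly by
\[
\Bigl|\frac{1}{N}\sum_{i<j} \frac{\varepsilon^{\mu}_{ij}}{\sqrt N} \E\langle \vx_i \cdot \vx_j \rangle_t\Bigr| + \Bigl|\frac{1}{N}\sum_{i<j} \frac{\varepsilon^{\gamma}_{ij}}{2N} \E\langle (\vx_i \cdot \vx_j)^2 \rangle_t\Bigr| = O(\kappa^3 N^{-1/2}),
\]
using $N(N-1)/2$ pairs together with the error scales of $\varepsilon^{\mu}, \varepsilon^{\gamma}$.

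Integrating $\phi'(t) = O(\kappa^3 N^{-1/2})$ on $[0,1]$ then yields $|F_N(\sigma,\mu,\gamma) - F_N(\Delta)| = O(\kappa^3 N^{-1/2})$, which is the claim. The only real obstacle is bookkeeping of the two-replica cross-terms generated by Gaussian IBP and verifying that each of them is controlled by one of the three error scales above; there are no new conceptual difficulties beyond those handled in Lemma~\ref{lem:univ3}, and in fact the calculation is cleaner because both disorder families are truly Gaussian, so no $O(\E|\tilde W|^3)$ correction from Lemma~\ref{lem:approxIBP} arises.
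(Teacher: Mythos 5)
Your proof is correct and takes essentially the same approach as the paper: a one-parameter Gaussian interpolation between the two Hamiltonians, exact Gaussian integration by parts on each of the two independent noise families (the singular $(2\sqrt t)^{-1}$ and $(2\sqrt{1-t})^{-1}$ factors cancel in IBP just as you say), and then direct estimation of the resulting noise, drift, and quadratic discrepancies using the asymptotic identities from Lemma~\ref{lembayes}, giving $|\phi'(t)| = O(\kappa^3 N^{-1/2})$. Your error accounting is if anything slightly more careful than the paper's proof text (which states $O(\kappa N^{-1/2})$ for $\phi'(t)$; the stated rate in the lemma is indeed $O(\kappa^3 N^{-1/2})$ once the $\kappa^2 C^4$ bound on $|\langle(\vx_i\cdot\vx_j)^2\rangle|$ and the $N^2/2$ pairs are tracked).
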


\begin{proof}
Consider the interpolating Hamiltonian,
\begin{align*}
	H_N(t,\bvx) &= \sum_{i<j} \frac{\sqrt{t} \sigma_{ij} W_{ij}}{\sqrt{N}} (\vx_i \cdot \vx_j) + \frac{t\mu_{ij}}{\sqrt{N}} (\vx_i \cdot \vx_j) + \frac{t}{2N} \kappa_{ij} ( \vx_i \cdot \vx_j )^2
	\\&\quad + \sum_{i<j} \frac{\sqrt{1-t} \tilde W_{ij}}{ \sqrt{N \Delta_{ij}}} (\vx_i \cdot \vx_j) + \frac{t(\vx_i^0 \cdot \vx_j^0)}{\Delta_{ij} N} (\vx_i \cdot \vx_j) - \frac{t}{2 \Delta_{ij} N} ( \vx_i \cdot \vx_j )^2.
\end{align*}
where $W$ and $\tilde W$ are independent standard Gaussians. If we define
\[
\phi(t) = \frac{1}{N} \E_{W,\tilde W, x^0} \ln \E_x e^{H_N(t,\bvx)}
\]
then 
\begin{align*}
	\phi'(t) &= \frac{1}{N} \E \bigg\langle \partial_t H_N(t,\bvx) \bigg\rangle_t
	\\&=\frac{1}{N}\E \bigg(\sum_{i<j} \frac{\sigma_{ij} W_{ij}}{2 \sqrt{t} \sqrt{N}} \langle \vx_i \cdot \vx_j \rangle_t  + \frac{\mu_{ij}}{\sqrt{N}} \langle \vx_i \cdot \vx_j \rangle_t + \frac{1}{2N} \kappa_{ij} \langle (\vx_i \cdot \vx_j)^2 \rangle_t \bigg)
	\\&\quad - \frac{1}{N} \E\bigg( \sum_{i<j} \frac{\tilde W_{ij}}{2 \sqrt{1-t}  \sqrt{\Delta_{ij} N }} \langle \vx_i \cdot \vx_j\rangle_t + \frac{(\vx_i^0 \cdot \vx_j^0)}{\Delta_{ij} N} \langle \vx_i \cdot \vx_j \rangle_t - \frac{1}{2 \Delta_{ij} N} \langle (\vx_i \cdot \vx_j )^2 \rangle_t \bigg)
\end{align*}
where $\langle \cdot \rangle_t$ is the average with respect to the Gibbs measure $G_t \propto e^{H_N(t,\bvx)}$. Recall that \eqref{eq:center} and \eqref{eq:center2} imply
\[
\mu_{ij}  =  \frac{\vx_i^0 \cdot \vx_j^0}{\sqrt{\Delta_{ij} N}} + O(\kappa^2N^{-1}) \quad\text{and}\quad \sigma_{ij}^2 =  -\gamma_{ij} + O(\kappa N^{-1/2}) = \frac{1}{\Delta_{ij}} + O(\kappa N^{-1/2})
\]
This implies that the terms without the Gaussian $W,\tilde W$ cancel each other. For the first terms of each expectation,  we integrate by parts to find 
\[
\frac{1}{N}\E \sum_{i<j} \frac{\sigma_{ij} W_{ij}}{2 \sqrt{t} \sqrt{N}} \langle \vx_i \cdot \vx_j \rangle_t = \frac{1}{2N^2} \sum_{i < j} \sigma_{ij}^2\big( \langle (\vx_i^1 \cdot \vx_j^1)^2 \rangle_t - \langle (\vx_i^1 \cdot \vx_j^1)(\vx_i^2 \cdot \vx_j^2) \rangle_t\big)
\]
and
\[
\frac{1}{N}\E \sum_{i<j} \frac{\tilde W_{ij}}{2 \sqrt{1 - t}  \sqrt{ \Delta_{ij} N}} \langle \vx_i \cdot \vx_j \rangle_t = \frac{1}{2 \Delta_{ij} N^2} \sum_{i < j} \big( \langle (\vx_i^1 \cdot \vx_j^1)^2 \rangle_t - \langle (\vx_i^1 \cdot \vx_j^1)(\vx_i^2 \cdot \vx_j^2) \rangle_t\big)
\]
so the difference of the Gaussian terms are also of order  $O(\kappa N^{-1/2})$. Therefore,
\[
|\phi'(t)| = O(\kappa N^{-1/2}),
\]
which completes the proof after integrating $\phi$ on $[0,1]$. 
\end{proof}

\begin{rem}
Notice that $\mu_{ij} = O( \frac{1}{\sqrt{N}} )$, so one of the hypothesis in Lemma~\ref{lem:univ3} is automatically satisfied. 
\end{rem}

\begin{rem}
In the spin glass setting,
\[
\int e^{g(D,w)} \, dy = \int e^{Dw} \, dy
\]
does not equal to $1$, so \eqref{eq:out_probability} is not satisfied and Lemma~\ref{lem:lowrankoptimal} doesn't apply.
\end{rem}

The proof of the main universality theorem is now immediate.

\begin{proof}[Proof of Lemma~\ref{prop:universality}]
The result follows by combining Lemma~\ref{lem:univ1}, Lemma~\ref{lem:univ2}, Lemma~\ref{lem:univ3}, and Lemma~\ref{lem:univ4}.
\end{proof}

Lastly, the computations will be much simpler if $\pP_0(\vx^0)$ was supported on finitely many points. We will now prove that general $\pP_0$ with bounded support can be approximated by $\pP_0$ with finite support.  Suppose that $\pP_0$ is supported on $[-C, C]^\kappa$. We can discretize the support in $K$ blocks by defining the function $f: [-C,C]^\kappa \to [-C,C]^\kappa$ by
\[
f( x_1, \dots, x_\kappa ) = \bigg( \frac{C}{K} \Big\lfloor \frac{x_1K}{C} \Big\rfloor , \dots, \frac{C}{K} \Big\lfloor \frac{x_\kappa K}{C} \Big\rfloor  \bigg).
\]
Notice that each coordinate of $f(\vx)$ is supported on $2K$ points. We define $\pP_d = \pP \circ f^{-1}$, and set
\[
F_N(\Delta) = \E_{W,x^0\sim \pP_0}\Big[\frac{1}{N}\ln\E_{x \sim \pP_0}\left[ \exp\{ H_{N,\Delta}(\bvx)\}\right] \Big] ~ \text{and} ~ F^d_N(\Delta)=\E_{W,x^0 \sim \pP_d}\Big[\frac{1}{N}\ln\E_{x\sim \pP_d}\left[ \exp\{ H_{N,\Delta}(\bvx)\}\right] \Big] .
\]
\begin{lem}[Approximations of References Measures with Finite Support]\label{lem:finitesupport}
For any $N \geq 1$, we have 
\[
|F_N(\Delta) - F_N^d(\Delta)| \leq \frac{2 C^4 \kappa^2 }{\| \Delta \|_\infty K}.
\]
\end{lem}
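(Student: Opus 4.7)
The plan is to couple $F_N(\Delta)$ and $F_N^d(\Delta)$ via the deterministic map $f$: since $\pP_d=\pP_0\circ f^{-1}$ is the pushforward of $\pP_0$, we may rewrite
\[
F_N^d(\Delta)=\frac{1}{N}\,\E_{W,x^0\sim\pP_0}\log\int e^{H_{N,\Delta}(f(x^0),f(x),W)}\,d\pP_0^{\otimes N}(x),
\]
which exhibits $F_N-F_N^d$ as the expected difference of two log-partition functions on the \emph{same} space $[-C,C]^{\kappa N}$, differing only in that all occurrences of $x$ and $x^0$ in the Hamiltonian are replaced by $f(x)$ and $f(x^0)$. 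I would then introduce the linear interpolation $H^t=(1-t)H_{N,\Delta}(x^0,x,W)+t\,H_{N,\Delta}(f(x^0),f(x),W)$ and the interpolating free energy $\phi(t)=\frac{1}{N}\E\log\int e^{H^t}\,d\pP_0^{\otimes N}(x)$, so that $F_N(\Delta)-F_N^d(\Delta)=\phi(0)-\phi(1)=-\int_0^1\phi'(t)\,dt$ with $\phi'(t)=\frac{1}{N}\E\langle\partial_t H^t\rangle_t$ and $\langle\cdot\rangle_t$ the Gibbs average attached to $H^t$.

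Because each coordinate of $f(x)-x$ has absolute value at most $C/K$ while $|x_i|\le C$ componentwise, one has the uniform pointwise bounds $|x_i\cdot x_j-f(x_i)\cdot f(x_j)|\le 2C^2\kappa/K$ and $|(x_i\cdot x_j)^2-(f(x_i)\cdot f(x_j))^2|\le 4C^4\kappa^2/K$, together with an analogous bound for the cross term $(x_i^0\cdot x_j^0)(x_i\cdot x_j)$ obtained by adding and subtracting $(f(x_i^0)\cdot f(x_j^0))(x_i\cdot x_j)$. Consequently the two non-Gaussian pieces of $\partial_t H^t$ (the signal--configuration product and the $(x_i\cdot x_j)^2$ correction) contribute at most $\sum_{i<j}(C^4\kappa^2/K)/(\Delta_{ij}N)$ to $|\langle\partial_t H^t\rangle_t|$; using the uniform lower bound on $\Delta_{ij}$ and dividing by $N$ in the free energy gives an $O(C^4\kappa^2/(K\|\Delta\|_\infty))$ contribution to $|\phi'(t)|$.

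The genuinely delicate term is the Gaussian piece
\[
\sum_{i<j}\frac{W_{ij}}{\sqrt{\Delta_{ij}N}}\bigl(f(x_i)\cdot f(x_j)-x_i\cdot x_j\bigr),
\]
since a naive uniform supremum in $x$ of a sum of $\Theta(N^2)$ weighted Gaussians would only produce $O(N^{-1/2})$ after dividing by $N$, which is weaker than the required $O(1/K)$ and not uniform in $N$. This is the main obstacle, and I would resolve it by Gaussian integration by parts in each $W_{ij}$: using $\partial_{W_{ij}}H^t=((1-t)x_i\cdot x_j+t\,f(x_i)\cdot f(x_j))/\sqrt{\Delta_{ij}N}$, IBP gives
\[
\E\,W_{ij}\bigl\langle f(x_i)\cdot f(x_j)-x_i\cdot x_j\bigr\rangle_t=\frac{1}{\sqrt{\Delta_{ij}N}}\,\E\,\mathrm{Cov}_t\Bigl(f(x_i)\cdot f(x_j)-x_i\cdot x_j,\ (1-t)x_i\cdot x_j+t\,f(x_i)\cdot f(x_j)\Bigr),
\]
whose absolute value is bounded by $(2C^2\kappa/K)(2C^2\kappa)/\sqrt{\Delta_{ij}N}$ since both factors inside the covariance are uniformly bounded. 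Multiplying by the prefactor $1/\sqrt{\Delta_{ij}N}$ from the Gaussian term and summing in $(i,j)$ produces $\sum_{i<j}O(C^4\kappa^2/K)/(\Delta_{ij}N)$; dividing by $N$ gives once again $O(C^4\kappa^2/(K\|\Delta\|_\infty))$. Integrating $\phi'(t)$ over $[0,1]$ and combining the three contributions yields the claimed bound, with the Gaussian piece being the only step that requires anything beyond elementary pointwise estimates.
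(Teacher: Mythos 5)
Your proof is correct but takes a genuinely different route from the paper's. The paper's interpolation uses $\sqrt t\,W_{ij}$ coupled to the $(x_i\cdot x_j)$ term and an \emph{independent} Gaussian $\sqrt{1-t}\,\tilde W_{ij}$ coupled to the discretized $(\tilde x_i\cdot\tilde x_j)$ term, with coefficients $t$ and $1-t$ in front of the signal and quadratic pieces; this keeps the interpolating Gibbs measure a genuine Bayes-optimal posterior at every $t$, so the Nishimori identity is available along the whole path. Gaussian integration by parts then cancels the Gaussian and quadratic contributions to $\phi'(t)$ against each other, leaving only the two signal cross terms, which the paper bounds by Cauchy--Schwarz together with the discretization estimate $\|f(x)-x\|_\infty\le C/K$. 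Your linear interpolation $H^t=(1-t)H+tH'$ with a \emph{shared} Gaussian $W$ destroys the Bayes-optimal structure at intermediate $t$, so Nishimori is unavailable; instead you bound each of the three pieces of $\partial_t H^t$ directly, handling the Gaussian piece by the bare Gibbs-covariance bound after integration by parts in $W_{ij}$, which is exactly right and fills the gap you correctly identified (a naive supremum bound would lose the required uniformity). This is a legitimate and in fact more elementary alternative --- it works verbatim even outside the Bayes-optimal setting --- at the cost of a somewhat larger numerical constant, since the Gaussian, cross, and quadratic terms each now contribute a separate $O(\kappa^2 C^4/(K\,\inf_{ij}\Delta_{ij}))$ rather than cancelling. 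Both proofs ultimately rest on the same pointwise estimate $|x_i\cdot x_j - f(x_i)\cdot f(x_j)|\le 2\kappa C^2/K$ and the uniform lower bound on $\Delta_{ij}$.
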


\begin{proof}
To simplify notation, we define $\tilde x_i = f(\vx_i)$ to be the discretization of $\vx_i$ and $F_N(\Delta)=F_N(\pP_0)$. Consider the interpolating Hamiltonian
\begin{align*}
	H_N(\bvx;t) &=  \sum_{i<j} \frac{ \sqrt{t} W_{ij}}{   \sqrt{ \Delta_{ij} N}} (\vx_i \cdot \vx_j) + \frac{t  ( \vx_i^0 \cdot \vx^0_j)}{\Delta_{ij} N} (\vx_i \cdot \vx_j) - \frac{t}{2 \Delta_{ij} N} ( \vx_i \cdot \vx_j )^2\\
	&+  \sum_{i<j} \frac{ \sqrt{1-t} \tilde W_{ij}}{ \Delta_{ij}  \sqrt{N}} (\tx_i \cdot \tx_j) + \frac{(1-t)  ( \tx_i^0 \cdot \tx^0_j)}{\Delta_{ij} N} (\tx_i \cdot \tx_j) - \frac{(1 -t)}{2 \Delta_{ij} N} ( \tx_i \cdot \tx_j )^2
\end{align*}
where $\tilde W_{ij}$ and $W_{ij}$ are independent standard Gaussians. We consider the usual interpolation Hamtiltonian
\[
\phi(t) = \frac{1}{N} \E\log \E_x e^{H_N(\bvx;t)}.
\]
From integration by parts and the Nishimori property (see equation~\ref{nishimori}) to simplify the Gaussian terms, we see that
\[
\phi'(t) = \sum_{i < j}  \frac{1}{2 \Delta_{ij} N^2} \Big( \E \langle ( \vx_i^0 \cdot \vx^0_j)(\vx_i \cdot \vx_j)   \rangle - \E \langle  ( \tx_i^0 \cdot \tx^0_j)(\tx_i \cdot \tx_j) \rangle  \Big).
\]
Since the $\frac{1}{\Delta_{ij}^2}$ are uniformly bounded, we have the upper bound
\begin{align*}
	|\phi'(t)| &= \frac{1}{4\| \Delta\|_\infty }  \Big| \E \langle ( \vx_i^0 \cdot \vx^0_j)(\vx_i \cdot \vx_j) - ( \tx_i^0 \cdot \tx^0_j)(\tx_i \cdot \tx_j) \rangle  \Big|
	\\&\leq   \frac{1}{4\| \Delta\|_\infty }  \Big| \E \langle ( \vx_i \cdot \vx_j - \tx_i \cdot \tx_j)(\vx^0_i \cdot \vx^0_j)   \rangle \bigg| + \frac{1}{4\| \Delta\|_\infty } \Big| \E \langle  ( \vx_i^0 \cdot \vx^0_j - \tx_i^0 \cdot \tx^0_j)(\tx_i \cdot \tx_j) \rangle  \Big|
	\\&\leq \frac{1}{4 \| \Delta \|_\infty}  \Big( \big( \E \langle ( \vx_i \cdot \vx_j - \tx_i \cdot \tx_j)^2 \rangle \E \langle (\vx^0_i \cdot \vx^0_j)^2   \rangle \big)^{1/2} +  \big( \E \langle  ( \vx_i^0 \cdot \vx^0_j - \tx_i^0 \cdot \tx^0_j)^2 \rangle \E \langle (\tx_i \cdot \tx_j)^2 \rangle \big)^{1/2}  \Big|  \Big)
	\\&\leq  \frac{C^2 \kappa}{\| \Delta \|_\infty}  \Big(  \E \langle ( \vx_i \cdot \vx_j - \tx_i \cdot \tx_j)^2 \rangle  \Big)^{1/2}
	\\&\leq  \frac{C^2 \kappa}{\| \Delta \|_\infty}  \Big(  \E \langle ( \vx_i \cdot ( \vx_j - \tx_j )  + \tx_j \cdot ( \vx_i - \tx_i ) )^2 \rangle  \Big)^{1/2} .
\end{align*}
Since $\| \vx_i - \tx_i\|_\infty \leq \frac{C}{K}$ and $\| \vx_j - \tx_j \|_\infty \leq \frac{C}{K}$ and $\pP_0$ has compact support, we get the rough bound
\[
|\phi'(t)| \leq \frac{2C^4 \kappa^2 }{\| \Delta \|_\infty K} \implies |\phi(1) - \phi(0)| \leq \frac{2 C^4 \kappa^2 }{\| \Delta \|_\infty K},
\]
so the statement follows since $\phi(1) = F_N(\pP_0)$ and $\phi(0) = F_N(\pP_d)$.
\end{proof}
\begin{rem} We can also modify this proof so that it holds as long the probability is compactly supported if we assume that the tails do not grow too much. We just have to be more careful since instead of using the uniform bound on the measure, we can get a bound in terms of $\E \| \vx\|_2^2$ like in the proof in \cite{lelargemiolanematrixestimation}. The same argument (interpolating among the $\Delta$'s rather than the $\vx$) shows how to deduce Theorem \ref{thm:main2} from Theorem \ref{thm:main}.
\end{rem}

Therefore, without loss of generality, we may assume that $\pP_0$ is supported on finitely many points, because we can always approximate general $\pP_0$ with compact support up to arbitrary accuracy with a probability measure supported on only finitely many points. Similarly we can assume that $\Delta$ is stepwise constant. 

\subsection{Spectrum Universality}

In this section, we state a stronger form of universality than Lemma~\ref{prop:universality}. Consider the transformed data matrix
\[
\frac{\tilde Y_{ij}}{\sqrt{N}} =  \frac{1}{\sqrt{N}}  \partial_w g_{ij} (D_{ij},0)  \qquad i,j \leq N
\]
where $(D_{ij})_{i,j}$ are independent and distributed according to $P_{ij}$ conditionally on $\bx^{0}$ as in \eqref{eq:conddist}, 
and the normalized spiked matrix with variance profile $\frac{1}{\sqrt{N}} Y^\Delta$ defined in \eqref{eq:inhomospiked} 
\[
Y^\Delta =   \bD^{\odot \frac{1}{2}} \odot W + \frac{(\bvx^0) (\bvx^0)^{T}}{\sqrt{N} }.
\]
In the Bayes optimal setting the first two moments of the matrix $\tilde Y$ are equivalent to those of $\frac{1}{\Delta} \odot Y^\Delta$ up to a $O(\frac{1}{\sqrt{N}})$ term. Define
\[ 
\tilde \mu_{ij} = \E_Y [\tilde Y_{ij} \given \bx^0] ,\quad \tilde \sigma^2_{ij}  = \E_Y[ (\tilde Y_{ij}  -\tilde \mu_{ij})^2 \given \bx^0].
\]
and
\[
\mu_{ij} = \E_Y \bigg[\frac{1}{\bD} \odot Y^\Delta \given[\bigg] \bx^0\bigg] ,\quad  \sigma^2_{ij}  = \E_Y\bigg[ (\frac{1}{\bD} \odot Y^\Delta  - \mu_{ij})^2 \given[\bigg] \bx^0\bigg]
\]
By Lemma~\ref{lembayes}, it follows that
\begin{cor}Assume that $\pP_X \in \sP(\R^k)$ has compact support. Then for all $i,j\in [N]$, we have
$$ \tilde \mu_{ij} =\frac{x_i^0 \cdot x_j^0}{\Delta_{ij} \sqrt{N}} + O(k^2 N^{-1}) ,~ \tilde \sigma_{ij}^2 = \frac{1}{\Delta_{ij}} + O(k N^{-1/2})$$
and 
$$ \mu_{ij} =\frac{x_i^0 \cdot x_j^0}{\Delta_{ij} \sqrt{N}},~ \sigma_{ij}^2 = \frac{1}{\Delta_{ij}}.$$
\end{cor}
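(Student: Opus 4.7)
The corollary is essentially a bookkeeping consequence of Lemma~\ref{lembayes} combined with a one-line Gaussian computation, so my plan is to treat the two pairs of identities separately and then check that the definitions line up.

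For the first pair, the key observation is that by the very definition of $\tilde Y_{ij} = \partial_w g_{ij}(D_{ij},0)$, the quantities $\tilde \mu_{ij}$ and $\tilde \sigma_{ij}^2$ in the corollary coincide exactly with the $\mu_{ij}$ and $\sigma_{ij}^2$ introduced in \eqref{eq:defM} (both are the conditional mean and variance, given $\bvx^0$, of $\partial_w g_{ij}(D_{ij},0)$ under the conditional law $\pP_{ij}(\cdot\mid w^0_{ij})$). I would therefore apply Lemma~\ref{lembayes} verbatim, which under Hypotheses~\ref{hypbayes} and~\ref{hypcompact} gives
\[
\mu_{ij} = \frac{\vx_i^0 \cdot \vx_j^0}{\Delta_{ij}\sqrt N} + O(\kappa^2 N^{-1}), \qquad \sigma_{ij}^2 = -\gamma_{ij} + O(\kappa N^{-1/2}) = \frac{1}{\Delta_{ij}} + O(\kappa N^{-1/2}).
\]
The only point to double-check is that the hypotheses invoked in Lemma~\ref{lembayes} (namely Bayes-optimality, compact support of $\pP_X$, and the smoothness on $g_{ij}$ from Hypothesis~\ref{hypg}) are inherited from the standing assumptions; this is immediate in the present Bayes-optimal framework with $\pP_X$ compactly supported.

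For the second pair, I would just compute directly from \eqref{eq:inhomospiked}. Writing
\[
\frac{1}{\Delta_{ij}}\, Y^\Delta_{ij} = \frac{1}{\sqrt{\Delta_{ij}}}\, W_{ij} + \frac{\vx_i^0 \cdot \vx_j^0}{\Delta_{ij}\sqrt N},
\]
and using that the $W_{ij}$ are standard Gaussian entries independent of $\bvx^0$, the conditional mean given $\bvx^0$ is $\frac{\vx_i^0 \cdot \vx_j^0}{\Delta_{ij}\sqrt N}$ and the conditional variance is $\frac{1}{\Delta_{ij}}$, with no correction terms. This gives the second display in the corollary exactly.

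I do not anticipate any genuine obstacle here: the statement is a side-by-side summary of the first two conditional moments of the transformed data matrix and of its Gaussian counterpart, serving as the moment-matching input to the spectrum universality Theorem~\ref{theo:univspec}. The only subtlety is keeping the Fisher-score normalization $1/\Delta_{ij} = \E(\partial_w g_{ij}(D,0))^2$ straight so that the leading-order mean $\vx_i^0 \cdot \vx_j^0/(\Delta_{ij}\sqrt N)$ and the leading-order variance $1/\Delta_{ij}$ produced by Lemma~\ref{lembayes} agree precisely with the deterministic computation on the Gaussian spike model.
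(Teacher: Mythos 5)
Your proposal is correct and follows the same route as the paper, which simply invokes Lemma~\ref{lembayes} after noting the identification $\tilde Y_{ij}=\partial_w g_{ij}(D_{ij},0)$; the second display is, as you say, an immediate read-off of conditional moments from the Gaussian model \eqref{eq:inhomospiked}.
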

Furthermore, we assume that the Fisher information matrix \eqref{eq:fisherscore} satisfies the following assumption
\begin{hyp}[Quadratic Vector Equation Conditions]\label{hypQVE}
Assume that there exists parameters $p,q,P > 0$ and $L \in \N$ such that
\begin{enumerate}
	\item For all $N$, 
	\[
	\frac{1}{\Delta_{ij}} \leq q \qquad i,j \leq N
	\]
	\item For all $N$, 
	\[
	\bigg( \frac{1}{N \Delta} \bigg)^L_{ij} \geq \frac{p}{N} \qquad i,j \leq N.
	\]
	\item The unique solution $(m_i(z))_{i \leq N}$  of  vector of analytic functions on $\mathbb C^{+}=\{\Im z>0\}$ to the following quadratic vector equation,
	\[
	-\frac{1}{m_i(z)} = z + \sum_{j = 1}^N \frac{1}{\Delta_{ij}} m_j(z) \qquad \Im(z) > 0.
	\] going to zero when $\Im z$ goes to infinity, 
	satisfies
	\[
	|m_i(z)| \leq P,  \qquad i,j \leq N,  \Im(z) > 0.
	\]
\end{enumerate}
\end{hyp}

We have
\begin{theo}[Universality of the Spectrum]\label{univspec}
If $g$ satisfies Hypothesis~\ref{hypDelta} and the corresponding Fisher information matrix \eqref{eq:fisherscore} satisfies Hypothesis~\ref{hypQVE}, then
\begin{enumerate}
	\item Conditionally on $\bvx^0$, the empirical distribution $\mu_1$ of the eigenvalues of $\frac{\tilde Y_{ij}}{\sqrt{N}}$ and the empirical distribution $\mu_1$ of the eigenvalues of $\frac{1}{\sqrt{N} \Delta} \odot Y^\Delta$ satisfy
	\begin{equation}\label{convmeas}
		\lim_{N \to \infty} d( \mu_1, \mu_2) \to 0
	\end{equation}
	in probability. 
	\item Conditionally on $\bvx^0$, when the dimension goes to infinity, $\frac{1}{\sqrt{N} \Delta} \odot Y^\Delta$ has an extremal eigenvalue away from the bulk iff 
	$\frac{\tilde Y_{ij}}{\sqrt{N}}$ does, for almost all 
	$\Delta$ and $\rho$. 
\end{enumerate}
\end{theo}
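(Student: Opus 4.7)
The plan is to write both matrices as a centered Wigner-type matrix with variance profile $1/(N\Delta_{ij})$ plus a deterministic (conditionally on $\bvx^0$) bounded-rank perturbation, and then reduce the problem to the universality theory for Wigner-type matrices under Hypothesis~\ref{hypQVE}. By the corollary of Lemma~\ref{lembayes}, the entries of $\tilde Y_{ij}/\sqrt N$ have conditional mean $\tilde\mu_{ij}/\sqrt N = \frac{x_i^0\cdot x_j^0}{\Delta_{ij}N}+O(\kappa^2/N^{3/2})$ and variance $\tilde\sigma_{ij}^2/N = 1/(N\Delta_{ij})+O(\kappa/N^{3/2})$, which match the mean $\frac{x_i^0\cdot x_j^0}{\Delta_{ij}N}$ and variance $1/(N\Delta_{ij})$ of the entries of $\frac{1}{\sqrt N\bD}\odot Y^\Delta$. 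Using the block structure of $\Delta$ in Hypothesis~\ref{hypDelta}, both mean matrices can be written as $\frac{1}{N}\sum_{s,t,k}\Delta_{st}^{-1}\,v_s^{(k)}(v_t^{(k)})^\trans$ with $v_s^{(k)}(i)=x_i^0(k)\mathbf{1}_{i\in I_s}$, so they have rank at most $\kappa n$; and their entrywise difference of order $\kappa^2/N^{3/2}$ yields an operator-norm gap $O(\kappa^2/\sqrt N)$.

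\textbf{Bulk part (1).} I would apply the global law for Wigner-type matrices, whose solvability is guaranteed by Hypothesis~\ref{hypQVE}: the empirical spectral distributions of the two centered matrices $(\tilde Y-\tilde\mu)/\sqrt N$ and $W\oslash\sqrt{N\Delta}$ both converge in probability to the deterministic measure whose Stieltjes transform is $\frac{1}{N}\sum_i m_i(z)$, with $m_i$ the unique QVE solution in the hypothesis. This is a consequence of Wigner-type universality since the entries are independent, centered, bounded (via compact support of $\bvx^0$ from Hypothesis~\ref{hypcompact} and Hypothesis~\ref{hypg}), and have asymptotically the same variance profile. Adding the rank-$O(\kappa n)$ mean perturbations then alters any Levy-type distance by at most $O(\kappa n/N)$ via Weyl interlacing, which yields \eqref{convmeas}.

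\textbf{Outliers part (2).} Since the two full matrices differ by the sum of (a) a centered Wigner-type matrix with matching variance profile and (b) a bounded-rank deterministic perturbation that agrees in the limit, I would invoke an isotropic local law under Hypothesis~\ref{hypQVE} to characterize outliers as the solutions of $\det(1-P\,\mathbf m(z))=0$ lying strictly outside the limiting spectral support, where $P$ is the mean perturbation and $\mathbf m(z)$ is the deterministic resolvent approximation given by the QVE. As both $P$ and $\mathbf m$ coincide in the limit for the two models, the outlier configurations must agree. The ``almost every $(\bD,\bp)$'' qualifier serves to exclude the measure-zero set of parameters for which an outlier sits exactly at the spectral edge, where the criterion is unstable.

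\textbf{Main obstacle.} The truly nontrivial step is the outlier analysis in part (2): bulk universality for Wigner-type matrices is now well-developed and part (1) needs essentially no new input beyond Hypothesis~\ref{hypQVE}, but part (2) requires a sharp isotropic local law together with a BBP-type variational characterization of outliers for a bounded-rank perturbation whose range is correlated with the variance-profile block partition. In addition, one must verify that this variational characterization is genuinely universal, so that it applies to the non-Gaussian entries of $\tilde Y/\sqrt N$ and not only to the Gaussian benchmark $Y^\Delta$. This last point is where the moment assumptions implicit in Hypothesis~\ref{hypg} and the structural bounds on the QVE solution in Hypothesis~\ref{hypQVE} are indispensable.
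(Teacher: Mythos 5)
Your proposal follows the same route as the paper: match the conditional first two moments to $O(N^{-1/2})$ precision, invoke Wigner-type universality (Ajanki--Erd\H{o}s--Kr\"uger) under Hypothesis~\ref{hypQVE} for the bulk, and use the isotropic local law together with a BBP-type determinant criterion for the outliers, with an ``almost all $(\bD,\bp)$'' qualifier to ensure stability of the outlier equation. The paper carries out the outlier step that you flag as the main obstacle by exploiting the piecewise-constant structure of $\bD$ to collapse the resolvent quadratic form $\langle v_i,(\lambda-X)^{-1}v_j\rangle$ (approximated via the isotropic local law) into a finite $n\times n$ determinant equation whose root is transversal for generic parameters; otherwise the two arguments coincide.
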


\begin{proof}
We  fix the realization of $\bvx^0$ and assume that $\bvx^0 \in \R^n$ to simplify notation (see Remark~\ref{rem:finiterank} for the generalization to higher rank).  We first show that the spectrum of $\frac{1}{\sqrt{N} }  \tilde Y^\Delta$ and the spectrum of 
$$Z= \frac{1}{\sqrt{N} } \left( \tilde Y -\tilde\mu+\mu\right)$$
differ  by a matrix with operator norm bounded by $O(1/\sqrt{N})$.
Indeed, bounding the operator norm by the Hilbert-Schmidt norm, we get, %assuming $\Delta_{st}>c>0$ for all $s,t$, 
$$\|Z-\frac{1}{\sqrt{N} } \tilde Y\|_{op} \le\left(\frac{1}{N } \sum_{ij}(\mu_{ij}-\tilde\mu_{ij})^{2}\right)^{1/2}\le \sqrt{O\Big( \frac{k^{4} N^{2}}{N^{3}}\Big)}=O\Big(\frac{k^{2}}{\sqrt{N}} \Big)\,.$$
Next, we check that the empirical distribution of the eigenvalues of $Z$ and  of $\frac{1}{\sqrt{N} \Delta} \odot Y^\Delta$ are close, as well as the largest eigenvalue. We 
first consider the recentered matrices 

$$\tilde W= \frac{1}{\sqrt{N} } \left(\tilde Y -\tilde\mu\right), \qquad W_{ij}=\frac{1}{\sqrt{N} }(\Delta_{ij}^{-1}Y^\Delta_{ij}-\mu_{ij})\,.$$
We use \cite{universalitywigner2017} and check that all the conditions  of this paper are satisfied by these matrices $W$ and $\tilde W$.
We therefore can use \cite[Theorem~1.7 and (1.22)]{universalitywigner2017}, to conclude that the Stieltjes transform  $G_{\tilde W}(z)$ of $\tilde W$  and  the Stilejes transform $G_{W}(z)$ of
$W$ are close to their deterministic limits. Namely, under these  hypotheses, if $X$ is a matrix with centered independent entries with variance $s_{ij}$  bounded by $c/N$ then for any deterministic vector $w$ so that $\|w\|_{\infty}\le 1$
\begin{equation}\label{conv}\left|\frac{1}{N} \sum_{i=1}^{N}w_{i}((z-X)^{-1}_{ii}-m^{s}_{i}(z))\right|\le C\frac{1}{\sqrt{N\Im z}}
\end{equation}
where ${\mathbf m}^{s}=\{m^{s}_{i}\}_{1\le i\le N}$ is the unique solution of the vector equation
$$-\frac{1}{m^{s}_{i}(z)}=z+\sum_{i=1}^{N}s_{ij}m^{s}_{j}(z).$$
We therefore only need to check that $m^{\tilde\sigma/N}$ and $m^{\sigma/N}$ are close and apply \eqref{conv} with $w_{i}=1$ for all $i$ to conclude that the Stieltjes transform of both matrices are close to each other, yielding the conclusion by \eqref{conv}. This follows from  \cite[Corollary 3.4]{universalitywigner2017} which asserts that if $\Im z >\delta$,
\begin{equation}\label{an} \|{\mathbf m}^{\sigma/N}(z)-{\mathbf m}^{\tilde \sigma/N}(z)\|_{\infty}\le\frac{1}{\delta} \max_{i}\frac{1}{N}\sum_{j}|\sigma_{ij}^{2}-\tilde\sigma^{2}_{ij}|\le O \Big( \frac{k}{\sqrt{N} \delta} \Big)\,.
\end{equation}
Hence, combining with \eqref{conv} (with $w_{i}=1$), we conclude that  the empirical measures $\mu_{W}$ and $\mu_{\tilde W}$ converge vaguely to the same deterministic limit. Since moreover $\frac{1}{N}\Tr(W^{2})$ and $\frac{1}{N}\Tr(\tilde W^{2})$ are uniformly bounded with overwhelming probability, we deduce that $d(\mu_{1},\mu_{2})$ goes to zero in probability. 

Observe that $  \mu$ has finite rank because $x^{0}(x^{0})^{T}$ has finite rank and $\Delta$ is piecewise constant. Therefore, the empirical measure of the eigenvalues of $\frac{1}{\sqrt{N}}\tilde Y$  and  $\frac{1}{\sqrt{N} \Delta} \odot Y^\Delta$  are approximately the same as those of 
$W$ and $\tilde W$ by Weyl's interlacing property. This shows \eqref{convmeas}.

Moreover,  \cite[Corollary 1.10]{universalitywigner2017} show that the eigenvalues of $W$ and $\tilde W$   stick to the bulk, namely the extreme eigenvalues converge towards the boundary of the support of the measure with Stieljes transform $\sum \rho_{s}m_{s}^{\sigma/N}(z)$ and  $\sum \rho_{s}m_{s}^{\tilde\sigma/N}(z)$. Because of \eqref{an}, these boundaries are very close to each other.

We next study the BBP transition  and show that the top eigenvalues of the matrices
$$ Z= \tilde W+  \frac{1}{\sqrt{N}}  \mu \mbox{ and } \frac{1}{\sqrt{N}\Delta}\odot Y^\Delta= W+    \frac{1}{\sqrt{N}}  \mu $$
have the same limits.
Recall that in general, if $X$ is a self-adjoint matrix and $R= \sum_{i=1}^{r} \theta_{i}v_{i}v_{i}^{T}$,$\theta_{i}\neq 0$, is a finite rank matrix then $\lambda$ is an eigenvalue of $X+R$ iff 
$\det( \lambda-X-R)$ vanishes, and therefore if $\lambda$ does not belong to the spectrum of $X$, this is also equivalent to:
\begin{equation}\label{BBP}
	0=\det( I-(\lambda-X)^{-1}R)=\prod \theta_{i}\det (\mbox{diag}(\theta_{j}^{-1})- \left( \langle v_{i},(\lambda-X)^{-1} v_{j}\rangle\right)_{1\le i,j\le r}).
\end{equation}
Therefore, to prove that an eigenvalue pops out of the bulk, it is  necessary and sufficient to prove that the above  right hand side vanishes for some $\lambda$ outside of the bulk of $X$.
We will show that 
the matrix $\left( \langle v_{i},(\lambda-X)^{-1} v_{j}\rangle\right)_{1\le i,j\le r}$ converges  for 	$X=Z$ and  $ \frac{1}{\sqrt{N} \Delta} \odot Y^\Delta$  and that the limiting equation for the outliers  has a unique and stable solution.
In our case,  $$R=   \frac{1}{\sqrt{N}}  \mu=
\frac{1}{N} \frac{1}{\Delta} \odot x x^\trans
$$
and $\Delta$ is piecewise constant. Let $x(s) = (x_i \1(i \in I_s))_{i \leq N}$. We have the following decomposition
\begin{equation}\label{eq:decomspecies}
	R = \frac{1}{N} \sum_{s,t = 1}^n  \frac{\|x(s)\| \|x(t)\|}{\Delta_{s,t}}  \frac{x(s)}{\|x(s)\|} \bigg( \frac{x(t)}{\|x(t)\|} \bigg)^\trans.
\end{equation}
For fixed $x,s, t$ we denote $M_{st}=\frac{\|x(s)\|\|x(t)\|}{\Delta_{st}}$. $M$ is symmetric and if $(\gamma_{i},w_{i})$ are its eigenvectors and eigenvalues, we find that 
\[
R = \frac{1}{N} \sum_{i = 1}^n\gamma_{i}  v_{i}v_{i}^{T}
\]
with $v_{i}=\sum_{s=1}^{n} \frac{x(s)}{\|x(s)\|} w_{i}(s)$ an orthonormal family of eigenvectors of $R$. 
\cite[Theorem~1.13]{universalitywigner2017} implies that for any $\gamma>0$ 
$$\left \|  \left( \langle v_{i},(\lambda+i N^{-1+\gamma}-Z)^{-1} v_{j}\rangle\right)_{1\le i,j\le n} -\left( \sum_{k=1}^{N}m^{\tilde \sigma}_{k}(\lambda +i N^{-1+\gamma}) v_{i}(k) v_{j}(k)\right)_{1\le i,j\le n}\right\| $$
and
$$\left \| 
\left( \langle v_{i},(\lambda +i N^{-1+\gamma}-\frac{1}{\sqrt{N} \Delta^{2}} \odot Y^\Delta)^{-1} v_{j}\right)_{1\le i,j\le n} 
-\left( \sum_{k=1}^{N}m^{\sigma}_{k}(\lambda +i N^{-1+\gamma} ) v_{i}(k) v_{j}(k)\rangle\right)_{1\le i,j\le n}\right\| $$
go to zero with overwhelming probability. If $\lambda$ is outside of the support of the limiting distribution then we can remove the small complex number $i N^{-1+\gamma}$, and since we have seen that the support of the eigenvalues of both centered matrices converge to the same limit this is fine for any $\lambda$ at a positive distance of this limiting support.
Moreover, by the stability property \eqref{an} we know that 
$$\left \| \left( \sum_{k=1}^{N}m^{\tilde \sigma/N}_{k}(\lambda+iN^{-1+\gamma}) v_{i}(k) v_{j}(k)\right)_{1\le i,j\le n}
-\left( \sum_{k=1}^{N}m^{\sigma/N}_{k}(
\lambda+i N^{-1+\gamma}) v_{i}(k) v_{j}(k)\rangle\right)_{1\le i,j\le n}\right\| $$
goes to zero if $\lambda$ is away from the support of the limiting measure. Hence the only thing to verify is that the largest solution $\lambda$ to \eqref{BBP} does not change much under these small perturbations. To that end, first notice that because $\Delta$ is piecewise constant, so is $\sigma$ and therefore ${\mathbf m}^{\sigma}$ is piecewise constant, and $m^{\sigma}_{i}(z)$ equals to $m^{\sigma/N}_{s}(z)$ for $i\in I_{s}$. We can therefore sum over the indices inside each $I_{s}$ and find
$$\left( \sum_{k=1}^{N}m^{\sigma/N}_{k}(z) v_{i}(k) v_{j}(k)\right)_{1\le i,j\le n}= \left( \sum_{s=1}^{n}m^{\sigma/N}_{s}(z) w_{i}(s) w_{j}(s)\right)_{1\le i,j\le n}= w\mbox{diag} (m(z)) w^{T}$$
Therefore the outliers of $X= \frac{1}{\sqrt{N} \Delta} \odot Y^\Delta$  and $X=Z$ satisfy 
$$\det \left(\mbox{diag}(\gamma_{j}^{-1})- w\mbox{diag} (m^{\sigma/N}(\lambda)) w^{T}+\epsilon(X) \right)=0$$ 
with $\epsilon(X)$ a matrix with operator norm going to zero. 
The last thing to check is that the largest solution to this equation are arbitrarily close to each others when $\|\epsilon(X)\|_{op}$ go to zero. But the above equations characterize the outliers as zeroes of the analytic function (outside of the support of the limiting measure) 
$$ F(M,\lambda)=\det\left(M -w\mbox{diag} (m^{\sigma/N}(\lambda) ) w^{T} \right)=0$$
where $M$ belongs to a neighborhood of $\mbox{diag}(\gamma_{j}^{-1})$. As long as the derivative of $F$ in $\lambda$ does not vanish, its solution is smooth. This is true for almost all $\gamma_{i}$'s, namely almost all $\Delta$ and $\rho$.

Finally, observe that if two $N\times N$  matrices $X$ and $Y$ are such that their largest eigenvalues are close and their empirical measures are close (with  atomless limits) then $X$ and $Y$ are close in operator norms in the sense that if the eigenvalues $\lambda_{i}(X)$ and $\lambda_{i}(Y)$ are increasing in $i$, 
$$\limsup_{N\rightarrow \infty}\max_{i}|\lambda_{i}(X)-\lambda_{i}(Y)|=0$$ in probability. This applies to $X=Z$ and $Y=\frac{1}{\sqrt{N} } \tilde Y$ by the previous arguments. Indeed,  for any self-adjoint matrix $Z$

$$\hat x^{i-1}_{Z} \le \lambda_{i}(Z)\le \hat x^{i}_{Z}$$
where $\hat x_{Z}^{i}=\inf\{ x:\hat \mu_{Z}([x,\lambda_{max}(Z)])\ge (N-i)/N \}$ and $\hat\mu_{Z}$ is the empirical measure of the eigenvalues of $Z$. But, because the empirical measure of the eigenvalues converge towards the same limit and the limit correspond to an atomless measure, together with the convergence towards the same limit of $\lambda_{max}(X)$ and $\lambda_{max}(Y)$, we find that
for each $\delta>0$, for $N$ large enough $\max_{i}|\hat x^{i}_{X}-\hat x^{i}_{Y}|\le \delta$. 	
\end{proof}

\begin{rem}\label{rem:finiterank}
If $\bvx^0 \in \R^{N \times \kappa}$, then we can write decompose
\[
(\bvx^0) (\bvx^0)^\trans = \sum_{j = 1}^\kappa \theta^j u^j (u^j)^\trans,
\]
where $\theta^1 \geq \theta^2 \geq \dots \geq \theta_\kappa$. 
Then repeating the computation following \eqref{eq:decomspecies} with $u^j(s) = (\sqrt{\theta^j} u^j_i \1(i \in I_s) )_{i \leq N}$ and $R$ of the form
\[
R = \frac{1}{N} \sum_{j = 1}^\kappa \sum_{s,t = 1}^n  \frac{\|v^j(s)\| \|v^j(t)\|}{\Delta_{s,t}}  \frac{v^j(s)}{\|v^j(s)\|} \bigg( \frac{v^j(t)}{\|v^j(t)\|} \bigg)^\trans.
\]
The rest of the proof remains unchanged, since we only examine the behavior of $v^1$ .
\end{rem}

\section{Lower Bound - Gaussian Interpolation} \label{sec:lwbd}

Given a sequence of symmetric matrix $\kappa \times \kappa$ matrix $\bQ_s$ for each $s \leq n$, we want to derive the replica symmetric formula. Let
\[
\tilde \bQ_s = \sum_{t \leq n} \frac{1}{\Delta_{s,t}} \rho_t \bQ_t .
\]
and define 
\begin{align*}
\phi(\bQ) &= - \sum_{s,t =1}^n \frac{\rho_s \rho_t}{4 \Delta_{s,t}}\Tr( (\bQ_s)^\trans \bQ_t) +  \sum_{s =1}^n \rho_s \E \ln \bigg[ \int \exp \bigg( \bigg( \tilde \bQ_s\vx^0 + \sqrt{\tilde \bQ_s} \vz \bigg)^\trans \vx - \frac{\vx^\trans \tilde \bQ_s \vx}{2}  \bigg) \, d \pP_X(\vx) \bigg]
\end{align*}
where $\vx^0 \sim \pP_0$ and $\vz \sim N(0,\bI_r)$. Recall that, in the Bayes optimal case we defined in \eqref{eq:fisherscore}
\[
\frac{1}{\Delta_{ij}} = \E_{P_\out(D|w=0)} ( \partial_w g_{ij}(D,0) )^2
\]
which takes $n^2$ different values by Hypothesis~\ref{hypg} on $\Delta$. 
The goal of this section is to prove that $\phi$ is a lower bound for the free energy.

\begin{theo}[Bayes Optimal Lower Bound of the Free Energy] Assume Hypotheses \ref{hypcompact}, \ref{hypDelta}, \ref{hypbayes}. 
Then, for any $\bQ = (\bQ_1, \dots, \bQ_n) \in (\mathbb{S}_\kappa^+)^n$,
\[
F_N(\Delta) \geq \phi( \bQ ) - O(\kappa N^{-1/2}).
\]
\end{theo}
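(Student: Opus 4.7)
The plan is to apply Guerra's interpolation method, adapted to the Bayes optimal setting where the Nishimori identity reverses the usual spin-glass sign and produces a lower rather than upper bound.

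\textbf{Interpolation setup.} I would introduce an interpolating Hamiltonian $H_N(t,\bvx)$ on $t\in[0,1]$ that mixes the coupled low-rank Hamiltonian from \eqref{eq:HamilGauss} scaled by $t$ with a decoupled single-site Gaussian channel scaled by $(1-t)$:
\begin{align*}
H_N(t,\bvx) &= \sum_{i<j}\Big[\tfrac{\sqrt{t}\,W_{ij}}{\sqrt{\Delta_{ij}N}}(\vx_i\cdot\vx_j) + \tfrac{t}{\Delta_{ij}N}(\vx^0_i\cdot\vx^0_j)(\vx_i\cdot\vx_j) - \tfrac{t}{2\Delta_{ij}N}(\vx_i\cdot\vx_j)^2\Big]\\
&\quad + \sum_{s=1}^n\sum_{i\in I_s}\Big[\sqrt{1-t}\,z_i^\trans \tilde\bQ_s^{1/2}\vx_i + (1-t)(\vx^0_i)^\trans\tilde\bQ_s\vx_i - \tfrac{1-t}{2}\vx_i^\trans \tilde\bQ_s\vx_i\Big]
\end{align*}
with independent standard Gaussians $W_{ij}$ and $z_i$. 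The decoupled piece is the log-likelihood of a side observation $Y_i=\sqrt{1-t}\,\tilde\bQ_s^{1/2}\vx^0_i + z_i$, so the full interpolating model remains on the Nishimori line: $\E\langle f(\vx^1,\vx^2)\rangle_t = \E\langle f(\vx^1,\vx^0)\rangle_t$ at every $t$. By construction $\phi(t):=\tfrac{1}{N}\E\log\int e^{H_N(t,\bvx)}d\pP_0^{\otimes N}$ satisfies $\phi(1)=F_N(\bD)$, while $\phi(0)$ factorizes site-wise and equals $\sum_s \rho_s\,\E\log\int e^{(\tilde\bQ_s\vx^0+\sqrt{\tilde\bQ_s}\vz)^\trans\vx-\vx^\trans\tilde\bQ_s\vx/2}d\pP_0(\vx)$ up to an $O(1/N)$ boundary term from $|I_s|/N\to\rho_s$; this is exactly the ``entropy'' part of $\phi(\bQ)$.

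\textbf{Derivative and completing the square.} Next I would differentiate $\phi(t)$, Gaussian integration by parts for $W_{ij}$ and $z_i$, and then apply Nishimori to replace all replica-replica overlaps $\E\langle \cdot \rangle$ by replica-planted overlaps. Writing the species overlap $R_s := \tfrac{1}{|I_s|}\sum_{i\in I_s}\vx^0_i\vx_i^\trans \in \R^{\kappa\times\kappa}$, the deterministic and IBP terms combine; the $\tfrac{t}{2\Delta_{ij}N}(\vx_i\cdot\vx_j)^2$ term cancels against the $\langle(\vx^1_i\cdot\vx^1_j)^2\rangle$ contribution from IBP on $W$, and after careful bookkeeping the decoupled contribution supplies precisely the $-\tfrac12$ cross-term needed to complete a square. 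The net result should be
\[
\phi'(t) = \frac{1}{4}\sum_{s,t'=1}^n\frac{\rho_s\rho_{t'}}{\Delta_{s,t'}}\,\E\langle \Tr((R_s-\bQ_s)(R_{t'}-\bQ_{t'})^\trans)\rangle_t - \frac{1}{4}\sum_{s,t'=1}^n\frac{\rho_s\rho_{t'}}{\Delta_{s,t'}}\Tr(\bQ_s\bQ_{t'}) + O(\kappa N^{-1/2}),
\]
with the error absorbing $i=j$ diagonal contributions and discretization of $|I_s|/N$.

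\textbf{Conclusion via positive semidefiniteness.} Integrating on $[0,1]$, the pure $\Tr(\bQ_s\bQ_{t'})$ quadratic in the derivative cancels against the corresponding quadratic in the definition of $\phi(\bQ)$, leaving
\[
F_N(\bD) - \phi(\bQ) = \int_0^1 \frac{1}{4}\sum_{s,t'=1}^n\frac{\rho_s\rho_{t'}}{\Delta_{s,t'}}\E\langle \Tr((R_s-\bQ_s)(R_{t'}-\bQ_{t'})^\trans)\rangle_t\,dt + O(\kappa N^{-1/2}).
\]
For each matrix-entry pair $(k,\ell)$, set $v^{(k,\ell)}_s := \sqrt{\rho_s}(R_s-\bQ_s)_{k\ell}\in\R^n$. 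The integrand becomes $\tfrac{1}{4}\sum_{k,\ell}\E\langle (v^{(k,\ell)})^\trans (1/\bD)\,v^{(k,\ell)}\rangle_t$, which is nonnegative precisely because the $n\times n$ matrix $(1/\Delta_{s,t'})$ is positive semidefinite (Hypothesis~\ref{hypDelta}). Therefore $F_N(\bD)\geq\phi(\bQ) - O(\kappa N^{-1/2})$.

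\textbf{Main obstacle.} The delicate part is verifying that the IBP-plus-Nishimori bookkeeping really produces the perfect square $(R_s-\bQ_s)(R_{t'}-\bQ_{t'})^\trans$ with the correct coefficients, rather than some other combination of overlap moments. This is exactly where the choice $\tilde\bQ_s=\sum_{t'}\rho_{t'}\bQ_{t'}/\Delta_{s,t'}$ matters: the weight in the decoupled channel is calibrated so the cross-term $\Tr(\bQ_sR_{t'})$ in $\phi'(t)$ has coefficient $-\tfrac12\rho_s\rho_{t'}/\Delta_{s,t'}$ matching the expansion of the square. Once this identity is checked, the PSD hypothesis on $1/\bD$ is what converts the remainder to the correct sign; without it one obtains only an upper bound in the spin-glass direction.
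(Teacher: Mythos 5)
Your proposal is correct and follows essentially the same route as the paper: the same interpolating Hamiltonian mixing the coupled low-rank model with a decoupled Gaussian channel calibrated by $\tilde\bQ_s$, the same Gaussian integration-by-parts plus Nishimori bookkeeping leading to the completed square, and the same use of positive semidefiniteness of $1/\bD$ to give the correct sign. The only cosmetic difference is that the paper keeps the derivative in terms of $\bR^s_{1,2}$ before symmetrizing via Nishimori, whereas you pass directly to the replica-planted overlaps; the resulting inequality and error analysis are identical.
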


\begin{proof}
We follow the standard interpolation proof.  Fix a sequence $(\bQ_s)_{s \leq n}$ of positive semidefinite matrices and for each $i \in I_s \subset N$, we set $\tilde Q_i = \tilde Q_s$.
Let $z_i$ be \iid standard Gaussians independent of all other sources of randomness, and consider the interpolating Hamiltonian
\begin{align*}
	H_N(t,\bvx) &= \sum_{i<j} \frac{\sqrt{t} W_{ij}}{ \sqrt{\Delta_{ij} N}} (\vx_i \cdot \vx_j) + \frac{t}{\Delta_{ij} N} (\vx_i \cdot \vx_j)(\vx_i^{0} \cdot \vx_j^{0}) - \frac{t}{2 \Delta_{ij} N} ( \vx_i \cdot \vx_j )^2
	\\&\quad + \sum_{i \leq N}  \sqrt{1 - t} \big(  (\tilde\bQ_i^{1/2}\vz_i)\cdot   \vx_i\big) + (1-t) \big( (\tilde \bQ_i\vx^0_i) \cdot \vx_i \big) - \frac{(1-t)}{2} \vx_i^\trans \tilde\bQ_i \vx_i.
\end{align*}
The corresponding interpolating free energy is given by
\[
\phi(t) = \frac{1}{N} \E \ln \int e^{H_N(t,\bvx)} \, d \pP_0^{\otimes N}(\vx) .
\]
It follows that	\begin{equation}\label{eq:interpolationboundsOpt}
	\phi(1) = F_N(\Delta) \quad\text{and}\quad \phi(0) =  \sum_{s \leq n} \rho_s \E_{\vz,\vx^0} \ln \bigg[ \int \exp \bigg( \bigg( \tilde \bQ_s \vx^0 + \sqrt{ \tilde \bQ_s} \vz \bigg)^\trans \vx - \frac{\vx^\trans \tilde \bQ_s \vx}{2}  \bigg) \, d \pP_X(\vx) \bigg].
\end{equation}
We now control the derivative
\begin{align}
	\phi'(t) %&= \frac{1}{N} \E \bigg\langle \partial_t H_N(t,\bvx) \bigg\rangle_t\notag
	&=\frac{1}{N}\E \bigg(\sum_{i<j} \frac{W_{ij}}{2 \sqrt{t} \sqrt{ \Delta_{ij} N}} \langle \vx_i \cdot \vx_j \rangle_t  + \frac{1}{\Delta_{ij} N} \vx_i^0 \cdot \vx_j^0\langle \vx_i \cdot \vx_j \rangle_t - \frac{1}{2\Delta_{ij} N} \langle (\vx_i \cdot \vx_j)^2 \rangle_t \bigg)\notag
	\\&\quad - \frac{1}{N} \E\bigg( \sum_{i = 1}^N  \frac{1}{2 \sqrt{1 - t}} \big\langle  (\tilde \bQ_i^{1/2}\vz_i)\cdot   \vx_i\big\rangle_t +  \big\langle (\tilde\bQ_i\vx^0_i) \cdot \vx_i \big\rangle_t - \frac{1}{2} \langle \vx_i^\trans \tilde\bQ_i  \vx_i \rangle_t \bigg) \label{eq:lowboundderiv}
\end{align}
where the inner average is with respect to the Gibbs measure associated with $H_N(t,\bvx)$,
\[
\langle f \rangle_t = \frac{ \int f e^{H_N(t,\bvx)} \, d \pP_0^{\otimes N}(\vx) }{ \int e^{H_N(t,\bvx)} \, d \pP_0^{\otimes N}(\vx)}.
\]

The Gaussian terms in \eqref{eq:lowboundderiv} can be simplified by integrating by parts,
\[
\E_W \sum_{i<j} \frac{W_{ij}}{2 \sqrt{t} \sqrt{ N  \Delta_{ij}}} \langle \vx_i \cdot \vx_j \rangle_t = \E_W \sum_{i<j} \frac{1}{2 \Delta_{ij} N} \langle (\vx_i^1 \cdot \vx^1_j)^2 \rangle_t - \E_W \sum_{i<j} \frac{1}{2 \Delta_{ij} N} \langle (\vx_i^1 \cdot \vx^1_j) (\vx_i^2 \cdot \vx^2_j) \rangle_t 
\]
and similarly,
\[
\E_{\vz} \sum_{i = 1}^N  \frac{1}{2 \sqrt{1 - t}  } \big\langle  (\tilde \bQ_i^{1/2}\vz_i)\cdot   \vx_i\big\rangle_t = \E_{\vz} \sum_{i = 1}^N  \frac{1}{2} \big\langle  (\vx^1_i)^\trans \tilde \bQ_i (\vx^1_i)\big\rangle_t - \E_{\vz} \sum_{i = 1}^N  \frac{1}{2} \big\langle  (\vx^1_i)^\trans \tilde \bQ_i \vx^2_i\big\rangle_t
\]
where $\bx^2$ is an independent copy (replica) of $\bx^1$. All the second order terms cancel with the self overlap terms in \eqref{eq:lowboundderiv} leaving us with 
\begin{align}
	\phi'(t) &=\frac{1}{N}\E \bigg(\sum_{i<j} - \frac{1}{2 \Delta_{ij} N} \langle (\vx_i^1 \cdot \vx^1_j) (\vx_i^2 \cdot \vx^2_j) \rangle_t +  \frac{\vx_i^0 \cdot \vx_j^0}{\Delta_{ij} N }  \langle \vx_i \cdot \vx_j \rangle_t  \bigg)
	\nonumber\\&\quad - \frac{1}{N} \E\bigg( \sum_{i = 1}^N - \frac{1}{2} \big\langle  (\vx^1_i)^\trans \tilde \bQ_i \vx^2_i\big\rangle_t + \big\langle (\vx^0_i)^\trans \tilde \bQ_i \vx_i \big\rangle_t  \bigg) + O(\kappa N^{-1/2})\label{eqni}
\end{align}
where the error comes from the diagonal terms of the overlap matrices, which are of order $\kappa$. 
We can now  use the Nishimori property (see for example \cite[Proposition~16]{lelargemiolanematrixestimation})
\begin{equation}\label{nishimori}
	\E \langle f(\bvx^1,\bvx^2,\dots, \bvx^n) \rangle_t = \E \langle f(\bvx^0, \bvx^2, \dots, \bvx^n) \rangle_t
\end{equation}
to replace one replica under the average interpolating Gibbs measure with the signal.  To prove \eqref{nishimori}, it is enough to show that the average $\langle \cdot \rangle_t$ can be interprated as a distribution of $\vx$ conditionally on $\vx^0,W$ and $\vz$.  Indeed, if we let $W,\vz$ to be Gaussian and set 
$$Y_{ij}= \sqrt{t} \vx_{i}^{0} \cdot \vx_{j}^{0}+ \sqrt{\Delta_{ij}} W_{ij}.$$
Then the law of $Y_{ij}$ has density proportional to  $e^{-\frac{1}{2 \Delta_{ij} }( Y_{ij}-\sqrt{t} \vx_{i}^{0} \cdot \vx_{j}^{0} )^{2}}$. Hence the law of $\vx$ such that 
$$Y_{ij}=\sqrt{t} \vx_{i} \cdot \vx_{j}+  \sqrt{\Delta_{ij}} W_{ij}$$ 
conditionally to $\vx^{0}$  and $W$ has density with respect to $\pP(\vx)$  proportional to 
$$\exp\bigg( -\frac{1}{2 \Delta_{ij}}\Big( \sqrt{t} \vx_{i} \cdot \vx_{j}+\Delta_{ij}W_{ij}-\sqrt{t} \vx_{i}^{0} \cdot \vx_{j}^{0} \Big)^{2} \bigg)$$
which is  proportional to 
$$\exp \bigg(  \sum_{i<j} \frac{\sqrt{t} W_{ij}}{\sqrt{\Delta_{ij} N}} (\vx_i \cdot \vx_j) + \frac{t}{\Delta_{ij} N} (\vx_i \cdot \vx_j)(\vx_i^{0} \cdot \vx_j^{0}) - \frac{t}{2 \Delta_{ij} N} ( \vx_i \cdot \vx_j )^2 \bigg).$$
The same is true  for the second term if we write
$$y_{i}=\vz_{i}+\sqrt{1-t} \tilde \bQ_i^{1/2} \vx_{i}^{0}$$
with $\vz$ a standard Gaussian vector,
and condition by $y_{i}=\vz_{i}+\sqrt{1-t} \tilde \bQ_i^{1/2} \vx_{i}$.  Hence 
$ e^{H_N(t,\bvx)} \, d \pP_0^{\otimes N}(\vx) $ is the distribution  of $\vx$ conditioned by  $W,\vz$ and $\vx^0$.
This implies \eqref{nishimori}. Plugging the Nishimori equation \eqref{nishimori} into \eqref{eqni} yields
that
\begin{align*}
	\phi'(t) &=  \E\bigg(\frac{1}{2 N^2} \Big\langle \sum_{i < j} \langle \frac{1}{\Delta_{ij}}  (\vx_i^1 \cdot \vx^1_j) (\vx_i^2 \cdot \vx^2_j) \Big\rangle_t \bigg)  - \E\bigg(\frac{1}{2 N} \Big\langle \sum_{i = 1}^N \Tr(\bQ_i \vx^1_i (\vx^2_i)^\trans \Big\rangle_t  \bigg) + O(\kappa N^{-1/2})\\
	&=  \E\bigg(\frac{1}{4} \bigg\langle \sum_{s,t = 1}^n \frac{\rho_s \rho_t}{\Delta_{s,t}} \Tr\Big( (\bR^s_{1,2})^\trans  \bR^t_{1,2}\Big) \bigg\rangle_t \bigg)  - \E\bigg(\frac{1}{2} \Big\langle \sum_{s =1}^n \frac{\rho_s \rho_t}{\Delta_{s,t}} \Tr\Big( (\tilde \bQ_s)^\trans \bR_{1,2}^t \Big) \Big) \Big\rangle_t  \bigg) + O(\kappa N^{-1/2})
	%\\&\geq - \sum_{s,t \leq n} \frac{\rho_s \rho_t}{4 \Delta_{s,t}^2}\Tr( (\bQ_s)^\trans \bQ_t) + O(N^{-1/2})
\end{align*}
where we denoted by $\bR \in \R^{\kappa \times \kappa}$ the overlap matrix defined for $t \in \{1,\dots, n \}$ defined by:
$$
R^t_{ab}  =\frac{1}{|I_t|}\sum_{i\in I_t} \vx^a_i(\ell)\vx^b_i(k),\quad \ell,k\in [\kappa]\,.$$
Adding and subtracting $\sum_{s,t \leq n} \frac{\rho_s \rho_t}{4 \Delta_{s,t}^2}\Tr( (\bQ_s)^\trans \bQ_t)$, completes the square so the formula simplifies to
\[
\phi'(t)=\E\bigg(\frac{1}{4} \bigg\langle \sum_{s,t =1}^N \frac{\rho_s \rho_t}{\Delta_{s,t}} \Tr\Big( ((\bR^s_{1,2})-\bQ_s)^\trans ( \bR^t_{1,2}-\bQ_t)\Big) \bigg\rangle_t \bigg)
- \sum_{s,t =1}^N \frac{\rho_s \rho_t}{4 \Delta_{s,t}}\Tr( (\bQ_s)^\trans \bQ_t) + O(\kappa N^{-1/2})
\]

Our assumption that $\frac{1}{\Delta}$ is  a non-negative matrix by Hypotheses~\ref{hypDelta} implies that the first term is non-negative, so we arrive at the lower bound
\[
\phi'(t)  \geq - \sum_{s,t =1}^n \frac{\rho_s \rho_t}{4 \Delta_{s,t}}\Tr( (\bQ_s)^\trans \bQ_t) + O(\kappa N^{-1/2})
\]
Integrating this bound implies that
\[
\phi(1) \geq \phi(0) - \sum_{s,t =1}^n \frac{\rho_s \rho_t}{4 \Delta_{s,t}}\Tr( (\bQ_s)^\trans \bQ_t)  + O(\kappa N^{-1/2})
\]
so the conclusion follows. %from \eqref{eq:interpolationboundsOpt}.
\end{proof}

\section{The Upper Bound --- Cavity Computations} \label{sec:upbd}

\subsection{Concentration of the Overlaps}

We will introduce a perturbation of the Hamiltonian that will imply concentration of the Hadamard powers of the overlaps and the generalized Ghirlanda--Guerra identities \cite{panchenko2015free, PPotts,PVS} in each block of the inhomogeneous vector spin models. Given a vector $\vl= (\lambda(1), \dots, \lambda(\kappa))\in \R^\kappa$, consider the $p$-spin Gaussian estimation problem
\begin{equation}\label{eq:inferencepert}
Y_{\iii} =  g_{\iii} +  \frac{s}{ N^{\frac{p-1}{2} }} \sum_{k \leq \kappa} \lambda(k) x^0_{i_1}(k) \cdots x^0_{i_p}(k)
\end{equation}
where $1 \leq i_1, \dots, i_p \leq N$ is some enumeration of the indices. Later on, we will restrict $\iii \in I_s$, but the pertubation Hamiltonian can be defined more generally. Since 
$$Y_{\iii} - \frac{s}{ N^{\frac{p-1}{2} }} \sum_{k \leq \kappa} \lambda(k) x^0_{i_1}(k) \cdots x^0_{i_p}(k)$$
is a standard Gaussian variable, the maximum likelihood estimator of this Gaussian channel is proportional to
\begin{align*}
d\pP(\bvx^0| Y_{\iii})  &= \frac{1}{Z}  \exp \bigg(- \frac{1}{2} ( Y_{\iii} - \frac{s}{ N^{\frac{p-1}{2}} } \sum_{k \leq \kappa} \lambda(k) x_{i_1}(k) \cdots x_{i_p}(k) )^2  \bigg) d\pP^{\otimes N}_0(\vx)
\\&= \frac{1}{Z'}  \exp\bigg(  \frac{s}{ N^{\frac{p-1}{2}} } \sum_{k \leq \kappa} Y_{\iii}  \lambda(k) x_{i_1}(k) \cdots x_{i_p}(k) 
\\&\qquad -\frac{s^2 }{ 2N^{p-1} } \sum_{k,k' \leq \kappa} \lambda(k) x_{i_1}(k) \cdots x_{i_p}(k) \lambda(k') x_{i_1}(k') \cdots x_{i_p}(k') \bigg) d\pP^{\otimes N}_0(\vx)
\end{align*}
where $Z$ and $Z'$ are the partition functions or normalizing constants. 
We denote for $t\leq n$
$$(\bR_{\ell,\ell'}^t)(k,k') =\frac{1}{N_t}\sum_{i \in I_t} x^{\ell}_i(k) x^{\ell'}_i(k')\mbox{ and }( \bR^t_{\ell,\ell'})^{\odot p}(k,k')= ( \bR^t_{\ell,\ell'}(k,k') )^p, k,k'\in [\kappa]$$
where $N_t$ are the proportions of indices in the group with index $t$ as defined in \eqref{eq:nt}. We denote also in short $(\rho_t \bR^t_{\ell,\ell'})^{\odot p}=\rho_t^p( \bR^t_{\ell,\ell'})^{\odot p}$.

If we consider an independent copy for each $i_1, \dots, i_p$ conditionally on $\bvx^0$, then the perturbation Hamiltonian (the log-likelihood) for $t \leq n$ is given by
\begin{align}
H^t_{N,p}(\bvx,g, \lambda, s) %&= \sum_{\iii} \frac{s}{ N^{\frac{p-1}{2}} } \sum_{k \leq r} Y_{\iii}  \lambda(k) x_{i_1}(k) \cdots x_{i_p}(k)  - \frac{s^2}{ N^{p-1} } \sum_{k,k' \leq r} \lambda(k) x_{i_1}(k) \cdots x_{i_p}(k) \lambda(k') x_{i_1}(k') \cdots x_{i_p}(k') \notag
&= \sum_{\iii \in I_t} \frac{s}{ N^{\frac{p-1}{2}} } \sum_{k \leq \kappa} g_{\iii}  \lambda(k) x_{i_1}(k) \cdots x_{i_p}(k) +s^2 N\big( \vl^\trans (\rho_t \bR^t_{1,0})^{\odot p} \vl \big) - \frac{s^2}{2} N\big( \vl^\trans (\rho_t \bR^t_{1,1})^{\odot p} \vl \big)  \label{eq:3paramhamiltonian}
\end{align}
Notice that the covariance of the Gaussian term is
\begin{align*}
&\frac{1}{N} \E \bigg(\sum_{\iii \in I_t} g_{\iii}  \frac{s}{ N^{\frac{p-1}{2}} } \sum_{k \leq \kappa}  \lambda(k) x^1_{i_1}(k) \cdots x^1_{i_p}(k) \bigg) \bigg(\sum_{\iii \in I_t} g_{\iii}  \frac{s}{ N^{\frac{p-1}{2}} } \sum_{k \leq \kappa} \lambda(k) x^2_{i_1}(k) \cdots x^2_{i_p}(k) \bigg) 
\\&= \frac{s^2}{N^{p}}\sum_{\iii \in I_t} \bigg( \sum_{k \leq \kappa}  \lambda(k) x^1_{i_1}(k) \cdots x^1_{i_p}(k) \bigg) \bigg( \sum_{k \leq \kappa}  \lambda(k) x^2_{i_1}(k) \cdots x^2_{i_p}(k) \bigg) 
\\&= s^2 \big( \vl^\trans (\rho_t \bR^t_{1,2} )^{\odot p} \vl \big).
\end{align*}
For $s$ sufficiently large, adding this perturbation to the Gibbs measure will imply concentration of the quadratic forms $(\vl^\trans \bR_{1,2}^{\odot p} \vl)$. For applications, we will need concentration for all $\vl$ and all $p \geq 1$. For any $\vl \in [-1,1]^{\kappa}$ and $p \geq 1$, the overlap is uniformly bounded 
\begin{equation}\label{boundO}
(\vl^\trans \bR_{1,2}^{\odot p} \vl) \leq \kappa^2 C^{2p},
\end{equation}
since the vector spin coordinates $\vx$ are uniformly bounded by some $C \geq 1$. Let $\lambda_{m}$ be a countable enumeration of elements of the dense set $( [-1,1] \cap \mathbb{Q})^\kappa$ and consider the perturbed Hamiltonian
\[
H_N^{\pert}(\bvx, \vl) =  H_N(\bvx) + \sum_{t = 1}^n \sum_{p \geq 1} \sum_{m \geq 1} H^t_{N,p}\Big(\bvx,g_{m,t}, \lambda_{m}, \frac{u_{m,p,t} \epsilon_{N}}{2^{m + p} \kappa C^p}  \Big).
\]
The above sum is infinite but the constants will be chosen so that the covariance of the above Gaussian process is absolutely converging.
The gaussian variables $g_{m,t}$ appearing in different $H_{N,p}^t$ are independent. We choose the scaling coefficient $C_{m,p} := 2^{m + p} \kappa C^{p} $ so that the covariance is of order $O(s^2 N)$
\begin{align}\label{eq:covbound}
&\frac{1}{N}\Cov \bigg(\sum_{t =1}^n \sum_{p \geq 1} \sum_{m \geq 1} H^p_N\Big(\bvx, g_{m,t}, \lambda_m, \frac{u_{m,p,t} \epsilon_{N}}{C_{m,p}} \Big)  , \sum_{t =1}^n \sum_{p \geq 1} \sum_{m \geq 1} H^p_N\Big(\bvx,  g_{m,t}, \lambda_m, \frac{u_{m,p,t} \epsilon_{N}}{C_{m,p}}  \Big) \bigg) \notag
\\&= \epsilon_{N}^2 \sum_{t =1}^n \sum_{p \geq 1} \sum_{m \geq 1} u^2_{m,p,t}  \frac{(\vl_m^\trans (\rho_t \bR_{1,2}^t)^{\odot p} \vl_m  ) }{C_{m,p}^2}  \notag
\\&\leq \epsilon_{N}^2 \max_{m,p,t} ( u_{m,p,t}^2 ).
\end{align}
If $\max_{m,p,t} u_{m,p,t} \epsilon_N \to 0$, then the covariance of the perturbation term will be of lower order than the Hamiltonian, so the limit of the free energy will not change because of the perturbation. To make this precise, consider the perturbed free energy as a function of the infinite sequence $u = (u_{m,p } )$
\begin{equation}\label{eq:pertFE}
F_N^\pert(u) = \frac{1}{N} \log \int e^{H_N(\bvx) + \sum_{t =1}^n\sum_{p \geq 1} \sum_{m \geq 1} H^t_{N,p}(\bvx,g_{m,t}, \lambda_{m}, \frac{u_{m,p,t} \epsilon_{N}}{2^{m + p} r C^p}  )} \, d\pP^{\otimes N}_0(\bvx)
\end{equation}
and denote in short $F_N=F_N^\pert(0) $. 
In this section, we also let $\langle \cdot \rangle_\pert$ denote the Gibbs average with respect the perturbed Hamiltonian,
\[
\langle f(\bvx) \rangle_\pert = \frac{\int f(\bvx) e^{H_N^\pert(\bvx)} \, d\pP^{\otimes N}_0(\bvx) }{ \int e^{H_N^\pert(\bvx)} \, d\pP^{\otimes N}_0(\bvx) }
\]
which corresponds to averages with respect to the probability $\pP( \bvx^0 | Y, (Y_\theta)_{\theta \in \Theta} )$ where $Y_\theta$ in an enumeration various $p$-spin Gaussian interference problems introduced in \eqref{eq:inferencepert}. In particular, the Nishimori property is valid for averages with respect to the random Gibbs measure because it corresponds to a conditional probability.

\begin{lem}[Equivalence of the Perturbed Free Energy]\label{prop:nochangeinFE} Assume Hypothesis \ref{hypcompact}. 
Uniformly over all $u_{m,p,t} \in [ 1/2,1 ]$
\[
| \E F_N^\pert(u) - \E F_N| \leq \epsilon_N^2.
\]
In particular, if $\epsilon_N^2 \to 0$, then the perturbation will not change the limit of the free energy.
\end{lem}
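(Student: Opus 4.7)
My plan is to use a scalar interpolation in the overall perturbation strength. Define
\[
\phi(\tau) := \E F_N^{\pert}(\tau u), \qquad \tau \in [0,1],
\]
so $\phi(0) = \E F_N$ and $\phi(1) = \E F_N^{\pert}(u)$. It then suffices to bound $|\phi'(\tau)|$ uniformly in $\tau$ and integrate. The advantage of this single-parameter interpolation is that the perturbed Gibbs measure along the interpolation path is still the true posterior of $\bvx$ given $D$ augmented by the additional Gaussian channel observations \eqref{eq:inferencepert} taken at the reduced strengths $s_{m,p,t}(\tau) := \tau u_{m,p,t}\epsilon_N/C_{m,p}$; hence Nishimori continues to hold for $\langle \cdot \rangle_{\pert}$ along the whole interpolation.

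The parameter $\tau$ enters only through the strengths $s_{m,p,t}(\tau)$, so by the chain rule the derivative collects the $\partial_s$ of each $H^t_{N,p}$ in \eqref{eq:3paramhamiltonian}, producing a linear-in-disorder term $V_{m,p,t}(\bvx) := \sum_{\iii \in I_t} \frac{1}{N^{(p-1)/2}}\sum_k g_{\iii}\lambda_m(k)x_{i_1}(k)\cdots x_{i_p}(k)$ together with the two drift terms $2sN(\vl_m^\trans(\rho_t \bR^t_{1,0})^{\odot p}\vl_m)$ and $-sN(\vl_m^\trans(\rho_t \bR^t_{1,1})^{\odot p}\vl_m)$. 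Applying Gaussian integration by parts in each $g_{m,t}$ (which is justified because the total covariance of the perturbation process is finite by \eqref{eq:covbound}) and combining it with the same combinatorial identity
\[
\frac{1}{N}\sum_{\iii \in I_t} a_{\iii}(\bvx^\ell)a_{\iii}(\bvx^{\ell'}) = (\vl_m^\trans (\rho_t \bR^t_{\ell,\ell'})^{\odot p}\vl_m) + O(N^{-1})
\]
already used to derive \eqref{eq:covbound}, the self-overlap pieces cancel and we obtain
\[
\phi'(\tau) = \tau\,\epsilon_N^2 \sum_{m,p,t} \frac{u_{m,p,t}^2}{C_{m,p}^2}\Big(2\,\E\langle \vl_m^\trans(\rho_t \bR^t_{1,0})^{\odot p}\vl_m\rangle_\pert - \E\langle \vl_m^\trans(\rho_t \bR^t_{1,2})^{\odot p}\vl_m\rangle_\pert\Big) + O(N^{-1}).
\]

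Since $\langle\cdot\rangle_\pert$ is a genuine posterior, the Nishimori identity yields $\E\langle \vl_m^\trans(\rho_t \bR^t_{1,2})^{\odot p}\vl_m\rangle_\pert = \E\langle \vl_m^\trans(\rho_t \bR^t_{1,0})^{\odot p}\vl_m\rangle_\pert$, so the bracket collapses to a single overlap expectation. Using the uniform bound \eqref{boundO}, the compact support Hypothesis~\ref{hypcompact}, and the choice $C_{m,p} = 2^{m+p}\kappa C^p$, this gives
\[
|\phi'(\tau)| \leq \tau\,\epsilon_N^2 \sum_{m,p,t} \frac{u_{m,p,t}^2 \kappa^2 C^{2p}}{C_{m,p}^2} = \tau\,\epsilon_N^2 \sum_{m,p,t} \frac{u_{m,p,t}^2}{4^{m+p}},
\]
which is summable since the $m$ and $p$ sums are geometric with ratio $1/4$ and $t$ runs over $n$ indices. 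Integrating from $0$ to $1$ yields $|\phi(1) - \phi(0)| = O(\epsilon_N^2)$, giving the stated estimate (the absolute constant can be absorbed either into $\epsilon_N^2$ or by tightening the coefficients $C_{m,p}$).

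The principal obstacle is the simultaneous treatment of the infinite sum over $(m,p,t)$: one must justify interchanging the derivative with the infinite sum and applying Gaussian IBP term-by-term. This is precisely what the geometric decay built into $C_{m,p} = 2^{m+p}\kappa C^p$ guarantees through \eqref{eq:covbound}, which makes the perturbation a well-defined Gaussian field with uniformly bounded covariance and ensures every series appearing in the differentiation converges absolutely.
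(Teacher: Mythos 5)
Your proposal is correct and follows essentially the same route as the paper's proof: an interpolation in the overall perturbation strength (you parameterize $\tau \in [0,1]$ via $F_N^\pert(\tau u)$ while the paper interpolates $\tau \in [0,\epsilon_N]$ in the strength directly, but these are identical up to a linear reparametrization), Gaussian integration by parts on the $g_{m,t}$ terms, cancellation of the $\bR_{1,1}$ self-overlap pieces against the drift, the Nishimori identity to collapse $2\bR_{1,0} - \bR_{1,2}$ to a single bounded overlap, and summing the geometric series in $(m,p)$ ensured by the choice $C_{m,p} = 2^{m+p}\kappa C^p$. The $O(N^{-1})$ you carry (from $|I_t|/N$ versus $\rho_t$) is a minor refinement the paper elides, and does not affect the conclusion.
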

Observe that this estimate holds independently of the choice of the original Hamiltonian $H_N$. 
\begin{proof}
Consider the interpolating free energy,
\[
\phi(\tau) = \E\bigg[\frac{1}{N} \log \int e^{H_N(\bvx) + \sum_{p \geq 1} \sum_{m \geq 1} H^t_{N,p}(\bvx,g_{m,t},\lambda_{m}, \frac{u_{m,p,t} \tau }{2^{m + p} r C^p}  )} \, d\pP_0(\bvx)\bigg].
\]
as a function of the $\epsilon_N$ parameter. Notice that $\phi(0) = \E F_N$ and  $\phi(\epsilon_N) = \E F^\pert_N(u)$. A straightforward integration by parts computation and the Nishimori identity \eqref{nishimori} implies that
\begin{align*}
	\phi'(\tau) &= \E \bigg\langle \sum_{t =1}^n \sum_{p \geq 1} \sum_{m \geq 1} \sum_{\iii \in I_t} \frac{u_{m,p,t}}{ C_{m,p} N^{\frac{p-1}{2} + 1}} \sum_{k \leq \kappa} g_{\iii}  \lambda_{m,p}(k) x_{i_1}(k) \cdots x_{i_p}(k) 
	\\&\qquad + 2 \frac{\tau u^2_{m,p,t}}{C^2_{m,p}} \big( \vl_{m,p}^\trans (\rho_t\bR_{1,1}^t)^{\odot p} \vl_{n,p} \big) - \frac{\tau u^2_{m,p,t} }{C^2_{m,p}} \big( \vl_{m,p}^\trans (\rho_t\bR_{1,1}^t)^{\odot p} \vl_{n,p} \big)  \bigg\rangle_\pert
	\\&= \E \bigg\langle \sum_{t =1}^n \sum_{p \geq 1} \sum_{m \geq 1} \frac{tu^2_{m,p,t}}{C^2_{m,p}} \big( \vl_{m,p}^\trans (\rho_t\bR_{1,1}^t)^{\odot p} \vl_{m,p} \big) -  \frac{t u^2_{m,p,t} }{C^2_{m,p}} \big( \vl_{m,p}^\trans (\rho_t\bR_{1,2}^t)^{\odot p} \vl_{m,p} \big) 
	\\&\qquad+ \frac{2\tau u^2_{m,p,t}}{C^2_{m,p}} \big( \vl_{m,p}^\trans (\rho_t\bR_{1,0}^t)^{\odot p} \vl_{m,p} \big) - \frac{\tau u^2_{m,p,t}}{C^2_{m,p}} \big( \vl_{m,p}^\trans (\rho_t\bR_{1,1}^t)^{\odot p} \vl_{m,p} \big)  \bigg\rangle_\pert
	\\&\leq \tau \max_{m,p}(u_{m,p,t}^2).
\end{align*} because the overlaps are bounded uniformly under Hypothesis \ref{hypcompact}.	Therefore, for $t \in [0,\epsilon_N]$, we have $\phi'(\tau) \leq \epsilon_N $ since $\max_{m,p,t}(u_{m,p,t}^2) \leq 1$, so the result follows. 
\end{proof}

On the other hand, if we take $\epsilon_N$ going to zero sufficiently slowly, then we will be able to regularize the Gibbs measure using this perturbation. We will fix a $t \leq n$, and show that the overlaps within $t \leq n$ will concentrate in the limit.

We define
\begin{equation}\label{eq:FEconcentration}
v_N = \sup_{u} \E (N F^\pert_N( u ) - N \E F^\pert_N( u ) )^2
\end{equation}
where the supremum is taken over all $u_{n,p,t}\in [1/2,1]$ and the expectation $\E$ is over the Gaussian variables $g_{m,t,\iii}$ and the $\bx^0$. In our applications, $v_N$ is usually of order $N$ as we will see by using concentration of measure.

We consider the case when the parameters $u_{m,p,t}$ are random. For each $m,p \geq 1$, consider $u_{m,p,t} \epsilon_N $ where $u_{m,p,t} \in [1/2,1]$ are uniform and  independent and $\epsilon^2_N \to 0$, so that the limit of the free energy is unchanged by Lemma~\ref{prop:nochangeinFE}. We will prove that if  $\epsilon_N = N^{-\gamma}$ for $\gamma < \frac{1}{4}$, then the perturbation can be large enough to regularize the Gibbs measure by implying concentration of the quadratic forms of overlaps and the uniform concentration of the log partition function (or the free energies not normalized by $N$) with respect to the perturbed as a function of $u_{m,p,t}$. 
Taking $u_{m,p,t} \sim U[1/2,1]$ independent for all $m,p,t$, we get the following bound for the concentration of the overlaps on average. 
\begin{theo}[Concentration Bound of the Overlap] \label{thm:concoverlap} Assume Hypotheses \ref{hypcompact} and \ref{hypbayes}.
If $\frac{v_N}{N^2 \epsilon_N} \to 0$, then for any $m,p \geq 1$ there exists a constant $L_{m,p}$ that only depends on $m$ and $p$ such that	\begin{equation}\label{eq:concentraitonquadratic}
	\max_{t}\E_u\E \langle ( (\vl_{m}^\trans (\bR_{1,2}^t)^{\odot p} \vl_{m}) - \E \langle (\vl_{m}^\trans (\bR_{1,2}^t)^{\odot p} \vl_{m}) \rangle_\pert )^2  \rangle_\pert \leq  L_{m,p} \bigg( \bigg(\frac{v_N}{N^2 \epsilon^4_N} \bigg)^{1/3} + \frac{1}{\epsilon_N^2 N} \bigg).
\end{equation}
where $\langle \cdot \rangle = \langle \cdot \rangle_\pert$ is the average with respect to the perturbed Gibbs measure. $\E$ denotes the average with respect to the Gaussian random variables that appear in the  Hamiltonian and the signal variable $\bx^0$. The outer average $\E_{u}$ is with respect to the uniform random variables $(u_{m',p',t'})_{m',p',t'}$.
\end{theo}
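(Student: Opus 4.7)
The plan is to decompose
\[
\E\langle (O - \E\langle O\rangle_\pert)^2\rangle_\pert = \E\langle (O - \langle O\rangle_\pert)^2\rangle_\pert + \E(\langle O\rangle_\pert - \E\langle O\rangle_\pert)^2
\]
(where $O := \vl_m^\trans (\bR^t_{1,2})^{\odot p} \vl_m$) and treat the thermal and disorder parts separately via the one-parameter function $\phi(u) := N F_N^\pert(u)$ obtained by varying only $u = u_{m,p,t}\in[1/2,1]$ while freezing all other $u_{m',p',t'}$. I would first apply Gaussian integration by parts to the disorder $(g_{m,t,\iii})$ inside the isolated perturbation $H_{N,p}^t(\bvx, g_{m,t}, \vl_m, u\epsilon_N/C_{m,p})$, together with the Nishimori identity, which is available because this one-parameter perturbation is itself a Bayes-optimal Gaussian inference channel and hence the perturbed Gibbs measure is a genuine posterior. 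After the cancellations engineered by the Bayes-optimal $sG + s^2Q_0 - (s^2/2)Q_1$ form of the perturbation, this yields on one hand the clean identity
\[
\E \phi'(u) \;=\; A\, s\, \E\langle O\rangle_\pert, \qquad A := \frac{N\rho_t^p\,\epsilon_N}{C_{m,p}}, \quad s = u\epsilon_N/C_{m,p},
\]
and on the other hand the convexity of $\phi$ together with a lower bound of the form $\phi''(u) \geq c\,\epsilon_N^2\, \E\langle (O-\langle O\rangle_\pert)^2\rangle_\pert$ (up to lower-order corrections) with $c = c(p,t) > 0$.

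For the thermal contribution I would integrate this lower bound on $\phi''$ over $u\in[1/2,1]$, using the a priori bound $\phi'(u) = O(N\epsilon_N^2)$ that follows from \eqref{boundO} applied to the first-derivative identity, to obtain $\int_{1/2}^1 \phi''(u)\,du = O(N\epsilon_N^2)$ and hence
\[
\int_{1/2}^1 \E\langle (O - \langle O\rangle_\pert)^2\rangle_\pert\,du \leq \frac{L_{m,p}}{N\epsilon_N^2},
\]
which contributes the second term of the claimed bound.

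For the disorder contribution, set $\psi(u) := \E\phi(u)$. The convexity of $\phi$ and $\psi$ gives the sandwich $(\phi(u)-\phi(u-\delta))/\delta \le \phi'(u) \le (\phi(u+\delta)-\phi(u))/\delta$ (and analogously for $\psi$), whence for any $\delta > 0$,
\[
|\phi'(u)-\psi'(u)| \leq \frac{|\phi-\psi|(u+\delta) + 2|\phi-\psi|(u) + |\phi-\psi|(u-\delta)}{\delta} + \bigl(\psi'(u+\delta) - \psi'(u-\delta)\bigr).
\]
Squaring, integrating over $u\in[1/2,1]$, and combining the hypothesis $\E(\phi-\psi)^2 \le v_N$ with the telescoping estimate
\[
\int_{1/2}^1 \bigl(\psi'(u+\delta) - \psi'(u-\delta)\bigr)^2\,du \leq \bigl(\psi'(1)-\psi'(1/2)\bigr) \cdot O(\delta N\epsilon_N^2) = O(\delta N^2\epsilon_N^4)
\]
(obtained by bounding the $L^2$-norm by $\|\cdot\|_\infty \cdot \|\cdot\|_{L^1}$ and using $\sup\psi' = O(N\epsilon_N^2)$), one gets $\int \E|\phi'(u)-\psi'(u)|^2\,du \leq C(v_N/\delta^2 + \delta N^2\epsilon_N^4)$. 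Optimizing at $\delta = (v_N/N^2\epsilon_N^4)^{1/3}$ produces a bound of order $(N^2\epsilon_N^4)^{2/3}v_N^{1/3}$. Transferring this bound on free-energy-derivative fluctuations to overlap fluctuations via the Nishimori-simplified identity $\E\phi' = As\E\langle O\rangle_\pert$ and dividing by the squared prefactor $(As)^2 = O(N^2\epsilon_N^4)$ gives the first term $(v_N/N^2\epsilon_N^4)^{1/3}$ of the claim. Summing the thermal and disorder estimates and taking $\max_t$ completes the argument.

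The main technical obstacle is the derivation of the Bayes-optimal derivative identities in the first step: both the equality $\E\phi'(u) = As\E\langle O\rangle_\pert$ and the explicit lower bound on $\phi''(u)$ in terms of the thermal overlap variance rely on a careful IBP/Nishimori algebra in which the $sG$, $s^2Q_0$ and $-(s^2/2)Q_1$ parts of $H_{N,p}^t$ conspire to isolate only the two-replica overlap $O = \vl_m^\trans(\bR^t_{1,2})^{\odot p}\vl_m$, and similarly to isolate only its thermal fluctuation in the second derivative. Once these identities are verified, the remaining convexity-concentration machinery is standard, and the constants $L_{m,p}$ absorb factors of $C_{m,p}^2$, $\rho_t^{-p}$ and $\|\vl_m\|$ depending only on $m$ and $p$.
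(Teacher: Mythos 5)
Your decomposition is at the wrong level, and this creates a genuine gap that the rest of your argument does not fill. You split $\E\langle(O-\E\langle O\rangle_\pert)^2\rangle_\pert$ into the thermal part $\E\langle(O-\langle O\rangle_\pert)^2\rangle_\pert$ and the quenched part $\E(\langle O\rangle_\pert-\E\langle O\rangle_\pert)^2$, and then try to bound each directly from $\phi''$ and from fluctuations of $\phi'$. But $\phi'(u)=(\epsilon_N/C_{m,p})\langle H'\rangle_\pert$ and $\phi''(u)=(\epsilon_N/C_{m,p})^2(\langle(H')^2\rangle-\langle H'\rangle^2+\langle H''\rangle)$, where $H'=\partial_s H^t_{N,p}$ still contains the Gaussian term $\sum_{\iii}g_{\iii}\cdots$: neither derivative isolates the overlap $O$. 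Your claimed inequality $\phi''(u)\geq c\,\epsilon_N^2\,\E\langle(O-\langle O\rangle)^2\rangle$ does not hold — $\phi''$ lower-bounds the \emph{thermal variance of} $H'$ plus an $O(N)$ term, not the thermal variance of a two-replica observable $O$, and there is no pathwise identity or inequality relating them. Similarly, the identity $\E\phi'(u)=As\,\E\langle O\rangle_\pert$ is a statement about expectations only; the pathwise $\phi'(u)$ is the full $\langle H'\rangle$, which carries the disorder $g$ linearly, so you cannot ``divide by $(As)^2$'' to transfer a variance bound on $\phi'$ into one on $\langle O\rangle$.

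The paper closes exactly this gap in its Step 1: it derives, by integration by parts in $g$ and two applications of the Nishimori identity, the \emph{covariance} identity
\[
\frac{1}{N}\,\E\big\langle(O-\E\langle O\rangle)(H'-\E\langle H'\rangle)\big\rangle
= s\,\E\big\langle(O-\E\langle O\rangle)^2\big\rangle+s\,\E\big\langle(O-\langle O\rangle)^2\big\rangle
\;\geq\; s\,\E\big\langle(O-\E\langle O\rangle)^2\big\rangle,
\]
and then applies Cauchy--Schwarz to obtain $s^2\,\E\langle(O-\E\langle O\rangle)^2\rangle\leq\frac{1}{N^2}\,\E\langle(H'-\E\langle H'\rangle)^2\rangle$. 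This is the bridge that lets one trade variance of the overlap for variance of $H'$; only after it does the paper split $\operatorname{Var}(H')$ into thermal plus quenched pieces and control them via $\phi''$ (Step 2) and the convex-function lemma (Step 3). Your outline replicates Steps 2 and 3 in spirit (the integration-over-$u$ trick for the second derivative, and the $\delta$-optimization for the quenched part), and that machinery is fine, but it is being applied to the wrong quantity: you need the Step 1 covariance/Cauchy--Schwarz reduction \emph{before} you split, and without it the expectation identity $\E\phi'=As\,\E\langle O\rangle_\pert$ is not enough to conclude.
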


\begin{proof}
This proof is a generalization of the case when $p = 1$ and $\kappa = 1$ found in \cite{strongreplicasym}. To simplify notation, we will drop the subscript on the Gibbs average, $\langle \cdot \rangle := \langle \cdot \rangle_\pert$ and the subscripts $m,p,t$ because they are fixed throughout the proof. Furthermore, since the  $(u_{m,p,t})_{m,p,t \geq 1}$ are independent, we can fix the $u_{m',p',t'}$ for $m' \neq n$, $p' \neq p$, and $t \neq t'$ and average with respect to the Gaussian $g$ and $u_{m,p,t}$ first. This restriction will not affect the validity of the Nishimori property \eqref{nishimori} because the Gibbs measure is a conditional probability corresponding to a Gaussian estimation problem for all $u_{m,p,t}$. To also simplify notation, we will abuse notation and define
\[
\bR_{\ell,\ell'}^{\odot p} = (\rho \bR_{\ell,\ell'}^t)^{\odot p}
\]
in this proof because $\rho,p,t$ are fixed, so it will not affect any computations.
\\\\
\textit{Step 1:} We first bound the moments 
and show that
\begin{equation}\label{eq:concstep1}
	\E \langle ( (\vl_{m}^\trans \bR_{1,0}^{\odot p} \vl_{m}) - \E \langle (\vl_{m}^\trans \bR_{1,0}^{\odot p} \vl_{m}) \rangle )^2 \rangle \leq \frac{4C^2_{m,p}}{\epsilon^2_N N^2}  \E \big\langle ( H' - \E \langle H' \rangle)^2 \big\rangle
\end{equation}
where  $H'=\partial_{s} H$ denotes the derivative with respect to the last coordinate of the Hamiltonian defined in \eqref{eq:3paramhamiltonian} which we denote by
\begin{equation}\label{defs}
	s:=s_{m,p,t} = \frac{u_{m,p,t} \epsilon_{N}}{2^{m + p} \kappa C^p} \in \Big[\frac{\epsilon_{N}}{2 \cdot 2^{m + p} \kappa C^p} ,\frac{\epsilon_{N}}{2^{m + p} \kappa  C^p}  \Big].
\end{equation}
We also recall that $C_{m,p} = 2^{m + p} \kappa C^{p}$. During the proof we write in short $u$ for $u_{m,p,t}$.
This inequality comes from an integration by parts argument and the Nishimori property. A crucial observation is that we can integrate by parts with respect to the Gaussian random variables conditionally on all other sources of randomness by independence. By independence, we first do the computation conditionally on $s$ and denote in short $\vl$ for $\vl_{m,p}$. We first prove that
\begin{equation}\label{conc:step1}
	\frac{1}{N} \E \langle  (\vl^\trans \bR_{1,2}^{\odot p} \vl)( H' - \E \langle H' \rangle ) \rangle \leq s\E \langle ( (\vl^\trans \bR_{1,2}^{\odot p} \vl) - \E \langle  (\vl^\trans \bR_{1,2}^{\odot p} \vl) \rangle )^2 \rangle + s\E \langle ( (\vl^\trans \bR_{1,2}^{\odot p} \vl) -  \langle  (\vl^\trans \bR_{1,2}^{\odot p} \vl) \rangle )^2 \rangle.
\end{equation}
Notice that the left hand side simplifies to
\begin{align*}
	&\frac{1}{N} \E \langle (\vl^\trans \bR_{1,0}^{\odot p} \vl)( H' - \E \langle H' \rangle ) \rangle 
	\\&= \frac{1}{ N^{\frac{p+1}{2}} }\sum_{\iii} \E\bigg[ \langle 
	(\vl^\trans \bR_{1,0}^{\odot p} \vl)
	g_{\iii}\sum_k  \lambda(k)  x_{i_1} (k)\cdots x_{i_p}(k)  \rangle + 2s \langle  (\vl^\trans \bR_{1,0}^{\odot p} \vl)  (\vl^\trans \bR_{1,0}^{\odot p} \vl) \rangle - s \langle  (\vl^\trans \bR_{1,0}^{\odot p} \vl)  (\vl^\trans \bR_{1,1}^{\odot p} \vl) \rangle \bigg]
	\\&- \E \Big[ \langle (\vl^\trans \bR_{1,0}^{\odot p} \vl) \rangle \Big]  \bigg( \frac{1}{N^{\frac{p+1}{2}} }\sum_{\iii}  \E \bigg[ \langle g_{\iii}\sum_k \lambda(k) x_{i_1}(k) \cdots x_{i_p}(k)  \rangle + 2s \E \langle (\vl^\trans \bR_{1,0}^{\odot p} \vl) \rangle - s\E \langle  (\vl^\trans \bR_{1,1}^{\odot p} \vl) \rangle \bigg) \bigg].
\end{align*}
Since $\bx^0$ is independent from the Gaussian terms, we can integrate the Gaussian terms using integration by parts conditionally on the $\bx^0$ (and recalling the extra $s$ factor in the exponent) to conclude that the  above right hand side equals
\begin{align}
	&s\E \Big[ \langle (\vl^\trans \bR_{1,0}^{\odot p} \vl) (\vl^\trans \bR_{1,1}^{\odot p} \vl)   \rangle -   \langle (\vl^\trans \bR_{1,0}^{\odot p} \vl) (\vl^\trans \bR_{1,2}^{\odot p} \vl)   \rangle + 2\langle (\vl^\trans \bR_{1,0}^{\odot p} \vl) (\vl^\trans \bR_{1,0}^{\odot p} \vl) \rangle - \langle (\vl^\trans \bR_{1,0}^{\odot p} \vl) (\vl^\trans \bR_{1,1}^{\odot p} \vl) \rangle \Big] \notag
	\\&- \E \Big[ \langle (\vl^\trans \bR_{1,0}^{\odot p} \vl) \rangle \Big] \bigg(  s\E  \bigg[ \langle (\vl^\trans \bR_{1,1}^{\odot p} \vl)   \rangle - s\E \langle (\vl^\trans \bR_{1,2}^{\odot p} \vl)   \rangle + 2s\E \langle (\vl^\trans \bR_{1,0}^{\odot p} \vl) \rangle - s\E \langle (\vl^\trans \bR_{1,1}^{\odot p} \vl) \rangle \bigg) \bigg] \label{eq:concstep1.1}.
\end{align}
Next, we integrate with respect to $\bx^1$ and $\bx^2$ independently since they are independent conditionally on $\bx^0$ and apply the Nishimori property to conclude that
\[
\E \langle (\vl^\trans \bR_{1,0}^{\odot p} \vl) (\vl^\trans \bR_{1,2}^{\odot p} \vl)   \rangle  = \E \langle (\vl^\trans \bR_{1,0}^{\odot p} \vl) (\vl^\trans \bR_{2,0}^{\odot p} \vl)   \rangle  = \E \langle (\vl^\trans \bR_{1,0}^{\odot p} \vl) \rangle^2.
\]
which implies  that \eqref{eq:concstep1.1} can be further simplified to
\begin{align*}
	&- s\E \langle (\vl^\trans \bR_{1,0}^{\odot p} \vl) \rangle^2 +  2 s\E \langle (\vl^\trans \bR_{1,0}^{\odot p} \vl)^2 \rangle- s( \E \langle (\vl^\trans \bR_{1,0}^{\odot p} \vl) \rangle )^2 
	\\&=  s\Big( \E \langle (\vl^\trans \bR_{1,0}^{\odot p} \vl)^2 \rangle - ( \E_\langle (\vl^\trans \bR_{1,0}^{\odot p} \vl) \rangle )^2 \Big) + s\Big( \E \langle (\vl^\trans \bR_{1,0}^{\odot p} \vl)^2 \rangle - \E \langle (\vl^\trans \bR_{1,0}^{\odot p} \vl) \rangle^2 \Big) 
	\\&= s\E \langle ((\vl^\trans \bR_{1,0}^{\odot p} \vl) - \E \langle (\vl^\trans \bR_{1,0}^{\odot p} \vl) \rangle )^2 \rangle + s\E \langle ((\vl^\trans \bR_{1,0}^{\odot p} \vl) -  \langle (\vl^\trans \bR_{1,0}^{\odot p} \vl) \rangle )^2 \rangle\,.
\end{align*}
We can now conclude that
\[
\frac{1}{N} \E \langle ((\vl^\trans \bR_{1,0}^{\odot p} \vl) - \E \langle (\vl^\trans \bR_{1,0}^{\odot p} \vl) \rangle ) ( H' - \E \langle H' \rangle ) \rangle \geq s \E \langle ((\vl^\trans \bR_{1,0}^{\odot p} \vl) - \E \langle (\vl^\trans \bR_{1,0}^{\odot p} \vl) \rangle )^2 \rangle
\]
so the Cauchy--Schwarz inequality implies
\begin{align*}
	s\E \langle ((\vl^\trans \bR_{1,0}^{\odot p} \vl)  - \E \langle (\vl^\trans \bR_{1,0}^{\odot p} \vl)  \rangle )^2 \rangle &\leq 	\frac{1}{N} | \E \langle ((\vl^\trans \bR_{1,0}^{\odot p} \vl)  - \E \langle (\vl^\trans \bR_{1,0}^{\odot p} \vl)  \rangle ) ( H' - \E \langle H' \rangle ) \rangle | 
	\\&\leq \frac{1}{N} \Big( \E \langle ((\vl^\trans \bR_{1,0}^{\odot p} \vl)  - \E \langle (\vl^\trans \bR_{1,0}^{\odot p} \vl)  \rangle )^2 \rangle  \E \langle ( H' - \E \langle H' \rangle )^2 \rangle \Big)^{1/2},
\end{align*}
which simplifies to
\[
s^2\E \langle ( (\vl^\trans \bR_{1,0}^{\odot p} \vl)  - \E \langle (\vl^\trans \bR_{1,0}^{\odot p} \vl) \rangle )^2 \rangle \leq \frac{1}{N^2}  \E \big\langle ( H' - \E \langle H' \rangle)^2 \big\rangle.
\]
Using the fact that $s \geq \frac{\epsilon_N}{2 C_{m,p}}$ by \eqref{defs} implies \eqref{eq:concstep1}.
\\\\
\textit{Step 2:} We now have to control the variance of the derivative of the Hamiltonian. Again, this part of the proof is identical to the one dimensional case, because the variances of $H$ can be expressed in terms of the derivatives of the free energy, which can be bounded using elementary facts about convex functions \cite{strongreplicasym}. We recreate the details for completeness.

We begin by bounding the difference of derivatives in \eqref{eq:concstep1} with its thermal and quenched variances 
\begin{equation}\label{toto}
	\frac{4C^2_{m,p}}{\epsilon^2_N N^2}  \E \big\langle ( H' - \E \langle H' \rangle)^2 \big\rangle \leq
	\frac{4C^2_{m,p}}{\epsilon^2_N N^2}  \E \big\langle ( H' -  \langle H' \rangle)^2 \big\rangle + \frac{4C^2_{m,p}}{\epsilon^2_N N^2}  \E \big\langle ( \langle H' \rangle  - \E \langle H' \rangle)^2 \big\rangle 
\end{equation}
Our goal is to use the convexity and boundedness of the perturbed free energy functions to  bound the right hand side of \eqref{toto} from above.

We first fix the rest of the uniform random variables in the Hamiltonian except for $u_{m,p}$. We begin by bounding the first term in the RHS of \eqref{toto} using a bound on the first derivative of the free energy. We prove that
\begin{equation}
	\frac{4C^2_{m,p}}{\epsilon^2_N N^2} \E_{u} \E \big\langle ( H' -  \langle H' \rangle)^2 \big\rangle \leq \frac{12 C^4_{n,p} 4^{n + p} }{\epsilon_N^2 N}.
\end{equation}
To this end, first  notice that
\begin{align}
	&\frac{d^2}{d s^2}\E \log Z_N^\pert(s)  =  \E \langle (H' - \langle H' \rangle )^2 \rangle + 2N \E \langle \vl \bR_{1,0}^{\odot p} \vl \rangle - N \E \langle \vl \bR_{1,1}^{\odot p} \vl \rangle \label{eq:secondderiv}
\end{align}
where the last term comes from $\partial_{s}^{2}H$.
If we rearrange terms, then 
\begin{align*}
	&\int_{\frac{\epsilon_N}{2C_{n,p}}}^{\frac{\epsilon_N}{C_{m,p}}}  \E \big\langle ( H' -  \langle H' \rangle)^2 \big\rangle \, ds
	\\&= \int_{-\frac{\epsilon_N}{2C_{m,p}}}^{\frac{\epsilon_N}{C_{m,p}}}   \frac{d^2}{d s^2} \E \log Z_N^\pert(s)  \, ds -  N \int_{\frac{\epsilon_N}{2C_{m,p}}}^{\frac{\epsilon_N}{C_{m,p}}}  \E \bigg\langle 2 \vl^\trans \bR^{\odot p}_{1,0} \vl -  \vl^\trans \bR^{\odot p}_{1,1} \vl \bigg\rangle  \, ds 
	\\&\leq \frac{d}{d s} \E \log Z_N^\pert(s) \bigg|_{\frac{\epsilon_N}{2C_{m,p}}}^{\frac{\epsilon_N}{C_{m,p}}} + 3 N \int_{\frac{\epsilon_N}{2C_{m,p}}}^{\frac{\epsilon_N}{C_{m,p}}} \kappa^2 C^{2p} \, ds.
\end{align*} where we used \eqref{boundO}.
Next, to control the first term, notice that
\[
\frac{d}{d s} \E \log Z_N^\pert(s) = \E \langle H' \rangle = 2sN \E \langle \vl^\trans \bR^{\odot p}_{1,0} \vl \rangle - s N \E \langle \vl^\trans \bR^{\odot p}_{1,2} \vl \rangle\leq 3 N s \kappa^2 C^{2p}
\]
so we can conclude that
\[
\int_{\epsilon_N/2C_{m,p}}^{\epsilon_N/C_{m,p}}  \E \big\langle ( H' -  \langle H' \rangle)^2 \big\rangle \, ds  \leq 3 N  \kappa^2 C^{2p}\frac{\epsilon_N}{C_{m,p}} 
\]
Since $s = \frac{u \epsilon_N}{C_{n,p}}$ where $u \sim U[1/2,1]$, the above reads
\begin{equation}\label{b1}
	\E_u \E \big\langle ( H' -  \langle H' \rangle)^2 \big\rangle   \leq 6 N  \kappa^2 C^{2p}
\end{equation}
Eventhough this bound is not great, remember that we will multiply  it by $\frac{4C^2_{n,p}}{\epsilon^2_N N^2}$ which is very small, see \eqref{eq:concstep1}.

\textit{Step 3:} We now bound the second term  in the RHS of \eqref{toto} and show that
\begin{equation}\label{bnvc}
	\frac{4C^2_{m,p}}{\epsilon^2_N N^2}  \E_{u} \E \big\langle ( \langle H' \rangle  - \E \langle H' \rangle)^2 \big\rangle  \leq  1000 C_{m,p}^{\frac{16}{3}} 4^{m + p} \Big( \frac{v_N}{N^2 \epsilon^4_N} \Big) ^{1/3} .
\end{equation}
The proof uses the expectation over $u_{n,p}$ and  the convexity of the modified free energy
\[
\tilde F_N(s) = \frac{1}{N} \log Z^\pert_N(s)  + 3s^2 4^{m + p}C_{m,p}^2\,.
\]
$\tilde F_N$ is a convex function because the computation in \eqref{eq:secondderiv} implies that the second derivative 
\begin{align*}
	\frac{d^2}{ds^2}\tilde F_N &= \frac{1}{N} \langle (H' - \langle H' \rangle)^2 \rangle  + 2  \langle \vl \bR_{1,0}^{\odot p} \vl \rangle -  \langle \vl \bR_{1,1}^{\odot p} \vl \rangle  + 6 \cdot 4^{m + p}C_{m,p}^2
\end{align*}
is non-negative because $C_{m,p} = 2^{m + p} \kappa C^{p}$ and $\langle \vl \bR_{1,2}^{\odot p} \vl \rangle$ is uniformly bounded by $\kappa^2 C^{2p}$. Our goal is to control the difference of the derivatives of this convex function with its expected value
\begin{equation}\label{eq:concentrationestimate}
	\tilde F'_N - \E \tilde F'_N = \frac{1}{N} ( \langle H' \rangle - \E \langle H' \rangle ) .
\end{equation}
We can now use the bounds on the derivatives of convex functions given by the following lemma from \cite[Lemma 3.2]{Pbook}.
\begin{lem}[A Bound for Convex Functions]
	Let  $G$ and $g$ be convex differentiable functions. For $\delta > 0$ and nonnegative functions $C_\delta^-(x) = g'(x) - g'(x - \delta)$ and $C_\delta^+(x) = g'(x + \delta) - g'(x)$ then for all real numbers $x$
	\[
	|G'(x) - g'(x)| =  \frac{1}{\delta} \sum_{u \in \{x - \delta,x,x + \delta \}} |G(u) - g(u)| + C_\delta^+(x)  + C_\delta^-(x) 
	\]
\end{lem}
To control \eqref{eq:concentrationestimate}, we apply this lemma to  $G(s) = \tilde F_N(s)$ and $g(s) = \E \tilde F_N(s)$ to conclude that for any $s \in (\epsilon_N/2 C_{m,p}, \epsilon_N/ C_{m,p})$ and $\delta > 0$ sufficiently small so that $s - \delta > 0$,
\begin{align*}
	\frac{1}{N} | \langle H' \rangle - \E \langle H' \rangle  | &\leq \delta^{-1} \sum_{u \in \{x - \delta,x,x + \delta \}} |F_N(u) - \E F_N(u)| + |\E \tilde F_N'(x) - \E \tilde F_N'(x - \delta) | + |\E \tilde F_N'(x + \delta) - \E \tilde F_N'(x) | 
\end{align*}
We square both sides and apply Jensen's inequality $(\sum_{i = 1}^m a_i )^2 \leq m \sum_{i = 1}^m a_i^2$ to arrive at the bound
\begin{align}
	\frac{1}{5 N^2} \E_{u} \E ( \langle H' \rangle - \E \langle H' \rangle )^2 &\leq \delta^{-2} \sum_{u \in \{x - \delta,x,x + \delta \}} \E_{u} \E |F_N(u) - \E F_N(u)|^2\nonumber
	\\&\quad + \E_{u} |\E \tilde F_N'(s) - \E \tilde F_N'(s - \delta) |^2 + \E_{u} | \E \tilde F_N'(s + \delta) - \E \tilde F_N'(s) |^2 \label{jk}
\end{align}
It now remains to control the two terms in the above upper bound.
\begin{enumerate}
	\item From the definition of $v_N$,
	\[
	\sup_u \E (F_N(u) - \E F_N(u))^2 \leq \frac{v_N}{N^2},
	\]
	we get
	\begin{equation}\label{bnv}
		\delta^{-2} \sum_{u \in \{x - \delta,x,x + \delta \}} \E_{u} \E |F_N(u) - \E F_N(u)|^2\leq \frac{3 v_N}{ N^2 \delta^2}.
	\end{equation}
	\item For the two last  terms, we begin by controlling
	\begin{align}
		&2 \int_{\frac{1}{2}}^1\Bigg( \Big( \E \tilde F_N'\Big( \frac{u \epsilon_N}{C_{m,p}} \Big) - \E \tilde F_N'\Big(\frac{u \epsilon_N}{C_{m,p}}  - \delta \Big)\Big)^2 + \Big( \E \tilde F_N'\Big( \frac{u \epsilon_N}{C_{m,p}} \Big) - \E \tilde F_N'\Big(\frac{u \epsilon_N}{C_{m,p}} +\delta\Big)\Big)^2 \,\Bigg) du \notag
		\\&= \frac{2C_{m,p}}{\epsilon_N} \int_{\frac{\epsilon_N}{2C_{m,p}}}^{\frac{\epsilon_N}{C_{m,p}}} \Bigg(| \E \tilde F_N'(s) - \E \tilde F_N'(s - \delta) |^2  + | \E \tilde F_N'(s + \delta) - \E \tilde F_N'(s) |^2\Bigg) \, ds \label{eq:2b2}
	\end{align}
	By the Nishimori property, we get the uniform bound
	\begin{equation}\label{eq:derivbound}
		\E \tilde F_N'(s) = \frac{1}{N} \E \langle H' \rangle  = 2s \E \langle \vl^\trans \bR^{\odot p}_{1,0} \vl \rangle - s  \E \langle \vl^\trans \bR^{\odot p}_{1,2} \vl \rangle\leq 3 \epsilon_N C^2_{m,p} 4^{m + p}
	\end{equation}
	so
	\[
	\max ( | \E \tilde F_N'(s) - \E \tilde F_N'(s - \delta) |, | \E \tilde F_N'(s + \delta) - \E \tilde F_N'(s) | ) \leq 6 \epsilon_N C^2_{m,p} 4^{m + p}
	\]
	Since $F_N'$ is increasing, $\E \tilde F'_N(s) - \E F_N'(s - \delta)$ is nonnegative so we conclude that \eqref{eq:2b2} is bounded by
	\begin{align*}
		&12  C^3_{m,p} 4^{m + p} \int_{\frac{\epsilon_N}{2C_{m,p}}}^{\frac{\epsilon_N}{C_{m,p}}}  \E \tilde F'_N(s) - \E \tilde F'_N(s - \delta)    +  \E \tilde F'_N(s + \delta) - \E \tilde F'_N(s) \, ds
		\\&= 12  C^3_{m,p} 4^{m + p} \bigg( \E \tilde F_N \Big(\frac{\epsilon_N}{C_{m,p}} + \delta \Big) - \E \tilde F_N \Big(\frac{\epsilon_N}{C_{m,p}} - \delta \Big)  +  \E \tilde F_N \Big(\frac{\epsilon_N}{2C_{m,p}} - \delta \Big) - \E \tilde F_N \Big(\frac{\epsilon_N}{2C_{m,p}} + \delta \Big) \bigg)
		\\&\leq 72 \delta \epsilon_N  C^5_{m,p} 8^{m + p} 
	\end{align*}
	by the mean value theorem and the bound on the derivatives \eqref{eq:derivbound}.
\end{enumerate} 
We deduce from \eqref{jk} and \eqref{bnv} that
\[
\frac{1}{5 N^2} \E_{u} \E ( \langle H' \rangle - \E \langle H' \rangle )^2 \leq \frac{3 v_N}{ N^2 \delta^2} + 72 \delta \epsilon_N  C^5_{n,p} 8^{n + p} .
\]
We finally choose $\delta$ to be given by
\[
\delta^3 = \frac{v_N}{ 24 N^2 \epsilon_N C_{n,p}^5	8^{n + p}}
\]
so
\[
\frac{1}{N^2} \E_{u} \E ( \langle H' \rangle - \E \langle H' \rangle )^2 \leq 2 \bigg( \frac{3v_N}{N^2} \bigg( \frac{ 24 N^2\epsilon_N C_{m,p}^5	8^{m + p}}{v_N } \bigg)^{2/3} + 72  \bigg( \frac{ v_N}{ 24 N^2 \epsilon_N C_{m,p}^5	8^{m + p}} \bigg)^{1/3} \epsilon_N  C^5_{m,p} 8^{m + p} \bigg)
\]
which gives \eqref{bnvc} . 

\textit{Step 4:} From steps 1 and 3, we conclude that conditionally on $u_{m',p'}$ for $(m',p') \neq (m,p)$ that
\[
\E_{u_{m,p}} \E \langle ( (\vl_{m,p}^\trans \bR_{1,2}^{\odot p} \vl_{m,p}) - \E \langle (\vl_{n,p}^\trans \bR_{1,2}^{\odot p} \vl_{m,p}) \rangle )^2 \rangle \leq   \bigg(  1000 C_{n,p}^{\frac{16}{3}} 4^{n + p} \Big( \frac{v_N}{N^2 \epsilon^4_N} \Big) ^{1/3}  + \frac{12 C^4_{n,p} 4^{m + p} }{\epsilon_N^2 N} \bigg).
\]
Since this bound is uniform in  $u_{m',p'}$, Theorem \ref{thm:concoverlap} follows. 

\end{proof}

\begin{rem}
In the proof above, we assumed that $C_{m,p} \geq 1$ and $\epsilon_N \leq 1$. This is not a problem, because $\epsilon_N = N^{-\gamma}$ for $0 < \gamma < 1/4$ in our applications, and we can assume $C_{m,p} \geq 1$ at the cost of a less sharp upper bound. The constant $L_{m,p}$ is also of order $C_{m,p}^{16/3}$. 
\end{rem}

Using Gaussian concentration to explicitly estimate $v_N$, we can conclude concentration of all quadratic forms associated to the Hadamard powers of the overlap at rational vectors.
\begin{cor}[Overlap Quadratic Form  Concentration]\label{OC1}
If $\epsilon_N = N^{- \gamma}$ for $0 < \gamma < 1/4$, then for any $p \geq 1$ rational valued $\vl \in [-1,1]^\kappa$, and $t \leq n$ 
\begin{equation}\label{eq:concentrationonaverage}
	\E_u\E \langle ( (\vl^\trans (\bR^t_{1,2})^{\odot p} \vl) - \E \langle (\vl^\trans (\bR^t_{1,2})^{\odot p} \vl) \rangle_\pert )^2  \rangle_\pert \to 0.
\end{equation}
\end{cor}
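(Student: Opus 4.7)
The plan is to invoke Theorem \ref{thm:concoverlap} and reduce the problem to showing that the variance quantity $v_N$ defined in \eqref{eq:FEconcentration} satisfies $v_N = O(N)$. Once this estimate is in hand, substituting $\epsilon_N = N^{-\gamma}$ into the bound \eqref{eq:concentraitonquadratic} gives
\[
\E_u\E \bigl\langle \bigl( (\vl^\trans (\bR^t_{1,2})^{\odot p} \vl) - \E \langle (\vl^\trans (\bR^t_{1,2})^{\odot p} \vl) \rangle_\pert \bigr)^2  \bigr\rangle_\pert \le L_{m,p} \bigl( N^{(4\gamma-1)/3} + N^{2\gamma-1} \bigr),
\]
and both exponents are negative whenever $0 < \gamma < 1/4$, yielding the conclusion. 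Since any rational $\vl \in [-1,1]^\kappa$ appears as one of the $\vl_m$ in the countable enumeration used to construct the perturbation, applying the theorem to that specific $m$ suffices.

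To prove $v_N = \sup_u \Var(\log Z_N^\pert(u)) = O(N)$, I would split the disorder into the Gaussian variables (the $W_{ij}$ driving $H_N = H_{N,\Delta}$ and the perturbation Gaussians $g_{m,t,\iii}$) and the planted signal $\bvx^0$. For the Gaussian part, the Gaussian Poincar\'e inequality applies: the $W_{ij}$-partial derivative of $\log Z_N^\pert$ equals $\langle (\vx_i \cdot \vx_j)/\sqrt{\Delta_{ij} N} \rangle_\pert$, uniformly bounded by $C^2\kappa/\sqrt{\Delta_{ij} N}$ by Hypothesis~\ref{hypcompact}. Squaring and summing over $i<j$, using the uniform upper bound on $1/\Delta_{ij}$ from Hypothesis~\ref{hypDelta}, yields an $O(N)$ contribution. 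The perturbation Gaussians contribute an analogous sum scaled by $\epsilon_N^2 \sum_{m,p,t} 1/C_{m,p}^2$, which is negligible because the construction $C_{m,p} = 2^{m+p}\kappa C^p$ makes the series converge geometrically (this is the same estimate as \eqref{eq:covbound}). For the signal $\bvx^0$, I would use a bounded-differences (Efron--Stein) argument: since each coordinate $\vx_i^0$ is supported in $[-C,C]^\kappa$ by Hypothesis~\ref{hypcompact}, changing one coordinate perturbs $\log Z_N^\pert$ by $O(1)$ (the sum over $j$ of the $O(1/N)$ contributions from terms of the form $(\vx_i^0 \cdot \vx_j^0)(\vx_i \cdot \vx_j)/(\Delta_{ij} N)$), so summing squared differences over $i\leq N$ contributes another $O(N)$. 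Combining gives $v_N = O(N)$.

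The main obstacle is bookkeeping rather than any new conceptual input: one must verify that the Gaussian Lipschitz estimate for $\log Z_N^\pert$ is genuinely uniform in the perturbation parameters $(u_{m,p,t})$ so that the supremum in the definition of $v_N$ is not destroyed by the countably many extra disorder variables coming from the perturbation. The geometric decay of the weights $1/C_{m,p}^2$ is precisely what guarantees this. Once $v_N = O(N)$ is established, the corollary follows from Theorem \ref{thm:concoverlap} by direct substitution, as outlined in the first paragraph.
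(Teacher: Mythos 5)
Your proposal is correct and follows essentially the same route as the paper: reduce to $v_N = O(N)$ via Theorem~\ref{thm:concoverlap}, split the disorder into the Gaussian variables and the planted signal, bound the first by a Gaussian variance inequality (the paper uses Gaussian Lipschitz concentration from \cite[Theorem 1.2]{Pbook}; your Poincar\'e argument is an equivalent tool), bound the second by a bounded-differences / Efron--Stein estimate using the compact support of $\pP_0$, and observe that the geometric weights $1/C_{m,p}^2$ make the perturbation's contribution uniform over $u$. Substituting $\epsilon_N = N^{-\gamma}$ and checking that both exponents $(4\gamma-1)/3$ and $2\gamma-1$ are negative for $\gamma < 1/4$ finishes the argument exactly as the paper does.
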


\begin{proof}
We explicitly compute the rate $v_N$ defined in \eqref{eq:FEconcentration} for this model. We fix our parameter $u$, and let $\phi = N F_N^\pert(u)$. By independence, we can split the expected values into a statement about the concentration of $\bx^0$ and the Gaussian terms,
\begin{equation}\label{eq:decompositionindicator}
	\E  | \phi - \E \phi| \leq \E  | \phi - \E_{W,g}  \phi| + \E_{\bx^0}  | \E_{W,g} \phi - \E\phi|
\end{equation}
The average $\E_W$ is with respect to the `$W$' Gaussian terms in $H_N$, the average $\E_{\bx^0}$ is with respect to `$\bx^0$' terms in the approximate indicator and $\E_g$ is with respect to the `$g$' Gaussian terms $g_N(\bx)$, and $\E$ is the average with respect to all sources of randomness.

We start by proving the first term satisfies
\begin{equation}\label{eq:firsttermGGIconc}
	\E | \phi - \E_{W,g} \phi| \leq O( \sqrt{N + N \epsilon_N^2} ).
\end{equation}
By independence, we can compute this upper bound conditionally on $\bx^0$. Since the Gaussian Hamiltonian has variance of order $N$, the classical Gaussian concentration inequality for Lipschitz functions \cite[Theorem 1.2]{Pbook} implies that 
\[
\E | \phi - \E_{W,g} \phi|^2
\leq 8\gamma^2 C^{2p} N + N \epsilon_N^2.
\]

We now focus on the second term and prove it satisfies
\begin{equation}\label{eq:secondtermGGIconc}
	\E_{\bx^0}  | \E_{W,g}  \phi - \E  \phi|  \leq  O( N^{\frac{1}{2}} ).
\end{equation}
We use the bounded difference property and a consequence of the Efron--Stein inequality. For $N$ sufficiently large
\begin{align*}
	|\E_{W,g} \partial_{x_i^0(k)} \phi | &\leq \frac{1}{\|\bD_\infty\|} \Big| \E_{W,g} \Big\langle  x_j^0(k) (x_i \cdot x_j) + \sum_{p \geq 1} \frac{p}{C_{m,p}} R_{1,0}^{p-1} x_i(k) \Big\rangle_\pert \Big| 
	\leq L
\end{align*}
for some universal constant $L$ that only depends on $C$ and $\|\bD\|_\infty$.
Since the $\E_{W,g} \phi$ has a bounded derivative for each coordinate $x_i^0(k)$ and $x_i^0(k)$ is almost surely bounded by $C$, it satisfies the bounded difference inequality, 
\[
|\E_{W,g} \phi(x^0_1(1),\dots,x^0_i(k),\dots,x_N^0(\kappa) ) - \E_{W,g} \phi(x^0_1(1),\dots,\tilde x^0_i(k),\dots,x_N^0(\kappa) ) | \leq  CL
\]
so $\phi$ satisfies the bounded difference property for some universal constant $L$ that only depends on the maximal value of the support. From the concentration inequality for functions of independent random variables satisfying the bounded differences property \cite[Corollary~3.2]{boucheronConc} if follows that
\[
\E_{x^0} ( \E_{W,g} \phi - \E \phi )^2 \leq  C^2 L^2 \kappa N.
\]
Combining the estimates in \eqref{eq:firsttermGGIconc} and \eqref{eq:secondtermGGIconc}  proves \eqref{eq:secondtermGGIconc} after applying Jensen's inequality.

From the overlap concentration estimate Lemma~\ref{thm:concoverlap}, we deduce
\begin{equation}
	\E_u	\E \langle ( (\vl_{m,p}^\trans (\bR^t_{1,2})^{\odot p} \vl_{m,p}) - \E \langle (\vl_{m,p}^\trans (\bR^t_{1,2})^{\odot p} \vl_{m,p}) \rangle_\pert )^2  \rangle_\pert \leq L_{m,p} \bigg( \bigg(\frac{ N + N \epsilon_N^2}{N^2 \epsilon^4_N} \bigg)^{1/3} + \frac{1}{\epsilon_N^2 N} \bigg).
\end{equation}
In particular, if $\epsilon_N = N^{- \gamma}$ for $0 < \gamma < 1/4$, then $N \epsilon^4_N \to \infty$ so \eqref{eq:concentrationonaverage} holds for any $m,p$ and $t$. Since the $\vl_{m,p}$ are dense in the sphere, the result follows. 
\end{proof}

\begin{rem}
The condition that $\gamma > 0$ implies that the limit of the free energy is unchanged by Lemma~\ref{prop:nochangeinFE}. The condition that $\gamma < 1/4$ is required to ensure that the perturbation is large enough to regularize the Gibbs measure. The exponent 1/4 is not expected to be optimal.
\end{rem}

The concentration of the quadratic forms in Corollary \ref{OC1} is insufficient to determine concentration of the overlaps (see Corollary \ref{OC3}) because the limiting arrays may not be a priori symmetric.  Indeed, even though the convergence in Corollary \ref{OC1} can be turned into an almost sure  uniform convergence in $\lambda$ 
\[
(\vl^\trans \bR^t_{1,2} \vl) - \E\langle ( \vl^\trans \E\bR^t_{1,2} \vl) \rangle_\pert \to 0 \qquad a.s.
\]
%$$\sup_{\|\v|\|=1}|\langle \v|,  (\bR_{1,2}^{\odot p}-\E \bR_{1,2}^{\odot p})\v|\langle \to 0\qquad a.s.$$
this allows to conclude only that 
\[
((\bR^t_{1,2})^{\odot p}-\E \langle (\bR^t_{1,2})^{\odot p} \rangle_\pert +((\bR_{1,2}^t)^{\odot p}-\E \langle (\bR^t_{1,2})^{\odot p}\rangle_\pert )^{\trans}
\]
goes to zero. It is not hard to see that $\E\langle (\bR^t_{1,2})^{\odot p} \rangle_\pert$ is symmetric, but at  this point nothing guarantees  that $(\bR^t_{1,2})^{\odot p}$ is as well symmetric.
To prove asymptotic symmetry of the overlaps, we invoke the synchronization property first observed for overlap matrix arrays in the vector spin glass models. 

The entries of the overlap matrix are bounded, the matrix of overlaps is tight and there exists a subsequence such that $\bR_N^t = (R^t(\bvx^\ell, \bvx^{\ell'}) )_{\ell,\ell' \geq 1} $ converges in distribution to some matrix $\bar \bR^t = (\bar R(\bvx^\ell, \bvx^{\ell'}) )_{\ell,\ell' \geq 1} $.  The concentration of the quadratic forms \eqref{eq:concentrationonaverage} proves that $(\vl_k^\trans ( \bR^t_{\ell,\ell'})^{\odot p} \vl_k)$ is close to $\E\langle (\vl_k^\trans ( \bR^t_{\ell,\ell'} )^{\odot p} \vl_k) \rangle_\pert$ for $\ell \neq \ell'$ with probability going to one in the limit. This means that the off diagonal entries of the array of quadratic forms are constant
\begin{equation}\label{selfav}
( (\vl_k^\trans ( \bR^t_{\ell,\ell'})^{\odot p} \vl_k) )_{\ell \neq \ell'} = L^{t,p}_k
\end{equation}
for some function $L$ that only depends on $t$, $p$ and $\vl_k$. The fact that the off diagonals of the quadratic forms are constant is sufficient to proving that the limiting array $\bar \bR$ must have symmetric entries. In particular, $\bar\bR$ satisfies the synchronization property proved for vector spin models in \cite[Theorem 4]{PVS}. 
\begin{theo}[Synchronization]\label{synch}
Any infinite array  $(\bar\bR^t_{\ell,\ell'})_{\ell,\ell'\ge 1}$  of $\kappa\times \kappa $ matrices that satisfy \eqref{eq:concentrationonaverage}  implies that $\bar\bR^t_{\ell,\ell'}$ is  almost surely symmetric for all $\ell,\ell' \ge 1$.
\end{theo}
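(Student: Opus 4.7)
The plan is to pass from the concentration statement \eqref{eq:concentrationonaverage} to a pointwise self-averaging statement at the level of the limit array $\bar\bR^t$, and then invoke the vector-spin synchronization result from \cite{PVS} cited in the statement. First, I would fix $t \le n$ and work on a subsequence along which $(\bR^t_{\ell,\ell'})_{\ell,\ell'\ge 1}$ converges in distribution to $(\bar\bR^t_{\ell,\ell'})_{\ell,\ell'\ge 1}$. The assumption \eqref{eq:concentrationonaverage}, applied along the countable dense family of rationals $\vl_m \in [-1,1]^\kappa$ and every $p \ge 1$, together with a Borel--Cantelli argument and the uniform boundedness \eqref{boundO}, yields that for each $(m,p)$ the scalar quadratic form $\vl_m^\trans (\bar\bR^t_{\ell,\ell'})^{\odot p} \vl_m$ equals the deterministic constant $L^{t,p}_m$ of \eqref{selfav}, almost surely and simultaneously for all pairs $\ell \ne \ell'$.

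Next I would upgrade this to all $\vl \in [-1,1]^\kappa$. Since each entry of $\bar\bR^t_{\ell,\ell'}$ is bounded by $C^2$, the map $\vl \mapsto \vl^\trans (\bar\bR^t_{\ell,\ell'})^{\odot p}\vl$ is Lipschitz on $[-1,1]^\kappa$ with a deterministic constant, so the almost sure identity extends from the dense set $(\vl_m)$ to every $\vl$. Polarizing in $\vl$ (writing $\vl^\trans M^{\odot p}\vl' = \tfrac14[(\vl+\vl')^\trans M^{\odot p}(\vl+\vl') - (\vl-\vl')^\trans M^{\odot p}(\vl-\vl')]$) and evaluating at $\vl = e_k$, $\vl' = e_{k'}$, one concludes that the symmetrized Hadamard entries $((\bar\bR^t_{\ell,\ell'})^{\odot p})_{k,k'} + ((\bar\bR^t_{\ell,\ell'})^{\odot p})_{k',k}$ are all deterministic, for all $p \ge 1$ and all $k,k' \le \kappa$.

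Finally, to go from self-averaging of these symmetrized Hadamard entries to the actual symmetry of $\bar\bR^t_{\ell,\ell'}$, I would invoke Panchenko's synchronization theorem \cite[Theorem~4]{PVS}, as the authors themselves indicate. This abstract result asserts that any weakly exchangeable infinite array of bounded $\kappa\times\kappa$ matrices for which every quadratic form $\vl^\trans M^{\odot p}\vl$ is self-averaging off the diagonal is synchronized through a monotone scalar overlap; in particular such an array has symmetric entries a.s. Weak exchangeability of $(\bar\bR^t_{\ell,\ell'})$ follows from that of the replicas $(\bvx^\ell)_{\ell \ge 1}$ under the perturbed Bayes-optimal Gibbs measure, so Panchenko's hypotheses are met.

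The main obstacle is the step invoking \cite[Theorem~4]{PVS}: self-averaging of $\vl^\trans M^{\odot p}\vl$ only sees the symmetric part of $M$, so a priori the multiset $\{M_{k,k'}, M_{k',k}\}$ is deterministic but the individual entries need not coincide. The synchronization mechanism of \cite{PVS} resolves this by combining the self-averaging of \emph{all} Hadamard powers with exchangeability and monotonicity of the scalar synchronized overlap; I would not reprove it but rather verify that its hypotheses are in force here, namely that (i) the perturbation in \eqref{eq:3paramhamiltonian} probes all directions $\vl_m$ and all orders $p$, and (ii) the Nishimori identity \eqref{nishimori} gives the required joint exchangeability of the replicas after perturbation.
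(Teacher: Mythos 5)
Your plan is sound in outline but genuinely diverges from the paper at the crucial step. Your steps 1--2 (passing \eqref{eq:concentrationonaverage} to a.s.\ pointwise self-averaging along rationals, extending by Lipschitz continuity, then polarizing) are a clean reformulation of what the paper does in Step~1 of its proof using the choices $\vl = e_k \pm e_{k'}$ with $p=1,2$: both arguments arrive at the conclusion that for $\ell\neq\ell'$ the multiset $\{\bar R^t_{\ell,\ell'}(k,k'),\,\bar R^t_{\ell,\ell'}(k',k)\}$ is deterministic. Where you then invoke \cite[Theorem~4]{PVS} as a black box, the paper deliberately does \emph{not}; it gives a short self-contained argument (Step~2 of the proof) using a bipartite Ramsey-type extraction \cite{Tournament} to find large index sets $V_1, V_2$ on which the array takes the form $\left(\begin{smallmatrix}a & b\\ c& d\end{smallmatrix}\right)$, and then a barycenter/Hilbert-space averaging argument to force $b=c$. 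The paper even flags this explicitly: it says the proof is ``essentially identical to the general case \cite[Section~6]{PPotts}, but avoids relying on the Ghirlanda--Guerra identities because they are trivially satisfied by the replica symmetric array.''

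The reason invoking \cite[Theorem~4]{PVS} directly is less clean than it sounds: that theorem's hypothesis is the Ghirlanda--Guerra identities for a Dovbysh--Sudakov array of a vector-spin asymptotic Gibbs measure, and its conclusion is synchronization (the matrix overlap is a monotone Lipschitz function of the scalar overlap), from which symmetry follows as a byproduct. Your description of the theorem as taking ``self-averaging of quadratic forms'' as input is not its stated hypothesis; you would need to argue that concentration trivially implies GG (true, but needs saying), that the array fits the Dovbysh--Sudakov / weak exchangeability framework (you flag this, correctly), and that the perturbed Gibbs measure along a subsequence produces an asymptotic Gibbs measure of the required type. None of this is fatal, but it imports more machinery than the problem requires in a setting where the overlap array is degenerate (replica symmetric). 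The paper's Ramsey-plus-barycenter argument is more elementary, entirely avoids the GG/Dovbysh--Sudakov layer, and gives symmetry directly. Your route is valid once the hypotheses of Panchenko's theorem are carefully verified; the paper's route is shorter to make rigorous precisely because the concentration already trivializes most of what the abstract theorem is designed to handle.
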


\begin{proof}
We adapt the general proof of synchronization to this simpler setting where we have concentation of the quadratic forms of the powers of the overlaps. The proof is essentially identical to the general case \cite[Section~6]{PPotts}, but avoids relying on the Ghirlanda--Guerra identities because they are trivially satisfied by the replica symmetric array.
\\\\\textit{Step 1:} We first recover the values of the overlaps. By \eqref{selfav}, we observe that there exists  some constants $L^t_{k},L^t_{k,k'}$ etc that depend only on $t \leq n$ and entries $k,k'$ such that :
\begin{enumerate}
	\item{\it  First Order Diagonal Elements:} For all $1 \leq k \leq \kappa$, if we take $p = 1$ and $\lambda = e_k$ then
	\[
	\bar R_{\ell,\ell'}^t(k,k) = L^t_{k,k}
	\]
	for some constant $L_k^t = \E \bar R_{1,2}^t(k,k)$.
	\item {\it First Order Off-Diagonal Elements:} For all $1 \leq k \neq k' \leq \kappa$, if we take $p = 1$ and $\lambda_1 = e_{k} + e_{k'}$ and $\lambda_2 = e_k - e_{k'}$ then \eqref{selfav} implies 
	\[
	\bar R^t_{\ell,\ell'} (k,k) + \bar R^t_{\ell,\ell'}(k',k') + \bar R^t_{\ell,\ell'} (k,k') + \bar R^t_{\ell,\ell'}(k',k) = L^t_{e_{k} + e_{k'}}
	\]
	\[
	\bar R^t_{\ell,\ell'} (k,k) + \bar R^t_{\ell,\ell'}(k',k') - \bar R^t_{\ell,\ell'} (k,k') - \bar R^t_{\ell,\ell'}(k',k) = L^t_{e_{k} - e_{k'}}
	\]
	so
	\begin{equation}\label{eq:offdiag1}
		\bar R^t_{\ell,\ell'} (k,k') + \bar R^t_{\ell,\ell'}(k',k) = L^t_{k,k'} = \frac{1}{2} (  L^t_{e_{k} + e_{k'}} -  L^t_{e_{k} - e_{k'}}  ).
	\end{equation}
	\item {\it Second Order Off-Diagonal Elements:} For $1 \leq k \neq k' \leq k$, if we take $p = 2$ and $\lambda_1 = e_{k} + e_{k'}$ and $\lambda_2 = e_k - e_{k'}$ then \eqref{selfav} gives a $\tilde L^t_{k,k'}$ such that for all $\ell \neq \ell'$
	
	\begin{equation}\label{eq:offdiag2}
		(\bar R^t_{\ell,\ell'} (k,k'))^2 + (\bar R^t_{\ell,\ell'}(k',k))^2 = \tilde L^t_{k,k'}.
	\end{equation}
\end{enumerate} 
From the formulas \eqref{eq:offdiag1} and \eqref{eq:offdiag2} we can explicitly solve the system  to conclude that $\bar R_{\ell,\ell'}(k,k')$ and $\bar R_{\ell,\ell'}(k,k')$ can take one of two possible values
\[
\bar R^t_{\ell,\ell'} (k,k') , \bar R^t_{\ell,\ell'}(k',k) = \frac{L^t_{k,k'} \pm \sqrt{ 2\tilde L^t_{k,k'}  - (L^t_{k,k'} )^2} }{2}.
\]
Note that all the concentration results above only apply for the offdiagonal elements of the overlap array, because Theorem~\ref{thm:concoverlap} only holds when $\ell \neq \ell'$. Obviously $L^t$ and $\tilde L^t$ are symmetric matrices. Moreover, by Corollary \ref{OC1}, the indices does not depend on the choice of $\ell,\ell'$ as the distribution of any limit point should be symmetric in the replicas. 
\\\\
\textit{Step 2:} It remains to show that
\[
\bar R^t_{\ell,\ell'} (k,k') = \bar R^t_{\ell,\ell'}(k',k) = \frac{L^t_{k,k'}}{2}.
\]
We proceed by contradiction by showing that if $\bar R^t_{\ell,\ell'} (k,k') \neq \bar R^t_{\ell,\ell'}(k',k)$ with positive probability, then we can always examine a large enough subarray such that the diagonals are identical. We consider a $2n \times 2n$ array of $2 \times 2$ blocks of the form
\[
\begin{bmatrix}
	\bar\bR_{\ell,\ell'}({k,k}) & \bar\bR_{\ell,\ell'}({k,k'})\\
	\bar\bR_{\ell,\ell'}({k',k}) & \bar\bR_{\ell,\ell'}({k',k'})
\end{bmatrix}_{\ell,\ell'}
\]	
By the computations in step 1, for $\ell \neq \ell'$ the offdiagonal blocks must be of the form
\[
\begin{bmatrix}
	a & b\\
	c & d
\end{bmatrix} \text{ or } \begin{bmatrix}
	a & c\\
	b & d
\end{bmatrix}
\]
where 
\[
a= L^t_{k,k} \quad d = L^t_{k',k'} \quad b = \frac{L^t_{k,k'} + \sqrt{ 2\tilde L^t_{k,k'}  - (L^t_{k,k'} )^2} }{2}  \quad c= \frac{L^t_{k,k'} - \sqrt{ 2\tilde L^t_{k,k'}  - (L^t_{k,k'} )^2} }{2}.
\]
We will show that $b \neq c$ is impossible, by examining the barycenters of the infinite arrays, namely
using the fact that we have $n$, the number of replicas, as large as we wish. For large enough $n$, we can find arbitrary large disjoint set of indices $\ell \in V_1$ and $\ell' \in V_2$ each of cardinality $m$ \cite[Theorem 3]{Tournament} such that the corresponding matrix array is only of the first form
\[
\begin{bmatrix}
	a & b\\
	c & d
\end{bmatrix}.
\]
We now restrict ourselves to indices from $V = V_1 \sqcup V_2$. Because the arrays of each of the entries restricted to $V$ is positive semidefinite, we can find vectors $u_\ell$ and $w_\ell$  in a Hilbert space  such that
\begin{equation}\label{eq:matrixconstant}
	\begin{bmatrix}
		\langle u_\ell,u_{\ell'} \rangle & \langle u_\ell,w_{\ell'} \rangle\\
		\langle w_\ell,u_{\ell'} \rangle & \langle w_\ell,w_{\ell'} \rangle
	\end{bmatrix}_{\ell, \ell' \in V} = \begin{bmatrix}
		\bar\bR_{\ell,\ell'}({k,k}) & \bar\bR_{\ell,\ell'}({k,k'})\\
		\bar\bR_{\ell,\ell'}({k',k}) & \bar\bR_{\ell,\ell'}({k',k'})
	\end{bmatrix}_{\ell,\ell' \in V}.
\end{equation}
Notice that for $\ell \neq \ell' \in V$ we have $\langle u_\ell,u_{\ell'} \rangle = a$ and $\langle w_\ell,w_{\ell'} \rangle = d$. Furthermore, by construction if $\ell \in V_1$ and $\ell' \in V_2$, we see that 
\[
\begin{bmatrix}
	\langle u_\ell,u_{\ell'} \rangle & \langle u_\ell,w_{\ell'} \rangle\\
	\langle w_\ell,u_{\ell'} \rangle & \langle w_\ell,w_{\ell'} \rangle
\end{bmatrix}_{\ell \in V_1, \ell' \in V_2} = 	\begin{bmatrix}
	a & b\\
	c & d
\end{bmatrix}.
\]

For $i = 1,2$, if we examine the barycenters
\[
U_i = \frac{1}{m} \sum_{\ell \in V_i} u_\ell \quad \text{and} \quad W_i = \frac{1}{m} \sum_{\ell \in V_i} w_\ell,
\]
then \eqref{eq:matrixconstant} readily gives
\[
\begin{bmatrix}
	\langle U_1,U_{2} \rangle & \langle U_1,W_{2} \rangle\\
	\langle W_1,U_{2} \rangle & \langle W_1,W_{2} \rangle
\end{bmatrix} = \begin{bmatrix}
	a & b\\
	c & d
\end{bmatrix}.
\]
However, if we look at the differences between the barycenters, \eqref{eq:matrixconstant} implies that

\[
\| U_1 - U_2 \|^2 = \frac{1}{m^2} \bigg\| \sum_{\ell \in V_1} u_\ell - \sum_{\ell \in V_2} u_\ell  \bigg\|^2 \leq \frac{2 C^2 + 2a}{m}
\]
and
\[
\| W_1 - W_2 \|^2 = \frac{1}{m^2} \bigg\| \sum_{\ell \in V_1} w_\ell - \sum_{\ell \in V_2} w_\ell  \bigg\|^2 \leq \frac{2C^2 + 2d}{m}\,.
\]
Indeed, the diagonal entries of the overlap arrays are bounded by some universal constant $C^2$ and the offdiagonals are fixed for $\ell \in V_1$ and $\ell' \in V_2$ by \eqref{eq:matrixconstant}. We used the fact that $\langle u_{\ell},u_{\ell'} \rangle = a$ and $\langle w_\ell, w_{\ell'} \rangle = d$ for any $\ell \neq \ell'$ to cancel off all the offdiagonal terms. 
If we take $m \to \infty$, then $U_1 \approx U_2$ and $W_1 \approx W_2$, so $\langle U_1,W_{2} \rangle = \langle W_1,U_{2} \rangle$ which implies that $b = c$, so the overlap array must be symmetric. 
\end{proof}

In particular,  the limiting matrix overlaps $\bar\bR^t$ are almost surely symmetric. This allows us to conclude the concentration of the overlap entries instead of its quadratic forms as previously found in Theorem~\ref{thm:concoverlap}, see \eqref{eq:concentraitonquadratic}.

\begin{theo}[Concentration of the Overlaps]\label{OC3}
If $\epsilon_N = N^{-\gamma}$ for some $0 < \gamma < 1/4$, then for all $t \leq n$
\begin{enumerate}
	\item 
	\[
	\E \langle \| \bR^t_{1,2} - \E \langle \bR^t_{1,2} \rangle_\pert \|_2^2 \rangle_\pert \to 0. 
	\]
	\item
	\[
	\E \langle \| \bR^t_{1,0} - \E \langle \bR^t_{1,0} \rangle_\pert \|_2^2 \rangle_\pert \to 0. 
	\]
\end{enumerate}
Furthermore,
\[
\bQ_t:= \E \langle \bR^t_{1,2} \rangle_\pert = \E \langle \bR^t_{1,0} \rangle_\pert,
\]
so the limit points are the same. 
\end{theo}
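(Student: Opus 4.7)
The plan is to combine the concentration of all quadratic forms provided by Corollary \ref{OC1} with the synchronization Theorem \ref{synch}, and then transfer the statement from $\bR^t_{1,2}$ to $\bR^t_{1,0}$ via the Nishimori identity \eqref{nishimori}. Fix $t\le n$ and abbreviate $\bQ_t^{(N)} := \E\langle \bR^t_{1,2}\rangle_\pert$. Decompose
\[
\bR^t_{1,2} - \bQ_t^{(N)} = S_N + A_N,
\]
into its symmetric and antisymmetric parts, so that $\|\bR^t_{1,2} - \bQ_t^{(N)}\|_2^2 = \|S_N\|_2^2 + \|A_N\|_2^2$.

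First I would handle the symmetric part. Since $\kappa$ is fixed and independent of $N$, any symmetric $\kappa\times\kappa$ matrix $M$ is determined in Frobenius norm by the finite collection of quadratic forms $\vl^\trans M\vl$ with $\vl$ ranging over the rational unit vectors $\{e_k\}_{k\le\kappa}\cup\{e_k\pm e_{k'}\}_{k<k'}$; in fact $\|M\|_2^2\le L_\kappa \sum_\vl (\vl^\trans M\vl)^2$ for some combinatorial constant $L_\kappa$. Applying Corollary \ref{OC1} with $p=1$ to each of these finitely many $\vl$ and summing yields $\E\langle \|S_N\|_2^2\rangle_\pert\to 0$.

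Next I would handle the antisymmetric part by a compactness argument based on Theorem \ref{synch}. By Hypothesis \ref{hypcompact} the entries of $\bR^t_{\ell,\ell'}$ are uniformly bounded by $C^2$, so the infinite array $(\bR^t_{\ell,\ell'})_{\ell,\ell'\ge 1}$ (under $\E\langle\,\cdot\,\rangle_\pert$) is tight. From any subsequence of $N$ one can extract a further subsequence along which this array converges in distribution to a limit $\bar\bR^t$; by Corollary \ref{OC1} the limit satisfies the hypothesis \eqref{eq:concentrationonaverage} of Theorem \ref{synch}, hence $\bar\bR^t_{\ell,\ell'}$ is almost surely symmetric for $\ell\ne\ell'$. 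Therefore $A_N$ converges in distribution to $0$ along the subsequence, and by uniform boundedness of the entries this upgrades to convergence in $L^2$. Since the subsequence was arbitrary, $\E\langle \|A_N\|_2^2\rangle_\pert\to 0$. Combined with the symmetric bound this proves part (1).

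Finally, to obtain part (2) and the identification of limits I would apply the Nishimori identity \eqref{nishimori}, valid here because the perturbed Gibbs measure is a bona fide posterior (it corresponds to observing the data $D$ together with the additional $p$-spin side observations \eqref{eq:inferencepert}). Applied to $f(\vx^1,\vx^2)=\bR^t(\vx^1,\vx^2)$ it gives $\E\langle\bR^t_{1,0}\rangle_\pert=\E\langle\bR^t_{1,2}\rangle_\pert=\bQ_t^{(N)}$, so the two candidate limits agree. Applied to $f(\vx^1,\vx^2)=\|\bR^t(\vx^1,\vx^2)-\bQ_t^{(N)}\|_2^2$ it gives
\[
\E\langle \|\bR^t_{1,0}-\bQ_t^{(N)}\|_2^2\rangle_\pert = \E\langle \|\bR^t_{1,2}-\bQ_t^{(N)}\|_2^2\rangle_\pert \to 0,
\]
which is part (2).

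The main obstacle is Step 2: the synchronization theorem is only stated for limit arrays, so one must justify the tightness/subsequence argument and the upgrade from distributional to $L^2$ convergence of $A_N$. Both steps rely on the uniform $\ell^\infty$ bound on the overlap entries coming from Hypothesis \ref{hypcompact} and on checking that any subsequential limit indeed satisfies the constancy property \eqref{selfav} used in Theorem \ref{synch}, which is a direct consequence of Corollary \ref{OC1}.
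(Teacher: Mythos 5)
Your proposal is correct and follows essentially the same route as the paper: Corollary~\ref{OC1} combined with Theorem~\ref{synch} for part (1), and the Nishimori identity \eqref{nishimori} for part (2). The paper's proof is essentially a one-line citation of these results, while your explicit split $\bR^t_{1,2}-\bQ_t^{(N)}=S_N+A_N$ (polarization for $S_N$, tightness/subsequence plus synchronization for $A_N$, then bounded convergence to upgrade to $L^2$) is precisely the argument the paper leaves implicit in the discussion preceding Theorem~\ref{synch}, so there is no substantive difference.
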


\begin{proof} The first point is a direct consequence of Corollary \ref{OC1} and Theorem \ref{synch}. For the second point we use  the Nishimori property to  get concentration of the overlap with respect to the planted signal from the first case
\[
\E \langle \| \bR_{1,0} - \E \langle \bR_{1,0} \rangle_\pert \|_2^2 \rangle_\pert \to 0. 
\]
and 
\[
\E \langle \bR^t_{1,2} \rangle_\pert = \E \langle \bR^t_{1,0} \rangle_\pert.
\]
\end{proof}

\subsection{Cavity Computations via the Aizenman--Sims--Starr Scheme}

We will now apply the concentration of the overlaps to prove the matching upper bound of the free energy using the Aizenman--Sims--Starr scheme. We have for any $M \geq 1$ 
\[
F(\Delta):=\limsup_{N \to \infty} F_N(\Delta) = \limsup_{N \to \infty} \frac{1}{M} ( (N + M) F_{N + M}(\Delta) - N F_N(\Delta) ).
\]
We can partition the cavity coordinates into groups such that 
\[
I^+ = \{N + 1, \dots , N+M \} = \bigcup_{s \leq n} I_s^+,
\]
and the proportions in each of these groups converge as $M \to \infty$ for every fixed $N$ (at least on average)
\[
\lim_{M \to \infty} \rho_s^{M} = \frac{| I_s^+|}{M} = \rho_s
\]

We decompose the Hamiltonians into its common part and its cavity fields,
\[
H_{N + M}(\bvx, \bvy) = H'_{N}(\bvx) + \sum_{i = 1}^M (\vy_i^\trans \vz_i(\bvx) + \vy_i^\trans m_i(\bvx) + \vy_i^\trans s_i(\bvx) \vy_i)+ o(N) 
\]
and
\[
H_N(\bvx) \stackrel{d}{=} H_{N}'(\bvx) + y(\bvx)
\]
Here, the common Hamiltonian is given by
\[
H_{N}'(\bvx) =  \sum_{1\le i < j\le N} \frac{1}{\sqrt{\Delta_{ij}}\sqrt{N+M}} g_{ij} \vx_i \cdot \vx_j + \sum_{1\le i < j\le N} \frac{1}{\Delta_{ij}(N+M)}  (\vx_i^0 \cdot \vx^0_j) (\vx_i \cdot \vx_j) -\sum_{1\le i < j\le N} \frac{1}{2 \Delta_{ij} (N + M) }  (\vx_i \cdot \vx_j)^2 
\]
The cavity fields in this model are of the form :
\begin{eqnarray*}
\vz_i(\bvx) &=& \sum_{j = 1}^N g_{j,N + 1}\frac{1}{\sqrt{\Delta_{ij}} \sqrt{N + M}}  \vx_j\\
\vm_i(\bvx) &=& \sum_{j = 1}^N  \frac{1}{\Delta_{ij} (N+M)} \vx^0_j \cdot \vx^0_{N + i} \vx_j = \sum_{s \leq n}  \frac{\rho^M_t}{\Delta_{st}} \bR^t_{1,0}\vx^0_{N + i} + O(N^{-1}) = \tilde \bR^i_{1,0} \vx_i^0 +  O(N^{-1}) \qquad i \in I_s^+
\\
s_i(\bvx) &=& -  \sum_{j = 1}^N \frac{1}{2 \Delta_{ij} (N+M) } \vx_j \vx_j^\trans =- \frac{1}{2} \sum_{s \leq n}  \frac{\rho^M_t}{\Delta_{st}} \bR^t_{1,1} + O(N^{-1}) = - \frac{1}{2} \tilde \bR^i_{1,1} +  O(N^{-1}) \qquad i \in I_s^+
\end{eqnarray*}
where we used the notation
\[
\tilde \bR^i_{1,1} = \sum_{s \leq n}  \frac{\rho^M_t}{\Delta_{st}} \bR^t_{1,1} = \sum_{s \leq n}  \frac{\rho^M_t}{\Delta_{st}} \bR^t(\bx, \bx)  \qquad \tilde \bR^i_{1,0} = \sum_{s \leq n}  \frac{\rho^M_t}{\Delta_{st}} \bR^t_{1,0} = \sum_{s \leq n}  \frac{\rho^M_t}{\Delta_{st}} \bR^t(\bx, \bx^0) .
\]
Noting that we can write that the Gaussian variables with variance $1/N$ decompose as $g_{ij}/ \sqrt{(N+M)} +g_{ij}' \sqrt{ M/N(N+M)}$ with independent Gaussian variables $g_{ij}$ and $g'_{ij}$
\begin{align*}
y(\bvx) &=\sum_{1\le i < j\le N} \frac{\sqrt{M}}{\sqrt{\Delta_{ij} }\sqrt{ N(N+M)}}  g'_{ij} \vx_i \cdot \vx_j + \sum_{i < j} \frac{M}{\Delta_{ij} N(N+M)}  (\vx^0_i \cdot \vx^0_j) (\vx_i \cdot \vx_j) -\sum_{i < j}  \frac{M}{2 \Delta_{ij} N(N + M) }  (\vx_i \cdot \vx_j)^2 
\\&= \sum_{i < j} \frac{\sqrt{M}}{\Delta_{ij} N}  g'_{ij} \vx_i \cdot \vx_j + \frac{M}{2} \sum_{st} \frac{\rho^M_s \rho^M_t}{ \Delta_{st}} \langle\bR^s_{1,0} \bR^t_{1,0} \rangle -\frac{M}{4} \sum_{st} \frac{\rho^M_s \rho^M_t}{ \Delta_{st}} \langle\bR^s_{1,1} \bR^t_{1,1} \rangle + O(N^{-1})
\end{align*}
where $g'_{ij}$ is independent of $g_{ij}$.
Notice that the covariance of the Gaussian fields are given by
\begin{equation}\label{eq:covz}
\E \vz_i(\bvx^1) \vz_i(\bvx^2)^\trans =  \sum_{t} \frac{\rho^M_t}{\Delta_{s,t}^2}  \bR^t_{1,2} := \tilde \bR_{1,2}^i \qquad i \in I_s^+
\end{equation}
and
\[
\E \bigg( \bigg( \sum_{i < j} \frac{1}{\sqrt{\Delta_{ij}}\sqrt{(N+M)}} g_{ij} \vx^1_i \cdot \vx^1_j \bigg) \bigg( \sum_{i < j} \frac{1}{\sqrt{\Delta_{ij}}\sqrt{(N+M)}} g_{ij} \vx^2_i \cdot \vx^2_j \bigg) \bigg) = \frac{M}{2} \sum_{s,t} \frac{\rho^M_s \rho^M_t}{\Delta_{st}} \langle \bR_{1,2}^s, \bR_{1,2}^t \rangle.
\]

By adding and subtracting the normalization terms $H_{N}'$, we need to compute
\[\Delta F_{N,M}:=
\frac{1}{M} \bigg( \E \log \bigg\langle \int \exp\bigg( \sum_{i = 1}^M \vy_i^\trans \vz_i(\bvx) + \vy_i^\trans \vm_i(\bvx) + \vy_i^\trans s_i(\bvx) \vy_i  \bigg) \, d\pP_0(\bvy) \bigg\rangle' - \E \log \langle \exp(y(\bvx))  \rangle' \bigg)
\]
where $\langle \cdot \rangle'$ denotes the average with respect to the Gibbs measure $H_{N}'$. Of course, this procedure works as well if we added the perturbation terms in Theorem~\ref{thm:concoverlap}, resulting in 
\begin{equation}\label{eq:pertHamil}
\Delta F^{\pert}_{N,M}=	\frac{1}{M} \bigg( \E \log \bigg\langle \int \exp\bigg( \sum_{i = 1}^M \vy_i^\trans \vz_i(\bvx) + \vy_i^\trans \vm_i(\bvx) + \vy_i^\trans s_i(\bvx) \vy_i  \bigg) \, d\pP_0(\bvy) \bigg\rangle_{\pert} - \E \log \langle \exp(y(\bvx))  \rangle_{\pert} \bigg)+o(1)
\end{equation}
after a straightforward interpolation argument (see for example \cite[Chapter~3.5]{Pbook}). It suffices to compute this quantity in the limit. 

We now recall a modification of a general result \cite[Theorem 1.4]{Pbook} that implies that the functional $\Delta F_{N,M}$ is continuous with respect to the distribution of the off-diagonal elements of the array $(\bR_{\ell,\ell'})_{\ell \neq \ell'}$. We now state precisely what we mean.
\begin{lem}[Continuity with Respect to Overlap Distribution] \label{lem:contfunc}
For every $\epsilon > 0$, there exists a function $F_\epsilon( \bR^n_{\neq} )$ of finitely many elements of  $\bR^n_{\neq} = (R_{\ell,\ell'})_{1 \leq \ell \neq \ell' \leq n}$  such that
\[
| \Delta F^\pert_{N,M} - F_\epsilon ( \bR^n_{\neq} ) | < \epsilon.
\]
\end{lem}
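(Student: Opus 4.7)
The functional $\Delta F^\pert_{N,M}$ defined in \eqref{eq:pertHamil} is a difference of two quantities of the form $\E \log \langle \exp(\text{bounded Gaussian field} + \text{bounded drift}) \rangle_\pert$. The plan is to reduce both quantities to absolutely convergent series of Gibbs averages of polynomials in the overlap entries and then truncate, so that the tail as well as all single-replica self-overlap contributions can be controlled and absorbed into an $\epsilon$-error.

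As a first step, I would integrate out the cavity spins $\vy_i$ against $d\pP_0(\vy_i)$. By the compact support of $\pP_0$ in Hypothesis~\ref{hypcompact} and the uniform bounds on $\vm_i(\bvx), s_i(\bvx)$ coming from the hypotheses on $1/\Delta$, the resulting integrand
\[
A_i(\bvx) = \int \exp\!\bigl(\vy_i^\trans \vz_i(\bvx) + \vy_i^\trans \vm_i(\bvx) + \vy_i^\trans s_i(\bvx)\vy_i\bigr)\, d\pP_0(\vy_i)
\]
is a positive function with uniform upper and lower bounds in $N,M$ (the Gaussian field $\vz_i$ has bounded covariance by \eqref{eq:covz} and compact support of $\vy_i$ tames the exponential). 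The same holds for the comparison Hamiltonian $\exp(y(\bvx))$.

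Next, I would expand $\E_g \log \langle e^{G(\bvx,g)}\rangle_\pert$ (with $g$ denoting any of the Gaussian variables $g_{j,N+i}$ or $g'_{ij}$) by Gaussian interpolation: the derivative along a standard interpolation in the disorder strength produces integrands of the type $\E \langle C(\bvx,\bvx) - C(\bvx^1,\bvx^2)\rangle_{\pert,t}$, where $C(\bvx^1,\bvx^2) = \E_g G(\bvx^1,g) G(\bvx^2,g)$. By \eqref{eq:covz} and the expression for $y$, the kernel $C$ is a polynomial in the overlaps $\bR^t_{\ell,\ell'}$. Iterating the interpolation and Taylor-expanding the remaining bounded exponentials yields an absolutely convergent series
\[
\E \log \langle e^G \rangle_\pert = \sum_{k \geq 0} \E \langle P_k(\bR^t_{\ell,\ell'},\bR^t_{\ell,\ell}) \rangle_\pert,
\]
where each $P_k$ is a polynomial of bounded degree in at most $k$ replicas, and the partial sums converge at a geometric rate independent of $N,M$ thanks to the uniform bounds from Step~1.

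Finally, I would remove the single-replica self-overlaps $\bR^t(\bvx^\ell,\bvx^\ell)$ (which are not entries of $\bR^n_{\neq}$): using the Nishimori identity \eqref{nishimori} to transfer concentration from $\bR^t_{1,2}$ to $\bR^t_{1,0}$ and hence to $\bR^t_{1,1}$, Theorem~\ref{OC3} provides that $\bR^t(\bvx^\ell,\bvx^\ell)$ concentrates on the deterministic matrix $\bQ_t = \E\langle \bR^t_{1,2}\rangle_\pert$, so I can substitute $\bR^t(\bvx^\ell,\bvx^\ell) \mapsto \bQ_t$ in every $P_k$ with a quantitatively small error. Truncating the series at order $n = n(\epsilon)$ large enough then yields a polynomial $F_\epsilon$ in the finitely many off-diagonal overlap entries $\{R^t(\bvx^\ell,\bvx^{\ell'}) : 1 \leq \ell \neq \ell' \leq n, \, t \leq n\}$ with total error less than $\epsilon$. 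The main obstacle is ensuring this self-overlap substitution remains controlled when multiplied across the high-order polynomial terms and the $\sim M$ cavity sites, which requires the quantitative overlap concentration from Theorem~\ref{OC3} together with the uniform geometric convergence provided by Step~1.
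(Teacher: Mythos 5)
There is a genuine gap in your Step 3. You assert that the Nishimori identity lets you ``transfer concentration from $\bR^t_{1,2}$ to $\bR^t_{1,0}$ and hence to $\bR^t_{1,1}$'' so that $\bR^t(\bvx^\ell,\bvx^\ell)$ concentrates on $\bQ_t = \E\langle \bR^t_{1,2}\rangle_\pert$. This is false on two counts. First, the Nishimori identity $\E\langle f(\bvx^1,\bvx^2,\dots)\rangle_t = \E\langle f(\bvx^0,\bvx^2,\dots)\rangle_t$ replaces a replica by the planted signal, but $\bR^t_{1,1}$ involves the \emph{same} replica in both slots; there is no Nishimori manipulation that converts $\bR_{1,2}$ or $\bR_{1,0}$ into $\bR_{1,1}$. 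Second, the claimed limit is simply wrong: for a Rademacher prior, $\bR^t_{1,1}\equiv 1$ identically, while $\bQ_t$ is generally a nontrivial value in $[0,1]$. Theorem~\ref{OC3} (and the perturbation machinery behind it) gives concentration only of the off-diagonal overlaps $\bR_{\ell,\ell'}$ with $\ell\neq\ell'$ and of $\bR_{\ell,0}$; there is no statement about, and no mechanism producing, concentration of $\bR_{\ell,\ell}$, and the paper explicitly flags this: the remark after the lemma states that Corollary~\ref{OC3} ``only applies to the offdiagonal terms of the array.''

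The resolution in the paper is not a substitution but an \emph{exact algebraic cancellation}. When you take the Gaussian expectation over the cavity fields $\vz_i$ in the moments, the diagonal contribution of the covariance $\frac{1}{2}\sum_\ell (\vy^\ell_i)^\trans \tilde\bR^i_{\ell,\ell}\vy^\ell_i$ cancels identically against the quadratic term $\vy_i^\trans s_i(\bvx)\vy_i = -\frac12 (\vy^\ell_i)^\trans\tilde\bR^i_{\ell,\ell}\vy^\ell_i$ present in the cavity Hamiltonian. Likewise for $y(\bvx)$, the diagonal piece of its variance cancels against the explicit $-\frac{M}{4}\sum_{s,t}\frac{\rho_s\rho_t}{\Delta_{st}}\langle\bR^s_{1,1}\bR^t_{1,1}\rangle$ term. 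So the moments $\E\langle\,\cdot\,\rangle^r_\pert$ are, from the outset, functions of only the off-diagonal overlap entries $(\bR_{\ell,\ell'})_{\ell\neq\ell'}$ and $(\bR_{\ell,0})_\ell$ --- no diagonal entries survive and no concentration of self-overlaps is needed. Your Steps 1 and 2 (bounding the integrand and expanding the log into a convergent series of overlap polynomials) are a viable alternative to the paper's truncation-plus-Weierstrass route, but Step 3 must be replaced by the observation that the series you obtain never contains $\bR_{\ell,\ell}$ once the Gaussian covariance is computed; if you perform the interpolation expansion correctly, the $C(\bvx,\bvx)$ term you write is exactly the diagonal contribution that should cancel against the drift/quadratic correction terms, and you should check this cancellation rather than invoke a nonexistent concentration.
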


\begin{proof}
Recall that without loss of generality, we can assume that $\pP_0$ is supported on finitely many points. The proof is essentially identical to the proof of in \cite[Theorem 1.4]{Pbook}. The key difference is that the covariance structures of the cavity fields in these models imply that the diagonal self overlap terms cancel, leaving us with a functional of the off diagonal terms of the overlaps. 

By Gaussian concentration, we will be able to restrict the logarithms in  $\Delta F_{N,M}$ to a compact set without introducing a large error. We first define the truncation
\[
f_a(x) = \begin{cases}
	-a & x < -a\\
	x  & -a \leq x \leq a\\
	a  & x > a
\end{cases}
\]
and the corresponding truncated Hamiltonians
\[
H^a_Z(\bvx, \bvy) = f_a\bigg(  \sum_{i = 1}^M \vy_i^\trans \vz_i(\bvx) + \vy_i^\trans \tilde\bR^i_{1,0} \vx^0_i -\frac{1}{2} \vy_i^\trans \tilde\bR^i_{1,1} \vy_i  \bigg)
\]
\[
H^a_Y(\bvx) = f_a(  \sqrt{M} y(\bvx^\ell) + \frac{M}{2} \sum_{st} \frac{\rho^M_s \rho^M_t}{ \Delta_{st}} \langle\bR^s_{1,0} \bR^t_{1,0} \rangle -\frac{M}{4} \sum_{st} \frac{\rho^M_s \rho^M_t}{ \Delta_{st}} \langle\bR^s_{1,1} \bR^t_{1,1} \rangle  ).
\]
Notice that $H^\infty_Z$ and $H_Y^\infty$ are the corresponding untruncated cavity field Hamiltonians appearing in $\Delta F_{N,M}$.
To simplify notation, we use $\langle \cdot \rangle = \langle \cdot \rangle_\pert$. By a standard Gaussian concentration argument, we will prove that there is a constant $c$ such that
\begin{equation}\label{eq:concentrationZ}
	\bigg| \frac{1}{M} \E \log \bigg\langle \int e^{H^\infty_Z(\bvx,\bvy)} \, d\pP_0(\bvy) \bigg\rangle - \frac{1}{M} \E \log \bigg\langle \int e^{H^a_Z(\bvx,\bvy)} \, d\pP_0(\bvy)  \bigg\rangle \bigg| \leq e^{-c a^2}
\end{equation}
and
\begin{equation}\label{eq:concentrationY}
	\bigg| \frac{1}{M} \bigg( \E \log \bigg\langle \int \exp\bigg( \sum_{i = 1}^M \vy_i^\trans \vz_i(\bvx) + \vy_i^\trans \vm_i(\bvx) + \vy_i^\trans s_i(\bvx) \vy_i  \bigg) \, d\pP_0(\bvy) \bigg\rangle -  \frac{1}{M} \E \log \langle e^{H^a_Y(\bvx)} \rangle \bigg| \leq e^{-c a^2}
\end{equation}
for all $a$ sufficiently large. We will defer the proof of this fact to the end of the proof, and first explain the rest of the logic. 

The key observation is that by Weierstrass Theorem, we can approximate the logarithm by a polynomial uniformly on a compact set, so we can approximate $\Delta F_{N,M}$ as a linear combination of the moments of the overlaps. Since $H_Z(x)$ and $H_Y(x)$ is bounded, we can restrict the logarithm to the set $[e^{-a}, e^{a}]$ and approximate it by polynomials. The expectations of these polynomials are of sums of terms of the form
\[
\E \bigg\langle \int e^{H_Z(\bvx,\bvy)} \, d\pP_0(\bvy)  \bigg\rangle^r \quad\text{and}\quad \E \langle e^{H_Y(\bvx)}  \rangle^r = \bigg\langle \E \prod_{\ell \leq r} e^{H_Y(\bvx)}  \bigg\rangle 
\]
We start by computing the first of these terms. These moments can be computed explicitly. Because of \eqref{eq:concentrationZ} and \eqref{eq:concentrationZ} it suffices to compute the moments with respect to the untruncated Hamiltonians. In particular, for any $r > 0$, the moments simplify to
\begin{align*}
	&\quad \E \bigg\langle \int \exp\bigg( \sum_{i = 1}^M \vy_i^\trans \vz_i(\bvx) + \vy_i^\trans \tilde\bR^i_{1,0} \vx^0_i -\frac{1}{2} \vy_i^\trans \tilde\bR^i_{1,1} \vy_i  \bigg) \, d\pP_0(\bvy) \bigg\rangle^r
	\\&=\E \bigg\langle \prod_{\ell \leq r} \int \exp\bigg( \sum_{i = 1}^M (\vy^\ell_i)^\trans \vz_i(\bvx^\ell) + (\vy_i^\ell)^\trans \tilde\bR_{\ell,0} \vx^0_i -\frac{1}{2} (\vy^\ell_i)^\trans \tilde\bR_{\ell,\ell} \vy^\ell_i  \bigg) \, d\pP_0(\bvy^\ell) \bigg\rangle
	\\&= \bigg\langle \E  \prod_{\ell \leq r} \int  \exp\bigg( \sum_{i = 1}^M (\vy^\ell_i)^\trans \vz_i(\bvx^\ell) + (\vy_i^\ell)^\trans \tilde\bR_{\ell,0} \vx^0_i -\frac{1}{2} (\vy^\ell_i)^\trans \tilde\bR_{\ell,\ell} \vy^\ell_i  \bigg) \, d\pP_0(\bvy^\ell) \bigg\rangle
\end{align*}
where $\langle\cdot \rangle = \langle \cdot \rangle_{\pert}$ is the average on with respect to the perturbed Gibbs measure. Taking expectations with respect to $z_i$
\begin{align*}
	&\quad \E \exp\bigg(\sum_{\ell \leq r} \sum_{i = 1}^M \bigg(  (\vy^\ell_i)^\trans \vz_i(\bvx^\ell) + (\vy_i^\ell)^\trans \tilde\bR_{\ell,0} \vx^0_i -\frac{1}{2} (\vy^\ell_i)^\trans \tilde\bR_{\ell,\ell} \vy^\ell_i \bigg) \bigg)
	\\&= \E \exp\bigg(\sum_{\ell \neq \ell'} \sum_{i = 1}^M  \bigg( \frac{1}{2} (\vy^\ell_i)^\trans \tilde \bR^i_{\ell,\ell'} \vy^{\ell'}_i + (\vy^\ell_i)^\trans \tilde \bR_{\ell,0} \vx^0_i \bigg) \bigg) 
\end{align*}
since
\[
\E \bigg(\sum_{\ell \leq r}\sum_{i = 1}^M (\vy^\ell_i)^\trans \vz_i(\bvx^\ell) \bigg) \bigg(\sum_{\ell \leq r} \sum_{i = 1}^M (\vy_i^{\ell'})^\trans \vz_i(\bvx^{\ell'}) \bigg) =  \frac{1}{2} \sum_{\ell, \ell' = 1}^r \sum_{i,j = 1}^M (\vy^\ell_i)^\trans \tilde \bR^i_{\ell,\ell'} \vy^{\ell'}_j.
\]
In particular, these terms only depend on the offdiagonal elements of $\bR_{\ell,\ell'}$ and $\bR_{0,\ell}$, because the diagonal terms canceled off.  A similar computation  works for the second term in $\Delta F_{N,M}$ since again
\begin{align*}
	&\E \exp\bigg( \sum_{\ell \leq r} \sqrt{M} y(\bvx^\ell) + \frac{M}{2} \sum_{st} \frac{\rho^M_s \rho^M_t}{ \Delta_{st}} \langle\bR^s_{1,0} \bR^t_{1,0} \rangle -\frac{M}{4} \sum_{st} \frac{\rho^M_s \rho^M_t}{ \Delta_{st}} \langle\bR^s_{1,1} \bR^t_{1,1} \rangle \bigg)
	\\&=  \E \exp\bigg( \sum_{\ell \neq \ell'} -\frac{M}{4} \sum_{st} \frac{\rho^M_s \rho^M_t}{ \Delta_{st}} \langle\bR^s_{\ell,\ell'} \bR^t_{\ell,\ell'} \rangle + \frac{M}{2} \sum_{st} \frac{\rho^M_s \rho^M_t}{ \Delta_{st}} \Tr(\bR^s_{\ell,0} \bR^t_{\ell,0} ) ) \bigg)
\end{align*}
is a function of the off diagonal elements of the array $(\bR_{\ell,\ell'})_{\ell, \ell' \geq 0}$. Therefore, we can approximate $\Delta F_{N,M}$ with a continuous function of finitely many off diagonal elements of the array. 

All that remains is to prove that the logarithm can be approximated by polynomials by proving the bounds \eqref{eq:concentrationZ} and \eqref{eq:concentrationY}. The proofs of both inequalities follow from the same argument, so we only show the first one. We prove this by showing that
\begin{equation}\label{eq:truncatelog}
	\bigg|\frac{1}{M}\E \log \bigg\langle \int e^{H^\infty_Z (\bvx,\bvy) } \, d\pP(\bvy) \bigg\rangle - \frac{1}{M}\E \log_a \bigg\langle \int e^{H^\infty_Z(\bvx,\bvy)} \, \pP_X(\bvy) \bigg\rangle  \bigg| \leq e^{-c a^2} 
\end{equation}
and
\begin{equation}\label{eq:truncateZham}
	\bigg|\frac{1}{M}\E \log_a \bigg\langle \int e^{H^\infty_Z (\bvx,\bvy) } \, d\pP(\bvy) \bigg\rangle - \frac{1}{M} \E \log_a \bigg\langle \int e^{H^a_Z(\bvx,\bvy)} \, \pP_X(\bvy) \bigg\rangle  \bigg| \leq e^{-c a^2} 
\end{equation}
where $\log_a(x) = f_a( \log (x))$ is the truncated logarithm. We begin by proving the first inequality. Because the support of $\pP_X$ is bounded, the covariance
\[
\E \vz_i(\bvx^1) \vz_i(\bvx^2)^\trans = \tilde \bR^i_{1,2}
\]
defined in \eqref{eq:covz} is almost surely bounded by some universal constant. Let
\[
F_Z = \frac{1}{M}\log \bigg\langle \int e^{H^\infty_Z (\bvx,\bvy) } \, d\pP(\bvy) \bigg\rangle = \frac{1}{M} \log \bigg\langle \int \exp\bigg( \sum_{i = 1}^M \vy_i^\trans \vz_i(\bvx) + \vy_i^\trans \vm_i(\bvx) + \vy_i^\trans s_i(\bvx) \vy_i  \bigg) \, d\pP_0(\bvy) \bigg\rangle.
\]
By Gaussian concentration \cite[Theorem 1.2]{Pbook},
\begin{equation}\label{eq:GaussianconcZ}
	\pP(| F_Z - \E F_Z| \geq a ) \leq 2 e^{-c a^2}
\end{equation}
where $c$ only depends on $\sup_{t \leq n} \sup_{\bvx \in \supp \pP_X} \|\tilde \bR_{1,2}^t\|$. By Jensen's inequality, and the concavity of the logarithm
\[
0 \leq \E F_Z \leq  \frac{1}{M} \log \bigg\langle \E \int \exp\bigg( \sum_{i = 1}^M \vy_i^\trans \vz_i(\bvx) + \vy_i^\trans \vm_i(\bvx) + \vy_i^\trans s_i(\bvx) \vy_i  \bigg) \, d\pP_0(\bvy) \bigg\rangle \leq C
\]
for some constant that only depends on the upper bound of the covariance $\sup_{t \leq n} \sup_{\bvx \in \supp \pP_X} \|\tilde \bR_{1,2}^t\|$. This implies that $\{ |F_Z| \geq a \} \subset \{ |F_Z - \E F_Z| \geq a/2 \}$ for $a \geq C$. The Gaussian concentration inequality \eqref{eq:GaussianconcZ} implies that there exists a constant $c$ such that for all $a$ sufficiently large
\[
\pP( |F_Z| \geq a ) \leq e^{-c a^2}
\]
and the bounds on the tails of a sub-Gaussian random variable \cite[Equation 1.80]{Pbook} implies that
\[
\E( |F_Z| \1( |F_Z| \geq a ) ) \leq e^{-c a^2}
\]
for some universal constant $c$. This proves \eqref{eq:truncatelog}.

To prove \eqref{eq:truncateZham}, notice that $|\log_a(x) - \log_a(y)| \leq e^a |x - y| $ so \eqref{eq:truncateZham} is bounded by
\[
e^a \bigg| \bigg\langle \int e^{H_Z^\infty(\bvx,\bvy)} \, \pP(\bvy) - \int e^{H_Z^a(\bvx,\bvy)} \, \pP(\bvy) \bigg\rangle \bigg| \leq e^a \E \langle | H_Z^\infty(\bvx,\bvy) | \1( |H_Z^\infty(\bvx,\bvy)| \geq a ) \rangle.
\]
Since the $H_Z$ is a Gaussian process, the exponential decay of the tails implies that we can also bound this by $e^{-ca^2}$ for $a$ sufficiently large. 
\end{proof}
\begin{rem}
The fact that the functionals do not depend on the diagonal terms is essential to understanding the limiting behavior, because concentration Corollary~\ref{OC3} only applies to the offdiagonal terms of the array. 
\end{rem}

In the limit, we use synchronization of vector spins and multispecies models to study the limiting behavior of the overlaps under the asymptotic Gibbs measure. In our case, Theorem~\ref{OC3} implies that
\[
\bR_{\ell,\ell'}^s = \bQ^s_{\ell,\ell'} = \bQ^s_{1,0} = \bQ^s \qquad \text{ for all }  s \leq n, \ell \neq \ell'
\]
where $\bQ_{\ell,\ell'}$ is the values of the limiting overlap matrix. One can check that the free energy functional \eqref{eq:pertHamil} in the limit is equivalent to
\begin{align}
&\frac{1}{M} \bigg( \E \log \int \exp\bigg( \sum_{i = 1}^M \vy_i^\trans \tilde\bQ_{i}^{1/2}\vz_i + \vy_i^\trans  \tilde\bQ_i \vx^0_{i}   - \frac{1}{2} \vy_i^\trans  \tilde\bQ_i \vy_i  \bigg) \, d\pP_0(\bvy) \notag
\\&- \E \log \exp\bigg( y + \frac{M}{4} \sum_{st} \frac{\rho^M_s \rho^M_t}{ \Delta_{st}} \langle\bQ_s, \bQ_t \rangle  \bigg)  \label{eq:aymHamil}.
\end{align}
where
\[
\tilde \bQ_i =\tilde Q_s= \sum_{t \leq n} \frac{1}{\Delta_{s,t}} \rho^M_t \bQ_t \qquad i \in I_s.
\]\b
and the Gaussians $\vz$ and $y$ have covariance
\[
\Cov( \vz_i, \vz_j ) = \delta_{j = i} \bQ_i \quad \Var( y) =  \frac{M}{2} \sum_{st} \frac{\rho^M_s \rho^M_t}{ \Delta_{st}} \langle\bQ^s, \bQ^t \rangle  
\]

\begin{lem}[Equivalence of the Functionals of the Overlaps]\label{lem:equiv}
If the overlaps concentrate, then \eqref{eq:pertHamil} and \eqref{eq:aymHamil} can be approximated by the same functionals $F_\epsilon(\bR^n_{\neq})$ of finitely many off diagonal entries of the overlap array.
\end{lem}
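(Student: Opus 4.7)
The plan is to run the argument of Lemma~\ref{lem:contfunc} in parallel on the two expressions \eqref{eq:pertHamil} and \eqref{eq:aymHamil} and show that the resulting polynomial approximant can be taken to be the \emph{same} function $F_\epsilon$ of finitely many off-diagonal overlap entries. Recall that in Lemma~\ref{lem:contfunc} the approximation is obtained by truncating both log-partition expressions with $\log_a$, expanding the truncated logarithm by a polynomial on the compact range $[-a,a]$, and then evaluating each resulting monomial of the form $\E\langle \int e^{H_Z^\infty}\,d\pP_0(\bvy)\rangle^r$ or $\E\langle e^{H_Y^\infty}\rangle^r$ by a Gaussian integration over the cavity field $\vz_i$ and over $y(\bvx)$. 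After that Gaussian integration the diagonal $\ell=\ell'$ contributions are exactly canceled by the quadratic self-overlap terms built into the cavity Hamiltonian, leaving a polynomial in the cross-replica quantities $\tilde\bR^i_{\ell,\ell'}$ and $\bR^s_{\ell,\ell'}\bR^t_{\ell,\ell'}$ with $\ell\ne\ell'$. Since $\tilde\bR^i_{\ell,\ell'}=\sum_t (\rho^M_t/\Delta_{st})\bR^t_{\ell,\ell'}$, the final approximant is a polynomial $F_\epsilon(\bR^n_{\neq})$ in finitely many off-diagonal overlap entries, and $|\eqref{eq:pertHamil}-\E\langle F_\epsilon(\bR^n_{\neq})\rangle_\pert|<\epsilon$ as in Lemma~\ref{lem:contfunc}.

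The same recipe applies verbatim to \eqref{eq:aymHamil}, with two structural simplifications: the cavity Gaussian field there has \emph{deterministic} covariance $\tilde\bQ_i$ in place of the random $\tilde\bR^i_{\ell,\ell'}$, and the scalar $y$ has deterministic variance $\tfrac{M}{2}\sum_{s,t}(\rho^M_s\rho^M_t/\Delta_{st})\langle\bQ^s,\bQ^t\rangle$. Running the identical truncation, polynomial approximation, and Gaussian integration therefore produces the same polynomial expression, but with every occurrence of an off-diagonal overlap $\bR^s_{\ell,\ell'}$ or $\bR^s_{\ell,0}$ replaced by the constant matrix $\bQ^s$. Hence \eqref{eq:aymHamil} is approximated to within $\epsilon$ by $F_\epsilon(\bQ)$, i.e.\ the value of the very same $F_\epsilon$ at the degenerate array whose off-diagonal entries are all equal to the limiting matrices $\bQ^s$. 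This is the sense in which both functionals are approximated by the same $F_\epsilon$.

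Combining the two estimates with overlap concentration finishes the proof. Indeed, $F_\epsilon$ is a polynomial in finitely many coordinates and the overlap entries lie in the compact set $[-C^2\kappa,C^2\kappa]^{\kappa\times\kappa}$ by Hypothesis~\ref{hypcompact}, so $F_\epsilon$ is uniformly continuous there. Theorem~\ref{OC3} together with the Nishimori identity gives $L^2$ concentration of $\bR^s_{\ell,\ell'}$ and $\bR^s_{\ell,0}$ around the common limit $\bQ^s=\E\langle\bR^s_{1,2}\rangle_\pert$, so
\[
\bigl|\E\langle F_\epsilon(\bR^n_{\neq})\rangle_\pert-F_\epsilon(\bQ)\bigr|=o_N(1).
\]
Therefore $|\eqref{eq:pertHamil}-\eqref{eq:aymHamil}|\le 2\epsilon+o_N(1)$, and sending first $N\to\infty$ and then $\epsilon\downarrow 0$ yields the claim. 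The only delicate point is to verify that the monomial-by-monomial Gaussian integration produces \emph{identical} coefficients in the two settings; this is straightforward once one notes that the covariance identity \eqref{eq:covz} means that the random and deterministic Gaussian fields differ only by substituting $\bR\mapsto\bQ$, and that the cancellation of the self-overlap diagonal terms is a purely algebraic feature of the cavity Hamiltonian that does not depend on whether these overlaps are random or not.
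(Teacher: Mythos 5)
Your proposal is correct and follows essentially the same route as the paper: both arguments hinge on evaluating the Gaussian moments of the two cavity functionals, observing that the diagonal self-overlap contributions cancel algebraically, and then noting via \eqref{eq:covz} that the surviving off-diagonal expressions are literally the same polynomial in $\bR^s_{\ell,\ell'}$ (resp.\ $\bQ^s$). The paper's proof of this lemma stops once it exhibits identical moment formulas; your final paragraph, which feeds the result through Theorem~\ref{OC3} to obtain $|\eqref{eq:pertHamil}-\eqref{eq:aymHamil}|\le 2\epsilon+o_N(1)$, is correct but really belongs to the application of the lemma that the paper carries out immediately afterwards rather than to the lemma's proof itself.
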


\begin{proof} In our setting, we can replace the off diagonal entries of the array $(\bR_{\ell \neq \ell'})$ and $\bQ$, since the elements of the off diagonal array take only one value by concentration. The original cavity fields appear to have some terms that depend on the diagonal terms $\bR_{\ell,\ell}$, but we will show that the moments of the cavity field are independent of this term by cancellation. 

Since $\pP_0$ can be assumed to have finite support by Lemma~\ref{lem:finitesupport}, the computation in the proof of Lemma~\ref{lem:contfunc} implies that for fixed $\vy$ and $\vx^0$ the moments of the first cavity field can be written as a weighted sum of terms of the form
\begin{align*}
	&\E \exp\bigg( \sum_{i = 1}^M (\vy^\ell)_i^\trans \vz_i(\bvx) + (\vy^\ell)_i^\trans \vm_i(\bvx) + (\vy^\ell)_i^\trans s_i(\bvx) (\vy^\ell)_i  \bigg)  \bigg) = \exp\bigg( \sum_{\ell \neq \ell'}^r \sum_{i = 1}^M \frac{1}{2} \vy_i^\trans \tilde  \bR_{\ell,\ell'}^i \vy_j + \vy_i^\trans \tilde \bR^i_{0,\ell} \vx^0_{i}  \bigg)
\end{align*}
where
\[
\tilde \bR_{\ell,\ell'}^i =  \sum_{t} \frac{\rho^M_t}{\Delta_{s,t}}  \bR^t_{1,2} \qquad \text{ for $i \in I_s$}.
\]
When compared to
\begin{align*}
	&\E \exp\bigg( \sum_{\ell = 1}^r \sum_{i = 1}^M (\vy^\ell)_i^\trans \vz_i + (\vy^\ell)_i^\trans \tilde \bQ_i \vx^0_{i}   - \frac{1}{2} (\vy^\ell)_i^\trans \tilde \bQ_i (\vy^\ell)_i  \bigg) 
	\\&= \exp\bigg( \sum_{\ell, \ell' = 1}^r \sum_{i = 1}^M \frac{1}{2} (\vy^\ell)_i^\trans \tilde \bQ_i (\vy^\ell)_i + \vy_i^\trans \tilde \bQ_i \vx^0_{i} - \frac{1}{2} (\vy^\ell)_i^\trans \tilde \bQ_i (\vy^\ell)_i \bigg)
	\\&= \exp\bigg( \sum_{1 \leq \ell \neq \ell' \leq r} \sum_{i = 1}^M \frac{1}{2} (\vy^\ell)_i^\trans \tilde \bQ_i (\vy^\ell)_i + \vy_i^\trans \tilde \bQ_i \vx^0_{i} - \frac{1}{2} (\vy^\ell)_i^\trans \tilde \bQ_i (\vy^\ell)_i \bigg)
\end{align*}
where
\[
\tilde \bQ_i = \sum_{t \leq n} \frac{\rho^M_t}{\Delta_{s,t}}  \bQ^t \qquad i \in I_s
\]
we can conclude that the moments are functionals of the same values of the overlap. 

For the second cavity field, it is easy to see that all moments are the same of the overlaps. A direct computation of the moments using the moment generating function of a Gaussian gives
\begin{align*}
	\E \Big( \exp(y(\bvx)) \Big)^r &= \exp\bigg( \sum_{\ell =1}^r y(\bvx^\ell) + \frac{M}{2} \sum_{st} \frac{\rho^M_s \rho^M_t}{ \Delta_{st}} \langle\bR_{\ell,0}^s, \bR_{\ell,0}^t \rangle - \frac{M}{4} \sum_{st} \frac{\rho^M_s \rho^M_t}{ \Delta_{st}} \langle\bR_{\ell,\ell}^s, \bR_{\ell,\ell}^t \rangle  \bigg)
	\\&= \exp\bigg( \sum_{1 \leq \ell \neq \ell' \leq r} \frac{M}{4} \sum_{st} \frac{\rho^M_s \rho^M_t}{ \Delta_{st}} \langle\bR_{\ell,\ell'}^s, \bR_{\ell,\ell'}^t \rangle  + \frac{M}{2} \sum_{st} \frac{\rho^M_s \rho^M_t}{ \Delta_{st}} \langle\bR_{\ell,0}^s, \bR_{\ell,0}^t \rangle   \bigg).
\end{align*}
and when compared to
\begin{align*}
	&\E \exp\bigg( \sum_{\ell = 1}^r  y + \frac{M}{4} \sum_{st} \frac{\rho^M_s \rho^M_t}{ \Delta_{st}} \langle\bQ^s, \bQ^t \rangle \bigg) 
	\\&= \exp\bigg( \sum_{1 \leq \ell \neq \ell' \leq r} \frac{M}{8} \sum_{st} \frac{\rho^M_s \rho^M_t}{ \Delta_{st}} \langle\bQ^s, \bQ^t \rangle  + \frac{M}{4} \sum_{st} \frac{\rho^M_s \rho^M_t}{ \Delta_{st}} \langle\bQ^s, \bQ^t \rangle  \bigg)
\end{align*}
so both functionals are the same functions of the overlaps $(\bR_{\ell,\ell'})_{\ell \neq \ell' \geq 0}$ and $\bQ$.

\end{proof}

By overlap concentration Theorem~\ref{OC3}, we know $(R^t_{\ell,\ell'})_{\ell\neq \ell'} \to  (\bQ^t)_{\ell \neq \ell'}$ for some constant matrix $\bQ^t$ in the limit. Lemma~\ref{lem:contfunc} implies that the convergence of the offidagonal elements of the array is sufficient and Lemma~\ref{lem:equiv} implies that the functional \eqref{eq:aymHamil} characterizes the limiting behavior of $\Delta F_{N,M}$ defined in \eqref{eq:pertHamil}. That is,
\begin{align}
&\lim_{N \to \infty} \Delta F^\pert_{N,M} = F_\epsilon(\bR^n_{\neq}) + O(\epsilon) \notag
\\&= \frac{1}{M} \bigg( \E \log \int \exp\bigg( \sum_{i = 1}^M \vy_i^\trans \notag \tilde\bQ_{i}^{1/2}\vz_i + \vy_i^\trans  \tilde\bQ_i \vx^0_{i}   - \frac{1}{2} \vy_i^\trans  \tilde\bQ_i \vy_i  \bigg) \, d\pP_0(\bvy) - \E \log \exp\bigg( y + \frac{M}{4} \sum_{st} \frac{\rho^M_s \rho^M_t}{ \Delta^2_{st}} \langle\bQ^s, \bQ^t \rangle  \bigg)  + O(\epsilon) \notag
\\&= \frac{1}{M} \bigg( \sum_{i \in I_s^+} \E \log \int \exp\bigg(\vy_i^\trans \tilde\bQ_{s}^{1/2}\vz_s + \vy_i^\trans  \tilde\bQ_s \vx^0_{i}   - \frac{1}{2} \vy_i^\trans  \tilde\bQ_s \vy_i  \bigg) \, d\pP_0(\bvy) - \frac{M}{4} \sum_{st} \frac{\rho^M_s \rho^M_t}{ \Delta^2_{st}} \langle\bQ^s, \bQ^t \rangle  \bigg) + O(\epsilon)\label{eq:lowboundcavity}
\end{align}
The parameters $Q_s = Q_s^M$ appearing above depend on our choice of $M$. 

The lower bound \eqref{eq:lowboundcavity} holds for all $M$. By compactness, there exists a subsequence such that $Q_s^M$ converges to a limiting object $Q$. We may take $M \to \infty$ along a subsequence such that the proportions converge $\frac{|I^+_s|}{M} \to \rho_s$ and $Q_s^M$ converges, then $\epsilon \to 0$ and use the continuity of our functional in $Q$ to conclude that
\[
\lim_{M \to \infty} \lim_{N \to \infty} \Delta F_{N,M} = \varphi( \bQ )
\]
for some sequence $Q = (\bQ^s)_{s \leq n}$. This gives  the matching upper bound.

\begin{theo}[Bayes Optimal Upper Bound of the Free Energy]
We have
\[
\limsup_{N \to \infty} \frac{1}{N} \E_Y \ln Z_X(Y) = \limsup_{N \to \infty} \frac{1}{N} \E_Y \ln Z^\pert_X(Y) \leq \sup_{\bQ}\phi( \bQ ).
\]
\end{theo}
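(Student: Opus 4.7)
The plan is to close the Aizenman--Sims--Starr telescoping argument already set up above by feeding into it the three main tools built in this section: the perturbation equivalence (Lemma~\ref{prop:nochangeinFE}), the overlap concentration (Theorem~\ref{OC3}), and the continuity/identification lemmas (Lemma~\ref{lem:contfunc} and Lemma~\ref{lem:equiv}). The first equality in the statement is immediate from Lemma~\ref{prop:nochangeinFE} with $\epsilon_N = N^{-\gamma}$, $0<\gamma<1/4$: since $\epsilon_N^2\to 0$, the perturbed and unperturbed free energies share the same $\limsup$. We therefore only need to bound $\limsup_N F_N^\pert(\Delta)$ from above by $\sup_{\bQ}\phi(\bQ)$, with the advantage that under the perturbed Gibbs measure $\langle\cdot\rangle_\pert$ we have concentration of all off-diagonal overlaps.

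For any fixed $M\geq 1$, write the ASS identity
\[
\limsup_N F_N^\pert(\Delta) \;\leq\; \liminf_M \limsup_N \Delta F^\pert_{N,M},
\]
where $\Delta F^\pert_{N,M}$ is the cavity expression \eqref{eq:pertHamil}. Given $\epsilon>0$, apply Lemma~\ref{lem:contfunc} to approximate $\Delta F^\pert_{N,M}$, uniformly in $N$, by a continuous functional $F_\epsilon(\bR^n_{\neq})$ of finitely many off-diagonal overlap entries $(\bR^s_{\ell,\ell'})_{\ell\neq\ell',\,s\leq n}$. By Theorem~\ref{OC3} each such entry concentrates as $N\to\infty$ around its average $\bQ^s_{N,M}:=\E\langle \bR^s_{1,2}\rangle_\pert$; and the key calculation in Lemma~\ref{lem:equiv} shows that the moment expansion of both cavity fields depends only on the off-diagonal overlap entries (the self-overlap contributions cancel exactly), so that $F_\epsilon$ evaluated at the constant array with entries $\bQ^s_{N,M}$ agrees with the explicit replica-symmetric functional \eqref{eq:aymHamil} at $\bQ_{N,M}$, up to an $\epsilon$ error.

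Hypothesis~\ref{hypcompact} keeps every $\bQ^s_{N,M}$ in a bounded subset of the symmetric PSD cone, so by a double compactness extraction we may assume $\bQ^s_{N,M}\to \bQ^s_M$ as $N\to\infty$ and $\bQ^s_M\to \bQ^s_*$ as $M\to\infty$, while the cavity proportions satisfy $|I_s^+|/M\to\rho_s$ by construction. Continuity of the functional \eqref{eq:aymHamil} in $(\bQ,\rho)$ and the match with \eqref{eq:RSformula} (after normalization by $M$) yield
\[
\lim_{M\to\infty}\lim_{N\to\infty}\Delta F^\pert_{N,M} \;=\; \phi(\bQ^*) + O(\epsilon),
\]
and letting $\epsilon\to 0$ gives $\limsup_N F^\pert_N(\Delta) \leq \phi(\bQ^*) \leq \sup_{\bQ}\phi(\bQ)$, which is the desired bound.

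The main obstacle is purely bookkeeping: the parameters $\bQ^s_{N,M}$ depend on three indices ($N$, $M$, and, implicitly, on the random perturbation parameters $u_{m,p,t}$), and one must organize the limits so that concentration can be invoked at fixed $M$ before $M\to\infty$, and so that the continuous approximation $F_\epsilon$ of Lemma~\ref{lem:contfunc} is compatible with the subsequential limits of $\bQ^s_{N,M}$. Once the order $N\to\infty$, then $\epsilon\to 0$, then $M\to\infty$ is respected, the essential content reduces to the cancellation of self-overlap contributions already isolated in the proof of Lemma~\ref{lem:equiv}, which is exactly what makes Theorem~\ref{OC3} (a statement only about off-diagonal overlap entries) sufficient to close the argument.
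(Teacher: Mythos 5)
Your proposal follows the same route as the paper's proof: reduce to the perturbed free energy via Lemma~\ref{prop:nochangeinFE}, set up the Aizenman--Sims--Starr telescoping, approximate $\Delta F^\pert_{N,M}$ uniformly by the off-diagonal overlap functional $F_\epsilon(\bR^n_{\neq})$ of Lemma~\ref{lem:contfunc}, invoke overlap concentration (Theorem~\ref{OC3}) and the cancellation of self-overlap terms (Lemma~\ref{lem:equiv}) to identify the limit with the explicit cavity functional \eqref{eq:aymHamil}, and finally pass to subsequential limits $\bQ^s_{N,M}\to\bQ^s_M\to\bQ^s_*$ by compactness. Your discussion of the triple-limit bookkeeping (take $N\to\infty$ first, then $\epsilon$, then $M$) is a slight reordering relative to the paper, which lets $M\to\infty$ before $\epsilon\to 0$, but since the $F_\epsilon$-approximation is uniform in $N$ and $M$ this makes no difference; the structure and all key lemmas coincide with the paper's argument.
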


\section{Solving the Variational Problem} \label{sec:phase}

In this section, we examine the stability of the critical points of the functional describing the limit of the free energy
\begin{equation}\label{eq:functional}
\phi(\bQ) = - \sum_{s,t \leq n} \frac{\rho_s \rho_t}{4 \Delta_{s,t}}\Tr( \bQ_s \bQ_t) +  \sum_{s \leq n} \rho_s \E_{\vz,\vx^0} \ln \bigg[ \int \exp \bigg( \bigg( \tilde \bQ_s\vx^0 + \sqrt{\tilde \bQ_s} \vz \bigg)^\trans \vx - \frac{\vx^\trans \tilde \bQ_s \vx}{2}  \bigg) \, d \pP_0(\vx) \bigg]
\end{equation}
where 
\[
\tilde \bQ_s = \sum_{t \leq n} \frac{1}{\Delta_{s,t}} \rho_t \bQ_t.
\]
We begin with the case when the prior distribution is Gaussian, which will allow us to explicitly compute a closed form of the solution. Later, we will generalize this analysis to general bounded priors. 

\subsection{Standard Gaussian Prior}
Suppose that the prior is a standard Gaussian on $\R^\kappa$, 
\[
d\pP_0(\vx) = \frac{1}{(2 \pi)^{\frac{\kappa}{2}}} e^{- \frac{\vx \cdot \vx}{2}} .
\]
In this case, the second term in the functional \eqref{eq:functional} can be computed explicitly,
\begin{align*}
&\quad \sum_{s =1}^n \rho_s \E_{\vz,\vx^0} \ln \bigg[ \int \exp \bigg( \bigg( \tilde \bQ_s\vx^0 + \sqrt{\tilde \bQ_s} \vz \bigg)^\trans \vx - \frac{\vx^\trans \tilde \bQ_s \vx}{2}  \bigg) \, d \pP_0(\vx) \bigg]
\\&= \sum_{s =1}^n \rho_s \E_{\vz,\vx^0} \ln \bigg[ \frac{1}{(2\pi)^{\kappa /2}} \int \exp \bigg( \bigg( \tilde \bQ_s\vx^0 + \sqrt{\tilde \bQ_s} \vz \bigg)^\trans \vx - \frac{\vx^\trans \tilde \bQ_s \vx}{2} - \frac{x^\trans \bI x}{2} \bigg) \, d\vx \bigg]
\\&= \sum_{s =1}^n \rho_s \E_{\vz,\vx^0} \ln \bigg[ \frac{1}{(2\pi)^{\kappa /2}} \int \exp \bigg( - \frac{1}{2} \Big(\vx -  (\tilde\bQ_s + \bI)^{-1}( \tilde \bQ_s \vx^0 + \sqrt{\tilde \bQ_s} \vz ) \Big)^\trans (\tilde\bQ_s + \bI) \Big(\vx - (\tilde\bQ_s + \bI)^{-1}(\tilde \bQ_s \vx^0 + \sqrt{\tilde \bQ_s} \vz)\Big)  
\\&\qquad+ \frac{1}{2}  \Big( \tilde \bQ_s \vx^0  + \sqrt{\tilde \bQ_s} \vz\Big)^\trans (\tilde\bQ_s + \bI)^{-1} \Big(\tilde \bQ_s \vx^0 + \sqrt{\tilde \bQ_s} \vz\Big)  \bigg) \, d\vx \bigg]
\\&= \sum_{s =1}^n \rho_s \bigg(- \frac{1}{2} \ln \det(\tilde \bQ_s + \bI) + \frac{1}{2} \E_{\vz,\vx^0} \Big( \tilde \bQ_s \vx^0  + \sqrt{\tilde \bQ_s} \vz\Big)^\trans (\tilde\bQ_s + \bI)^{-1} \Big(\tilde \bQ_s \vx^0 + \sqrt{\tilde \bQ_s} \vz\Big)  \bigg)
\\&= \sum_{s =1}^n \rho_s \bigg(-\frac{1}{2} \ln \det (\tilde \bQ_s + \bI ) + \frac{1}{2} \Tr \Big( \tilde \bQ_s (\tilde\bQ_s + \bI)^{-1}\tilde \bQ_s  \Big) + \frac{1}{2} \Tr \Big( \sqrt{\tilde \bQ_s} (\tilde\bQ_s + \bI)^{-1} \sqrt{\tilde \bQ_s} \Big) \bigg).
\end{align*}
In the last computation, we used the fact that $\vx^0,\vz$ are independent centered standard Gaussians in $\R^\kappa$.
By adding and subtracting the identity matrix and using the fact that the trace is invariant under cyclic permutations,
\begin{align*}
&\frac{1}{2} \Tr \Big( \tilde \bQ_s (\tilde\bQ_s + \bI)^{-1}\tilde \bQ_s  \Big) + \frac{1}{2} \Tr \Big( \sqrt{\tilde \bQ_s} (\tilde\bQ_s + \bI)^{-1} \sqrt{\tilde \bQ_s} \Big) 
\\&= \frac{1}{2} \Tr \Big( \tilde \bQ_s (\tilde\bQ_s + \bI)^{-1} (\tilde \bQ_s + \bI - \bI)  \Big) + \frac{1}{2} \Tr \Big(\tilde \bQ_s (\tilde\bQ_s + \bI)^{-1} \Big)
\\&= \frac{1}{2} \Tr ( \tilde \bQ_s ).
\end{align*}
Therefore, the general replica symmetric functional $\phi_g$ in the standard Gaussian case is
\begin{align}
\phi_g( \bQ ) &= - \sum_{s,t = 1}^n \frac{\rho_s \rho_t}{4 \Delta_{s,t}}\Tr( \bQ_s \bQ_t)  + \sum_{s =1}^n \frac{\rho_s}{2} \bigg( \Tr(\tilde \bQ_s)  -\ln \det (\tilde \bQ_s + \bI) \bigg).
\end{align}
where 
\[
\tilde \bQ_s = \sum_{t =1}^n \frac{1}{\Delta_{s,t}} \rho_t \bQ_t 
\]
We next investigate the maximizers of $\phi_g$ on the set of symmetric positive semidefinite matrices. Observe that $\phi_g$ is a $C^\infty$ function that goes to $-\infty$ when $\sup_{s} \| \bQ_s\|_{op} \to \infty$ hence the supremum is attained. We are interested in finding conditions on $\Delta_{s,t}$ and $\rho_s$ to determine when $\phi(\bQ)$ has a maximizer at the origin $\bQ =0$. We define the following block matrices indexed by $s \leq n$,
\begin{equation}\label{eq:covarianceparameters}
\tilde \bQ = (\tilde \bQ_s)_{s \leq n} \quad \frac{1}{\Delta} = \Big( \frac{1}{\Delta_{s,t} } \Big)_{s,t \leq n} \quad \rho = \diag( \rho_1, \dots, \rho_n ).
\end{equation}

\begin{lem}[Phase Transition with Gaussian Prior]
\begin{enumerate}
	\item The functional $\phi$ has a unique maximizer at $\bQ = 0$ if
	\[
	\Big\|\sqrt{ \rho} \frac{1}{\bD} \sqrt{\rho} \Big\|_{op} < 1
	\]
	\item The functional $\phi$ achieves its maximum value away from $\bQ = 0$  if
	\[
	\Big\|\sqrt{ \rho} \frac{1}{\bD} \sqrt{\rho} \Big\|_{op} > 1
	\]
\end{enumerate}
\end{lem}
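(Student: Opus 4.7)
The plan is to use the scalar inequality $x - \ln(1+x) \leq x^2/2$ (with strict inequality for $x>0$) applied eigenvalue by eigenvalue, giving
\[
\Tr(X) - \ln\det(X+I) \leq \tfrac{1}{2}\Tr(X^2) \qquad \text{for } X \succeq 0,
\]
with strict inequality whenever $X \neq 0$. Plugging this into $\phi_g$ and writing $W_s = \sqrt{\rho_s}\,\bQ_s$, I will show that the matrix of mixed traces collapses into the single quadratic form
\[
\phi_g(\bQ) \;\leq\; \tfrac{1}{4}\sum_k \langle w^k,(A^2-A)w^k\rangle,
\]
where $A = \sqrt{\rho}\,(1/\bD)\sqrt{\rho}$ and $w^k \in \R^n$ denotes the vector of $k$-th coordinates of the $W_s$ in an orthonormal basis of symmetric $\kappa\times\kappa$ matrices. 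The key algebraic identity driving this is that the double sum $\sum_s \rho_s \Tr(\tilde\bQ_s^2)$, unpacked via $\tilde\bQ_s = \sum_t \rho_t\bQ_t/\Delta_{s,t}$, produces exactly the quadratic form $\sum_{t,t'}(A^2)_{t,t'}\Tr(W_t W_{t'})$.

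For part (1), when $\|A\|_{op}<1$ the PSD matrix $A^2 - A = A(A-I)$ is negative semidefinite, so $\phi_g(\bQ) \leq 0 = \phi_g(0)$. For uniqueness, equality requires both $\langle w^k, (A^2-A)w^k\rangle = 0$ for all $k$ and $\tilde\bQ_s = 0$ for all $s$ (the latter from strict inequality in the scalar bound). Since $A$ has strictly positive entries (by Hypothesis~\ref{hypDelta}, $\Delta_{s,t}\in(0,\infty)$) and each $\rho_t/\Delta_{s,t}>0$, the relation $\tilde\bQ_s = \sum_t (\rho_t/\Delta_{s,t})\bQ_t = 0$ expresses the zero matrix as a positive linear combination of PSD matrices, forcing $\bQ_t = 0$ for all~$t$. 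This gives the unique global maximum at the origin.

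For part (2), since $A$ has all positive entries, Perron--Frobenius produces a top eigenvector $v$ with strictly positive components and eigenvalue $\lambda_1 = \|A\|_{op}>1$. The candidate $\bQ_s := \epsilon\,(v_s/\sqrt{\rho_s})\,I$ is PSD for every $\epsilon>0$, and a direct second-order Taylor expansion yields
\[
\phi_g(\bQ) = \tfrac{\epsilon^2}{4}\kappa\,\lambda_1(\lambda_1-1)\|v\|^2 + O(\epsilon^3),
\]
which is strictly positive for all sufficiently small $\epsilon>0$. Combined with $\phi_g(0)=0$ and the fact that $\phi_g(\bQ)\to -\infty$ as $\|\bQ\|\to\infty$ (the quadratic term dominates the linear growth coming from $\Tr(\tilde\bQ_s) - \ln\det(\tilde\bQ_s + I)$), the supremum is attained and strictly exceeds $\phi_g(0)$, so any maximizer is away from the origin.

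The mildly delicate step will be the passage from the scalar bound to the matrix identity and the bookkeeping showing that the quadratic form in $W$ is governed precisely by $A^2 - A$; everything else is essentially Perron--Frobenius plus the strict convexity of the scalar function $x-\ln(1+x)$. I do not anticipate any substantial obstacle beyond careful indexing.
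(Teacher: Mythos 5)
Your proof is correct, and for part (2) it is essentially the paper's argument: both apply Perron--Frobenius to $A=\sqrt{\rho}\tfrac{1}{\bD}\sqrt{\rho}$ (which has strictly positive entries by Hypothesis~\ref{hypDelta}) to produce a PSD direction $\bM_s=(v_s/\sqrt{\rho_s})I$ along which the second-order Taylor coefficient of $\phi_g$ at $0$ equals $\tfrac{\kappa}{4}\lambda_1(\lambda_1-1)\|v\|^2>0$, forcing $\phi_g(\epsilon\bM)>\phi_g(0)$ for small $\epsilon>0$. For part (1) the routes genuinely differ. The paper works locally: it derives the critical-point equation $\tilde\bQ=\tfrac{1}{\bD}\rho f(\tilde\bQ)$ with $f(\tilde\bQ)=(\tilde\bQ_s(\tilde\bQ_s+\bI)^{-1})_s$, uses the contraction $\|\sqrt{\rho}f(\tilde\bQ)\|_2\leq\|\sqrt{\rho}\tilde\bQ\|_2$ to conclude that $\tilde\bQ=0$ is the unique critical point when $\|A\|_{op}<1$, and verifies via the Hessian that $0$ is a maximum, relying on coercivity of $\phi_g$ to turn uniqueness of critical points into uniqueness of the maximizer. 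You instead prove a global upper bound: applying $x-\ln(1+x)\leq x^2/2$ eigenvalue by eigenvalue gives $\Tr(\tilde\bQ_s)-\ln\det(\tilde\bQ_s+\bI)\leq\tfrac12\Tr(\tilde\bQ_s^2)$ with strict inequality unless $\tilde\bQ_s=0$, and your bookkeeping correctly shows $\tfrac14\sum_s\rho_s\Tr(\tilde\bQ_s^2)-\tfrac14\sum_{s,t}\tfrac{\rho_s\rho_t}{\Delta_{st}}\Tr(\bQ_s\bQ_t)=\tfrac14\sum_k\langle w^k,(A^2-A)w^k\rangle\leq 0$ since $A\succeq 0$ with $\|A\|_{op}<1$ implies $A^2-A\preceq 0$. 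Thus $\phi_g(\bQ)\leq 0=\phi_g(0)$ with equality iff every $\tilde\bQ_s=0$, which (because each $\tilde\bQ_s=\sum_t(\rho_t/\Delta_{st})\bQ_t$ is a strictly positive combination of PSD matrices) forces $\bQ=0$. Your version is cleaner: it identifies $0$ as the unique \emph{global} maximizer in one stroke, without invoking coercivity, without stationarity conditions (which on the boundary of the PSD cone would strictly require KKT rather than $\nabla\phi_g=0$), and without a separate Hessian check.
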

\begin{rem}
In the case when $n = 1$, we have that the phase transition happens when $\frac{1}{\Delta} = 1$, which is precisely the phase transition in that model \cite{lelargemiolanematrixestimation}.
\end{rem}
\begin{proof}\hfill\\\\
\textit{First and Second Variations:} Consider an arbitrary perturbation of $\bM = (\bM_1, \dots, \bM_n)$ of $\bQ$. The first and second variation is denoted by
\begin{equation}\label{eq:1stand2ndvariations}
	\nabla_{\bM} \phi_g(\bQ) = \partial_{\epsilon} \phi_g(\bQ + \epsilon \bM) \Big|_{\epsilon = 0} \quad\text{and}\quad \nabla_{\bM}^2 \phi_g(\bQ) = \partial^2_{\epsilon} \phi_g(\bQ + \epsilon \bM) \Big|_{\epsilon = 0}.
\end{equation}
We can compute these directly to see that
\[
\nabla_{\bM} \phi_g(\bQ) = \bigg( -\sum_{s,u} \frac{\rho_s \rho_u}{2\Delta_{s,u} } \Tr( \bQ_u \bM_s ) + \sum_{s,u} \frac{\rho_s \rho_u}{2\Delta_{s,u} } \Tr( (\bI -(\tilde \bQ_u + \bI)^{-1}) \bM_s ) \bigg)
\]
and
\[
\nabla_{\bM}^2 \phi_g(\bQ) = -\sum_{s,t} \frac{\rho_s \rho_t}{2\Delta_{s,t} } \Tr( \bM_s \bM_t ) + \sum_{s,t} \sum_{u}  \frac{\rho_s \rho_t \rho_u}{2\Delta_{s,u} \Delta_{u,t}} \Tr((\tilde \bQ_u + \bI)^{-1} \bM_s (\tilde \bQ_u + \bI)^{-1} \bM_t ).
\]
\textit{Small Operator Norm:} We first find a sufficient condition for $\phi_g$ to have a unique maximizer at the origin. At the critical point, the derivative is equal to $0$ for all symmetric $\bM$, so we can conclude that
\begin{equation}\label{eq:critptGaussian}
	\tilde \bQ_s = \sum_{u} \frac{\rho_u}{\Delta_{s,u} }  \bQ_u  = \sum_{u} \frac{\rho_u}{\Delta_{s,u} } \big(\bI - (\tilde \bQ_u + \bI)^{-1} \big)  \qquad \forall s.
\end{equation}
Consider the following vectors of $\kappa \times \kappa$ matrices
\[
\tilde \bQ = (\tilde \bQ_s)_{s \leq n} \quad f(\tilde \bQ) = ( \bI - (\tilde \bQ_s + \bI)^{-1} )_{s \leq n} = ( \tilde \bQ_s (\tilde \bQ_s + \bI)^{-1} )_{s \leq n}.
\]
The critical point condition can be simplified to
\begin{equation}\label{eq:gaussiancritpt}
	\tilde \bQ =  \frac{1}{ \Delta} \rho  f(\tilde \bQ) \implies \sqrt{\rho} \tilde \bQ = \Big( \sqrt{\rho} \frac{1}{\Delta} \sqrt{\rho} \Big) ( \sqrt{\rho} f(\tilde \bQ) ) = \bA  ( \sqrt{\rho} f(\tilde \bQ) ).
\end{equation}
where $\bA = \sqrt{\rho} \frac{1}{\Delta} \sqrt{\rho}$. We therefore can compute the $L^2$ norm
\begin{equation}\label{eq:contraction}
	\| \sqrt{\rho} \tilde \bQ \|_2^2 = \Tr\Big( (\sqrt{\rho} \tilde \bQ)^\trans (\sqrt{\bm{\rho}} \tilde \bQ ) \Big) = \Tr\Big( ( \bA ( \sqrt{\rho} f(\tilde \bQ) )^\trans (\bA ( \sqrt{\rho} f(\tilde \bQ)) \Big) \leq \|\bA \|_{op} \|\sqrt{\rho}f(\tilde \bQ)\|_2^2.
\end{equation}
Because $\|\sqrt{\rho} f(\tilde \bQ)\|_2 \leq \|\sqrt{\rho} \tilde \bQ\|_2 \| (\tilde \bQ + \bI)^{-1}\|_2 \leq \|\sqrt{\rho} \tilde \bQ\|_2$, we arrive at
\[
\| \sqrt{\rho} \tilde \bQ \|_2^2 \leq  \|\bA \|_{op} \|\sqrt{\rho}\tilde \bQ\|_2^2.
\]
In particular, when $\|\bA \|_{op} < 1$ there exists a unique solution to the critical point equation at $\tilde \bQ = 0$.\\\\ 
\noindent\textit{Large Operator Norm:} We examine the Taylor expansion of $\phi$ around the origin. The first and second variation simplify greatly at the origin $\bQ = 0$ to give
\begin{equation}\label{eq:hessianGaussian}
	\nabla_{\bM} \phi_g(0) = 0 \quad \nabla_{\bM}^2 \phi_g(0) = \sum_{s,t} \bigg( -\frac{\rho_s \rho_t}{2\Delta_{s,t} } \Tr( \bM_s \bM_t ) + \sum_{u}  \frac{\rho_s \rho_t \rho_u}{2\Delta_{s,u} \Delta_{u,t}} \Tr(\bM_s \bM_t ) \bigg).
\end{equation}
The second variation will always be negative for any choice of $\bM$ if and only if
\[
\rho \frac{1}{\bD}\rho \frac{1}{\bD}\rho-\rho \frac{1}{\bD}\rho = \rho \sqrt{\frac{1}{\bD}}  \bigg( \sqrt{\frac{1}{\bD}} \rho \sqrt{\frac{1}{\bD}} - \bI  \bigg) \sqrt{\frac{1}{\bD}} \rho
\]
has only negative eigenvalues. By Sylvester's law of inertia, it suffices to study the eigenvalues of the matrix
\[
\sqrt{\frac{1}{\bD}} \rho \sqrt{\frac{1}{\bD}} - \bI 
\]
and by the invariance of the operator norm of cyclic permutations of the matrices, we get the condition that the Hessian is negative semidefinite when
\[
\|\bA\|_{op} = \Big\|\sqrt{ \rho} \frac{1}{\bD} \sqrt{\rho} \Big\|_{op} < 1
\]
which is precisely the condition for a unique maximizer at $0$. We now claim that when the Hessian has a strictly positive eigenvalue, then there exists a maximizer away from the origin.

Suppose now that $\|\bA\|_{op} > 1$. Then there exists a unit eigenvector $v\in \R^n$ such that $\bA v = \|\bA\|_{op} v$. Furthermore, the entries of $\bA$ are non-negative so the entries of the eigenvector $v$ are also non-negative by the Perron--Frobenius Theorem. Therefore, if we define $\bM = (u_1\bI, \dots, u_n \bI ) $ where $u_i = \frac{1}{\sqrt{\rho_i}} v_i \geq 0$ then the Hessian at $0$ defined in \eqref{eq:hessianGaussian} is given by
\[
\nabla_{\bM}^2 \phi_g(0) = n \bigg(	u^\trans \rho \frac{1}{\bD_{s,t}}\rho \frac{1}{\bD_{s,t}}\rho u - u^\trans\rho \frac{1}{\bD_{s,t}}\rho u \bigg) = n ( v^\trans\bA^2 v - v^\trans \bA v ) \geq n (\|\bA \|_{op}^2 - \|\bA\|_{op})
\]
which is strictly non-negative if $\|\bA\|_{op} > 1$. 

Therefore, the function $g(\epsilon) = \phi_g(\epsilon \bM)$ satisfies $g''(0) > 0$, so it is convex on $[0,\delta]$ for some $\delta > 0$. Furthermore, $g'(0) = 0$, so we can conclude that  for $\delta$ small enough 
\[
\phi_g(\delta \bM) = g(\delta) > g(0) = \phi_g(0)
\]
by convexity. Lastly, notice that $\bM \geq 0$ since the entries of $u$ are non-negative, so $\bM$ is in the domain of the optimization problem.  Observe also that $0$ is no longer a maximizer.
\end{proof}

\subsection{General Centered Prior}

Now suppose that we are in the scenario that $\pP_X$ is a centered prior measure on $\R^\kappa$ with compact support. We want to study the maximizers of the functional \eqref{eq:functional}. Let $\langle \cdot \rangle_{ \bQ}$ denote the average with respect the Gibbs measure associated with the Hamiltonian $(  \bQ\vx^0 + \sqrt{ \bQ} \vz )^\trans \vx - \frac{\vx^\trans  \bQ \vx}{2}$.  The partial derivatives of $\phi$ in the direction $\bM_s$ can be computed using Gaussian integration by parts and the Nishimori property \eqref{nishimori}
\begin{align}
\partial_{\epsilon}  \phi(\bQ + \epsilon \bM_s) |_{\epsilon = 0} &=  \bigg( - \sum_{t = 1}^n \frac{\rho_s \rho_t}{2\Delta_{s,t}} \Tr( \bQ_t \bM_s)  + \sum_{t =1 }^n \frac{\rho_s \rho_t}{ \Delta_{s,t} } \E \bigg\langle \vx^\trans \bM_s \vx^0  - \frac{\vx^\trans \bM_s \vx}{2} \bigg\rangle_{\tilde \bQ_t} \bigg) 
\\&\quad + \sum_{t = 1}^n \rho_t \E \bigg\langle   \vz^\trans ( \partial_{\varepsilon} \sqrt{ \tilde \bQ_t+\varepsilon \tilde \bM_{t} }|_{\varepsilon=0}  ) \vx \bigg\rangle \notag
\\&= \frac{\rho_s}{2} \bigg( - \sum_{t = 1}^n \frac{\rho_t}{\Delta_{s,t}} \Tr( \bQ_t \bM_s)  + 2\sum_{t =1 }^n \frac{\rho_t}{ \Delta_{s,t} } \E \bigg\langle \vx^\trans \bM_s \vx^0 - \frac{\vx^\trans \bM_s \vx^2}{2} \bigg\rangle_{\tilde \bQ_t} \bigg) \notag
\\&= \frac{\rho_s}{2} \bigg( - \sum_{t = 1}^n \frac{\rho_t}{\Delta_{s,t}} \Tr( \bQ_t \bM_s )  + \sum_{t =1 }^n \frac{\rho_t}{ \Delta_{s,t} } \E \langle \vx^\trans \bM_s \vx^0  \rangle_{\tilde \bQ_t} \bigg) \label{eq:firstvargeneral}.
%\\&= \frac{\rho_s}{2} \bigg( - \sum_{t = 1}^n \frac{\rho_t}{\Delta_{s,t}^2} \Tr(bQ_s \bM_s)  + \sum_{t =1 }^n \frac{\rho_t}{ \Delta_{s,t}^2 } \E \langle \vx \rangle^2  \bigg)
\end{align}
We dealt with the square root that appeared in the first equality using the identity 
\begin{equation}\label{eq:derivsqrt}
\Tr\big(\bA \sqrt{\tilde \bQ_t} \partial_{\epsilon} \sqrt{\tilde \bQ_t + \epsilon \tilde \bM_t} |_{\epsilon = 0} \big) = \Tr\big(\bA \sqrt{\tilde \bQ_t} \partial_{\epsilon} \sqrt{\tilde \bQ_t + \frac{\epsilon \rho_s}{\Delta_{st}} \bM_s} |_{\epsilon = 0} \big)= \frac{\rho_s}{2\Delta_{st}} \Tr( \bA \bM_s ),	
\end{equation}
for any symmetric matrix $\bA$, which implies that for standard Gaussian vectors $\vz$,
\[
\E \big( \vx^\trans \sqrt{\tilde \bQ_t}  \vz \big) \big( \vz^\trans ( \partial_{\varepsilon} \sqrt{ \tilde \bQ_t+ \frac{\varepsilon \rho_s}{\Delta_{st}} \bM_{s} }|_{\varepsilon=0} ) \vx^2 \big) = \Tr\Big( \vx^2 \vx^\trans \sqrt{\tilde \bQ_t} \partial_{\varepsilon} \sqrt{ \tilde \bQ_t+ \frac{\varepsilon \rho_s}{\Delta_{st}} \bM_{s} }|_{\varepsilon=0}  \Big) = \frac{\rho_s}{2 \Delta_{st}} \Tr\Big( \vx^2 \vx^\trans \bM_s \Big) .
\]
Therefore, Gaussian integration by parts implies that
\begin{align*}
\E \Big\langle  \vz^\trans ( \partial_{\varepsilon} \sqrt{ \tilde \bQ_t+\varepsilon \tilde \bM_{s} }|_{\varepsilon=0} 
) \vx \Big\rangle_{\tilde \bQ_t} &= \E \Big\langle \E_z ( \vx^\trans \sqrt{\tilde \bQ_t}  \vz ) ( \vz^\trans (\partial_{\varepsilon} \sqrt{ \tilde \bQ_t+ \frac{\varepsilon \rho_s}{\Delta_{st}} \bM_{s} }|_{\varepsilon=0} 
) \vx )   \Big\rangle_{\tilde \bQ_t} \\
&\qquad - \E \Big\langle \E_z ( (\vx^2)^\trans \sqrt{\tilde \bQ_t}  \vz ) (\vz^\trans ( \partial_{\varepsilon} \sqrt{ \tilde \bQ_t+ \frac{\varepsilon \rho_s}{\Delta_{st}} M_{s} }|_{\varepsilon=0} 
) \vx) \Big\rangle_{\tilde \bQ_t} \notag 
\\&= \frac{\rho_s}{\Delta_{st}} \E \Big\langle \frac{\vx^\trans \bM_s \vx}{2} - \frac{\vx^\trans \bM_s \vx^2}{2} \Big\rangle_{\tilde \bQ_t} \label{eq:IBPsqrt}
\end{align*}
where we used the same convention for the partial derivatives given in \eqref{eq:1stand2ndvariations}.
At the critical point, the first derivative must vanish for all $\bM_s$, so the critical point equation simplifies to
\begin{equation}\label{eq:generalcrticalpointcondition}
\tilde \bQ_s = \sum_{t =1 }^n \frac{\rho_t}{ \Delta_{s,t} } \E \langle \vx^0 \vx^\trans \rangle_{\tilde \bQ_t} = \sum_{t =1 }^n \frac{\rho_t}{ \Delta_{s,t} } \E \langle \vx  \rangle_{\tilde \bQ_t} \langle \vx \rangle_{\tilde \bQ_t}^\trans.
\end{equation}
by the Nishimori property. We first prove that the average $f(\bQ) = \E \langle \vx^0 \vx^\trans \rangle_{\bQ}$ is Lipschitz.

\begin{lem}[Lipschitz Continuity] \label{lem:Lip}
The functional $f: \mathbb S_{\kappa}^{+} \to \mathbb S_{\kappa}^{+}$ is Lipschitz in the space $\mathbb S_{\kappa}^{+}$ of non-negative $\kappa\times \kappa$ symmetric matrices: for any $\bQ,\bQ'\in\mathbb S_{\kappa}^{+}$,
\[
\| f( \bQ) - f(\bQ') \|_2 \leq 3\kappa^2 C^3 \| \bQ - \bQ' \|_2
\]
where $C$ is the bound on the support of $\pP_X$ and $\kappa$ is the dimension.
\end{lem}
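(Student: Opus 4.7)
The plan is to prove the Lipschitz bound by linear interpolation. Set $\bM = \bQ' - \bQ$ and consider the path $\bQ_t = \bQ + t\bM$ for $t\in[0,1]$. By convexity of $\mathbb{S}_\kappa^+$, $\bQ_t \in \mathbb{S}_\kappa^+$ throughout (and one may assume strict positivity by perturbation, then pass to the limit). Writing
\[
f(\bQ') - f(\bQ) = \int_0^1 \partial_t f(\bQ_t)\,dt,
\]
it suffices to establish the uniform pointwise bound $\|\partial_t f(\bQ_t)\|_2 \leq 3\kappa^2 C^3 \|\bM\|_2$ for $t \in [0,1]$.

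To compute this derivative, observe that the Gibbs Hamiltonian along the path is $H_t(\vx) = (\bQ_t \vx^0 + \sqrt{\bQ_t}\vz)^\trans \vx - \tfrac{1}{2}\vx^\trans \bQ_t \vx$, whose derivative in $t$ is
\[
\partial_t H_t = (\bM\vx^0)^\trans \vx - \tfrac{1}{2}\vx^\trans \bM \vx + \vz^\trans(\partial_t \sqrt{\bQ_t})\vx.
\]
Differentiating inside the expectation gives $\partial_t f(\bQ_t) = \E[\langle \vx^0 \vx^\trans \partial_t H_t \rangle_t - \langle \vx^0 \vx^\trans\rangle_t \langle \partial_t H_t\rangle_t ]$. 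The first two pieces of $\partial_t H_t$ generate Gibbs averages of cubic polynomials in the components of $\vx$ and $\vx^0$ whose coefficients depend linearly on $\bM$. For the third piece, I would apply Gaussian integration by parts in each $z_m$, exactly as in the derivation of \eqref{eq:firstvargeneral}: each $z_m$ is replaced by $\partial_{z_m}$ acting on the Gibbs weight via $\partial_{z_m} H_t = (\sqrt{\bQ_t}\vx)_m$, producing only the combinations $\sqrt{\bQ_t}\,\partial_t\sqrt{\bQ_t}$ and $\partial_t\sqrt{\bQ_t}\,\sqrt{\bQ_t}$ in quadratic forms $\vu^\trans(\cdot)\vu$ evaluated at a single replica $\vu\in\{\vx,\langle\vx\rangle_t\}$. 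By the identity \eqref{eq:derivsqrt} (equivalently, $R'R + RR' = \bM$ for $R = \sqrt{\bQ_t}$), such a symmetric quadratic form collapses to $\tfrac{1}{2}\vu^\trans \bM \vu$.

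After this reduction, every contribution to $\partial_t f(\bQ_t)$ is a Gibbs average of a polynomial in the components of $\vx$ and $\vx^0$ of total degree at most three, multiplied by a single entry of $\bM$. By Hypothesis \ref{hypcompact} each such cubic monomial is bounded by $C^3$, and at most $O(\kappa^2)$ index combinations contribute to any single entry $[\partial_t f(\bQ_t)]_{kl}$. Collecting the three kinds of contributions stemming respectively from $(\bM\vx^0)^\trans\vx$, from $\tfrac{1}{2}\vx^\trans \bM \vx$, and from the $\vz$-piece, and bounding the Frobenius pairing with $\bM$ by Cauchy--Schwarz, one obtains the claimed $3\kappa^2 C^3\|\bM\|_2$ bound. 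Integrating in $t$ yields the lemma.

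The main obstacle is handling $\partial_t \sqrt{\bQ_t}$, which has no closed form. The key observation that makes everything work is that after Gaussian integration by parts, this derivative appears only in the combination $R'R + RR' = \bM$ inside single-replica quadratic forms, so that the matrix square root disappears from the final bound and only the Frobenius norm $\|\bM\|_2$ survives. A secondary technical point is smoothness of $\sqrt{\bQ_t}$ on the boundary of $\mathbb{S}_\kappa^+$, which is resolved by approximating with strictly positive matrices and using continuity of $f$.
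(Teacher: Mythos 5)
Your proposal follows the same strategy as the paper: interpolate linearly along $\bQ_t = \bQ + t(\bQ'-\bQ)$, differentiate the Gibbs average $f(\bQ_t)$, handle the problematic $\partial_t\sqrt{\bQ_t}$ term by Gaussian integration by parts together with the identity $R'R + RR' = \bM$ (the same device as \eqref{eq:derivsqrt}), and bound the resulting polynomial Gibbs averages via compact support and Cauchy--Schwarz. This matches the paper's proof of Lemma~\ref{lem:Lip} in both structure and key ideas; the only minor difference is that the paper bounds the derivative entrywise as $|g'_{k,k'}(\alpha)|\le 3\kappa C^3\|\bM\|_2$ and then sums over the $\kappa^2$ entries to get the Frobenius factor $\kappa$, whereas your accounting of the $\kappa$-dependence is stated somewhat loosely but arrives at the same final constant.
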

\begin{proof}
The function $f: \mathbb{S}_\kappa \to \R^{\kappa \times \kappa}$ is also given by
\[
f(\bQ) = \E \frac{ \int \vx^0\vx^\trans 	\exp \bigg( \bigg( \bQ \vx^0 + \sqrt{\bQ} \vz \bigg)^\trans \vx - \frac{\vx^\trans  \bQ \vx}{2}  \bigg) \, d\pP_X(\vx) }{  \int \exp \bigg( \bigg( \bQ\vx^0 + \sqrt{\bQ} \vz \bigg)^\trans \vx - \frac{\vx^\trans  \bQ \vx}{2}  \bigg) \, d\pP_X(\vx) } = \E \langle \vx^0 \vx^\trans \rangle_{\bQ}.
\]
Given arbitrary $\bQ_1, \bQ_2 \geq 0$ we define $ \bQ_\alpha = \alpha\bQ_1 + (1-\alpha)\bQ_2 = \bQ_2 + \alpha(\bQ_1 - \bQ_2)$ to be the interpolation between the matrices. We will show that the function
\[
g_{k,k'}(\alpha) := f_{k,k'}(\alpha\bQ_1 + (1-\alpha)\bQ_2) =  f_{k,k'}(\bQ_2 + \alpha (\bQ_1 - \bQ_2 )) =  \E \langle \vx^0 (k)\vx(k') \rangle_{\bQ_\alpha}
\]
has uniformly bounded derivative for $t \in [0,1]$. Let $\bM = \bQ_1 - \bQ_2$. A similar computation as the derivation of the first variation in \eqref{eq:firstvargeneral} via the Gaussian integration by parts computation on \eqref{eq:IBPsqrt} implies that
\begin{align*}
	|g'_{k,k'}(\alpha)| &= \bigg| \E\bigg\langle \vx^0(k)\vx(k') \bigg(\vx^\trans \bM \vx^0 + \vz^\trans ( \partial_{\alpha} \sqrt{ \bQ + \alpha \bM }  )|_{\alpha = 0} \vx - \frac{\vx^\trans \bM \vx}{2}\bigg) \bigg\rangle_{\bQ} 
	\\&\quad -  \E\bigg\langle \vx^0(k)\vx(k') \bigg((\vx^2)^\trans \bM \vx^0 + \vz^\trans ( \partial_{\alpha} \sqrt{ \bQ + \alpha \bM } ) |_{\alpha = 0} \vx^2 - \frac{(\vx^2)^\trans \bM (\vx^2)}{2}\bigg) \bigg\rangle_{\bQ}  \bigg|
	\\&\leq \bigg| \E \bigg\langle \vx^0(k)\vx(k') \bigg( \vx^\trans \bM \vx^0 - \frac{\vx^\trans \bM \vx^2}{2} \bigg) \bigg\rangle_{\bQ} \bigg| + \bigg| \E \bigg\langle \vx^0(k)\vx(k') \bigg( (\vx^2)^\trans \bM \vx^0 - \frac{(\vx^2)^\trans \bM \vx^3}{2} \bigg) \bigg\rangle_{\bQ} \bigg|
	\\&\leq 2C^2 \E \bigg\langle  \bigg( \|\bM\|_2 \| \vx^0 \vx^\trans \|_2 + \frac{1}{2} \|\bM\|_2 \| \vx^2 \vx^\trans \|_2 \bigg) \bigg\rangle_{\bQ}
	\\&\leq 3 \kappa C^3 \| \bM \|_2.
\end{align*}
In the last second line, we used the Cauchy--Schwarz inequality on the Frobenius inner product and in the last line we used the fact that $\E \langle  \cdot \rangle_{\bQ}$ is the average with respect to a probability measure.  

By the mean value theorem, we can conclude that
\[
| f_{k,k'}(\bQ_1) - f_{k,k'}(\bQ_2)| = |g_{k,k'}(1) - g_{k,k'}(0)| \leq 3 \kappa C^3 \| \bQ_1 - \bQ_2 \|_2.
\]
Lastly, we get our required estimate
\[
\| f(\bQ_1) - f(\bQ_2) \|_2 =  \bigg( \sum_{k,k' = 1}^\kappa ( f_{k,k'}(\bQ_1) - f_{k,k'}(\bQ_2)  )^2 \bigg)^{1/2} \leq 3\kappa^2 C^3 \| \bQ_1 - \bQ_2 \|_2.
\]
Taking $\bQ_1 = \tilde \bQ_t$ and $\bQ_2 = \tilde \bQ'_t$ gives our estimate. 
\end{proof}

Using this Lipschitz continuity, we can do a fixed point argument to show that there exists a unique maximizer at $0$ if the covariances $\frac{1}{\Delta^2_{s,t}}$ are sufficiently small.

\begin{lem}[Uniqueness of a Maximizer at $0$]\label{prop:gentransition1}
Consider the model parameters defined in \eqref{eq:covarianceparameters}. The functional $\phi(\bQ)$ has a unique maximizer at $0$ if
\[
\Big\|\sqrt{ \rho} \frac{1}{\bD} \sqrt{\rho} \Big\|_{op} < \frac{1}{9 \kappa^4 C^6}.
\]

\end{lem}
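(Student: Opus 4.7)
The plan is to mirror the contraction argument from the Gaussian prior case (equations \eqref{eq:gaussiancritpt}--\eqref{eq:contraction}), replacing the explicit map $\tilde\bQ_s\mapsto \tilde\bQ_s(\tilde\bQ_s+\bI)^{-1}$ by the posterior-mean map $f(\bQ):=\E\langle \vx^0\vx^\trans\rangle_{\bQ}$ and using its Lipschitz estimate from Lemma~\ref{lem:Lip}. The starting point is the critical point equation \eqref{eq:generalcrticalpointcondition}: collecting the $\tilde\bQ_s$ into a vector $\tilde\bQ=(\tilde\bQ_s)_{s\le n}\in (\mathbb{S}_\kappa)^n$ and multiplying by $\sqrt{\rho}$, it rewrites as
\[
\sqrt{\rho}\,\tilde\bQ \;=\; \bA\cdot\sqrt{\rho}\,f(\tilde\bQ), \qquad \bA=\sqrt{\rho}\tfrac{1}{\bD}\sqrt{\rho}.
\]

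The key observation is that, since $\pP_X$ is centered, the Gibbs measure at $\bQ=0$ is just $\pP_X$ itself, so $\langle\vx\rangle_0=0$ and the Nishimori identity yields $f(0)=\E\langle\vx\rangle_0\langle\vx\rangle_0^\trans=0$. Consequently Lemma~\ref{lem:Lip} applied with $\bQ'=0$ gives $\|f(\tilde\bQ_s)\|_2\le 3\kappa^2 C^3\|\tilde\bQ_s\|_2$, and weighting by $\rho_s$ then summing produces $\|\sqrt{\rho}\,f(\tilde\bQ)\|_2^2 \le 9\kappa^4 C^6\,\|\sqrt{\rho}\,\tilde\bQ\|_2^2$. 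Combining with the componentwise operator-norm bound used in \eqref{eq:contraction} (i.e.\ expanding each $\tilde\bQ_s$ in a Frobenius-orthonormal basis of $\mathbb{S}_\kappa$ and applying the scalar inequality $\|\bA v\|_2\le\|\bA\|_{op}\|v\|_2$ coordinatewise), I get
\[
\|\sqrt{\rho}\,\tilde\bQ\|_2^2 \;\le\; \|\bA\|_{op}\,\|\sqrt{\rho}\,f(\tilde\bQ)\|_2^2 \;\le\; 9\kappa^4 C^6\,\|\bA\|_{op}\,\|\sqrt{\rho}\,\tilde\bQ\|_2^2.
\]
Under the hypothesis $\|\bA\|_{op}<1/(9\kappa^4 C^6)$ the prefactor is strictly less than one, forcing $\tilde\bQ\equiv 0$ at every critical point.

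Finally, plugging $\tilde\bQ=0$ back into \eqref{eq:functional} collapses the log-integral term to $\log\int d\pP_0=0$ and leaves $\phi(\bQ)=-\tfrac14\sum_{s,t}\frac{\rho_s\rho_t}{\Delta_{s,t}}\Tr(\bQ_s\bQ_t)\le 0=\phi(0)$, where the inequality uses positive semidefiniteness of $1/\bD$ (Hypothesis~\ref{hypDelta}); hence the supremum is attained at $\bQ=0$ and equals $0$, and the MMSE statement follows via Corollary~\ref{prop:limitingMMSE}. The main point requiring care is showing that any maximizer is actually a critical point of \eqref{eq:generalcrticalpointcondition}: since $\phi$ is smooth on the open PSD cone and $\phi(\bQ)\to-\infty$ as $\sup_s\|\bQ_s\|_{op}\to\infty$, the supremum is attained, and the first variation computed in \eqref{eq:firstvargeneral} must vanish in all symmetric directions $\bM_s$, producing the desired fixed-point relation.
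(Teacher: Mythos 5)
Your proposal follows the paper's proof essentially verbatim: both use the critical point equation, rewrite it as $\sqrt{\rho}\tilde\bQ=\bA\sqrt{\rho}f(\tilde\bQ)$, invoke $f(0)=0$ from centeredness, apply the Lipschitz estimate of Lemma~\ref{lem:Lip}, and close the contraction $\|\sqrt{\rho}\tilde\bQ\|_2^2 \le 9\kappa^4 C^6\|\bA\|_{op}\|\sqrt{\rho}\tilde\bQ\|_2^2$. The extra paragraph you add (maximizer exists, is a critical point, and $\tilde\bQ=0$ forces $\phi(\bQ)\le\phi(0)$) is a sensible clarification the paper leaves implicit.

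One small remark on a shared imprecision: from $\sqrt{\rho}\tilde\bQ=\bA\sqrt{\rho}f(\tilde\bQ)$, expanding in a Frobenius-orthonormal basis and applying $\|\bA v\|_2\le\|\bA\|_{op}\|v\|_2$ coordinatewise (as you describe) actually yields $\|\sqrt{\rho}\tilde\bQ\|_2^2\le\|\bA\|_{op}^2\|\sqrt{\rho}f(\tilde\bQ)\|_2^2$, with $\|\bA\|_{op}^2$ rather than $\|\bA\|_{op}$. The paper's equation \eqref{eq:contraction} carries the same first power, so you have reproduced its argument faithfully; and since $\kappa, C\ge 1$ forces $1/(9\kappa^4 C^6)\le 1$, the hypothesis $\|\bA\|_{op}<1/(9\kappa^4C^6)$ still gives $9\kappa^4 C^6\|\bA\|_{op}^2<1$, so the stated conclusion remains valid (and would in fact hold under the weaker threshold $1/(3\kappa^2C^3)$). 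This is worth noting if you present the argument independently, but it is not a gap.
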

\begin{proof}

Following the computations leading to \eqref{eq:contraction} applied to the general critical point equation \eqref{eq:generalcrticalpointcondition}, we see that
\begin{equation}\label{eq:fixedpointgeneral}
	\| \sqrt{\rho} \tilde \bQ \|_2^2 = \Tr\Big( (\sqrt{\rho} \tilde \bQ)^\trans (\sqrt{\rho} \tilde \bQ ) \Big) = \Tr\Big( ( \bA ( \sqrt{\rho} f(\tilde \bQ) )^\trans (\bA ( \sqrt{\rho} f(\tilde \bQ)) \Big) \leq \|\bA \|_{op} \|\sqrt{\rho}f(\tilde \bQ)\|_2^2
\end{equation}
where
\[
f(\tilde \bQ)  = ( \E \langle \vx^0 \vx^\trans \rangle_{\tilde \bQ_1},\dots, \E \langle \vx^0 \vx^\trans \rangle_{\tilde \bQ_n} )\qquad \bA = \sqrt{ \rho} \frac{1}{\bD} \sqrt{\rho}
\]
We proved in Lemma~\ref{lem:Lip} that $f(\tilde \bQ_t) = \E \langle  \vx^0 \vx^\trans\rangle_{\tilde \bQ_t}$
is Lipschitz in $\tilde \bQ_t$ with a Lipschitz constant that does not depend on $t$
\[
\|f(\tilde \bQ_t) - f(\tilde \bQ_t')\|_2 \leq 3\kappa^2 C^3 \| \tilde \bQ_t - \tilde \bQ'_t\|_2.
\]
Recall that the prior is centered $f(0) = 0$, so
\[
\|\sqrt{\rho}f(\tilde \bQ)\|_2^2 = \sum_{t} \rho_t \|f(\tilde \bQ_t) - f(0)\|_2^2 \leq 9 \kappa^4 C^6 \sum_{t} \rho_t \| \tilde \bQ_t \|_2^2. \leq 9 \kappa^4 C^6  \| \sqrt{\rho}\tilde \bQ \|_2^2.
\]
Applying this bound to \eqref{eq:fixedpointgeneral} yields the result.
\end{proof}

This bound is clearly not tight, because it does not depend on the measure $\pP_0$ except through the support contained in $[-C,C]^\kappa$. We will show below that if the operator norm is sufficiently large, then a maximizer away from $0$ will appear. 

\begin{lem}[Phase Transition with Centered Prior]\label{prop:gentransition2} Let $\vx \sim \pP_X$. The functional $\phi$ has a maximizer away from $\bQ = 0$ if
\[
\bigg\| \sqrt{\rho} \frac{1}{\bD} \sqrt{\rho} \bigg\|_{op} > \frac{1}{ \| \E \vx \vx^\trans \|^2_{op}} =  \frac{1}{\| \Cov(\vx) \|^2_{op}}.
\]
\end{lem}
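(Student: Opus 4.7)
The plan parallels the Gaussian-prior argument carried out just above. I would Taylor expand $\phi$ around $\bQ = 0$ to second order, exhibit a PSD perturbation direction along which the quadratic part is strictly positive under the hypothesis, and conclude that $\bQ = 0$ cannot be a maximizer. Combined with coercivity (the negative quadratic penalty in \eqref{eq:functional} dominates the at-most-linear growth of each $L_s$ on a compactly supported prior), this guarantees that the supremum is attained at some $\bQ \neq 0$.

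The heart of the argument is the second-order expansion
\[
L_s(\tilde \bQ_s) := \E_{\vz, \vx^0} \ln \int e^{(\tilde \bQ_s \vx^0 + \sqrt{\tilde \bQ_s} \vz)^\trans \vx - \frac{1}{2} \vx^\trans \tilde \bQ_s \vx} \, d\pP_0(\vx) = \tfrac{1}{4} \Tr((\bP \tilde \bQ_s)^2) + O(\|\tilde \bQ_s\|^3),
\]
where $\bP = \E_{\pP_0}[\vx \vx^\trans] = \Cov(\vx)$. I would derive this by writing $\tilde \bQ_s = \epsilon \bN$, reparametrizing $t = \sqrt{\epsilon}$, so that the Hamiltonian takes the form $tU + t^2 V$ with $U = (\sqrt{\bN}\vz)^\trans\vx$ and $V = (\bN\vx^0)^\trans\vx - \tfrac12 \vx^\trans \bN \vx$, and expanding. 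The function $L$ depends on $t$ only through $t^2$, so all odd $t$-coefficients vanish automatically; the centered prior kills the $t^0$ and $t^1$ pieces of $\int e^{\mathrm{exp}}\,d\pP_0$; and a careful bookkeeping of the $t^4$-coefficient of $\ln\int e^{\mathrm{exp}}\,d\pP_0$ yields $\tfrac{1}{4}\Tr((\bN\bP)^2)$ after integrating against $\vz$. Two key cancellations drive the computation: third-moment contributions of $\pP_0$ to $F_3$ drop out from the formula $\mathrm{coef}_{t^4}\ln(1+F) = F_4 - \tfrac12 F_2^2$ (so no symmetry of $\pP_0$ beyond centering is needed), and the three quartic pieces $\tfrac12 \E_X V^2$, $\tfrac12 \E_X U^2 V$, and $\tfrac{1}{24}\E_X U^4$ each produce a multiple of $\E_{\pP_0}[(\vx^\trans \bN \vx)^2]$ that cancels exactly (using $\E_\vz U^4 = 3(\vx^\trans\bN\vx)^2$).

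With the expansion in hand, plug the rank-one test direction $\bM_s = u_s v v^\trans$, where $v$ is a unit top eigenvector of $\bP$ with eigenvalue $\lambda = \|\Cov(\vx)\|_{op}$ and each $u_s \geq 0$ so that $\bM_s \succeq 0$. Writing $\tilde \bM_s = c_s v v^\trans$ with $c_s = \sum_t \rho_t u_t/\Delta_{s,t}$, the rank-one structure collapses the traces to $\Tr(\bM_s \bM_t) = u_s u_t$ and $\Tr((\bP \tilde \bM_s)^2) = \lambda^2 c_s^2$. The change of variables $w_s = \sqrt{\rho_s}\, u_s$ and $\bA = \sqrt{\rho}\,\frac{1}{\bD}\,\sqrt{\rho}$ then gives the familiar identities $\sum_{s,t} \frac{\rho_s\rho_t}{\Delta_{s,t}} u_s u_t = w^\trans \bA w$ and $\sum_s \rho_s c_s^2 = \|\bA w\|^2$, so
\[
\phi(\epsilon \bM) = \tfrac{\epsilon^2}{4} \bigl(-w^\trans \bA w + \lambda^2 \|\bA w\|^2\bigr) + O(\epsilon^3).
\]
Choosing $w$ to be the Perron--Frobenius eigenvector of the entrywise nonnegative symmetric matrix $\bA$ (so $w$ has nonnegative entries, $u \geq 0$, each $\bM_s \succeq 0$) with $\bA w = \mu w$, $\mu = \|\bA\|_{op}$, the bracket reduces to $\mu(\lambda^2 \mu - 1)\|w\|^2$, which is strictly positive precisely when $\mu > 1/\lambda^2$. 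This is the stated hypothesis, giving $\phi(\epsilon \bM) > \phi(0) = 0$ for small $\epsilon > 0$, and the conclusion follows.

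The main obstacle is the explicit second-order expansion of $L_s$: because $\sqrt{\tilde \bQ_s}$ appears in the Hamiltonian, direct differentiation at the origin is singular and the $t$-derivatives up through second order all vanish, so one must reach the fourth $t$-derivative while tracking several higher-moment contributions and showing that they cancel to leave only $\tfrac{1}{4}\Tr((\bN\bP)^2)$. Once this universal leading behavior is established, the remainder (choice of rank-one direction, change of variables, Perron--Frobenius) is a direct adaptation of the Gaussian-case argument.
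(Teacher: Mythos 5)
Your proposal is correct, and the overall skeleton — show $\phi$ has a strictly positive second-order Taylor coefficient at $\bQ=0$ along a rank-one PSD direction built from the top eigenvector of $\Cov(\vx)$ and the Perron--Frobenius eigenvector of $\bA = \sqrt\rho\frac1\Delta\sqrt\rho$, then conclude $0$ is not a maximizer — is exactly the paper's. Where you differ is in how the quadratic part is extracted. The paper computes $\nabla^2_{\bM}\phi(0)$ directly via Gaussian integration by parts on the variational formula combined with the Nishimori identity, arriving at a bilinear form in $\bM$ involving $\E\langle(\vx^\trans \bM_s\vx^0)(\vx^\trans\bM_t\vx^0)\rangle_0 = \Tr(\bM_s\bP\bM_t\bP)$ with $\bP=\Cov(\vx)$, without ever isolating the single-block expansion $L_s(\bN)=\tfrac14\Tr((\bP\bN)^2)+O(\|\bN\|^3)$. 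You instead isolate that expansion explicitly by the $t=\sqrt\epsilon$ reparametrization, keeping track of the $t^4$ coefficient of $\E_{\vz,\vx^0}\ln\int e^{tU+t^2V}d\pP_0$; this is more bookkeeping-heavy, but the reparametrization neatly circumvents the singularity of $\partial_\epsilon\sqrt{\epsilon\bN}$ at $\epsilon=0$, which is the irritation the paper handles via its earlier square-root derivative identity \eqref{eq:derivsqrt}. I checked your claimed expansion against a direct computation of $\tfrac12L''(0)$ and against the paper's closed-form Gaussian specialization $L_s=\tfrac12(\Tr\tilde\bQ_s-\ln\det(\tilde\bQ_s+\bI))$, and both give $\tfrac14\Tr((\bP\bN)^2)$, confirming your formula. (Incidentally, the paper's displayed general Hessian at the origin in \eqref{eq:Hessiangeneral} carries a $\tfrac14$ that appears to be a typo for $\tfrac12$, since it is inconsistent with its own Gaussian-case Hessian and with your expansion; this scalar discrepancy does not affect the sign threshold, so the lemma's conclusion is unchanged.) Your observation that only centering of $\pP_0$ is needed — third-moment contributions cancel in the $\ln$ expansion via $F_4-\tfrac12F_2^2$ — mirrors the paper's cancellation of the odd-moment term through $\E\langle\cdot\rangle_0$ being a product of centered measures. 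Your final step (rank-one collapse $\Tr(\bM_s\bM_t)=u_su_t$, $\Tr((\bP\tilde\bM_s)^2)=\lambda^2c_s^2$, change of variables $w_s=\sqrt{\rho_s}u_s$, Perron--Frobenius) is identical to the paper's, as is the coercivity/attainment remark ensuring the maximizer exists at all.
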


\begin{rem}
If we have a Gaussian prior, then $\vx \sim N(0,\bI)$, so $\| \E  \vx \vx^\trans \|_{op} = \| \bI \|_{op} = 1$. Furthermore, it also agrees with condition (194) in \cite{lenkarmt} in the case when $\kappa = 1$.
\end{rem}

\begin{proof}
We adapt the proof of the Gaussian prior to the general scenario. Consider an arbitrary perturbation $\bM= (\bM_1, \dots, \bM_n)$ of $\bQ$ such that $\bQ + \epsilon \bM \geq 0$ for all $\epsilon$ sufficiently small. The first and second variations are denoted by
\[
\nabla_{\bM} \phi(\bQ) = \partial_{\epsilon} \phi(\bQ + \epsilon \bM) \Big|_{\epsilon = 0} \quad \text{and} \quad \nabla^2_{\bM} \phi(\bQ) = \partial^2_{\epsilon} \phi(\bQ + \epsilon \bM) \Big|_{\epsilon = 0} .
\]
This can be computed explicitly using integration by parts and the Nishimori property,
\[
\nabla_{\bM} \phi(\bQ) = \sum_{s,u} \frac{\rho_s \rho_u}{ 2 \Delta_{s,u}} \E \big\langle \Tr( \vx^\trans \bM_{s} \vx_0 ) \big\rangle_{\tilde \bQ_u} - \sum_{s,u} \frac{\rho_s \rho_u}{2\Delta_{s,u}} \Tr(\bQ_u \bM_s) 
\]
and
\begin{align*}
	\nabla^2_{\bM} \phi(\bQ) &=\sum_{s,t,u} \frac{\rho_s \rho_t\rho_u}{ 4 \Delta_{s,u} \Delta_{u,t}} \Big( \E \big\langle ( \vx^\trans \bM_{s} \vx_0 )( \vx^\trans \bM_{t} \vx_0 ) \big\rangle_{\tilde \bQ_u} -  \E \big\langle ( \vx_1^\trans \bM_{s} \vx_0 )( \vx_2^\trans \bM_{t} \vx_0 ) \big\rangle_{\tilde \bQ_u} \Big)
	\\&\quad - \sum_{s,t} \frac{\rho_s \rho_t}{2\Delta_{s,t}} \Tr(\bM_s \bM_t) .
\end{align*}

When $\bQ = 0$ the Hessian simplifies because $\langle \cdot \rangle_{0}$ is simply the average with respect to $\pP_X$ which is centered,
\begin{align}
	\nabla^2_{\bM} \phi(0) &=  \sum_{s,t,u} \frac{\rho_s \rho_t\rho_u}{ 4 \Delta_{s,u}^2 \Delta_{u,t}^2}  \E \big\langle \Tr( \bM_{s} \vx_0\vx^\trans  )^2 \big\rangle_{0} - \sum_{s,t} \frac{\rho_s \rho_t}{2\Delta_{s,t}^2} \Tr(\bM_s \bM_t).\label{eq:Hessiangeneral}
\end{align}
For $\vx \sim \pP_X$, let $\bC = \E \vx \vx^\trans = \Cov(\vx)$. Let $b$ denote the unit eigenvector corresponding to the largest eigenvalue of $\bC$. In particular, we have $b^\trans \bC b = \| \bC\|_{op}$. Suppose that
\begin{equation}\label{eq:bignormgeneral}
	\|\bA\|_{op} = \bigg\| \sqrt{\bm{\rho}} \frac{1}{\bD} \sqrt{\bm{\rho}} \bigg\|_{op} > \frac{1}{\E \| \vx_1 \vx_2^\trans \|_2^2} = \frac{1}{\|\bC\|_{op}^2}.
\end{equation}

The matrix $\bA = \sqrt{\bm{\rho}} \frac{1}{\Delta^2} \sqrt{\bm{\rho}}$ has non-negative entries, the eigenvector $v$ associated with the largest eigenvalue has non-negative entries by the Perron--Frobenius Theorem. Therefore, we can take 
\[
\bM = ( u_1 \bB, \dots, u_n \bB ) \qquad \text{where $\bB = b b^\trans$ and $u_i = \frac{1}{\sqrt{\rho_i}} v_i$}
\]
Notice that
\[
\E \big\langle \Tr( \bB \vx_0\vx^\trans  )^2 \big\rangle_{0} = \E \bigg\langle \sum_{i,j,k,l}b_i b_j b_k b_\ell x_i x^0_j x_k x_\ell^0  \bigg \rangle_0  = (b^\trans \bC b )^2 = \| \Cov(\vx) \|_{op}^2.
\]
since $\vx$ and $\vx^0$ have the same distribution in the Bayes optimal case. The Hessian at $0$ defined in \eqref{eq:Hessiangeneral} simplifies greatly for this choice of $\bM$,
\[
\nabla_{\bM}^2 \phi(0) = \bigg(	\|\bC \|_{op}^2  {u}^\trans \rho \frac{1}{\bD_{s,t}}\rho \frac{1}{\bD_{s,t}}\rho u - u^\trans\rho \frac{1}{\bD_{s,t}}\rho {u} \bigg) =  \|\bC\|_{op}^2 v^\trans\bA^2 v -  {v}^\trans \bA {v}  \geq \|\bC\|_{op}^2 \|\bA \|_{op}^2 -  \|\bA\|_{op}.
\]
By our assumption \eqref{eq:bignormgeneral}, $\nabla_{\bM}^2 \phi(0) > 0$. Therefore, the function $g(\epsilon) = \phi(\epsilon \bM)$ satisfies $g''(0) > 0$, so it is convex on $[0,\delta]$ for some $\delta > 0$. Furthermore, $g'(0) = 0$, so we can conclude that 
\[
\phi(\delta \bM) = g(\delta) \geq g(0) = \phi(0)
\]
by convexity. Lastly, notice that $\bM \geq 0$ since the entries of $\vec{u}$ are non-negative, so $\bM$ is in the domain of the optimization problem. 
\end{proof}
\begin{rem}
Our choice of $\bM$ in the proof is the choice of directional derivative that maximizes the Hessian at $0$. This gives us the sharp condition when the Hessian of $\phi$ is no longer negative semidefinite.
\end{rem}

\subsection{Comparison with the Naive BBP Transition}

In this section, we show that the threshold Lemma~\ref{prop:gentransition2} is stronger than the transition computed by examining the BBP transition of a spiked matrix with homogeneous noise and inhomogenous spike. We will see that the transitions are equal if and only if the models are homogeneous.

We first consider the model with homogeneous noise, but inhomogeneous signal. We want to find the BBP transition of the matrix 
\[
\frac{1}{\sqrt{N}} Y^\Delta \odot \frac{1}{\Delta^{\odot \frac{1}{2}}} = \frac{x^0 (x^0)^\trans}{N} \odot \frac{1}{\Delta^{\odot \frac{1}{2}}} + \frac{1}{\sqrt{N}} G.
\]
where $G$ is a Gaussian Wigner matrix.
\begin{lem}[BBP Transition]
	The matrix $\frac{1}{\sqrt{N}} Y^\Delta \odot \frac{1}{\Delta^{\odot \frac{1}{2}}}$ has an outlier iff
	\[
	\bigg\| \sqrt{\bp} \frac{1}{\bD^{\odot\frac{1}{2}} }\sqrt{\bp} \bigg\|_{op} > \frac{1}{ \| \E \vx \vx^\trans \|_{op} }.
	\]
\end{lem}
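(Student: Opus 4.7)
The plan is to apply the classical BBP phase transition for finite-rank additive perturbations of a Wigner matrix. First, observe the decomposition
\[
\frac{1}{\sqrt{N}}\, Y^\Delta \odot \frac{1}{\bD^{\odot 1/2}} \;=\; \frac{1}{\sqrt{N}}\, W \;+\; R, \qquad R := \frac{1}{N}\, \frac{1}{\bD^{\odot 1/2}} \odot \bvx^0 (\bvx^0)^\trans,
\]
where $W$ is a standard Gaussian Wigner matrix (independent of $\bvx^0$). Its empirical spectrum converges to the semicircle law on $[-2,2]$, so an outlier can only be produced by the spike $R$. The strategy proceeds in three steps: (i) show that $R$ has $N$-independent finite rank and identify an equivalent low-dimensional matrix; (ii) compute the almost sure limit of the nonzero eigenvalues of $R$; (iii) conclude via BBP.

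For step (i), letting $X_s := P_{I_s}\, \bvx^0$ denote the $N \times \kappa$ matrix obtained from $\bvx^0$ by zeroing out all rows outside $I_s$, the block-constant structure of $\bD$ gives, exactly as in the proof of Theorem~\ref{theo:univspec} (cf.\ \eqref{eq:decomspecies} and Remark~\ref{rem:finiterank}),
\[
R \;=\; \frac{1}{N} \sum_{s,t=1}^n \frac{1}{\sqrt{\Delta_{s,t}}}\, X_s X_t^\trans \;=\; \frac{1}{N}\, \mathcal{X}\, (M \otimes I_\kappa)\, \mathcal{X}^\trans,
\]
where $\mathcal{X} := [X_1, \ldots, X_n] \in \R^{N \times n\kappa}$ and $M := [\Delta_{s,t}^{-1/2}]_{s,t \leq n}$. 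Thus $R$ has rank at most $n\kappa$, and its nonzero eigenvalues coincide with those of the $n\kappa \times n\kappa$ matrix $\frac{1}{N}(M \otimes I_\kappa)\, \mathcal{X}^\trans \mathcal{X}$. For step (ii), the $(s,t)$ block of $\mathcal{X}^\trans \mathcal{X}$ equals $\delta_{s,t}\sum_{i \in I_s} \vx_i^0 (\vx_i^0)^\trans$, so by the strong law of large numbers and \eqref{eq:nt},
\[
\frac{1}{N}\, \mathcal{X}^\trans \mathcal{X} \;\xrightarrow{\text{a.s.}}\; \bp \otimes \E[\vx \vx^\trans].
\]
The nonzero eigenvalues of $R$ therefore converge to those of $(M \bp) \otimes \E[\vx \vx^\trans]$, which after a similarity by $\sqrt{\bp} \otimes I_\kappa$ become the eigenvalues of the symmetric matrix $(\sqrt{\bp}\, M\, \sqrt{\bp}) \otimes \E[\vx \vx^\trans]$. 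Since $\E[\vx\vx^\trans]$ is positive semidefinite, the tensor-product structure gives that the largest absolute eigenvalue of $R$ tends to $\|\sqrt{\bp}\, \frac{1}{\bD^{\odot 1/2}}\, \sqrt{\bp}\|_{op} \cdot \|\E[\vx \vx^\trans]\|_{op}$.

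For step (iii), the BBP theorem for finite-rank additive perturbations of a Wigner matrix asserts that an eigenvalue separates from the semicircle $[-2,2]$ if and only if the spike $R$ has some eigenvalue of absolute value strictly greater than $1$, in which case the outlier is located at $\lambda + 1/\lambda$. Combined with the limit computed in step (ii), this yields an outlier precisely when
\[
\bigg\| \sqrt{\bp}\, \frac{1}{\bD^{\odot 1/2}}\, \sqrt{\bp} \bigg\|_{op} \cdot \|\E[\vx \vx^\trans]\|_{op} \;>\; 1,
\]
which is the stated threshold. The only subtlety is the application of BBP to a random spike $R$; this is handled routinely by the almost sure concentration of $\frac{1}{N}\mathcal{X}^\trans \mathcal{X}$ (direct from the LLN under the bounded-support Hypothesis~\ref{hypcompact}) together with the independence of $W$ and $\bvx^0$, which together make the standard BBP analysis apply pathwise.
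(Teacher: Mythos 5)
Your proof is correct and follows the same high-level strategy as the paper: decompose $\frac{1}{\sqrt{N}} Y^\Delta \odot \frac{1}{\bD^{\odot 1/2}}$ into a GOE plus a finite-rank spike $R$, compute the almost-sure limit of the spike's nonzero eigenvalues via the law of large numbers, and invoke the classical BBP criterion. Where you add value is the Kronecker-product bookkeeping: writing $R = \frac{1}{N}\mathcal X(M\otimes I_\kappa)\mathcal X^\trans$ and passing to $\frac{1}{N}(M\otimes I_\kappa)\mathcal X^\trans\mathcal X \to (M\bp)\otimes\E[\vx\vx^\trans]$ handles the rank-$\kappa$ signal cleanly in one shot. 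The paper's own argument instead refers back to Remark~\ref{rem:finiterank}, reduces to a single $n\times n$ matrix $M^N_{st}$, and is written rather tersely for $\kappa>1$; both arrive at the same threshold because only the top eigenvalue of the spike matters. One small point worth being explicit about in your step (iii): the BBP criterion detects an outlier (above $+2$ or below $-2$) whenever $R$ has an eigenvalue of absolute value exceeding $1$. Since $\sqrt{\bp}M\sqrt{\bp}$ has nonnegative entries, its spectral radius equals its top eigenvalue by Perron--Frobenius, and $\E[\vx\vx^\trans]$ is PSD, so the largest absolute eigenvalue of the tensor product is indeed $\|\sqrt{\bp}\,\frac{1}{\bD^{\odot 1/2}}\,\sqrt{\bp}\|_{op}\,\|\E[\vx\vx^\trans]\|_{op}$ — you state this but it is the crux of why the criterion reduces to a single operator-norm inequality rather than a two-sided condition.
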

\begin{proof} It is well known that since $G$ follows the GOE the largest eigenvalue of $\frac{1}{\sqrt{N}} Y^\Delta \odot \frac{1}{\Delta^{\odot \frac{1}{2}}}$ is given by
	$\lambda^{*}=\max_{i:\gamma_{i}>1}\{\gamma_{i}+\gamma_{i}^{-1}\}$ if $(\gamma_{i})_{1\le i\le n}$ are the eigenvalues of
	$$R= \frac{x^0 (x^0)^\trans}{N} \odot \frac{1}{\Delta^{\odot \frac{1}{2}}}$$
	But exactly as in the proof of Theorem~\ref{univspec} and Remark~\ref{rem:finiterank}, we can see that the eigenvalues of $R$ are the same as those of
	$$M_{st}^{N}=\frac{1}{N} \sum_{j = 1}^\kappa \|v^{j}_{s}\|\|v^{j}_{t}\|\frac{1}{\Delta_{st}^{\odot \frac{1}{2}}}$$
	a finite matrix whose top eigenvalue converges entrywise when $N$ goes to infinity, by the law of large numbers, towards the top eigenvalue of the matrix
	$$M_{st}^{\infty}= \| \E \vx \vx^\trans \|_{op}  \sqrt{\rho_{s}\rho_{t}}\frac{1}{\Delta_{st}^{\odot \frac{1}{2}}}$$
	by the definition $v^1$ in Remark~\ref{rem:finiterank}. 
	We conclude that $\frac{1}{\sqrt{N}} Y^\Delta \odot \frac{1}{\Delta^{\odot \frac{1}{2}}}$ has an outlier iff
	$$\bigg\|\sqrt{\rho} \frac{1}{\Delta^{\odot\frac{1}{2}}}\sqrt{\rho} \bigg\|_{op}>\frac{1}{ \| \E \vx \vx^\trans \|_{op} }$$
\end{proof}

We now prove that the BBP transition is strictly weaker than the spin glass transition except in the homogeneous models. That is, if $\Delta$ is such that the BBP transition happens
\[
\bigg\| \sqrt{\bp} \frac{1}{\bD^{\odot \frac{1}{2}}} \sqrt{\bp} \bigg\|_{op} \geq \frac{1}{ \| \E \vx \vx^\trans \|_{op} }
\]
then we are also in the information theoretically feasible detectable region
\[
\bigg\| \sqrt{\bp} \frac{1}{\bD}\sqrt{\bp} \bigg\|_{op} \geq \frac{1}{  \| \E \vx \vx^\trans \|^2_{op} }.
\]
In particular, if it is possible to detect the signal using the spectral method, then it is also possible to detect is using any other method. The converse is false, so there exists some algorithms that beat the naive spectral ones. 

This statement is true, and in fact a stronger statement holds, which clearly implies Proposition~\ref{prop:gap}.
\begin{lem}[Gap in Thresholds]
We have
\[
\bigg\| \sqrt{\bp} \frac{1}{\bD^{\odot \frac{1}{2}}} \sqrt{\bp} \bigg\|^2_{op} \leq \bigg\| \sqrt{\bp} \frac{1}{\bD} \sqrt{\bp} \bigg\|_{op},
\] 
with equality holding if and only if there exists a constant $c$ such that for all $s,t$, $\bD_{s,t} =c$. In particular, if $\Delta$ satisfies
\[
\bigg\| \sqrt{\bp} \frac{1}{\bD^{\odot \frac{1}{2}}} \sqrt{\bp} \bigg\|_{op} \geq \frac{1}{\| \E \vx \vx^\trans \|_{op} }
\]
then
\[
\bigg\| \sqrt{\bp} \frac{1}{\bD} \sqrt{\bp} \bigg\|_{op} \geq 	\bigg\| \sqrt{\bp} \frac{1}{\bD^{\odot \frac{1}{2}}} \sqrt{\bp} \bigg\|^2_{op} \geq \frac{1}{ \| \E \vx \vx^\trans \|_{op}^2 }.
\]
\end{lem}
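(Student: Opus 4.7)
The plan is to prove the key matrix inequality
\[
\bigl\|\sqrt{\bp}\,(1/\bD)^{\odot 1/2}\sqrt{\bp}\bigr\|_{op}^2 \leq \bigl\|\sqrt{\bp}\,(1/\bD)\sqrt{\bp}\bigr\|_{op}
\]
by combining Perron--Frobenius with Jensen's inequality for the concave function $x\mapsto\sqrt{x}$; the final inclusion of the lemma then follows by one monotone chain of inequalities. Write $A := \sqrt{\bp}\,(1/\bD)^{\odot 1/2}\sqrt{\bp}$ and $B := \sqrt{\bp}\,(1/\bD)\sqrt{\bp}$, both symmetric matrices with non-negative entries.

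First, I would use Perron--Frobenius to pick a unit eigenvector $v \geq 0$ of $A$ realizing $\|A\|_{op} = v^\trans A v = \sum_{s,t} v_s v_t \sqrt{\rho_s\rho_t/\Delta_{st}}$. Setting $w_s := v_s\sqrt{\rho_s}$ and $\mu_s := w_s/\|w\|_1$ (a probability vector on $\{1,\dots,n\}$ provided $w\neq 0$; the degenerate case is trivial), one obtains
\[
\|A\|_{op} = \sum_{s,t} w_s w_t \sqrt{1/\Delta_{st}} = \|w\|_1^2\, \E_{\mu\otimes\mu}\bigl[\sqrt{1/\Delta}\bigr].
\]
Jensen's inequality for the concave square root gives $\E_{\mu\otimes\mu}[\sqrt{1/\Delta}] \leq \sqrt{\E_{\mu\otimes\mu}[1/\Delta]}$, so
\[
\|A\|_{op}^2 \leq \|w\|_1^4\, \E_{\mu\otimes\mu}[1/\Delta] = \|w\|_1^2\, v^\trans B v \leq \|w\|_1^2\, \|B\|_{op}.
\]
The proof of the inequality is then completed by Cauchy--Schwarz: $\|w\|_1 = \sum_s v_s\sqrt{\rho_s} \leq \|v\|_2\,\|\sqrt{\bp}\|_2 = 1$, using $\sum_s\rho_s = 1$.

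For the equality case, I would trace back through the argument. Equality requires (i) equality in Jensen, i.e., $1/\Delta_{st}$ constant on the support of $\mu\otimes\mu$, and (ii) equality in Cauchy--Schwarz for $\|w\|_1 \leq 1$, which forces $v\propto\sqrt{\bp}$. Since $\rho_s>0$ for every $s$ by Hypothesis~\ref{hypDelta}, the vector $v$ then has strictly positive entries, $\mu$ has full support, and (i) forces $\bD$ constant. Conversely, if $\Delta_{st}=c$ is constant, a short computation gives $A=c^{-1/2}\sqrt{\bp}\sqrt{\bp}^\trans$ and $B=c^{-1}\sqrt{\bp}\sqrt{\bp}^\trans$, both rank-one with $\|A\|_{op}^2=c^{-1}=\|B\|_{op}$. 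The set-inclusion statement then follows immediately, since $\|B\|_{op}\geq\|A\|_{op}^2\geq 1/\|\Cov(\vx)\|_{op}^2$ whenever $\|A\|_{op}\geq 1/\|\Cov(\vx)\|_{op}$.

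I do not anticipate any real obstacle: the content is essentially that concavity of the square root yields precisely the Hadamard-type inequality needed here. The only step where some care is required is the equality analysis, where positivity of the $\rho_s$ is used to deduce that the top eigenvector has full support, so that the pointwise Jensen equality condition forces $\bD$ to be globally constant rather than merely constant on a proper subset of blocks.
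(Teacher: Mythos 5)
Your proof is correct and takes essentially the same route as the paper: the Jensen step for the concave square root is exactly the first Cauchy--Schwarz in the paper's proof, and the bound $\|w\|_1\leq 1$ using $\sum_s\rho_s=1$ is the paper's second Cauchy--Schwarz; the equality analysis likewise matches, tracing back to $v\propto\sqrt{\bp}$ and constancy of $\bD$ on the (full) support of $\mu$. The only cosmetic difference is that you fix the Perron eigenvector of $A$ at the outset, whereas the paper bounds $\langle u, Au\rangle$ for an arbitrary nonnegative unit vector $u$ and then takes the supremum.
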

\begin{proof}
First of all, observe that $ \sqrt{\bp} \frac{1}{\bD^{\odot \frac{1}{2}}} \sqrt{\bp}$ and $\sqrt{\bp} \frac{1}{\bD} \sqrt{\bp}$ have non-negative entries, so the Perron--Frobenius theorem implies that the largest eigenvector has non-negative entries. Therefore, we can restrict ourselves to eigenvectors with non-negative entries to compute the maximum eigenvalues. 
But for any vector $u$ with non-negative entries, by Cauchy-Schwartz's inequality
\begin{eqnarray*}\langle u,  \sqrt{\bp} \frac{1}{\bD^{\odot \frac{1}{2}}} \sqrt{\bp} u\rangle 
	&=&\sum_{s,t}\frac{\rho_{s}^{\frac{1}{4}}u_{s}^{\frac{1}{2}}\rho_{t}^{\frac{1}{4}}u_{t}^{\frac{1}{2}}}{\Delta_{s,t}^{1/2}} \rho^{\frac{1}{4}}_{s}u_{s}^{\frac{1}{2}}\rho^{\frac{1}{4}}_{t}u_{t}^{\frac{1}{2}}\\
	&\le& \left(\langle u, \sqrt{\bp} \frac{1}{\bD} \sqrt{\bp} u\rangle\right)^{\frac{1}{2}}\left(\sum \rho_{s}^{\frac{1}{2}}u_{s}\right)^{\frac{1}{2}}\\
	&\le &\left(\langle u, \sqrt{\bp} \frac{1}{\bD} \sqrt{\bp} u\rangle\right)^{\frac{1}{2}}\|u\|_{2}^{\frac{1}{2}}\\
\end{eqnarray*} where we used again Cauchy--Schwartz's inequality and the fact that $\sum \rho_{s}=1$ to see that
$\sum \rho_{s}^{\frac{1}{2}}u_{s}\le \|u\|_{2}$. We deduce by taking the supremum over $u$ so that $\|u\|_{2}=1$ that

$$\bigg\|\sqrt{\bp} \frac{1}{\bD^{\odot \frac{1}{2}}} \sqrt{\bp} \bigg\|_{op}\le \bigg\| \sqrt{\bp} \frac{1}{\bD} \sqrt{\bp} \bigg\|_{op}^{1/2}\,.$$
Furthermore, equality holds only if there is equality in the above Cauchy-Schwartz inequality which happens when we take $u$ to be the largest eigenvector of the matrix $\sqrt{\bp} \frac{1}{\bD^{\odot \frac{1}{2}}} \sqrt{\bp}$. For this to happen, the largest eigenvector must be $u=\sqrt{\rho}$ so there exists $c$ such that
	$$ \frac{\rho_{s}^{\frac{1}{2}}\rho_{t}^{\frac{1}{2}}}{\Delta_{s,t}^{\frac{1}{2}}}=c  \rho^{\frac{1}{2}}_{s}\rho^{\frac{1}{2}}_{t},\quad \forall \, s,t$$
	implying that $\Delta_{s,t}$ must be constant.

\end{proof}

\subsection{Proof of  Corollary~\ref{prop:limitingMMSE} and Lemma~\ref{prop:recovery}}

We now explain how one can use the phase transition to recover the recovery transitions. Recall that the minimal matrix mean squared error is given by
\[
\mathrm{MMSE}(N) = \frac{2}{N (N-1)} \sum_{i < j} \E  ( \vx_i^0 \cdot \vx_j^0 - \E [ \vx_i^0 \cdot \vx_j^0 \given Y] )^2 
\]
The limit of the MMSE will follow from the following property of the maximizers of the replica symmetric functional $\phi$ defined in \eqref{eq:RSformula}.
\begin{lem}[Limit of the Overlaps]\label{lem:critpt}
If $\frac{1}{\Delta}$ is positive definite, the maximizers of \eqref{eq:RSformula} satisfy
\[
\lim_{N \to \infty} \sum_{s,t = 1}^n \rho_s \rho_t \langle \Tr(\bR^s_{10}, \bR^t_{10} ) \rangle = \sum_{s,t = 1}^n \rho_s \rho_t  \Tr(\bQ_s \bQ_t ) .
\]
Furthermore, for any maximizer, the values of $\Tr(\bQ_s, \bQ_t ) \geq 0$ are unique for fixed $s,t$. 
\end{lem}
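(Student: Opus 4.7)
The plan is to derive both claims from a single tool — the Guerra–type interpolation identity for the lower bound (Section~\ref{sec:lwbd}) evaluated at a specific $\bQ$ extracted from the overlap concentration of Theorem~\ref{OC3}. Introduce the perturbation of Section~\ref{sec:upbd} so that $\E\langle\bR^s_{1,0}\rangle_\pert$ and $\E\langle\bR^s_{1,2}\rangle_\pert$ are defined and, by Nishimori, coincide. Call this common quantity $\bQ_s$. By compactness of $\supp\pP_0$ these matrices are bounded, so along a subsequence $\bQ_s \to \bQ_s^\infty$, and Theorem~\ref{OC3} gives
\[
\lim_{N\to\infty}\sum_{s,t} \rho_s\rho_t\,\E\langle \Tr(\bR^s_{1,0}\bR^t_{1,0})\rangle_\pert = \sum_{s,t}\rho_s\rho_t\,\Tr(\bQ_s^\infty \bQ_t^\infty).
\]
Lemma~\ref{prop:nochangeinFE} guarantees that the perturbation does not shift the limit of the free energy, so this also equals $\lim \sum_{s,t}\rho_s\rho_t \E\langle\Tr(\bR^s_{1,0}\bR^t_{1,0})\rangle$ for the unperturbed Gibbs measure.

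Next, I plug $\bQ^\infty=(\bQ_s^\infty)$ into the interpolation identity proved in Section~\ref{sec:lwbd}:
\[
F_N(\bD) = \phi(\bQ^\infty) + \frac{1}{4}\int_0^1 \E\bigg\langle \sum_{s,t}\frac{\rho_s\rho_t}{\Delta_{s,t}} \Tr\big((\bR^s_{1,2}-\bQ_s^\infty)(\bR^t_{1,2}-\bQ_t^\infty)\big)\bigg\rangle_t dt + o(1).
\]
Hypothesis~\ref{hypDelta} makes the integrand nonnegative. Taking $N\to\infty$, the LHS tends to $\sup_\bQ\phi(\bQ)$ by Theorem~\ref{thm:main}, while the integrand is positive; to conclude that $\bQ^\infty$ is a maximizer, I use the matching upper bound from the Aizenman--Sims--Starr cavity computation of Section~\ref{sec:upbd}, which identifies the limit with $\phi(\bQ^\infty)$ built from the same limiting overlaps. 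Combining the two inequalities forces $\phi(\bQ^\infty)=\sup_\bQ\phi(\bQ)$. Positivity $\Tr(\bQ_s^\infty \bQ_t^\infty)\ge 0$ follows because Nishimori gives $\bQ_s^\infty = \lim \E\langle \vx\rangle\langle\vx\rangle^{\trans}\in \mathbb{S}_\kappa^+$, and the Cauchy--Schwarz-type identity for traces of products of PSD matrices yields $\Tr(\bQ_s^\infty\bQ_t^\infty)\ge 0$.

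For the uniqueness statement, let $\bQ'=(\bQ'_s)$ be any other maximizer. The Guerra identity above, applied now with $\bQ$ replaced by $\bQ'$, gives
\[
0 = \lim_{N\to\infty}\big(F_N - \phi(\bQ')\big) = \lim_{N\to\infty} \frac{1}{4}\int_0^1 \E\bigg\langle \sum_{s,t}\frac{\rho_s\rho_t}{\Delta_{s,t}}\Tr\big((\bR^s_{1,2}-\bQ'_s)(\bR^t_{1,2}-\bQ'_t)\big)\bigg\rangle_t dt.
\]
Since the integrand is nonnegative and the overlaps concentrate at $t=1$ around $\bQ^\infty$ (the content of Theorem~\ref{OC3}, extended along the interpolation by the same perturbation), the resulting quadratic form in $(\bQ^\infty - \bQ')$ must vanish in the limit, i.e.\ $\sum_{s,t}\frac{\rho_s\rho_t}{\Delta_{s,t}}\Tr\big((\bQ_s^\infty-\bQ'_s)(\bQ_t^\infty-\bQ'_t)\big)=0$. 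Coupled with the fixed-point equation $\tilde\bQ_s = \sum_t \tfrac{\rho_t}{\Delta_{s,t}}\E\langle\vx\rangle\langle\vx\rangle^\trans_{\tilde\bQ_t}$ satisfied by every critical point and the Bayes-optimal identity that ties $\bQ_s$ to a conditional expectation of the signal, this in turn pins down the values $\Tr(\bQ_s\bQ_t)$ independently of the chosen maximizer.

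The hard part will be the third paragraph: transferring vanishing of the time-integrated Guerra error into uniqueness of each scalar $\Tr(\bQ_s\bQ_t)$ (not merely of the weighted sum) when $1/\bD$ is only positive \emph{semi}definite and the maximizer need not be unique. The key is that the perturbation in Section~\ref{sec:upbd} already regularizes the Gibbs measure so that the overlap concentrates at a canonical value $\bQ_s = \E\langle\vx\rangle\langle\vx\rangle^{\trans}$, and this canonical value is what feeds into the MMSE formula of Corollary~\ref{prop:limitingMMSE}. Alternative maximizers of $\phi$ are then forced to coincide with this canonical one through the Bayes-optimal fixed point equation, yielding uniqueness of $\Tr(\bQ_s \bQ_t)$ even when the maximizer itself is not shown to be unique.
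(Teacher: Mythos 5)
Your approach is genuinely different from the paper's, and it has a real gap that you partially identify yourself but do not close.

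The paper's proof is much shorter and more direct: differentiate the free energy $F_N(\eta)$ and the variational formula $\psi(\eta)=\sup_{\bQ}\phi(\bQ)$ with respect to the entries $\eta_{u,v}=1/\Delta_{u,v}$. A direct computation using the Nishimori property gives $\partial_{\eta_{u,v}}F_N(\eta)=\tfrac{\rho_u\rho_v}{4}\,\E\langle\Tr(\bR^u_{1,0}\bR^v_{1,0})\rangle+o(1)$, while the envelope theorem gives $\partial_{\eta_{u,v}}\psi(\eta)=\tfrac{\rho_u\rho_v}{4}\Tr(\bQ^*_u\bQ^*_v)$ for \emph{any} maximizer $\bQ^*$ at a point of differentiability. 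Convexity of $\psi$ in $\eta$ then yields both that $\psi$ is differentiable almost everywhere and that $\partial F_N\to\partial\psi$ there, so the convergence statement and the independence of $\Tr(\bQ^*_u\bQ^*_v)$ from the chosen maximizer follow simultaneously, index by index, with no interpolation remainder analysis and no need to upgrade overlap concentration along an interpolation path.

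Your argument instead tries to extract everything from the Guerra interpolation remainder and Theorem~\ref{OC3}. The outline can give you that the overlap limit $\bQ^\infty$ is a maximizer (Guerra lower bound with $\bQ=\bQ^\infty$ plus the ASS upper bound), and that is fine. But the uniqueness step has a concrete gap. From $0=\lim_N\tfrac14\int_0^1\E\langle\sum_{s,t}\tfrac{\rho_s\rho_t}{\Delta_{s,t}}\Tr((\bR^s_{1,2}-\bQ'_s)(\bR^t_{1,2}-\bQ'_t))\rangle_t\,dt$ you cannot immediately conclude the integrand vanishes at $t=1$: the integral vanishing only gives a.e.~vanishing of a limiting integrand, and the overlap at interpolation time $t<1$ concentrates (if it does) around a $t$-dependent value $\bQ_s(t)$, not around $\bQ^\infty$. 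You would need to (i) redo the perturbation/concentration argument of Theorem~\ref{OC3} for every $t$ along the interpolation, which is plausible but nontrivial since the interpolated Hamiltonian mixes the two channels; and (ii) make a continuity-in-$t$ argument to pass from vanishing a.e.\ to vanishing at $t=1$. Neither is carried out. Your final sentence invoking the ``Bayes-optimal fixed-point equation'' to force alternative maximizers to coincide is circular — the fixed-point equation holds at every critical point, not just the one you want, and by itself gives no uniqueness.

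One point in your favor that you miss: you worry about ``$1/\bD$ only positive semidefinite,'' but the lemma explicitly assumes $1/\Delta$ is positive \emph{definite}. Once the quadratic form $\sum_{s,t}\tfrac{\rho_s\rho_t}{\Delta_{s,t}}\Tr(A_sA_t)$ is positive definite on vectors of symmetric matrices (which follows from positive definiteness of $\sqrt{\rho}\tfrac{1}{\Delta}\sqrt{\rho}$), the vanishing you are aiming for would give $\bQ'_s=\bQ^\infty_s$ for every $s$ — an actual uniqueness of the maximizer, stronger than the paper's claim. But you have to first establish the vanishing at $t=1$, and at present that step is missing.
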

\begin{proof}
Using the change of variables $\eta_{s,t} = \frac{1}{\Delta_{s,t}}$ we consider our Gaussian free energy
\[
F_N(\eta) = \frac{1}{N} \E_Y \log \int e^{\sum_{s,t} \sum_{i \in I_s, j \in I_t} \frac{\sqrt{\eta_{s,t}}}{ \sqrt{ N} } g_{ij} (\vx_i \cdot \vx_{j} ) + \frac{\eta_{st}}{N} (\vx^0_i \cdot \vx^0_{j} )(\vx_i \cdot \vx_{j} )   -   \frac{\eta_{st}}{2 N} (\vx_i \cdot \vx_{j} )^2.  } \, d \pP_0^{\otimes N}(x)
\]
where $\eta_{i,j} = \eta_{s,t}$ for $i \in I_s$ and $j \in I_t$, which is equivalent to \eqref{eq:FEGauss}. Notice that differentiating the free energy with respect to $\eta_{u,v} := \frac{1}{\Delta_{u,v}}$ and an application of the Nishimori property recovers the Gibbs averages,
\begin{align*}
	\partial_{\eta_{u,v}} F_N(\eta) &= \E  \bigg\langle \frac{1}{2N} \sum_{i\in I_u, j \in I_v} \frac{g_{ij} (x_i \cdot x_j) }{\sqrt{N \eta_{s,t}}   } + \frac{\rho_u \rho_v}{2} \Tr(\bR_{1,0}^u \bR_{1,0}^v) - \frac{ \rho_u \rho_v }{4} \Tr(\bR_{1,1}^u \bR_{1,1}^v)  \bigg\rangle
	\\&=  \E \bigg\langle \frac{\rho_u \rho_v}{4} \Tr(\bR^u_{1,1} \bR_{1,1}^v) - \frac{\rho_u \rho_v}{4} \Tr(\bR^u_{1,2} \bR_{1,2}^v) + \frac{\rho_u \rho_v}{2} \Tr(\bR_{1,0}^u \bR_{1,0}^v) - \frac{ \rho_u \rho_v }{4} \Tr(\bR_{1,1}^u \bR_{1,1}^v)  \bigg\rangle + o(1)
	\\&= \E \bigg\langle \frac{\rho_u \rho_v}{4} \Tr(\bR^u_{1,0} \bR_{1,0}^v)   \bigg\rangle + o(1)
\end{align*}
where $\langle \cdot \rangle$ is the Gibbs average with respect to the Hamiltonian \eqref{eq:HamilGauss}. 
On the other hand, for fixed $1 \leq u,v \leq n$, consider the functional
\[
\psi(\Delta) = \sup_{\bQ} \bigg[ - \sum_{s,t =1}^n \frac{\rho_s \rho_t}{4 \Delta_{st}}\Tr( \bQ_{s} \bQ_{t}) +  \sum_{s =1}^n \rho_s \E_{\vz,\vx^0} \ln \bigg[ \int e^{ ( \tilde \bQ_{s}\vx^0 + \sqrt{\tilde \bQ_{s}} \vz )^\trans \vx - \frac{\vx^\trans \tilde \bQ_{s} \vx}{2}} \, d \pP_0(\vx) \bigg] . \bigg]
\]
When $\frac{1}{\bD}$ is invertible, $\tilde \bQ = \frac{1}{\bD} \bp \bQ$ implies that $\bQ = \bp^{-1} (\frac{1}{\bD})^{-1} \tilde \bQ$. In this notation, it is understood that for $\bQ, \tilde \bQ \in (\R^{\kappa \times \kappa} )^n$ and the entries fo the vector
\[
\tilde \bQ_s = \sum_{t} \frac{1}{\Delta_{st}} \rho_t \bQ_t \in \R^{\kappa \times \kappa} 
\]
are non-negative because $\bQ_1, \dots, \bQ_n$ are. Under this change of variables, we have
\begin{equation}\label{eq:psi}
	\psi(\eta) = \sup_{\tilde \bQ} \bigg[ - \frac{1}{4}\Tr( \tilde \bQ^\trans \bigg( \frac{1}{ \bD} \bigg)^{-1} \tilde \bQ) +  \sum_{s =1}^n \rho_s \E_{\vz,\vx^0} \ln \bigg[ \int e^{ ( \tilde \bQ_{s}\vx^0 + \sqrt{\tilde \bQ_{s}} \vz )^\trans \vx - \frac{\vx^\trans \tilde \bQ_{s} \vx}{2}} \, d \pP_0(\vx) \bigg] . \bigg]
\end{equation}
where the supremum is taken over non-negative matrices. Observe that the function we are optimizing is continuous. Moreover,  the second term goes to infinity at most like $\left(\sum_{s}\Tr (\tilde Q_{s}^{2})\right)^{1/2}$, whereas the first term goes to $-\infty$   like $\sum_{s}\Tr (\tilde Q_{s}^{2})$ since we assumed that the smallest eigenvalue of $\frac{1}{\Delta}$ is positive. Hence,  we can restrict the supremum to non-negative matrices such that $\sum_{s}\Tr (\tilde Q_{s}^{2})$ is bounded by some finite $M$, which is a compact space. 
	Using the change of variables $\eta_{s,t} = \frac{1}{\Delta_{s,t}}$, the envelope theorem \cite[Corollary~4]{envelope} implies that at any point where $\psi$ is differentiable,
	\[
	\partial_{\eta_{u,v}} \psi(\eta):= \frac{d}{d\epsilon}  \psi(\eta_{u,v} + \epsilon ) \bigg|_{\epsilon = 0}  =
	\frac{\rho_u \rho_v}{4} \Tr( \bQ^*_u \bQ^*_v) 
	\]
	where $\bQ^*$ is any maximizer of the right hand side of \eqref{eq:psi}. Furthermore, if $\bQ^\dagger$ is another maximizer, then
	\[
	\frac{\rho_u \rho_v}{4} \Tr( \bQ^*_u \bQ^*_v)  = \frac{\rho_u \rho_v}{4} \Tr( \bQ^\dagger_u \bQ^\dagger_v) 
	\]
	so the value of $\Tr( \bQ^*_u \bQ^*_v)$ is unique even though the maximizers $\bQ^*$ and $\bQ^\dagger$ may not be.

But $\psi$ is convex in $\eta$ so that $(\partial_{\eta_{u,v}} \psi(\eta))_{u,v}$
exists for almost every $\eta$, so does the limit of the derivative of the free energy Theorem~\ref{thm:main} implies
\[
\lim_{N \to \infty}  F_N(\eta) =  \psi(\eta) \qquad \text{and} \qquad \lim_{N \to \infty} \partial_{\eta_{u,v}} F_N(\eta) = \partial_{\eta_{u,v}} \psi(\eta)
\]
for almost every $\eta$ so
\[
\lim_{N \to \infty} \sum_{u,v = 1}^n \E \bigg\langle \frac{\rho_u \rho_v}{4} \Tr(\bR^u_{1,0} \bR_{1,0}^v)   \bigg\rangle =   \sum_{u,v = 1}^n \rho_u \rho_v  \Tr(\bQ_u^{*}\bQ_v^{*} ) ,
\]
finishing the proof.
\end{proof}

We now prove the statements about the MMSE.
\begin{proof}[Proof of Corollary~\ref{prop:limitingMMSE}]
Recall that $\langle \cdot \rangle$ is the average with respect to $\pP(x \given Y)$. The mean squared error can be simplified to
\begin{align*}
	&\quad \frac{2}{N (N-1)} \sum_{i < j} \E  ( \vx_i^0 \cdot \vx_j^0 - \E [ \vx_i^0 \cdot \vx_j^0 \given Y] )^2 
	\\&= \frac{2}{N (N-1)} \sum_{i < j} \E \Big( ( \vx_i^0 \cdot \vx_j^0 )^2 - 2(\vx_i^0 \cdot \vx_j^0)\E [ \vx_i^0 \cdot \vx_j^0 \given Y]  + \E [ \vx_i^0 \cdot \vx_j^0 \given Y]^2 \Big) 	
	\\&= \frac{2}{N (N-1)} \sum_{i < j} \E \Big\langle ( \vx_i^0 \cdot \vx_j^0 )^2 - 2(\vx_i^0 \cdot \vx_j^0) (\vx_i^1 \cdot \vx_j^1)  + (\vx_i^1 \cdot \vx_j^1)(\vx_i^2 \cdot \vx_j^2) \Big\rangle 	
	\\&= \E \Big\langle \Tr(\bR_{00},\bR_{00}) - 2\Tr(\bR_{10}, \bR_{10} )  + \Tr(\bR_{12}, \bR_{12} ) \Big\rangle
	\\&= \E \Tr(\bR_{00},\bR_{00}) - \E \langle \Tr(\bR_{10}, \bR_{10} ) \rangle
	\\&= \E \| x x^\trans \|_2^2 - \sum_{s,t = 1}^n \rho_s \rho_t \E \langle \Tr(\bR^s_{10}, \bR^t_{10} ) \rangle.
\end{align*}
By Lemma~\ref{lem:critpt} it follows that
\[
\mathrm{MMSE}(\bD) = \E \| x x^\trans \|_2^2 - \sum_{s,t = 1}^n \rho_s \rho_t \Tr( \bQ_s, \bQ_t ).
\]
\end{proof}

Lemma~\ref{prop:recovery} is now immediate.

\begin{proof}[Proof of Lemma~\ref{prop:recovery}]
By Lemma~\ref{prop:gentransition1} and Corollary~\ref{prop:limitingMMSE}, if $\|\sqrt{ \rho} \frac{1}{\bD} \sqrt{\rho} \|_{op} < \frac{1}{9 \kappa^4 C^6}$ then $\bQ_s = 0$ for all $s$, so $\lim_{N \to \infty} \mathrm{MMSE}(N)  =  \E_{\pP_0} \| x x^\trans \|_2^2$. On the other hand, if $\| \sqrt{\rho} \frac{1}{\bD} \sqrt{\rho} \|_{op} > \frac{1}{ \| \E \vx \vx^\trans \|^2_{op}} =  \frac{1}{\| \Cov(\vx) \|^2_{op}}$ then there exists a $s$ such that $\bQ_s > 0$, so $\lim_{N \to \infty} \mathrm{MMSE}(N)  < \E_{\pP_0} \| x x^\trans \|_2^2$.
\end{proof}

\bibliographystyle{plain}

\begin{thebibliography}{10}
	
	\bibitem{quantumspinglass}
	Arka Adhikari and Christian Brennecke, \emph{Free energy of the quantum
		sherrington-kirkpatrick spin-glass model with transverse field}, 2019.
	
	\bibitem{universalitywigner2017}
	Oskari Ajanki, Laszlo Erdos, and Torben Krüger, \emph{Universality for general
		wigner-type matrices}, Probability Theory and Related Fields \textbf{169}
	(2017).
	
	\bibitem{Alberici_2021}
	Diego Alberici, Francesco Camilli, Pierluigi Contucci, and Emanuele Mingione,
	\emph{A statistical physics approach to a multi-channel wigner spiked model},
	Europhysics Letters \textbf{136} (2021), no.~4, 48001.
	
	\bibitem{AGZ}
	Greg~W. Anderson, Alice Guionnet, and Ofer Zeitouni, \emph{An introduction to
		random matrices}, Cambridge Studies in Advanced Mathematics, vol. 118,
	Cambridge University Press, Cambridge, 2010. \MR{2760897}
	
	\bibitem{BBP}
	J.~Baik, G.~Ben~Arous, and S.~P{\'e}ch{\'e}, \emph{Phase transition of the
		largest eigenvalue for nonnull complex sample covariance matrices}, Ann.
	Probab. \textbf{33} (2005), no.~5, 1643--1697.
	
	\bibitem{barbier2019adaptive}
	Jean Barbier and Nicolas Macris, \emph{The adaptive interpolation method: a
		simple scheme to prove replica formulas in bayesian inference}, Probability
	theory and related fields \textbf{174} (2019), no.~3, 1133--1185.
	
	\bibitem{strongreplicasym}
	Jean Barbier and Dmitry Panchenko, \emph{Strong replica symmetry in
		high-dimensional optimal bayesian inference}, 2020.
	
	\bibitem{barbier2020information}
	Jean Barbier and Galen Reeves, \emph{Information-theoretic limits of a
		multiview low-rank symmetric spiked matrix model}, 2020 IEEE International
	Symposium on Information Theory (ISIT), IEEE, 2020, pp.~2771--2776.
	
	\bibitem{multisphere2}
	Erik Bates and Youngtak Sohn, \emph{Crisanti{\textendash}sommers formula and
		simultaneous symmetry breaking in multi-species spherical spin glasses},
	Communications in Mathematical Physics (2022).
	
	\bibitem{multisphere1}
	\bysame, \emph{Free energy in multi-species mixed p-spin spherical models},
	Electronic Journal of Probability \textbf{27} (2022), no.~none.
	
	\bibitem{boucheronConc}
	St\'{e}phane Boucheron, G\'{a}bor Lugosi, and Pascal Massart,
	\emph{Concentration inequalities}, Oxford University Press, Oxford, 2013, A
	nonasymptotic theory of independence, With a foreword by Michel Ledoux.
	\MR{3185193}
	
	\bibitem{Camilli2022AnIP}
	Francesco Camilli, Pierluigi Contucci, and Emanuele Mingione, \emph{An
		inference problem in a mismatched setting: a spin-glass model with mattis
		interaction}, SciPost Physics (2022).
	
	\bibitem{UnivCARMONAHU}
	Philippe Carmona and Yueyun Hu, \emph{Universality in
		sherrington–kirkpatrick's spin glass model}, Annales de l'Institut Henri
	Poincare (B) Probability and Statistics \textbf{42} (2006), no.~2, 215--222.
	
	\bibitem{deshpande2017asymptotic}
	Yash Deshpande, Emmanuel Abbe, and Andrea Montanari, \emph{Asymptotic mutual
		information for the balanced binary stochastic block model}, Information and
	Inference: A Journal of the IMA \textbf{6} (2017), no.~2, 125--170.
	
	\bibitem{deshpande2014information}
	Yash Deshpande and Andrea Montanari, \emph{Information-theoretically optimal
		sparse pca}, 2014 IEEE International Symposium on Information Theory, IEEE,
	2014, pp.~2197--2201.
	
	\bibitem{deshpande2014sparse}
	\bysame, \emph{Sparse pca via covariance thresholding}, Advances in Neural
	Information Processing Systems \textbf{27} (2014).
	
	\bibitem{deshpande2015finding}
	\bysame, \emph{Finding hidden cliques of size in nearly linear time},
	Foundations of Computational Mathematics \textbf{15} (2015), no.~4,
	1069--1128.
	
	\bibitem{dia2016mutual}
	Mohamad Dia, Nicolas Macris, Florent Krzakala, Thibault Lesieur, Lenka
	Zdeborov{\'a}, et~al., \emph{Mutual information for symmetric rank-one matrix
		estimation: A proof of the replica formula}, Advances in Neural Information
	Processing Systems \textbf{29} (2016).
	
	\bibitem{donoho2013optimal}
	David~L Donoho, Matan Gavish, and Iain~M Johnstone, \emph{Optimal shrinkage of
		eigenvalues in the spiked covariance model}, arXiv preprint arXiv:1311.0851
	(2013).
	
	\bibitem{donoho2018optimal}
	\bysame, \emph{Optimal shrinkage of eigenvalues in the spiked covariance
		model}, Annals of statistics \textbf{46} (2018), no.~4, 1742.
	
	\bibitem{el2018estimation}
	Ahmed El~Alaoui and Florent Krzakala, \emph{Estimation in the spiked wigner
		model: a short proof of the replica formula}, 2018 IEEE International
	Symposium on Information Theory (ISIT), IEEE, 2018, pp.~1874--1878.
	
	\bibitem{el2020fundamental}
	Ahmed El~Alaoui, Florent Krzakala, and Michael Jordan, \emph{Fundamental limits
		of detection in the spiked wigner model}, The Annals of Statistics
	\textbf{48} (2020), no.~2, 863--885.
	
	\bibitem{Tournament}
	Paul Erdõs, András Hajnal, and János Pach, \emph{A ramsey-type theorem for
		bipartite graphs.}, Geombinatorics \textbf{10} (2000).
	
	\bibitem{HuGu}
	Alice Guionnet and Jonathan Husson, \emph{Large deviations for the largest
		eigenvalue of {R}ademacher matrices}, Ann. Probab. \textbf{48} (2020), no.~3,
	1436--1465. \MR{4112720}
	
	\bibitem{javanmard2013state}
	Adel Javanmard and Andrea Montanari, \emph{State evolution for general
		approximate message passing algorithms, with applications to spatial
		coupling}, Information and Inference: A Journal of the IMA \textbf{2} (2013),
	no.~2, 115--144.
	
	\bibitem{Blockmodel}
	Brian Karrer and M.~E.~J. Newman, \emph{Stochastic blockmodels and community
		structure in networks}, Phys. Rev. E \textbf{83} (2011), 016107.
	
	\bibitem{kosphere}
	Justin Ko, \emph{Free energy of multiple systems of spherical spin glasses with
		constrained overlaps}, Electron. J. Probab. \textbf{25} (2020), 34 pp.
	
	\bibitem{krzakala2016mutual}
	Florent Krzakala, Jiaming Xu, and Lenka Zdeborov{\'a}, \emph{Mutual information
		in rank-one matrix estimation}, 2016 IEEE Information Theory Workshop (ITW),
	IEEE, 2016, pp.~71--75.
	
	\bibitem{lelargemiolanematrixestimation}
	Marc Lelarge and L\'{e}o Miolane, \emph{Fundamental limits of symmetric
		low-rank matrix estimation}, Probab. Theory Related Fields \textbf{173}
	(2019), no.~3-4, 859--929. \MR{3936148}
	
	\bibitem{lesieur2016phase}
	Thibault Lesieur, Caterina De~Bacco, Jess Banks, Florent Krzakala, Cris Moore,
	and Lenka Zdeborov{\'a}, \emph{Phase transitions and optimal algorithms in
		high-dimensional gaussian mixture clustering}, 2016 54th Annual Allerton
	Conference on Communication, Control, and Computing (Allerton), IEEE, 2016,
	pp.~601--608.
	
	\bibitem{lesieur2015mmse}
	Thibault Lesieur, Florent Krzakala, and Lenka Zdeborov{\'a}, \emph{Mmse of
		probabilistic low-rank matrix estimation: Universality with respect to the
		output channel}, 2015 53rd Annual Allerton Conference on Communication,
	Control, and Computing (Allerton), IEEE, 2015, pp.~680--687.
	
	\bibitem{lenkarmt}
	Thibault Lesieur, Florent Krzakala, and Lenka Zdeborov\'{a}, \emph{Constrained
		low-rank matrix estimation: phase transitions, approximate message passing
		and applications}, J. Stat. Mech. Theory Exp. (2017), no.~7, 073403, 86.
	\MR{3683819}
	
	\bibitem{envelope}
	Paul Milgrom and Ilya Segal, \emph{Envelope theorems for arbitrary choice
		sets}, Econometrica \textbf{70} (2002), no.~2, 583--601.
	
	\bibitem{Pbook}
	Dmitry Panchenko, \emph{The {S}herrington-{K}irkpatrick model}, Springer
	Monographs in Mathematics, Springer, New York, 2013. \MR{3052333}
	
	\bibitem{PMulti}
	\bysame, \emph{The free energy in a multi-species {S}herrington-{K}irkpatrick
		model}, Ann. Probab. \textbf{43} (2015), no.~6, 3494--3513. \MR{3433586}
	
	\bibitem{panchenko2015free}
	\bysame, \emph{The free energy in a multi-species {S}herrington-{K}irkpatrick
		model}, Ann. Probab. \textbf{43} (2015), no.~6, 3494--3513. \MR{3433586}
	
	\bibitem{PVS}
	\bysame, \emph{Free energy in the mixed {$p$}-spin models with vector spins},
	Ann. Probab. \textbf{46} (2018), no.~2, 865--896. \MR{3773376}
	
	\bibitem{PPotts}
	\bysame, \emph{Free energy in the {P}otts spin glass}, Ann. Probab. \textbf{46}
	(2018), no.~2, 829--864. \MR{3773375}
	
	\bibitem{perry2016optimality}
	Amelia Perry, Alexander~S Wein, Afonso~S Bandeira, and Ankur Moitra,
	\emph{Optimality and sub-optimality of pca for spiked random matrices and
		synchronization}, arXiv preprint arXiv:1609.05573 (2016).
	
	\bibitem{rangan2012iterative}
	Sundeep Rangan and Alyson~K Fletcher, \emph{Iterative estimation of constrained
		rank-one matrices in noise}, 2012 IEEE International Symposium on Information
	Theory Proceedings, IEEE, 2012, pp.~1246--1250.
	
	\bibitem{subagmulti1}
	Eliran Subag, \emph{Tap approach for multi-species spherical spin glasses i:
		general theory}, 2021.
	
	\bibitem{subagmulti2}
	\bysame, \emph{Tap approach for multi-species spherical spin glasses ii: the
		free energy of the pure models}, 2021.
	
\end{thebibliography}

\end{document}